\documentclass[11pt, oneside]{amsart}   	% use "amsart" instead of "article" for AMSLaTeX format
%11pt, reqno, oneside article
\usepackage[
backend=bibtex,
style=numeric,
backref=true,
]{biblatex}
\addbibresource{name.bib}

\usepackage[margin = 0.75in]{geometry}                		% See geometry.pdf to learn the layout options. There are lots.
\geometry{letterpaper}                   		% ... or a4paper or a5paper or ... 
\usepackage{graphicx}				% Use pdf, png, jpg, or eps§ with pdflatex; use eps in DVI mode

%\usepackage{setspace}
%\doublespacing

\DeclareFontEncoding{FMS}{}{}
\DeclareFontSubstitution{FMS}{futm}{m}{n}
\DeclareFontEncoding{FMX}{}{}
\DeclareFontSubstitution{FMX}{futm}{m}{n}
\DeclareSymbolFont{fouriersymbols}{FMS}{futm}{m}{n}
\DeclareSymbolFont{fourierlargesymbols}{FMX}{futm}{m}{n}
\DeclareMathDelimiter{\VERT}{\mathord}{fouriersymbols}{152}{fourierlargesymbols}{147}

								% TeX will automatically convert eps --> pdf in pdflatex	
								
\usepackage[english]{babel}
\usepackage{amsthm}
\usepackage{bbm}	
\usepackage{mathtools}

%To see the labels (to be commented for the final draft)
%\usepackage{showkeys}

%%%% Bib-stuff
\usepackage[
%backref=page,
colorlinks=true]{hyperref} % Links in the pdf. Backref option: each bibliographical entry denotes where it was cited
\usepackage{ifthen}
\usepackage{tikz}
\usetikzlibrary{decorations.pathreplacing}
\usepackage{tikz-cd}

%%%%%%%%%%% changing backref options (each bibliographical entry indicates where it was cited)
%\renewcommand*{\backref}[1]{}
%\renewcommand*{\backrefalt}[4]{\tiny
%	\ifcase #1 (\textbf{NOT CITED.})%
%	\or    (Cited on page~#2.)%
%	\else   (Cited on pages~#2.)%
%	\fi}
%%%%%%%%%%%%%%%%

\allowdisplaybreaks
%\usepackage[sc]{mathpazo}
%\linespread{1.2}         % Palatino needs more leading (space between lines)
%\usepackage[T1]{fontenc}

\usepackage{blindtext}
\usepackage{scrextend}
\addtokomafont{labelinglabel}{\sffamily}

\usepackage{amssymb}
\usepackage{amsmath}
\newcommand{\R}{\mathbb{R}}
\newcommand{\ep}{\varepsilon}
\newcommand{\K}{\mathcal{K}}

\newcommand{\C}{\mathbb{C}}
\newcommand{\E}{\mathbb{E}}

\newcommand{\Z}{\mathbb{Z}}
\newcommand{\N}{\mathbb{N}}
\newcommand{\real}{\operatorname{Re}}
%SetFonts

%SetFonts

%short-cut for norm
\newcommand{\norm}[1]{\left\lVert#1\right\rVert}

%short-cut for the floor function
\newcommand{\floor}[1]{\lfloor#1\rfloor}

\newtheorem{theorem}{Theorem}[section]
\newtheorem{corollary}[theorem]{Corollary}
\newtheorem{lemma}[theorem]{Lemma}

\theoremstyle{definition}

\newtheorem{remark}[theorem]{Remark}

\title{More Stability and Convergence results for higher-order Wiener-Wintner systems}
\author{Jacob Folks}							% Activate to display a given date or no date

\begin{document}
\subjclass[2020]{37A05, 37A30}

\keywords{Wiener-Wintner, multiple recurrence, return times theorem, ergodic averages}
\begin{abstract}
    \textit{Higher-order Wiener-Wintner averages} were constructed by Assani, Folks, and Moore \cite{assani2024higherorderwienerwintnersystems} to quantitatively control multiple recurrence averages. Systems in which these averages converge at a polynomial rate for a sufficiently large subset are termed \textit{higher-order Wiener-Wintner systems of power type}, in which properties like pointwise convergence of multiple recurrence averages and multiple return times averages has been shown. 

    We establish that these higher-order Wiener-Wintner averages satisfy a type of sublinearity, and that they bound conditional expectations and products, which transfers to improved stability results of higher-order Wiener-Wintner systems under sums, factors, and products. We also establish more general convergence results for such systems, which include a polynomial return times theorem and convergence of the multilinear one-side ergodic Hilbert transform with polynomial phase.
\end{abstract}
\maketitle

\section{Introduction}

\subsection{Background}
For a dynamical system $(X, \mathcal{F}, \mu, T)$ and $J\in \N$, the \textit{$J$th order multiple recurrence averages} have the form
\begin{align}\label{F:mult rec}
\frac{1}{N} \sum_{n=1}^N \prod_{j=1}^J f_j \circ T^{jn}
\end{align}
for $f_1, \dots, f_J \in L^\infty(\mu)$. These averages first appeared in Furstenberg's proof of Szemerédi's theorem \cite{Fur77} and their convergence properties have been long since studied. Notably, norm convergence in $L^2$ was established independently by Host and Kra \cite{hostkra} and Ziegler \cite{ziegler07}. Pointwise convergence results have been established for some specific cases, including some weakly mixing systems by Assani \cite{assani98} and distal systems by Huang, Shao, and Ye \cite{Ye}. 

In his landmark work \cite{Bourgain}, Bourgain established pointwise converges for the $J=2$, or double recurrence case
\begin{align}\label{F:double rec}
\frac{1}{N}\sum_{n=1}^N f_1\circ T^n \cdot f_2 \circ T^{2n} \, .
\end{align}
He does so, in part, by tying the above average double recurrence average to the weighted averages
\begin{align}\label{F:WW av}
\frac{1}{N}\sum_{n=1}^N e^{2\pi i n t} f\circ T^n \, .
\end{align}
The classical \textit{Wiener-Wintner Theorem} \cite{WW} states that for every $f\in L^1(\mu)$, there exists a set $X_f$ of full measure over which the above converges for all $t\in \R$. Moreover, the \textit{Uniform Wiener-Wintner theorem} (also proved by Bourgain in \cite{Bourgain}) states that uniform convergence of these averages over $t$ characterizes the Kronecker factor $\mathcal{K}$ (the closed span of the eigenfunctions), in that
$$f\in L^2(\K)^\perp \quad \iff \quad \lim_N \sup_t \left| \frac{1}{N}\sum_{n=1}^N e^{2\pi i n t} f(T^n x) \right|= 0 \text{ for almost all }x \, .$$
For the double recurrence averages (\ref{F:double rec}), the case in which either $f_1$ or $f_2$ is in $L^2(\mathcal{K})$ reduces to the classical Wiener-Wintner theorem. For the remaining case, Bourgain shows that the norm behavior of (\ref{F:double rec}) is controlled by the norm of the uniform Wiener-Wintner averages of either $f_1$ or $f_2$, and hence converge to zero for $f_1$ or $f_2$ in  $L^2(\mathcal{K})^\perp$. Tools from harmonic analysis are then used to pass from norm to pointwise convergence, which comprise the bulk of Bourgain's paper.

This portion of the argument can be greatly simplified, as shown by Assani \cite{assani}, under the added condition that the uniform Wiener-Winter averages converge polynomially in norm, or for some $\alpha>0$ and a dense set of $f\in L^2(\mathcal{K})^\perp$ there exists constants $C_f$ such that
$$\left\Vert\sup_t \left| \frac{1}{N}\sum_{n=1}^N e^{2\pi i n t} f \circ T^n \right|\right\Vert_2 \leq \frac{C_f}{N^\alpha}$$
holds for all $N$. Such a system is called a \textit{Wiener-Winter system of power type $\alpha > 0$} (we note that for the dense set, the powers $\alpha$ do not need to be uniform for the convergence to extend by density). Moreover, examples and stability conditions of such systems are established, and further convergence results such as the one-sided ergodic Hilbert transform are shown \cite{assani04}. 

In \cite{assani2024higherorderwienerwintnersystems}, Assani, Folks, and Moore inductively lift this line of reasoning to higher-order multiple recurrence averages. The norm convergence arguments of Host/Kra and Ziegler establish the characteristic factors $\mathcal{Z}_{J-1}$ of the multiple recurrence averages (\ref{F:mult rec}) (known as \textit{Host-Kra-Ziegler factors}) which have the structure of a $J$-step nilsystem inside which pointwise convergence of multiple recurrence averages has been shown by Leibman \cite{leibman05a}. Assani, Folks, and Moore \cite{assani2024higherorderwienerwintnersystems} construct a \textit{$J$-th order Wiener-Wintner average} (here denoted $W_N^J(f)$) whose pointwise limit characterizes $\mathcal{Z}_{J-1}$ and controls the multiple recurrence averages in norm analogously to the classical Wiener-Wintner average in the Bourgain argument. With the added condition that these higher-order Wiener-Wintner averages converge to zero polynomially for a dense subset of $L^2(\mathcal{Z}_{J-1})^\perp$, we obtain pointwise convergence of multiple recurrence, and also a multiterm returns times theorem (a survey of such return times theorems in given by Assani and Presser \cite{AssaniPresser+2014+19+58}). Such a system is called a \textit{$J$-th order Wiener-Wintner dynamical system}, and examples are given which dip outside those covered by the previous convergence results for multiple recurrence averages \cite{assani2024higherorderwienerwintnersystems}
(The construction of these averages and the exact bounds they satisfy are discussed in detail in the following section \S \ref{S:background}). This example arises as the result of a stability result, in that the product of a higher-order Wiener-Wintner system with a K automorphism is a higher order Wiener-Wintner system.

These higher order Wiener-Wintner averages introduce technical complications not present in the first-order case. Notably, the averages are no longer sublinear, and do not immediately form a vector space. This problem is overcome, in certain cases, by Assani, Folks, and Moore  \cite{assani2024higherorderwienerwintnersystems} 
by proving a stronger condition, termed ``multilinearity concerns". While this condition is straightforward to verify in specific examples, it makes more general stability results more challenging. Notably, while it is easy to verify that higher-order Wiener-Wintner systems are preserved under isomorphisms, it does not obviously follow from the definitions that higher-order Wiener-Wintner systems are preserved even under factors.

\subsection{Goals and Overview} In this paper, we wish to establish stability properties of the higher-order Wiener-Wintner averages under sums, factors, and products, and establish convergence results for the one-sided multilinear ergodic Hilbert transform for higher-order Wiener-Wintner functions. We do so in part by establishing an equivalence between the decay rates of the higher order Wiener-Winter average $W_N^J(f)$ and the \textit{uniform multiple recurrence average}, denoted by
\begin{align*}
M_N^J(f) := \sup_{\overset{g_j\in L^\infty(\mu)}{\max_{j=1, \dots, J} \Vert g_j \Vert_\infty \leq 1}}\left\Vert \frac{1}{N}\sum_{n=1}^N \prod_{j=1}^{J} g_j \circ T^{jn} \cdot f\circ T^{(J+1)n}\right \Vert_2 \, .
\end{align*}
The ``Bourgain bound" from \cite{assani2024higherorderwienerwintnersystems} establishes half of this equivalence, and the other is shown here. Passing back and forth between $W_N^J(f)$ and $M_N^J(f)$, we will be able to show bounds that establish the desired stability properties. We note that since most methods used are primarily combinatorial, the constants obtained are ``universal", in that they never pick up any dependence on the dynamical system itself, as seen in the Bourgain bound.

We detail the exact construction of the higher-order Wiener-Wintner averages and the bounds that they satisfy on multiple recurrence in section \S \ref{S:BB and av}, also providing some updated notation. We also provide precise definitions for Higher-order Wiener-Wintner functions and systems in \S \ref{S:Systems}. We introduce a special asymptotic notation $f \precsim g$. to make rigorous our desired notion of equivalence between decay rates.

We begin \S \ref{S:Stability} by establishing the equivalence between the averages $W_N^J(f)$ and $M_N^J(f)$ through what is termed a ``reverse Bourgain bound" (Theorem \ref{T:reverse BB, simplest case}). This lets us immediately conclude a sublinearity property in section \S \ref{S:sublinear}. We show in \S \ref{S:Off-diag} that the multiple recurrence averages $M_N^J(f)$ further bound the ``off-diagonal" Wiener-Wintner averages, as described by the Multilinearity concerns of \cite{assani2024higherorderwienerwintnersystems}. By our previous equivalence, this shows that $W_N^J(f)$ controls its own off-diagonal terms, and the multilinearity concerns are always satisfied for a weaker power. In section \S \ref{S:conditioning}, we show that the higher-order Wiener-Wintner averages of a conditional expectation form one of these off-diagonal averages, and are hence controlled by $W_N^J(f)$. We are immediately able to conclude that a factor of a higher-order Wiener-Wintner system is itself a higher-order Wiener-Wintner system.

We introduce a corresponding notion of ``higher-order weak Wiener-Wintner averages" in \S \ref{S:WWW}, which we show bounds the ``strong" average at the cost of order (Theorem \ref{T:converse bound for weak WW averages}). Since these weak averages interact well with products, we raise this argument inductively and are able to bound $W_N^J(f \otimes g)$ by the individual averages, at the cost of raising the order. This lets us extend the $J$-th order Wiener-Wintner stability result under products with a K automorphism \cite{assani2024higherorderwienerwintnersystems} to stability under products with any weakly mixing $J+1$-th order Wiener-Wintner system. We also show that the equivalence between $W_N^J(f)$ and $M_N^J(f)$ is also inherited by many alternative constructions of the Wiener-Wintner averages. We are able to use this in \S \ref{S:Alt const} to answer a natural question about the construction of $W_N^J(f)$, showing that most other constructions are equivalent.

In \S \ref{S:Conv results}, we show more convergence results. We start by showing that the averages $W_N^{k+J-1}(f)$ control a polynomial Wiener-Wintner multiple recurrence average of the form
\begin{align}\label{F:ex of polynomial norm}
\left\Vert \sup_{t_1, \dots, t_k} \left|\frac{1}{N}\sum_{n=1}^N e^{2\pi i p_{k, t}(n)} \prod_{j=1}^J f_j \circ T^{a_j n} \right| \right\Vert_2
\end{align}
where $p_{k, t} = t_1n + t_2n^2 + \dots + t_kn^k$, naturally generalizing the arguments from \cite{assani2024higherorderwienerwintnersystems} and \cite{AssaniMoore}. This immediately lets us conclude a polynomial return times theorem for multiple recurrence
\begin{align*}
\frac{1}{N}\sum_{n=1}^N g(S^{P(n)} y) \prod_{j=1}^J f_{j}(T^{a_j n} x)
\end{align*}
as shown in \S \ref{S:Polynomial bounds}. Following this, we use polynomial decay to deduce in \S\ref{S:Hilb} that all convergence results thus far established for higher-order WW systems have corresponding ergodic Hilbert transform analogues; for example, polynomial control on (\ref{F:ex of polynomial norm}) leads to almost everywhere convergence of the averages
$$\sum_{n=1}^\infty \frac{e^{2\pi i \left(t_1 n + t_2 n^2 + \dots + t_k n^k\right)}}{n^\sigma} \, \prod_{j=1}^{J}f_j(T^{a_j n} x)$$
for some $\sigma < 1$ to a continuous function in $t_1, \dots, t_k$, and that the multilinear multiterm return times theorem of the form 
$$\sum_{n=1}^\infty \frac{\prod_{k=1}^K g_{k} (S^{kn} y) \prod_{j=1}^J f_{j} (T^{jn} x)}{n^\sigma}$$
converges almost everywhere. Such results do not readily extend by density, but they illustrate that higher-order Wiener-Wintner systems are a class in which convergence holds on a dense subset of $L^2(\mathcal{Z}_{k+J-1})^\perp$. Moreover, since there always exists a dense set of higher-order Wiener-Wintner functions of any order in the orthogonal complement of the Pinsker algebra of any system \cite{assani2024higherorderwienerwintnersystems}, this established that these return times Hilbert transform results always hold on a dense subset of $L^2(\mathcal{P})^\perp$.

Finally, the appendix \ref{A:general case} contains a full proof of the reverse Bourgain bound established in \S\ref{s:RBB}.

\subsection{Acknowledgments} This article is part of the author's PhD thesis, done under Prof. Idris Assani.

\section{Notation and Preliminaries}\label{S:background}

\subsection{Notation and conventions}\label{S:notation} Throughout this paper, quadruples $(X, \mathcal{F}, \mu, T)$ will denote \textit{(measure-preserving) dynamical systems} or simply \textit{systems} consisting of a probability space $(X, \mathcal{F}, \mu)$ and measurable map $T:X \to T$ satisfying $\mu(T^{-1}A) = \mu(A)$ for all $A\in \mathcal{F}$. Such a system is \textit{ergodic}
if for all $A\in \mathcal{F}$, we have $T^{-1}A = A$ implies that $\mu(A) = 0$ or $1$. Moreover, for any $p\in [1, \infty]$, the $L^p$ norm will be denoted $\Vert \cdot \Vert_p$ or $\Vert \cdot \Vert_{L^p(\mu)}$ if we wish to indicate the particular measure for clarification. If $\mathcal{B}\subset \mathcal{F}$ is a sub $\sigma$-algebra, we may use $L^p(\mathcal{B})$ to denote the collection of all $\mathcal{B}$-measureable functions in $L^p(\mu)$. Powers of any map $T^n$ will denote composition of the map $T$ with itself $n$ times.

For $N\in \N$, we let $[N] = \{1, \dots, N\}$. For any $k\in \N$, we denote $V_k$ to be the set $\{0, 1\}^k$. For any $\eta\in V_k$, we let $|\eta|$ count the number of $1$'s appearing. Likewise, for any $h\in \N^k$, we may let $\eta \cdot h$ denote the usual componentwise dot product $\sum_{n=1}^k h_k \eta_k$. We may let $c:\C \to \C$ denote complex conjugation, such that $c^n z$ is equal to $z$ for even $n$ and $\overline{z}$ for odd $n$, and we denote the real component of $z$ by $\Re(z)$. We also make use of the floor function $\floor{ \cdot }$ to be the greatest integer below a given input. Cardinality of a set $A$ is denoted by $\# A $.

Following standard notation, we use subscripts to denote the dependence of certain constants. However, in many cases a constant $C$ will depend on a finite collection of quantities $a_1, \dots, a_k$ of variable length. In such cases, we use an indexless subscript $C_{a}$ to denote that the constant $C$ depends on \textit{all} of the quantities $a_1, \dots, a_k$. Hence, this quantity will implicitly depend on $k$ as the amount of $a_i$'s, but such dependence is not explicitly denoted. In the case that the constant $C$ depends on $k$ apart from the terms $a_1, \dots, a_k$, we denote this constant as $C_{a, k}$. We remark that almost all examples of interest fall into this latter case, in which the constant $C_{a, k}$ depends explicitly on both $a_1, \dots, a_k$ and $k$.

Finally, we will abbreviate ``Wiener-Wintner" as ``WW".

\subsection{Bourgain bounds and Higher-order Wiener-Wintner (WW) averages}\label{S:BB and av}
In his proof of pointwise convergence for double recurrence \cite{Bourgain}, Bourgain implicity uses a bound of the following form on any bounded functions $f_1$ and $f_2$:
\begin{align}\label{bourg}
\left\Vert \frac{1}{N}\sum_{n=1}^N f_1 \circ T^n \cdot f_2 \circ T^{2n} \right\Vert_2 \leq C\left\Vert \sup_{t}\left| \frac{1}{N}\sum_{n=1}^N e^{2\pi i n t} f_1\circ T^n \right| \right\Vert_1^{2/3}
\end{align}
holds for some constant $C$ and sufficiently large $N$. This bound is made explicit, in the above form, by Assani \cite{assani}. As previously stated, this bound lets us immediately conclude norm convergence for double recurrence, and Bourgain uses approaches from harmonic analysis to establish pointwise convergence.

In \cite{assani2024higherorderwienerwintnersystems}, this bound is inductively extended to higher-order recurrence. To begin, for $f\in L^\infty(\mu)$, we denote the \textit{first-order WW average of $f$} to be
\begin{align*}
W^1_N(f) &= \left\Vert \sup_{t}\left| \frac{1}{N}\sum_{n=1}^N e^{2\pi i n t} f\circ T^n \right| \right\Vert_2^{2/3} \, .
\end{align*}
By using the ideas from the proof of (\ref{bourg}), this bound is modified to hold for all $N$ and remove dependence on $f_1$ and $f_2$. Specifically, it is shown that for any $a_1, a_2\in \mathbb{Z}$, distinct and nonzero, there exists a constant $C_{a, 1}$ such that for all $N\in \mathbb{N}$ and $f_1$ and $f_2$ with $\max_{j=1, 2} \Vert f_j \Vert_{\infty} \leq 1$, we have 
\begin{align*}
\left\Vert \frac{1}{N}\sum_{n=1}^N f_1 \circ T^{a_1n} \cdot f_2 \circ T^{a_2n} \right\Vert_2 \leq C_{a, 1} \left( \frac{1}{N} +\left\Vert\sup_t \left|\frac{1}{N}\sum_{n=1}^N e^{2\pi i n t} f\circ T^n \right| \right\Vert_1^{2/3}\right) \, .
\end{align*}
The choice of the subscript $1$ for $C_{a, 1}$ is done to fit the later convention given by the estimate (\ref{L:BB}) below. We remark, as indicated in the previous section \ref{S:notation}, that this constant $C_{a, 1}$ depends on both $a_1$ and $a_2$. An exact value of this is worked out in the appendix of \cite{assani2024higherorderwienerwintnersystems}, and it grows polynomially in $|a_1|$ and $|a_2|$. Of particular interest, this constant does not depend on the dynamical system itself.    

Since we are working on a probability space, we can bound $L^1$ norms by $L^2$, and write the above as
\begin{align*}
\left\Vert \frac{1}{N}\sum_{n=1}^N f_1 \circ T^{a_1n} \cdot f_2 \circ T^{a_2n} \right\Vert_2 \leq C_{a, 1} \left( \frac{1}{N} + W^1_N(f_1)\right)
\end{align*}
Because of the asymmetry in norms ($L^2$ on the left and $L^1$ on the right), we could have balanced the estimate above in another way: by lowering the left hand average to $L^1$. Throughout this paper, unless otherwise noted, we take $W_N^1$ to be defined as above with $L^2$ norms and phrase all bounds in terms of $L^2$. However, in most estimates the $2$-norms can be replaced with $1$-norms at no extra cost.

For $f\in L^\infty(\mu)$ and $k\in \mathbb{N}$, we inductively define the \textit{$k$-th order WW average of $f$} to satisfy
\begin{align}\label{F:inductive construction of WW averages}
W^{k}_N(f) = \frac{1}{\floor{\sqrt{N}}}\sum_{h=1}^{\floor{\sqrt{N}}} W^{k-1}_N \left( f \cdot \overline{f \circ T^h} \right) \, .
\end{align}
It follows inductively 
\begin{align*}
W^{k}_N(f) &= \frac{1}{\floor{\sqrt{N}}^{k-1}}\sum_{h\in [\floor{\sqrt{N}}]^{k-1}} W^{1}_N \left( \prod_{\eta\in V_{k-1}} c^{|\eta|} f\circ T^{\eta \cdot h} \right) \\
&=\frac{1}{\floor{\sqrt{N}}^{k-1}}\sum_{h\in [\floor{\sqrt{N}}]^{k-1}}  \left\Vert \sup_t \left| \frac{1}{N} \sum_{n=1}^N e^{2\pi i n t}\left[\prod_{\eta\in V_{k-1}} c^{|\eta|} f\circ T^{\eta \cdot h} \right]\circ T^n\right| \right\Vert_2^{2/3}
\end{align*}
using some of the previously established notation and conventions.

It is shown that these averages satisfy a \textit{$k$-th order Bourgain bound}: that for any integers $a_1, \dots, a_{k+1}$, all distinct and nonzero, there exists a constant $C_{a, k}>0$ and $N_{a, k}\in \mathbb{N}$ so that for all 
$f_1, \dots, f_{k+1} \in L^\infty(\mu)$ with  $\max_{j=1, \dots, k+1} \Vert f_j \Vert_{\infty} \leq 1$, we have
\begin{align}\label{L:BB}
\left\Vert \frac{1}{N}\sum_{n=1}^N \prod_{j=1}^{k+1} f_j \circ T^{a_j n} \right \Vert_2 \leq C_{a, k} \left( \frac{1}{N^{1/2^k}} + \left[W_N^k (f_1)\right]^{1/2^{k-1}} \right)
\end{align}
for all $N \geq N_{a, k}$. An analogous bound holds if the 2-norms are replaced with 1-norms. As remarked previously, this constant $C_{a, k}$ only depends on all of $a_1, \dots, a_k$, and $k$ itself. Based on the argumentation of \cite{assani2024higherorderwienerwintnersystems}, we see that this constant depends exponentially in $k$, and for a fixed $k$ grows polynomially in each $a_1, \dots, a_k$.

As in the lower order case, it is shown \cite{assani2024higherorderwienerwintnersystems} that the averages $W_N^k$ distinguish in limit the correct characteristic factor for multiple recurrence. The \textit{Host-Kra-Ziegler factors}, denoted $\mathcal{Z}_{k}$, can be defined using the following inductive \textit{Gowers-Host-Kra seminorm} construction: 
\begin{align}\label{F:HKZ seminorm construction}
\begin{split}
 \VERT f \VERT_2^4 &= \lim_H \frac{1}{H} \sum_{h=1}^H \left| \int f \cdot \overline{f\circ T^h  } \, d\mu \right|^2 \\
\VERT f \VERT_3^8 &= \lim_H \frac{1}{H} \sum_{h=1}^H \left\VERT f \cdot \overline{f \circ T^h }\right\VERT_2^4 \\
\VERT f \VERT_{k}^{2^k} &= \lim_H \frac{1}{H} \sum_{h=1}^H \left\VERT f\cdot \overline{f\circ T^h}\right\VERT_{k-1}^{2^{k-1}} 
\end{split}
\end{align}
such that $f\in L^2(\mathcal{Z}_{k})^\perp$ if and only if $\VERT f \VERT_{k+1} = 0$. In \cite{assani}, it is shown that there exists constants $C_k$ such that
\begin{align}\label{E:bound WW by HKZ seminorm}
\limsup_N W_{N}^k(f) \leq C_k \VERT f \VERT_{k+1}^{2/3}
\end{align}
and that a bounded function $f$ has $f\in L^2(\mathcal{Z}_k)^\perp$ if and only if the pointwise Wiener-Wintner average ($W_N^k(f)$ with the integrals removed and $h$ sums extended to $N$) converges to zero for almost all $x\in X$.

\begin{remark}\label{R:is ergodicity needed}
We note that this seminorm construction only works in the case that $(X, \mathcal{F}, \mu, T)$ is ergodic. However, the proof of the Bourgain bound (\ref{L:BB}) is primarily combinatorial, and does not require ergodicity. Hence, in the non-ergodic case, the averages $W_N^k(f)$ are still defined and still bound multiple recurrence in norm, but the limiting behavior of $W_N^k(f)$ cannot be analyzed by the seminorms above. 

Throughout this article, we remark that almost every bound does not require ergodicity. It is only those that directly reference the Gowers-Host-Kra seminorms, such as (\ref{E:bound WW by HKZ seminorm}), that may fail to hold if we no longer require the system to be ergodic. Such results are only interested in the limiting behavior of the WW averages, and for a fixed value of $N$ all of the bounds shown in  \S \ref{S:Stability} and \S \ref{S:Conv results} hold regardless of whether the system is ergodic.
\end{remark}

\subsection{Higher-order Wiener-Wintner (WW) functions and systems}\label{S:Systems}

On the space of functions $\R^\N$, for $\alpha > 0$ denote the following subset by
\begin{align*}
\text{Poly}(\N, \alpha) &= \{r:\N \to \R \, :\, \exists \, C>0 \text{ such that }r(N) \leq N^{-\alpha} \text{ for all }N\} \\
&= \{r:\N \to \R \, :\, r = O(N^{-\alpha}) \} \, .
\end{align*}
We denote the subset of \textit{functions of polynomial decay} as
\begin{align*}
\text{Poly}(\N) &= \{r:\N \to \R \, :\, \exists \,\alpha > 0, C>0 \text{ such that }r(N) \leq N^{-\alpha} \text{ for all }N\} \\
&= \{r:\N \to \R \, :\, \exists \,\alpha > 0\text{ such that }r = O(N^{-\alpha}) \} \\
&= \bigcup_{\alpha > 0} \text{Poly}(\N, \alpha) \, .
\end{align*}

Classically, good decay rates may provide summability which in certain cases can extend norm convergence to pointwise convergence. This was used by Assani \cite{assani} to prove pointwise convergence for double recurrence such that one function had polynomial decay on its WW average, termed a \textit{Wiener-Wintner function of power type}.

Following \cite{assani2024higherorderwienerwintnersystems}, we say that $f\in L^\infty(\mu)$ is a \textit{$k$-th order WW function of power type $\alpha$} if $W_N^k(f) \in \text{Poly} (\N, \alpha)$, and $f\in L^\infty(\mu)$ is a \textit{$k$-th order WW function of power type} if $W_N^k(f) \in \text{Poly}(\N)$. For such functions, pointwise convergence of multiple recurrence averages follows by the same summability argument. Hence we say that an ergodic dynamical system $(X, \mathcal{F}, \mu, T)$ is a \textit{$k$-th order WW system of power type $\alpha$} if there exists a dense subset of $k$-th order WW functions of power type $\alpha$ inside $L^2(\mathcal{Z}_k)^\perp$. Similarly, we define a \textit{$k$-th order WW system of power type} such that the WW functions of power type are dense in $L^2(\mathcal{Z}_k)^\perp$. In such systems, pointwise convergence of multiple recurrence and multiple return times convergence theorems are established by a classical summation argument.

Following the remark \ref{R:is ergodicity needed}, we note that the definition of WW \textit{functions} does not require the system to be ergodic. The definition of WW \textit{systems}, however, is based on the seminorm construction of Host-Kra-Ziegler factors, and does require ergodicity. This is of particular importance for some later results concerning product systems; while such systems may not be ergodic, it is still well phrased to consider WW functions.

\subsection{Special asymptotic notation}\label{S:special asymp}
Since many results throughout this paper focus on the transfer of decay rates between certain averages, we make use of the following notation for simplification. For functions $f, g:\N \to \R^{\geq 0}$, we say that $f \precsim g$ if there exists constants $0 < \alpha, \beta, \gamma \leq 1$ and $C >0$ and $N_0\in \mathbb{N}$, and a nondecreasing function $\phi: \N \to \N$ satisfying $\phi(N) \geq N^\beta$ such that
\begin{align}\label{E:def of precsim notation}
f(N) \leq C\left(\frac{1}{N^\alpha} + g\Big( \phi(N)\Big)^\gamma\right)
\end{align}
for all $N > N_0$. Following standard convention, we also use $f(N) \precsim_{a, b, \dots} g(N)$ to denote that the constants depend on variables $a, b, \dots$, and we use $f \approx g$ to denote that $f \precsim g$ and $g \precsim f$.

While similar in purpose to the standard asymptotic notation $f\lesssim g$, the constants $\alpha, \beta$ and $\gamma$ in $f\precsim g$ generalize the scope considerably and create a weaker notion of comparison. However, in the case that $g$ tends to zero, it follows that any $f$ with $f \precsim g$ will also tends to zero at ``approximately" the same rate as $g$, up to scaling by powers on the inside and outside of $g$ and a polynomial remainder term. This is exactly the relationship that arose in the Bourgain bound (\ref{L:BB}), which can be written in this notation as
$$\left\Vert \frac{1}{N}\sum_{n=1}^N \prod_{j=1}^{k+1} f_j \circ T^{a_j n} \right \Vert_2 \precsim_{a, k} W_N^k (f_1)\, .$$
Notably, we see that the flexibility afforded by the inclusion of the function $\phi$ in the formula (\ref{E:def of precsim notation}) is not needed to write the Bourgain bound, as $\phi(N) = N$ suffices in the above statement. Later bounds will have the same form, but with polynomial scaling on the inside, such as the following:
$$f(N) \leq C\left(\frac{1}{N^\alpha} + g\left( \floor{N^\beta}\right)^\gamma\right)$$
The notation $f\precsim g$ is defined by (\ref{E:def of precsim notation}) rather than the above because the above relation is not transitive, as the nested floor functions do not necessarily simplify. Throughout all following results, the function $\phi$ can be thought of as behaving like $\floor{N^\beta}$ for some $0 < \beta \leq 1$; the inclusion of the general function $\phi$ in the definition of $f\precsim g$ is specifically to ensure that the relation is transitive.

Moreover, if $f(N) = O(N^{-\alpha})$ for some $\alpha>0$, it follows that $f \precsim 0$. Since all positive functions satisfy $0 \precsim f$, we may denote functions with polynomial decay as $f \approx 0$. Hence, $f\in L^\infty (\mu)$ is a Wiener-Wintner function of power type if and only if $W_N^k (f) \approx 0$.

While this notation greatly simplifies the kinds of asymptotic comparison that will arise throughout the paper, it is complicated in practice and lacks some standard desirable properties. Specifically, the relation $f \precsim g$ is not additive, as the functions $\phi$ may misalign the comparison. However, we note in the case that $f \precsim 0$ and $g \precsim 0$, it does follow that $f + g \precsim 0$. Since we will be almost exclusively interested in the transfer of polynomial decay of functions $f \precsim 0$, the relation $f \precsim g$ behaves well enough in most every case that we use it. Because of its technical limitations, we remark that we primarily use this notation to simplify the statement of results rather than to simplify proofs, and in all cases we will include the precise statements of the estimates proved. However, using this notation informally can cleanly explain the structure of many of the following arguments.

\subsection{Tools}\label{S:tools}
Here, we collect some essential estimates. Primarily, we make extensive use of the classical
\textit{Van der Corput inequality} (cf. \cite{KN74}), under the following statement:
		
		\begin{lemma}[Van der Corput's estimate]\label{vdc-lem}
			Let $\{v_n\}$ be a sequence of complex numbers. For any integers $1 \leq H \leq N$, we have
			\begin{align*}
				& \left| \frac{1}{N} \sum_{n=0}^{N-1} v_n \right|^2
				\leq \frac{N+H}{N^2(H+1)} \sum_{n=0}^{N-1} |v_n|^2 + \frac{2(N+H)}{N^2(H+1)^2} \sum_{h=1}^H(H+1-h) \real \left(\sum_{n=0}^{N-h-1} \overline{v_{n+h}} v_n \right).
			\end{align*}
		\end{lemma}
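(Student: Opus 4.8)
The plan is to run the classical averaging-plus-Cauchy--Schwarz argument. First I would extend the sequence by declaring $v_n = 0$ whenever $n < 0$ or $n \geq N$, so that every sum over $n$ below may be taken over all of $\Z$ without worrying about endpoints. The key preliminary identity is
$$(H+1)\sum_{n=0}^{N-1} v_n \;=\; \sum_{m=-H}^{N-1}\left(\sum_{h=0}^{H} v_{m+h}\right),$$
which one checks by interchanging the order of summation: for each fixed $h$, the inner sum $\sum_{m=-H}^{N-1} v_{m+h}$ equals the full sum $\sum_{n=0}^{N-1} v_n$ thanks to the zero-extension, and there are $H+1$ values of $h$. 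The outer sum on the right has exactly $N+H$ terms, so the Cauchy--Schwarz inequality yields
$$(H+1)^2\left|\sum_{n=0}^{N-1} v_n\right|^2 \;\leq\; (N+H)\sum_{m=-H}^{N-1}\left|\sum_{h=0}^{H} v_{m+h}\right|^2 .$$

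Next I would expand the inner modulus-squared as $\sum_{h,h'=0}^{H} v_{m+h}\,\overline{v_{m+h'}}$, sum over $m$ (harmlessly enlarged to all of $\Z$, since the extra terms vanish), and exchange the order of summation. Setting $k = h - h'$ and shifting the summation index, the contribution of each pair $(h,h')$ is $\sum_n v_{n+k}\,\overline{v_n}$. The diagonal $k = 0$ arises from the $H+1$ pairs with $h = h'$ and contributes $(H+1)\sum_n |v_n|^2$; for each $k$ with $1 \leq |k| \leq H$ there are precisely $H+1-|k|$ pairs with $h - h' = k$. Grouping the terms for $k$ and $-k$ and using that $\sum_n v_{n-k}\,\overline{v_n}$ is the complex conjugate of $\sum_n v_{n+k}\,\overline{v_n}$ (substitute $n \mapsto n-k$), the off-diagonal part becomes $2\sum_{h=1}^{H}(H+1-h)\,\real\!\left(\sum_n \overline{v_{n+h}}\,v_n\right)$, where I have also used $\real(\overline{v_{n+h}}\,v_n) = \real(v_{n+h}\,\overline{v_n})$. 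Since the zero-extension gives $\sum_{n\in\Z}\overline{v_{n+h}}\,v_n = \sum_{n=0}^{N-h-1}\overline{v_{n+h}}\,v_n$, dividing the resulting inequality by $(H+1)^2 N^2$ produces exactly the claimed bound.

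There is no real obstacle here, as this is a standard estimate; the only thing demanding care is the bookkeeping. I would be careful to verify the first averaging identity after the zero-extension, to count correctly the number of pairs $(h,h')$ with a prescribed difference, and to check the boundary contributions (the extreme shifts $k = \pm H$ and the range of $m$) so that nothing is lost or double-counted when the sums are enlarged to $\Z$.
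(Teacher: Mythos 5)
Your proof is correct. The paper itself does not prove this lemma—it is quoted as a classical estimate with a reference to Kuipers--Niederreiter—and your argument (zero-extension, the averaging identity $(H+1)\sum_n v_n = \sum_{m=-H}^{N-1}\sum_{h=0}^{H} v_{m+h}$, Cauchy--Schwarz over the $N+H$ values of $m$, and expansion of the square with the count of $H+1-|k|$ pairs at each shift) is exactly the standard derivation, with all the boundary bookkeeping handled correctly; dividing by $(H+1)^2N^2$ gives precisely the stated constants.
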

		
In the case that $v_n = u_ne^{2\pi int}$ for some sequence of complex numbers $\{u_n\}$, the rotating weigh $e^{2\pi i n t}$ is lost under the application of the Van der Corput identity. Hence, we may take a supremum to achieve the following variation: For every $N \in \N$ and $1 \leq H \leq N-1$, we have
		\begin{equation}\label{vdc-1}
			\sup_t \left| \frac{1}{N} \sum_{n=0}^{N-1} u_ne^{2\pi int} \right|^2
			\leq \frac{2}{N(H+1)} \sum_{n=0}^{N-1} |u_n|^2 + \frac{4}{H+1} \sum_{h=1}^H \left| \frac{1}{N}\sum_{n=0}^{N-h-1} \overline{u_{n+h}} u_n \right|.
		\end{equation}

A straightforward but useful computation (which mirrors the $H = N$ case of the Van der Corput inequality in the context of dynamical systems) is presented here as a lemma:
\begin{lemma}\label{L:vdc for systems}
Let $(X, \mathcal{F}, \mu, T)$ be a dynamical system, and $f\in L^\infty(\mu)$ have $\Vert f\Vert_{\infty} \leq 1$. Then for all $N$, we have
\begin{align*}
\int \left| \frac{1}{N} \sum_{n=1}^N f\circ T^{n} \right|^2 d\mu &\leq \frac{2}{N} \sum_{n = 0}^{N-1} \left(\frac{N-n}{N}\right)\Re \left[\int f \cdot \overline{f\circ T^{n}} \, d\mu \right] \, .
\end{align*}
%\begin{align*}
%&\norm{\sup_t \left|\frac{1}{N} \sum_{n=1}^N e^{2\pi i n t} f\circ T^n \right|}^2_2 \\
%&\leq \frac{2}{H} + \frac{4H}{N} + \frac{4}{N^{1/2}} + \frac{4}{H}\sum_{h=1}^H \left( \frac{2}{N} \sum_{n=1}^N \left(\frac{N-n}{N}\right) \Re\left[ \int f \cdot \overline{f \circ T^n}\cdot \overline{f \circ T^h} \cdot f \circ T^{n+h} \, d\mu\right] \right)^{1/2}
%\end{align*}
\end{lemma}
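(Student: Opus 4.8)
The plan is to expand the squared modulus on the left-hand side directly and then use measure-preservation of $T$ to reindex. First I would write
\begin{align*}
\int \left| \frac{1}{N} \sum_{n=1}^N f\circ T^{n} \right|^2 d\mu = \frac{1}{N^2} \sum_{m=1}^N \sum_{n=1}^N \int f\circ T^m \cdot \overline{f \circ T^n} \, d\mu \, .
\end{align*}
Since $\mu$ is $T$-invariant, each summand $\int f\circ T^m \cdot \overline{f\circ T^n}\, d\mu$ depends only on $m-n$ (when $m \geq n$, apply $T^{-n}$ to see it equals $\int f\circ T^{m-n}\cdot \overline{f}\, d\mu$), so I would group the double sum by the difference $k = m-n$, which ranges over $-(N-1), \dots, N-1$, with the value $k$ occurring $N - |k|$ times.

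Next I would split the double sum into the diagonal term $k=0$, the terms with $k>0$, and the terms with $k<0$. The $k=0$ contribution is $\frac{1}{N^2}\cdot N \cdot \int |f|^2\, d\mu = \frac{1}{N}\int|f|^2\,d\mu$, and since $\|f\|_\infty \leq 1$ this is at most $\frac{1}{N}$. For $k<0$, substituting and conjugating shows the $k<0$ block is the complex conjugate of the $k>0$ block, so the sum over all $k\neq 0$ equals $2\real$ of the sum over $k>0$. Collecting, I would arrive at
\begin{align*}
\int \left| \frac{1}{N} \sum_{n=1}^N f\circ T^{n} \right|^2 d\mu = \frac{1}{N}\int |f|^2\, d\mu + \frac{2}{N^2} \sum_{k=1}^{N-1} (N-k)\, \real\left[ \int f\circ T^k \cdot \overline{f}\, d\mu\right] \, ,
\end{align*}
and then note $\int f\circ T^k\cdot \overline f\, d\mu = \overline{\int f\cdot \overline{f\circ T^k}\,d\mu}$ so its real part equals $\real\int f\cdot\overline{f\circ T^k}\,d\mu$. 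Reindexing $k$ from $1$ to $N-1$ as $n$ from $1$ to $N-1$ and absorbing the diagonal $\frac1N\int|f|^2\,d\mu \le \frac1N$ into the $n=0$ term of the stated sum (whose summand is $\frac{2}{N}\cdot\frac{N}{N}\cdot\real\int f\cdot\overline f\,d\mu = \frac2N\int|f|^2\,d\mu \ge \frac1N\int|f|^2\,d\mu$) gives exactly the claimed inequality; the factor of $2$ in front of the $n=0$ term provides the needed slack.

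There is really no hard step here — the computation is entirely formal manipulation of a finite double sum together with $T$-invariance of $\mu$. The only point requiring a little care is the bookkeeping at the diagonal: the identity as I derive it produces $\frac1N\int|f|^2\,d\mu$, whereas the stated bound writes the $n=0$ term of the sum as $\frac{2}{N}\cdot 1 \cdot \real\int f\cdot\overline f\,d\mu$, which is $\frac2N\int|f|^2\,d\mu$; since $\int|f|^2\,d\mu \ge 0$, the stated right-hand side dominates, so the inequality (as opposed to equality) holds with room to spare. I would present the exact identity first and then observe that passing to the stated form only loses a factor of $2$ on the diagonal, which is why the lemma is phrased as an inequality.
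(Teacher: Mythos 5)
Your proposal is correct and follows essentially the same route as the paper: expand the double sum, use $T$-invariance to reduce each summand to a function of $m-n$, pair the conjugate off-diagonal terms into $2\real$, and absorb the diagonal into the $n=0$ term where the factor of $2$ overshoots a nonnegative quantity. Your bookkeeping at the diagonal is slightly more explicit than the paper's, but the argument is the same.
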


\begin{proof}
Factoring out, we observe
\begin{align*}
\int \left| \frac{1}{N} \sum_{n=1}^N f\circ T^{n} \right|^2 \, d\mu &=  \int  \frac{1}{N^2} \sum_{n, m=1}^N f\circ T^n \cdot \overline{f\circ T^m} \, d\mu \\
&=  \frac{1}{N^2} \sum_{n, m=1}^N \int f \cdot \overline{f\circ T^{m-n}} \, d\mu \, .
\end{align*}
Notice that the term $m-n$ ranges between $1-N$ and $N-1$. Moreover, by measure translation the terms at $m-n$ and $n-m$ are conjugate to each other. Grouping these terms together, we obtain twice of the real component and may restrict the sum to values such that $m - n \geq 0$:
\begin{align*}
&\leq  \frac{2}{N^2} \sum_{m \geq n = 1}^N  \Re \left[\int f\cdot \overline{f\circ T^{m-n}} \, d\mu \right]\, .
\end{align*}
Notice that the factor of 2 does not appear on the terms such that $m-n =0$. However, these terms are real and positive, and can be bounded by 2 anyway, making the above indeed an inequality.

Now, the value $m-n$ ranges from $0$ to $N-1$, and we observe that for each $0 \leq j \leq N-1$, there are $N-j$ pairs of $m$ and $n$ in the desired range such that $m-n = j$. Hence, the $m-n = 1$ summand is being added $N-1$ times, the $m-n = 2$ summand is being added $N-2$ times, and so on. Combining the $n, m$ sum into one sum over $m-n$, and relabeling this variable as $n$, we obtain
\begin{align*}
&=  \frac{2}{N^2} \sum_{n = 0}^{N-1} \left(N-n\right)\Re \left[\int f \cdot \overline{f\circ T^{n}} \, d\mu \right] \, .
\end{align*}
Moving one power of $1/N$ inside the sum yields the desired inequality.
\end{proof}

We also use Hölder's inequality, particularly in the following form on averages:
\begin{lemma}[Hölder's inequality on averages]
Let $\{a_n\}_{n=1}^N$ be a finite sequence of real, nonnegative numbers. The function $A: \R \to \R$ such that
$$A_N(p) := \left(\frac{1}{N}\sum_{n=1}^N a_n^p\right)^{1/p}$$
is increasing in $p$: i.e. for $p \leq q$ we have
$$\left(\frac{1}{N}\sum_{n=1}^N a_n^p\right)^{1/p}\leq \left(\frac{1}{N}\sum_{n=1}^N a_n^q\right)^{1/q}.$$
In particular, for $0 < r < 1$, we have the following:
$$ \frac{1}{N} \sum_{n=1}^N a_n^r \leq \left( \frac{1}{N} \sum_{n=1}^N a_n\right)^r\, .$$
\end{lemma}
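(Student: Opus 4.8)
\subsection*{Proof proposal}

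The plan is to reduce the whole statement to the classical Hölder inequality, applied on the probability space given by $[N]$ with the uniform measure, so that normalized sums $\frac{1}{N}\sum_{n=1}^N(\cdot)$ become expectations. Fix $0 < p \leq q$; the case $p = q$ is trivial, and we only need $p>0$ for the displayed formula to make sense. We may assume not all $a_n$ vanish, since otherwise both sides are $0$.

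First I would apply Hölder's inequality with the conjugate pair of exponents $q/p$ and $q/(q-p)$ (both are $\geq 1$ since $q\geq p>0$, and the second is a well-defined positive number once $p<q$) to the two functions $n \mapsto a_n^p$ and $n \mapsto 1$ on $[N]$ equipped with the uniform probability measure:
\[
\frac{1}{N}\sum_{n=1}^N a_n^p \;\leq\; \left(\frac{1}{N}\sum_{n=1}^N \big(a_n^p\big)^{q/p}\right)^{p/q}\left(\frac{1}{N}\sum_{n=1}^N 1\right)^{(q-p)/q} \;=\; \left(\frac{1}{N}\sum_{n=1}^N a_n^q\right)^{p/q},
\]
where the second factor on the right equals $1$ precisely because of the normalization by $N$. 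Raising both sides to the power $1/p>0$, which preserves inequalities between nonnegative reals, yields $A_N(p) \leq A_N(q)$, i.e.\ $p\mapsto A_N(p)$ is nondecreasing on $(0,\infty)$. For the special case stated in the lemma I would then simply specialize to $q=1$ and $p=r\in(0,1)$: the displayed inequality above reads $\frac{1}{N}\sum_{n=1}^N a_n^r \leq \left(\frac{1}{N}\sum_{n=1}^N a_n\right)^r$ directly, with no final rescaling needed.

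The only real obstacle here is bookkeeping the edge cases: treating $p=q$ separately so that the conjugate exponent $q/(q-p)$ is defined, and disposing of the all-zero sequence; beyond that the argument is entirely routine. An equally short alternative, if one prefers to avoid invoking Hölder in this guise, is to apply Jensen's inequality to the convex function $t\mapsto t^{q/p}$ on $[0,\infty)$ against the uniform measure, which gives $\big(\frac{1}{N}\sum_{n=1}^N a_n^p\big)^{q/p} \leq \frac{1}{N}\sum_{n=1}^N a_n^q$, followed by the same rearrangement; I would pick whichever of the two matches the tools already set up in the surrounding text.
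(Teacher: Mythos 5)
Your proof is correct. The paper states this lemma as a standard tool without providing any proof, and your derivation --- Hölder with conjugate exponents $q/p$ and $q/(q-p)$ applied to $a_n^p$ and $\mathbf{1}$ on $[N]$ with the uniform probability measure (equivalently, Jensen for $t\mapsto t^{q/p}$) --- is exactly the routine argument one would supply, with the edge cases ($p=q$, all $a_n=0$) handled appropriately.
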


We also use the following maximal inequality to extend some results via density (cf. \cite[Theorem 1.8]{AssaniWW} for instance):

\begin{lemma}[Maximal inequality]\label{L:maxIneq}
Let $(X, \mathcal{F}, \mu, T)$ be a measure-preserving system, and $p \in (1, \infty)$. For every real-valued function $f \in L^p(\mu)$, we have
\[ \norm{  \sup_N \frac{1}{N} \sum_{n=1}^N f \circ T^n}_p \leq \frac{p}{p-1} \norm{f}_p \, . \]
\end{lemma}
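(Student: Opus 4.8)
The statement is the classical dominated (Wiener) ergodic theorem, and the plan is to give the layer-cake proof via Hopf's maximal ergodic lemma, which produces exactly the constant $p/(p-1)$. First I would reduce to $f\geq 0$ and to the standard normalization: pointwise one has $\bigl|\sup_N \frac1N\sum_{n=1}^N f\circ T^n\bigr|\le \sup_N \frac1N\sum_{n=1}^N |f|\circ T^n$, and since $\sum_{n=1}^N h\circ T^n=\bigl(\sum_{n=0}^{N-1}h\circ T^n\bigr)\circ T$ while $T$ preserves $\mu$ (so $\|h\circ T\|_p=\|h\|_p$), it suffices to bound $\|f^*\|_p$ for $f\geq 0$, where $f^* := \sup_{N\geq 1}\frac1N\sum_{n=0}^{N-1} f\circ T^n$ is the usual ergodic maximal function.

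The engine is Hopf's maximal ergodic lemma: $\int_{\{g^*>0\}} g\,d\mu\geq 0$ for every real $g\in L^1(\mu)$. I would prove it by Garsia's telescoping argument. Put $S_0 g=0$, $S_N g=\sum_{n=0}^{N-1} g\circ T^n$, and $M_N g=\max_{0\leq j\leq N} S_j g\geq 0$. From $S_{j+1}g=g+(S_j g)\circ T$ one gets $M_N g\leq \max\{0,\,g+(M_N g)\circ T\}$, hence on $A_N:=\{M_N g>0\}$ the pointwise bound $g\geq M_N g-(M_N g)\circ T$. Integrating over $A_N$, using $M_N g=0$ off $A_N$ and $\int (M_N g)\circ T\,d\mu=\int M_N g\,d\mu$ together with $(M_N g)\circ T\geq 0$, yields $\int_{A_N} g\,d\mu\geq 0$; since $A_N\uparrow\{g^*>0\}$ and $|g\,\mathbf 1_{A_N}|\leq|g|\in L^1$, dominated convergence finishes the lemma.

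From here the computation is short. Applying Hopf's lemma to $g=f-\lambda$ for $\lambda>0$, and using $(f-\lambda)^*=f^*-\lambda$ so that $\{(f-\lambda)^*>0\}=\{f^*>\lambda\}$, gives the ``local'' weak-type bound $\lambda\,\mu(f^*>\lambda)\leq \int_{\{f^*>\lambda\}} f\,d\mu$. Then the layer-cake formula together with Tonelli gives
\[
\int (f^*)^p\,d\mu=\int_0^\infty p\lambda^{p-1}\,\mu(f^*>\lambda)\,d\lambda\;\leq\;\int_0^\infty p\lambda^{p-2}\!\int_{\{f^*>\lambda\}} f\,d\mu\,d\lambda=\frac{p}{p-1}\int f\,(f^*)^{p-1}\,d\mu,
\]
and Hölder's inequality with exponents $p$ and $p/(p-1)$ bounds the right-hand side by $\frac{p}{p-1}\|f\|_p\,\|f^*\|_p^{p-1}$; dividing by $\|f^*\|_p^{p-1}$ gives the claimed inequality, and the reduction above then covers general real $f$.

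The one subtlety — the main obstacle, such as it is — is justifying that last division, i.e.\ the a priori finiteness of $\|f^*\|_p$. On a probability space (the standing convention here) this is immediate for bounded $f$, since then $f^*\leq\|f\|_\infty$; for general $f\in L^p$ with $f\geq 0$ one applies the bounded case to the truncations $f_M=\min(f,M)$, observes $f_M^*\uparrow f^*$, and passes to the limit by monotone convergence. As with the other combinatorial estimates in the paper, ergodicity is never used.
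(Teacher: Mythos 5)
The paper does not prove this lemma at all --- it is quoted from \cite{AssaniWW} as a known maximal inequality --- and your argument is the standard, correct proof of that classical fact: Garsia's telescoping proof of Hopf's maximal ergodic lemma, the local weak-type bound $\lambda\,\mu(f^*>\lambda)\le\int_{\{f^*>\lambda\}}f\,d\mu$, and the layer-cake/H\"older computation that produces exactly the constant $p/(p-1)$. The reductions (to $f\ge 0$, the shift by one power of $T$ to match the paper's summation starting at $n=1$, and the truncation argument justifying division by $\norm{f^*}_p^{p-1}$) are all handled correctly, so nothing is missing.
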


\section{Stability under sums, factors, products}\label{S:Stability}
\subsection{A reverse Bourgain bound}\label{s:RBB}

Recall that the constants in the Bourgain bound only depend on the exponents and order of multiple recurrence. For example, fixing the exponents as $1, 2, 3, \dots$, it follows that for any $k$ there exists $C_k$ and $N_k$ such that for any $f\in L^\infty$ bounded by 1 and all functions $g_1, \dots, g_k$ with $\max_{j=1, \dots, k} \Vert g_j \Vert_\infty \leq 1$ we have
\begin{align*}
\left\Vert \frac{1}{N}\sum_{n=1}^N \prod_{j=1}^{k} g_j \circ T^{jn} \cdot f\circ T^{(k+1)n}\right \Vert_2 \leq C_{k} \left( \frac{1}{N^{1/2^k}} + \left[W_N^k (f)\right]^{1/2^{k-1}} \right)
\end{align*}
for all $N \geq N_k$. Hence, we can take a supremum over all such functions $g_j$ to see that
\begin{align*}
\sup_{\overset{g_j\in L^\infty(\mu)}{\max_{j=1, \dots, k} \Vert g_j \Vert_\infty \leq 1}}\left\Vert \frac{1}{N}\sum_{n=1}^N \prod_{j=1}^{k} g_j \circ T^{jn} \cdot f\circ T^{(k+1)n}\right \Vert_2 \leq C_{k} \left( \frac{1}{N^{1/2^k}} + \left[W_N^k (f)\right]^{1/2^{k-1}} \right)
\end{align*}
holds for sufficiently large $N$, depending only on $k$.

Let us denote the above supremum over $k+1$ multiple recurrence averages as $M^k_N(f)$, so that we may rewrite the previous bound as
\begin{align}\label{E:BB for M, W}
M_N^k(f) \leq C_{k} \left( \frac{1}{N^{1/2^k}} + \left[W_N^k (f)\right]^{1/2^{k-1}} \right)
\end{align}
for sufficiently large $N$ depending on $k$, or
$$M_N^k(f) \precsim_k W_N^k(f)$$
using the special asymptotic notation. We shall refer to $M_N^k$ as the \textit{$k$-th order Uniform multiple recurrence average}. Hence, it follows from this Bourgain bound (\ref{E:BB for M, W}) that for any $f\in L^2(\mathcal{Z}_k)^\perp$, the uniform multiple recurrence averages $M_N^k(f)$ converge to zero, and the rate of convergence can be quantitatively controlled by the WW averages $W_N^k(f)$; that is, a ``good" decay rate on the averages $W_N^k(f)$ will transfer to the averages $M_N^k(f)$. Specifically, we see that if $W_N^k(f) \in \text{Poly}(\N)$, then $M_N^k(f) \in \text{Poly}(\N)$.

In this section, we seek to reverse this transfer, and establish that good decay rates on $M_N^k(f)$ can pass back to $W_N^k(f)$. We do so by obtaining a ``reverse" Bourgain bound, in which the WW averages are controlled by the uniform multiple recurrence averages:
\begin{theorem}[Reverse Bourgain bound]\label{T:reverse BB, simplest case}
For each $k\in \N$, there exists a constant $C'_k$, such that for any $f\in L^\infty(\mu)$ with $\Vert f\Vert_\infty \leq 1$ we have 
\begin{align}\label{E:RBB for W, M}
W_N^k(f) \leq C'_k \left(\frac{1}{N^{1/24}} + \left[M_{\floor{N^{1/4}}}^k(f)\right]^{1/6} \right)
\end{align}
for all $N$. Hence, we have
$$W_N^k(f) \precsim_{k} M_N^k(f)$$
\end{theorem}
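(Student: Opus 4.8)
The plan is to induct on $k$. For $k=1$, I need to bound $W_N^1(f) = \|\sup_t |\frac1N\sum_{n=1}^N e^{2\pi int} f\circ T^n|\|_2^{2/3}$ by the uniform double recurrence average $M_N^1(f) = \sup_{\|g\|_\infty\le 1}\|\frac1N\sum_{n=1}^N g\circ T^n\cdot f\circ T^{2n}\|_2$. The idea is to run Van der Corput in the form (\ref{vdc-1}) on $u_n = f(T^n x)$: this bounds $\sup_t|\frac1N\sum u_n e^{2\pi int}|^2$ by $\frac{2}{H+1} + \frac{4}{H+1}\sum_{h=1}^H |\frac1N\sum_{n} \overline{f(T^{n+h}x)} f(T^n x)|$. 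Integrating in $x$ (using Cauchy--Schwarz / Hölder to move the square root outside, or working with the square), the $h$-th inner term is, after a change of variable, of the form $\|\frac1N\sum_n f\circ T^n \cdot \overline{f\circ T^{n+h}}\|$-type expression; applying $T^{-n}$ inside the integral turns it into $\int \overline{f}\cdot (f\circ T^h) \cdot(\text{stuff})$, but the cleaner route is to recognize $\frac1N\sum_n (f\circ T^n)\overline{(f\circ T^{n+h})}$ as a double recurrence average along exponents $\{1,2\}$ after a suitable substitution, so each term is $\le M_N^1(f)$ (or a close variant, e.g.\ with the roles of the exponents permuted, which is controlled the same way since $M_N^1$ already takes a sup over the auxiliary function). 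Optimizing $H$ as a power of $N$ (e.g.\ $H\approx \lfloor N^{1/4}\rfloor$ to make the $1/H$ error term and the scale of the inner average balance) and tracking the exponents $2/3$, the square, and the square root produces the claimed shape $W_N^1(f)\le C_1'(N^{-1/24} + [M_{\lfloor N^{1/4}\rfloor}^1(f)]^{1/6})$.

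For the inductive step, I would use the recursive definition (\ref{F:inductive construction of WW averages}): $W_N^k(f) = \frac1{\lfloor\sqrt N\rfloor}\sum_{h=1}^{\lfloor\sqrt N\rfloor} W_N^{k-1}(f\cdot\overline{f\circ T^h})$. By the inductive hypothesis applied to each function $f_h := f\cdot\overline{f\circ T^h}$ (which is bounded by $1$), $W_N^{k-1}(f_h) \le C_{k-1}'(N^{-c} + [M_{\lfloor N^\beta\rfloor}^{k-1}(f_h)]^{\gamma})$. The crux is then to show that the average over $h$ of $M_M^{k-1}(f\cdot\overline{f\circ T^h})$ — a uniform $k$-term multiple recurrence average for the product $f\cdot\overline{f\circ T^h}$ — is controlled by $M_M^k(f)$, the uniform $(k+1)$-term average for $f$ itself. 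Concretely, $M_M^{k-1}(f_h)$ involves averages $\frac1M\sum_{n=1}^M \prod_{j=1}^{k-1} g_j(T^{jn}x)\cdot f(T^{kn}x)\overline{f(T^{kn+h}x)}$; averaging these over $h\in[\lfloor\sqrt M\rfloor]$ (or the appropriate range) and applying Cauchy--Schwarz in $h$, the $h$-sum can be folded into an extra multiple-recurrence variable, producing an average of the shape $\frac1M\sum_n \prod_{j=1}^{k-1}g_j(T^{jn})\cdot f(T^{kn})\cdot(\text{conjugate term at shifted exponent})$, which after relabeling is dominated by $M_M^k(f)$ up to universal constants and a polynomial error. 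This is essentially the dual/adjoint version of how the Bourgain bound itself was inductively propagated in \cite{assani2024higherorderwienerwintnersystems}, so the combinatorial bookkeeping should parallel that argument.

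The main obstacle I anticipate is exactly this last reduction: showing $\frac1{\lfloor\sqrt M\rfloor}\sum_h M_M^{k-1}(f\cdot\overline{f\circ T^h}) \precsim M_M^k(f)$ with \emph{universal} constants (no dependence on the system) and with the inner scale degrading only polynomially. One has to be careful that the sup over auxiliary functions $g_1,\dots,g_{k-1}$ inside $M_M^{k-1}$ does not interfere with the $h$-averaging — i.e.\ the $g_j$'s must be chosen uniformly before averaging in $h$, or one absorbs the $h$-dependence into a new auxiliary function of one more variable. A clean way to handle this is to first fix the worst-case $g_j$'s, then treat the resulting $h$-average by Van der Corput or Cauchy--Schwarz, recognizing the diagonal $h$-contribution as the error and the off-diagonal as a genuine $(k+1)$-fold recurrence average in disguise. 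Because the exponents $a_j = 1,2,\dots$ are rigid, one must also verify that the new exponent configuration arising after folding in $h$ is still "distinct and nonzero" so that the relevant instance of $M_M^k$ (with possibly permuted or rescaled exponents) is the one already controlled — but since $M_N^k$ is defined with a sup over \emph{all} bounded auxiliary functions, minor reshuffling of exponents is harmless. Tracking the precise powers through the induction to land on the stated exponents ($1/24$ and $1/6$, independent of $k$) will require some care but is routine once the structural reduction is in place; the full details are deferred to Appendix~\ref{A:general case}.
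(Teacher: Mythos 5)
There is a genuine gap, and it sits exactly where you flag "the main obstacle": the mechanism that makes a multiple recurrence average appear at all. In the base case $k=1$, the quantity $\frac1N\sum_n f\circ T^n\cdot\overline{f\circ T^{n+h}}$ is \emph{not} a double recurrence average in disguise -- both occurrences of $f$ sit at the same linear exponent $n$, so this is just the ordinary ergodic average of the single function $f\cdot\overline{f\circ T^h}$, and no substitution produces the exponent $2n$. Bounding it by $M_N^1(f)$ as you propose is therefore unjustified. The paper's route is: square the inner average and expand it via Lemma \ref{L:vdc for systems} (introducing a second time variable $n$ alongside the Van der Corput shift $h$), rewrite the triangular weight as a Ces\`aro average over a new index $q$, shift the $h$-range by $n$, and then \emph{exchange the $n$ and $h$ summations} over the resulting parallelogram. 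After this exchange the roles of $h$ and $n$ swap: the integrand becomes $f\circ T^{-h}\cdot(\overline{f\circ T^{-h}\cdot f})\circ T^{n}\cdot f\circ T^{2n}$ with $n$ running over a window of length $\lfloor\sqrt N\rfloor$, which is a genuine double recurrence average -- but at the \emph{reduced} scale $\lfloor\sqrt N\rfloor$, not $N$. That scale reduction is why $M^1_{\lfloor\sqrt N\rfloor}$ (and in general $M^k_{\lfloor N^{1/4}\rfloor}$) appears in the statement; your plan does not account for it.

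Your inductive architecture also diverges from the paper in a way that matters for the stated exponents. The paper does \emph{not} induct on $k$: it writes $W_N^k$ as a $(k-1)$-fold $h$-average of first-order WW averages, applies Van der Corput and the squaring lemma exactly once each, and performs a single $k$-dimensional summation exchange (the box-counting Lemmas \ref{L:exact count for gamma}--\ref{L:est for big p} in the appendix). That is why the losses are one factor of $2/3$, one square root from Van der Corput, and one square root from the expansion -- giving the $k$-independent power $1/6$. Your proposed induction would have to prove the reduction $\frac1{\lfloor\sqrt M\rfloor}\sum_h M_M^{k-1}(f\cdot\overline{f\circ T^h})\precsim M_M^k(f)$ at every step, each such step costing (at least) another square root and another polynomial rescaling of the inner index; the exponents would then degrade with $k$, contradicting your claim that the uniform $1/24$ and $1/6$ fall out "routinely." The qualitative conclusion $W_N^k(f)\precsim_k M_N^k(f)$ might survive such an induction if the reduction were actually carried out, but the reduction itself is the summation-exchange argument you have not supplied, so as written the proof does not close.
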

As an immediate consequence, we see that if $M_N^k(f) \in \text{Poly}(\N)$, then $W_N^k(f) \in \text{Poly}(\N)$. Together with the Bourgain bound (\ref{E:BB for M, W}), we see that $M_N^k$ and $W_N^k$ must have ``essentially" the same decay rate, in that $W_N^k(f) \approx_k M_N^k(f)$ for all bounded $f$.

Here, we prove the case $k=1$:
$$W_N^1(f) \leq C'_1\left( \frac{1}{N^{1/24}} + \left[ M^1_{\floor{N^{1/4}}} (f)\right]^{1/6}\right)$$
This estimate corresponds to double recurrence (as seen from the definition of $M^k_N(f)$), and its proof contains most of the relevant observations and ideas. In fact, we improve on the $k=1$ case here and show
$$W_N^1(f) \leq C'_1\left( \frac{1}{N^{1/6}} + \left[ M^1_{\floor{\sqrt{N}}} (f)\right]^{1/6}\right)$$
holds for all $N$. 
A much more general case is worked out in the appendix \ref{A:general case}, from which this result and more throughout this section follow immediately. The $k>1$ case has many more technical details arising from the summation switch, which are handled in detail but geometrically force a slightly worse estimate than what is presented here.
\begin{proof}[Proof of Theorem \ref{T:reverse BB, simplest case}, with $k=1$]
Applying the Van der Corput lemma pointwise, we bound the first piece trivially to yield a remainder of the order $1/\floor{\sqrt{N}}$. Extending the $n$ sum from $N-h$ to $N$ picks up another remainder of the order $1/\floor{\sqrt{N}}$. Hence:
\begin{align*}
W^1_N(f)^3 &\leq \norm{\sup_t \left|\frac{1}{N} \sum_{n=1}^N e^{2\pi i n t} f\circ T^n \right|}^2_2 \\
&\leq \frac{6}{\floor{\sqrt{N}}} +  \frac{4}{\floor{\sqrt{N}}}\sum_{h=1}^{\floor{\sqrt{N}}} \int \left| \frac{1}{N} \sum_{n=1}^N f \circ T^n \cdot \overline{f \circ T^{n + h}} \right| \, d\mu \\
&\leq \frac{6}{\floor{\sqrt{N}}} +  4\left( \frac{1}{\floor{\sqrt{N}}}\sum_{h=1}^{\floor{\sqrt{N}}} \int \left| \frac{1}{N} \sum_{n=1}^N f \circ T^n \cdot \overline{f \circ T^{n + h}} \right|^2\, d\mu \right)^{1/2}
\end{align*}
On the integral, we apply the lemma \ref{L:vdc for systems}:
\begin{align*}
&\leq \frac{6}{\floor{\sqrt{N}}} +  4\left( \frac{1}{\floor{\sqrt{N}}}\sum_{h=1}^{\floor{\sqrt{N}}} \frac{2}{N} \sum_{n = 0}^{N-1} \left(\frac{N-n}{N}\right)\Re \left[\int f \cdot \overline{f\circ T^{h}} \cdot \overline{f \circ T^n} \cdot f\circ T^{n+h} \, d\mu \right] \right)^{1/2} \, .
\end{align*}
Notice that for any sequence $a_n$, by interchanging sums we have
$$\frac{1}{N}\sum_{n=0}^{N-1} \left( \frac{N-n}{N} \right) a_n = \frac{1}{N}\sum_{q=0}^{N-1} \frac{1}{N}\sum_{n=0}^q a_n\, .$$
Applying this, we continue to see
\begin{align*}
&= \frac{6}{\floor{\sqrt{N}}} +  4\sqrt{2}\left( \frac{1}{\floor{\sqrt{N}}}\sum_{h=1}^{\floor{\sqrt{N}}} \frac{1}{N} \sum_{q=0}^{N-1} \frac{1}{N} \sum_{n = 0}^{q} \Re \left[\int f \cdot \overline{f\circ T^{h}} \cdot \overline{f \circ T^n} \cdot f\circ T^{n+h} \, d\mu \right] \right)^{1/2} \\
&\leq \frac{6}{\floor{\sqrt{N}}} + 4\sqrt{2} \left(  \frac{1}{N} \sum_{q=0}^{N-1} \left|\frac{1}{N \cdot \floor{\sqrt{N}}}  \sum_{n = 0}^{q} \sum_{h=1}^{\floor{\sqrt{N}}} \int f \cdot \overline{f\circ T^{h}} \cdot \overline{f \circ T^n} \cdot f\circ T^{n+h} \, d\mu \right| \right)^{1/2}
\end{align*}
after pulling out $\Re$ and bounding it by the absolute value. Consider excising the values of $q$ between $0$ and $\floor{\sqrt{N}}-1$ from the sum, which we can bound trivially to create a remainder term of the order $1/N$. Pulling this out by subadditivity, it may join the other remainder term:
\begin{align*}
&\leq \frac{6 +4\sqrt{2}}{\floor{\sqrt{N}}} + 4\sqrt{2} \left(  \frac{1}{N} \sum_{q=\floor{\sqrt{N}}}^{N-1} \left|\frac{1}{N \cdot \floor{\sqrt{N}}}  \sum_{n = 0}^{q} \sum_{h=1}^{\floor{\sqrt{N}}} \int f \cdot \overline{f\circ T^{h}} \cdot \overline{f \circ T^n} \cdot f\circ T^{n+h} \, d\mu \right| \right)^{1/2}\, .
\end{align*}
Now, consider shifting the indices of the $h$ sum down by $n$, which causes the indices of the summands to increase by $n$:
\begin{align*}
&= \frac{6 +4\sqrt{2}}{\floor{\sqrt{N}}} + 4\sqrt{2} \left(  \frac{1}{N} \sum_{q=\floor{\sqrt{N}}}^{N-1} \left|\frac{1}{N \cdot \floor{\sqrt{N}}}  \sum_{n = 0}^{q} \sum_{h=1-n}^{\floor{\sqrt{N}}-n} \int f \cdot \overline{f\circ T^{h+n}} \cdot \overline{f \circ T^n} \cdot f\circ T^{h+2n} \, d\mu \right| \right)^{1/2} \, .
\end{align*}

We wish to exchange the $n$ and $h$ sums. Note that the bounds on the $n$ and $h$ sum form a parallelogram in the $n,h$ plane. Since $q \geq \floor{\sqrt{N}}$, we may split this parallelogram into three regions: two triangles for $h$ between $1-q$ and $\floor{\sqrt{N}}-q$ and for $h$ between $1$ and $\floor{\sqrt{N}}$, and another parallelogram for $h$ between $\sqrt{N}-q$ and $0$. For each such region, we can interchange the sums to get
$$\sum_{n=0}^q \sum_{h=1-n}^{\floor{\sqrt{N}}-n} = \left[\sum_{h = 1-q}^{\floor{\sqrt{N}}-q} \sum_{n=1-h}^q \right] + \left[\sum_{h = \floor{\sqrt{N}}-q+1 }^{0} \sum_{n=1-h}^{\floor{\sqrt{N}}-h}\right] + \left[\sum_{h = 1}^{\floor{\sqrt{N}}} \sum_{n=0}^{\floor{\sqrt{N}}-h} \right]$$
written without the summands. We see that the third term has a total of $\leq \floor{\sqrt{N}}^2$ summands. If we bound those terms trivially, we get a remainder term of the size $1/\floor{\sqrt{N}}$. Likewise, the number of summands in the first term is
$$\sum_{h = 1-q}^{\floor{\sqrt{N}}-q} q-1+h +1 = \sum_{h = 1}^{\floor{\sqrt{N}}}h$$
which again is on the order of $\floor{\sqrt{N}}^2$. Hence, both of these pieces can be cut off and absorbed into the remainder term, and we continue to see
\begin{align*}
&\leq \frac{6 +12\sqrt{2}}{\floor{\sqrt{N}}} + 4\sqrt{2} \left(  \frac{1}{N} \sum_{q=\floor{\sqrt{N}}}^{N-1} \left|\frac{1}{N \cdot \floor{\sqrt{N}}}  \sum_{h = \floor{\sqrt{N}}-q+1}^{0} \sum_{n=1-h}^{\floor{\sqrt{N}}-h} \int f \cdot \overline{f\circ T^{h+n}} \cdot \overline{f \circ T^n} \cdot f\circ T^{h+2n} \, d\mu \right| \right)^{1/2} \\
&= \frac{6 +12\sqrt{2}}{\floor{\sqrt{N}}} + 4\sqrt{2} \left(  \frac{1}{N} \sum_{q=\floor{\sqrt{N}}}^{N-1} \left|\frac{1}{N \cdot \floor{\sqrt{N}}}  \sum_{h = 0}^{q-\floor{\sqrt{N}}-1}\sum_{n=1}^{\floor{\sqrt{N}}} \int f\circ T^{-h} \cdot \overline{f\circ T^{n-h}} \cdot \overline{f \circ T^{n}} \cdot f\circ T^{2n} \, d\mu \right| \right)^{1/2}
\end{align*}
after adjusting the indices, translating the measure by $T^{h}$, and replacing $h$ with $-h$.
Now, we finally pull the absolute value inside the integral to make the uniform double recurrence averages appear:

\begin{align*}
&\leq \frac{6 +12\sqrt{2}}{\floor{\sqrt{N}}} + 4\sqrt{2} \left(  \frac{1}{N} \sum_{q=\floor{\sqrt{N}}}^{N-1} \frac{1}{N}\sum_{h = 0}^{q-\floor{\sqrt{N}}-1} \int |f\circ T^{-h}| \cdot \left|\frac{1}{\floor{\sqrt{N}}}  \sum_{n=1}^{\floor{\sqrt{N}}} (\overline{f\circ T^{-h} \cdot f}) \circ T^n \cdot f\circ T^{2n} \right| \, d\mu \right)^{1/2} \\
&\leq \frac{6 +12\sqrt{2}}{\floor{\sqrt{N}}} + 4\sqrt{2} \left(  \frac{1}{N} \sum_{q=\floor{\sqrt{N}}}^{N-1} \frac{1}{N}\sum_{h = 0}^{q-\floor{\sqrt{N}}-1} \sup_{\Vert g \Vert_\infty \leq 1}\int\left|\frac{1}{\floor{\sqrt{N}}}  \sum_{n=1}^{\floor{\sqrt{N}}} g\circ T^n \cdot f\circ T^{2n} \right| \, d\mu \right)^{1/2} \\
&\leq \frac{6 +12\sqrt{2}}{\floor{\sqrt{N}}} + 4\sqrt{2} \left(  \frac{1}{N} \sum_{q=\floor{\sqrt{N}}}^{N-1} \frac{1}{N}\sum_{h = 0}^{q-\floor{\sqrt{N}}-1} M^1_{\floor{\sqrt{N}}}(f) \right)^{1/2}\, .
\end{align*}
With the $h$ dependence lost, both the averages over $h$ and $q$ vanish, and we may take a cube root of both sides to achieve the desired result.
\end{proof}

\subsection{Sublinearity of WW averages}\label{S:sublinear}
As the first order WW averages are sublinear, it follows that they form a subspace of $L^2(\mathcal{K})^\perp$, as noted by Assani \cite{assani}. However, the higher-order WW averages introduced in \cite{assani2024higherorderwienerwintnersystems} are submultilinear. Hence, controlling the average $W_N^k(f+g)$ by factoring it out requires control also of the ``off-diagonal averages". In \cite{assani2024higherorderwienerwintnersystems}, this approach is referred to as ``multilinearity concerns", and such control is possible in the examples given.

However, we note that the uniform multiple recurrence averages $M_N^k$ as defined previously are sublinear. Since we have established between the forwards and reverse Bourgain bounds that polynomial decay rates can pass between $M_N^k$ and $W_N^k$, it follows that we can also transfer sublinearity, albeit with a worse powers and remainders:

\begin{theorem}\label{T:WW linearity- basic estimate}
Let $k\in \N$. Then there exists $N_k$ and $C''_k$ such that for all invertible dynamical systems $(X, \mathcal{F}, \mu, T)$ and for all $f_1, f_2\in L^\infty(\mu)$ both bounded by 1, we have
\begin{align}
W_N^k(f_1 + f_2) &\leq C_k'' \left( \frac{1}{N^{1/(3\cdot 2^{k+3})}} + \left[W_{\floor{N^{1/4}}}^k (f_1)\right]^{1/(3 \cdot 2^k)} + \left[W_{\floor{N^{1/4}}}^k (f_2)\right]^{1/(3 \cdot 2^k)}\right)
\end{align}
for all $N \geq N_k$. Hence, we have
$$W_N^k(f_1 + f_2) \precsim_k W_N^k(f_1) + W_N^k(f_2) \, .$$
\end{theorem}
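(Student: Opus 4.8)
The plan is to chain together the two Bourgain-type bounds already in hand with the elementary sublinearity of the uniform multiple recurrence averages $M_N^k$. The key observation is that, unlike $W_N^k$, the average $M_N^k$ is genuinely sublinear: for any bounded $g_1,\dots,g_k$ and any $f_1,f_2$,
\begin{align*}
\left\Vert \frac{1}{N}\sum_{n=1}^N \prod_{j=1}^k g_j\circ T^{jn}\cdot (f_1+f_2)\circ T^{(k+1)n}\right\Vert_2 \leq \left\Vert \frac{1}{N}\sum_{n=1}^N \prod_{j=1}^k g_j\circ T^{jn}\cdot f_1\circ T^{(k+1)n}\right\Vert_2 + \left\Vert \frac{1}{N}\sum_{n=1}^N \prod_{j=1}^k g_j\circ T^{jn}\cdot f_2\circ T^{(k+1)n}\right\Vert_2,
\end{align*}
and taking the supremum over the $g_j$ on both sides (the supremum of a sum is at most the sum of suprema) gives $M_N^k(f_1+f_2)\leq M_N^k(f_1)+M_N^k(f_2)$. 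This is the one place linearity is used and it is essentially free.

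The argument then has three steps. First, apply the reverse Bourgain bound (Theorem \ref{T:reverse BB, simplest case}) to $f_1+f_2$, which is bounded by $2$, so after normalizing we get $W_N^k(f_1+f_2)\precsim_k M_N^k(f_1+f_2)$; concretely, $W_N^k(f_1+f_2)\leq C'_k\big(N^{-1/24}+[M^k_{\floor{N^{1/4}}}(f_1+f_2)]^{1/6}\big)$ up to the harmless factor of $2$ absorbed into the constant. Second, use sublinearity of $M^k$ at the argument $\floor{N^{1/4}}$ to split $M^k_{\floor{N^{1/4}}}(f_1+f_2)\leq M^k_{\floor{N^{1/4}}}(f_1)+M^k_{\floor{N^{1/4}}}(f_2)$, and use the subadditivity of $x\mapsto x^{1/6}$ on nonnegative reals to distribute the exponent: $[a+b]^{1/6}\leq a^{1/6}+b^{1/6}$. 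Third, apply the forward Bourgain bound (\ref{E:BB for M, W}) to each piece: $M^k_{\floor{N^{1/4}}}(f_i)\leq C_k\big(\floor{N^{1/4}}^{-1/2^k}+[W^k_{\floor{N^{1/4}}}(f_i)]^{1/2^{k-1}}\big)$, raise to the $1/6$ power (again distributing the exponent by subadditivity), and collect. The polynomial remainder term $\floor{N^{1/4}}^{-1/2^k}$ raised to $1/6$ becomes a remainder of order $N^{-1/(6\cdot 4\cdot 2^k)}=N^{-1/(3\cdot 2^{k+3})}$, matching the stated remainder, and the main term becomes $[W^k_{\floor{N^{1/4}}}(f_i)]^{1/(6\cdot 2^{k-1})}=[W^k_{\floor{N^{1/4}}}(f_i)]^{1/(3\cdot 2^k)}$, matching the stated exponent. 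The $1/N^{1/24}$ term from the first step is dominated by the new remainder for all $N$ (after adjusting the constant), so it can be absorbed.

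I do not expect a genuine obstacle here — this is a bookkeeping composition of three inequalities, all of which are either stated in the excerpt or elementary. The one point requiring a little care is the exponent and remainder arithmetic: one must track that applying the reverse bound contributes a factor $1/4$ to the exponent on $N$ inside $\phi$ and a factor $1/6$ to the outer exponent, then the forward bound contributes another factor $1/2^{k-1}$ to the outer exponent, so the composed outer exponent is $\tfrac16\cdot\tfrac1{2^{k-1}}=\tfrac1{3\cdot 2^k}$, and the composed remainder exponent is $\tfrac14\cdot\tfrac1{2^k}\cdot\tfrac16=\tfrac1{3\cdot 2^{k+3}}$; the bound holds for $N\geq N_k$ where $N_k$ is chosen large enough that $\floor{N^{1/4}}$ exceeds the threshold in the forward bound and that $\floor{N^{1/4}}\geq 1$. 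Finally, the $\precsim_k$ statement follows formally from (\ref{E:def of precsim notation}) with $\phi(N)=\floor{N^{1/4}}$, $\alpha=\tfrac1{3\cdot 2^{k+3}}$, $\beta=\tfrac14$, $\gamma=\tfrac1{3\cdot 2^k}$, together with the remark in \S\ref{S:special asymp} that $\precsim 0$ is closed under sums (so the two $W^k_{\floor{N^{1/4}}}(f_i)$ terms combine correctly when each $f_i$ is a WW function of power type).
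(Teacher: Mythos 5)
Your proposal is correct and follows essentially the same route as the paper: apply the reverse Bourgain bound (Theorem \ref{T:reverse BB, simplest case}) to $f_1+f_2$, use the genuine sublinearity of $M_N^k$, then apply the forward Bourgain bound (\ref{E:BB for M, W}) to each piece and consolidate remainders. Your exponent bookkeeping matches the stated constants, and your explicit handling of the normalization of $f_1+f_2$ (bounded by $2$ rather than $1$) is a detail the paper glosses over but which is absorbed into $C_k''$ exactly as you say.
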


For example, the $k=2$ case yields
$$W_N^2(f_1 + f_2) \leq C''_2\left( \frac{1}{N^{1/96}} + W_{\floor{N^{1/4}}} (f_1)^{1/12} + W_{\floor{N^{1/4}}} (f_2)^{1/12} \right)$$
for an absolute constant $C_2''$ and sufficiently large $N$, with no dependence on the functions or system.

\begin{proof}
We apply the reverse Bourgain bound, use sublinearity, and apply the Bourgain bound:
\begin{align*}
&W_N^k(f_1 + f_2) \\ &\leq C'_k\left(\frac{1}{N^{1/24}} + \left[M_{\floor{N^{1/4}}}^k(f_1 + f_2)\right]^{1/6} \right) \\
&\leq C'_k\left(\frac{1}{N^{1/24}} + \left[M_{\floor{N^{1/4}}}^k(f_1) + M_{\floor{N^{1/4}}}^k(f_1)\right]^{1/6} \right) \\
&\leq C'_k\left(\frac{1}{N^{1/24}} + \left[ C_{k} \left( \frac{1}{\floor{N^{1/4}}^{1/2^k}} + \left[W_{\floor{N^{1/4}}}^k (f_1)\right]^{1/2^{k-1}} \right) + C_{k} \left( \frac{1}{\floor{N^{1/4}}^{1/2^k}} + \left[W_{\floor{N^{1/4}}}^k (f_2)\right]^{1/2^{k-1}} \right) \right]^{1/6} \right) \, .
\end{align*}
Consolidating remainder terms gives the desired bound for a larger constant.
\end{proof}

As previously remarked, the relation $\precsim$ is not generally additive. But in the case of polynomial decay, it does follow that if $f, g \precsim 0$, then $f + g \precsim 0$. Hence, in the context of WW functions we immediately get the following:

\begin{corollary}
Let $(X, \mathcal{F}, \mu, T)$ be an invertible dynamical system, and $f_1, f_2 \in L^\infty(\mu)$. If $f_1$ and $f_2$ are both $k$-th order WW functions of power type, then $f_1 + f_2$ is a $k$-th order WW function of power type; that is, if $W_N^k (f_1)\in \text{Poly}(\N)$ and $W_N^k (f_2)\in \text{Poly}(\N)$, then $W_N^k (f_1+f_2)\in \text{Poly}(\N)$ and the collection of WW functions of power type
$$\left\{f\in L^\infty(\mu) : W_N^k(f) \in \text{Poly}(\N)\right\} = \left\{f\in L^\infty(\mu) : W_N^k(f) \approx 0\right\}$$
forms a vector space.

Specifically, if  $W_N^k (f_1)\in \text{Poly}(\N, \alpha)$ and $W_N^k (f_2)\in \text{Poly}(\N, \beta)$, then $W_N^k (f_1+f_2)\in \text{Poly}\left(\N, \frac{\min\{\alpha, \beta\}}{3\cdot 2^{k+2}}\right)$.
\end{corollary}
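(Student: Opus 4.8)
The plan is to deduce the statement from Theorem~\ref{T:WW linearity- basic estimate} together with the trivial behaviour of $W_N^k$ under scalar multiplication, and then package the two closure properties as a subspace statement.

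First I would record how $W_N^k$ scales: from the inductive definition~(\ref{F:inductive construction of WW averages}) one has $W_N^k(cf)=|c|^{2^k/3}\,W_N^k(f)$ for any scalar $c$. Indeed, at the base level the rotating weight lets one pull $|c|$ out of $\sup_t$, so that $W_N^1(cf)=|c|^{2/3}W_N^1(f)$; each inductive step replaces $f$ by $f\cdot\overline{f\circ T^h}$, hence $cf$ by $|c|^2\,f\cdot\overline{f\circ T^h}$, doubling the exponent on $|c|$. Thus $W_N^k(cf)$ and $W_N^k(f)$ differ only by a multiplicative constant, so membership of $W_N^k(f)$ in $\text{Poly}(\N,\alpha)$ (hence in $\text{Poly}(\N)$) is preserved by scalar multiplication, and $W_N^k(0)\equiv 0$ lies in every such class.

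Next I would treat the sum. Suppose $W_N^k(f_1)\in\text{Poly}(\N,\alpha)$ and $W_N^k(f_2)\in\text{Poly}(\N,\beta)$. Since $\text{Poly}(\N,\alpha)\subseteq\text{Poly}(\N,\alpha')$ whenever $\alpha'\le\alpha$ (because $N^{-\alpha}\le N^{-\alpha'}$), I may shrink $\alpha,\beta$ and assume $\alpha,\beta\le\tfrac12$. Substituting $f_1,f_2$ into Theorem~\ref{T:WW linearity- basic estimate}: the inner rescaling gives $W_{\floor{N^{1/4}}}^k(f_i)=O(N^{-\alpha_i/4})$, so each term $[\,W_{\floor{N^{1/4}}}^k(f_i)\,]^{1/(3\cdot 2^k)}$ is $O\!\big(N^{-\alpha_i/(3\cdot 2^{k+2})}\big)$ (using $4\cdot 2^k=2^{k+2}$), while the fixed remainder $N^{-1/(3\cdot 2^{k+3})}$ decays at least as fast as $N^{-\min\{\alpha,\beta\}/(3\cdot 2^{k+2})}$ precisely because $\min\{\alpha,\beta\}\le\tfrac12$. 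Hence $W_N^k(f_1+f_2)=O\!\big(N^{-\min\{\alpha,\beta\}/(3\cdot 2^{k+2})}\big)$, i.e.\ $W_N^k(f_1+f_2)\in\text{Poly}\!\big(\N,\tfrac{\min\{\alpha,\beta\}}{3\cdot 2^{k+2}}\big)$; in particular $W_N^k(f_1+f_2)\in\text{Poly}(\N)$ whenever both summands have polynomial decay. Combined with the scaling above, $\{f\in L^\infty(\mu):W_N^k(f)\in\text{Poly}(\N)\}$ contains $0$ and is stable under sums and scalar multiples inside $L^\infty(\mu)$, hence is a linear subspace; its coincidence with $\{f:W_N^k(f)\approx 0\}$ is the characterisation recorded in \S\ref{S:special asymp}.

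I expect no serious obstacle: this is essentially exponent bookkeeping layered on Theorem~\ref{T:WW linearity- basic estimate}. The one point that needs care is that the remainder exponent $\tfrac{1}{3\cdot 2^{k+3}}$ in that theorem is fixed, so the stated power type $\tfrac{\min\{\alpha,\beta\}}{3\cdot 2^{k+2}}$ is actually governed by the remainder rather than the main terms once $\min\{\alpha,\beta\}>\tfrac12$; the reduction to $\alpha,\beta\le\tfrac12$ via the nesting of the $\text{Poly}$ classes is what makes the clean statement go through, and in all cases the qualitative vector-space conclusion is unaffected.
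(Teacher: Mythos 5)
Your proposal is correct and follows the paper's own (implicit) route: the corollary is read off by substituting the polynomial bounds into Theorem~\ref{T:WW linearity- basic estimate} and tracking exponents (with $4\cdot 3\cdot 2^k = 3\cdot 2^{k+2}$), while the scaling identity $W_N^k(cf)=|c|^{2^k/3}W_N^k(f)$ supplies closure under scalar multiples for the vector-space claim. Your observation that the fixed remainder $N^{-1/(3\cdot 2^{k+3})}$ caps the achievable rate once $\min\{\alpha,\beta\}>1/2$ is a genuine, if minor, point the paper glosses over, and your reduction to $\alpha,\beta\le 1/2$ via the nesting of the $\mathrm{Poly}$ classes correctly secures the qualitative conclusion in all cases.
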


From the application of sublinearity in the proof of Theorem \ref{T:WW linearity- basic estimate}, we could also get the above results for the sum of any amount of functions $f_1, f_2, \dots, f_m$ rather than just two. Hence, if $E$ is a collection of $k$-th order WW functions of power type $\alpha > 0$, then any $f\in \text{span}(E)$ is a $k$-th order WW function of power type $\alpha / 3 \cdot 2^{k+2}$. In order to show that a given $(X, \mathcal{F}, \mu, T)$ is a $k$-th order WW system of some power type $\alpha >0$, it suffices to find a set of $k$-th order WW functions of power type $\beta$ whose span is dense in $L^2(\mathcal{Z}_k)^\perp$.

\subsection{Off-diagonal averages}\label{S:Off-diag}

Linearity results for higher order WW averages were approached in \cite{assani2024higherorderwienerwintnersystems} through ``multilinearity concerns"; for a collection $\mathcal{E}\subset L^\infty(\mu)$ of $k$-th order WW functions of power type $\alpha$, we can show that any $f\in \text{span}(\mathcal{E})$ is a $k$-th order WW functions of power type $\alpha$ if we could establish polynomial decay on the \textit{off-diagonal terms} of the form
\begin{align}\label{F:off diag av}
\frac{1}{\floor{\sqrt{N}}^{k-1}} \sum_{h \in [\floor{\sqrt{N}}]^{k-1}} \left\Vert\sup_t \left| \frac{1}{N} \sum_{n=1}^N e^{2\pi i n t} \left[ \prod_{\eta \in V_{k-1}} c^{|\eta|} e_\eta \circ T^{h \cdot \eta}\right] \circ T^n\right| \right\Vert_2^{2/3}
\end{align}
for every possible collection $\{e_{\eta} : \eta\in V_{k-1}\}\subset \mathcal{E}$. This comes from simply factoring everything out using multi-sublinearity, in which $W_N^k(f)$ is bounded by a finite collection of terms of the form (\ref{F:off diag av}). 

While theoretically straightforward and easy to check in the specific examples, like K automorphisms or skew products \cite{assani2024higherorderwienerwintnersystems}, this approach is less helpful for more generic cases, such as stability results. For the stability under a product of a $k$-th order WW system $(Y, \mathcal{G}, \nu, S)$ with a K automorphism $(X, \mathcal{F}, \mu, T)$, extra steps must be taken to ensure a dense set of $k$-th order WW functions of power type; while it is easy to show that there is a spanning set, we do not immediately know that the WW property is transferred because we do not know anything about the system $(Y, \mathcal{G}, \nu, S)$.

However, we observe in the proof of the reverse Bourgain bound on double recurrence (Theorem \ref{T:reverse BB, simplest case}), most terms are bounded away to make the uniform double recurrence average $M_{\floor{\sqrt{N}}}^1(f)$ appear. In higher order cases, we can let these functions be arbitrary, and we see that the off-diagonal terms can be controlled by uniform multiple recurrence averages. Since these can then be controlled by diagonal average $W_N^k$, we can string these estimates together to control all off-diagonal WW averages by $W_N^k$:
\begin{theorem}\label{T:WW control of off-diag}
Let $k\in \N$. Then there exists a constant $C_k''$ and $N_k$ such that for all invertible dynamical systems $(X, \mathcal{F}, \mu, T)$ and  for all collections $g_\eta \in L^\infty(\mu)$ indexed by $\eta \in V_{k-1}$ and satisfying $\max_{\eta\in V_{k-1}}\Vert g_\eta \Vert_\infty \leq 1$, and for any $\zeta\in V_{k-1}$ we have
\begin{align}
\begin{split}
&\frac{1}{\floor{\sqrt{N}}^{k-1}} \sum_{h \in [\floor{\sqrt{N}}]^{k-1}} \left\Vert\sup_t \left| \frac{1}{N} \sum_{n=1}^N e^{2\pi i n t} \left[ \prod_{\eta \in V_{k-1}} c^{|\eta|} g_\eta \circ T^{h \cdot \eta}\right] \circ T^n\right| \right\Vert_2^{2/3} \\ &\leq C''_k \left(\frac{1}{N^{1/(3\cdot 2^{k+3})}} + \left[W_{\floor{N^{1/4}}}^k(g_\zeta)\right]^{1/(3\cdot 2^{k})} \right)
\end{split}
\end{align}
for $N \geq N_k$.
\end{theorem}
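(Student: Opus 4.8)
The plan is to transport good decay through a two-step chain: first from the off-diagonal average on the left to the uniform multiple recurrence average $M_N^k(g_\zeta)$, and then from $M_N^k(g_\zeta)$ back to $W_N^k(g_\zeta)$. The first step is the reverse Bourgain bound machinery of \S\ref{s:RBB} run in the generality of Appendix~\ref{A:general case}, and the second step is the forward Bourgain bound (\ref{E:BB for M, W}).

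For the first step, observe that the left-hand side of the statement has exactly the structure of $W_N^k(g_\zeta)$ --- an average over $h\in[\floor{\sqrt N}]^{k-1}$ of $W_N^1$ of a product of $2^{k-1}$ translates --- except that the single function is replaced by the family $\{g_\eta\}_{\eta\in V_{k-1}}$. I would therefore rerun, essentially verbatim, the chain of steps from the proof of Theorem~\ref{T:reverse BB, simplest case} (pointwise Van der Corput to open the supremum over $t$, Hölder to pass to an $L^2$ average, Lemma~\ref{L:vdc for systems} applied to the inner averages, the summation switch $\frac1N\sum_n(\tfrac{N-n}{N})a_n=\frac1N\sum_q\frac1N\sum_{n\le q}a_n$, excision of the low-index and boundary pieces of the resulting simplices into a polynomial remainder, and translation of the measure to free up shift variables), now carried out with the product $\prod_\eta c^{|\eta|}g_\eta\circ T^{h\cdot\eta}$ in place of the single function. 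The one bookkeeping point is to track which function survives: the coordinates of $h$, the Van der Corput shift variables introduced along the way, and all function factors other than one distinguished copy of $g_\zeta$ (including the $g_\eta$ with $\eta\ne\zeta$ and any surplus translates of $g_\zeta$) are absorbed into the supremum defining $M^k$, while that one copy of $g_\zeta$ is carried through to the slot at exponent $(k+1)n$; the choice of which $\eta$ plays the role of $\zeta$ amounts to a relabeling of the coordinates of $h$. This is precisely the general reverse Bourgain bound of Appendix~\ref{A:general case}, and it yields, for all $N$,
\begin{align*}
\frac{1}{\floor{\sqrt{N}}^{k-1}} \sum_{h \in [\floor{\sqrt{N}}]^{k-1}} \left\Vert\sup_t \left| \frac{1}{N} \sum_{n=1}^N e^{2\pi i n t} \left[ \prod_{\eta \in V_{k-1}} c^{|\eta|} g_\eta \circ T^{h \cdot \eta}\right] \circ T^n\right| \right\Vert_2^{2/3} \leq C'_k\left(\frac{1}{N^{1/24}} + \left[M^k_{\floor{N^{1/4}}}(g_\zeta)\right]^{1/6}\right),
\end{align*}
the weakening of the $k=1$ exponents $1/6,\ \floor{\sqrt N}$ to $1/24,\ \floor{N^{1/4}}$ being the geometric cost of the iterated summation switch for $k>1$.

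For the second step, apply the forward Bourgain bound (\ref{E:BB for M, W}) to $g_\zeta$ at scale $M=\floor{N^{1/4}}$, namely $M^k_{\floor{N^{1/4}}}(g_\zeta)\le C_k\big(\floor{N^{1/4}}^{-1/2^k}+[W^k_{\floor{N^{1/4}}}(g_\zeta)]^{1/2^{k-1}}\big)$, substitute this into the display above, use $(a+b)^{1/6}\le a^{1/6}+b^{1/6}$, and consolidate. Since $6\cdot 2^{k-1}=3\cdot 2^k$ and $\floor{N^{1/4}}^{-1/(6\cdot 2^k)}\le C\,N^{-1/(24\cdot 2^k)}=C\,N^{-1/(3\cdot 2^{k+3})}$ for $N$ past some $N_k$, and since $N^{-1/24}\le N^{-1/(3\cdot 2^{k+3})}$ for every $k\ge 1$, all error terms collapse into a single $C''_k N^{-1/(3\cdot 2^{k+3})}$, which gives the claimed inequality for $N\ge N_k$. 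I expect the only genuine obstacle to be the first step --- organizing the iterated summation switch for general $k$ and verifying that exactly one copy of $g_\zeta$ (and not some mixed product of the $g_\eta$'s) ends up in the distinguished recurrence slot --- which is exactly why the paper defers that computation to the appendix.
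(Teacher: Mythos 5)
Your second step (chaining the off-diagonal reverse Bourgain bound with the forward Bourgain bound (\ref{E:BB for M, W}) at scale $\floor{N^{1/4}}$ and consolidating exponents) is exactly the paper's argument. The gap is in your first step, at precisely the point you flag as the potential obstacle: you claim that which $g_\eta$ lands in the distinguished slot at exponent $(k+1)n$ ``amounts to a relabeling of the coordinates of $h$.'' It does not. In the appendix argument, after the two van der Corput applications and the shift of every $h_m$ by $n$, the coefficient of $n$ in the exponent attached to the vertex $\eta\in V_{k+1}$ is $|\eta|$; the only vertex with coefficient $k+1$ is $\mathbf{1}$, and it carries $g_{\mathbf{1}}$. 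Permuting the coordinates $h_1,\dots,h_{k-1}$ only permutes the cube coordinates, which fixes the all-ones vertex, so no relabeling can move a $g_\zeta$ with $\zeta\neq\mathbf{1}$ (e.g.\ $\zeta$ having some zero entries) into that slot. This is why the appendix statement (Theorem \ref{T:general case}, and Lemma \ref{L:RBB for off-diag}) is stated only with $M^k(g_{\mathbf{1}})$ on the right.

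The paper closes this gap with a separate maneuver (Lemma \ref{L:permutation for off-diag}): for each coordinate $i$ with $\zeta_i=0$ one reverses the order of summation, $h_i\mapsto\floor{\sqrt N}+1-h_i$, and then translates the measure by $T^{(\mathbf{1}-\zeta)\cdot h}$. This affine change of variables flips exactly those coordinates of the cube, sending $\zeta$ to the $\mathbf{1}$ position, at the cost of fixed extra translates $T^{|\eta\cap(\mathbf{1}-\zeta)|(\floor{\sqrt N}+1)}$ on the other factors, which are harmless because those factors are bounded away into the supremum defining $M^k$ anyway. With that lemma in place of your ``relabeling'' claim, the rest of your argument goes through as written and coincides with the paper's proof.
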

\begin{remark}
Based on dependence, note that we can take a min over $\zeta\in V_{k-1}$ on the right-hand side and pass it through to the WW average. Hence, the above can be written as
\begin{align*}
\frac{1}{\floor{\sqrt{N}}^{k-1}} \sum_{h \in [\floor{\sqrt{N}}]^{k-1}} \left\Vert\sup_t \left| \frac{1}{N} \sum_{n=1}^N e^{2\pi i n t} \left[ \prod_{\eta \in V_{k-1}} c^{|\eta|} g_\eta \circ T^{h \cdot \eta}\right] \circ T^n\right| \right\Vert_2^{2/3} \precsim_k \min_{\eta \in V_{k-1}} W_N^k(g_\eta) \, .
\end{align*}
Since the minimum may also be bounded by the product over all $W_N^k(g_\eta)$, each raised to the appropriate power, this may also be used to bound the above terms.
\end{remark}

In \cite[Theorem 6.3]{assani2024higherorderwienerwintnersystems}, it is shown that all $k$-th order off-diagonal averages for $f$ (above averages where some $g_\eta = f$) converge to zero if $f\in L^2(\mathcal{Z}_{k}^\perp)$. This theorem shows that they do so at a uniform rate, which is controlled by the WW average itself.

To build up to this theorem, we first apply the exact same argument of the reverse Bourgain bound (Theorem \ref{T:reverse BB, simplest case}) on a collection of functions $g_\eta$ for $\eta\in V_{k-1}$, rather than only $f$. This is done in detail in the appendix:
\begin{lemma}\label{L:RBB for off-diag}
Let $k\in N$. Then there exists a constant $C_k'$ such that for all invertible dynamical systems $(X, \mathcal{F}, \mu, T)$ and for all collections $g_\eta \in L^\infty(\mu)$ indexed by $\eta \in V_{k-1}$ and satisfying $\max_{\eta\in V_{k-1}}\Vert g_\eta \Vert_\infty \leq 1$, we have
\begin{align}
\begin{split}
&\frac{1}{\floor{\sqrt{N}}^{k-1}} \sum_{h \in [\floor{\sqrt{N}}]^{k-1}} \left\Vert\sup_t \left| \frac{1}{N} \sum_{n=1}^N e^{2\pi i n t} \left[ \prod_{\eta \in V_{k-1}} c^{|\eta|} g_\eta \circ T^{h \cdot \eta}\right] \circ T^n\right| \right\Vert_2^{2/3} \\ &\leq C'_k \left(\frac{1}{N^{1/24}} + \left[M_{\floor{N^{1/4}}}^k(g_\textbf{1})\right]^{1/6} \right)
\end{split}
\end{align}
where $\textbf{1}\in V_{k-1}$ denotes the element with $1$'s in every component.
\end{lemma}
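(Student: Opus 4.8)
The plan is to reproduce the argument of Theorem~\ref{T:reverse BB, simplest case} (the case $k=1$ given in the text, and the general scheme carried out in the appendix), with the single function $f$ replaced throughout by the product $G_h := \prod_{\eta\in V_{k-1}} c^{|\eta|} g_\eta\circ T^{h\cdot\eta}$, which is again bounded by $1$, and with the outer average over $h\in[\floor{\sqrt N}]^{k-1}$ carried along for the ride. The left-hand side of the lemma is exactly $\frac{1}{\floor{\sqrt N}^{k-1}}\sum_h W_N^1(G_h)$, so the first step is Hölder's inequality on averages (the ``in particular'' form with $r=1/3$): since $W_N^1(G_h) = \big(\int\sup_t|\frac1N\sum_{n=1}^N e^{2\pi int}G_h\circ T^n|^2\,d\mu\big)^{1/3}$, pulling the exponent through the $h$-average reduces matters to bounding
\[
\left(\frac{1}{\floor{\sqrt N}^{k-1}}\sum_{h\in[\floor{\sqrt N}]^{k-1}}\int\sup_t\left|\frac1N\sum_{n=1}^N e^{2\pi int}G_h\circ T^n\right|^2 d\mu\right)^{1/3}.
\]

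Second, for each fixed $h$ I would apply the pointwise Van der Corput estimate (\ref{vdc-1}) with window $\floor{\sqrt N}$ and extend the truncated inner sum back to length $N$ (each step costing an $O(1/\floor{\sqrt N})$ remainder), then use Cauchy--Schwarz to pass the square root past all the $h$-sums and apply Lemma~\ref{L:vdc for systems} once to the resulting $L^2$-averages. The structural observation, exactly as in the computation following Lemma~\ref{L:vdc for systems}, is that $G_h\cdot\overline{G_h\circ T^{h'}}$ is again a product of shifted copies of the $g_\eta$'s, now indexed by $V_k$ with the Van der Corput shift $h'\in[\floor{\sqrt N}]$ adjoined as a new coordinate, and that a further application of $(\,\cdot\,)\cdot\overline{(\,\cdot\,)\circ T^m}$ coming from Lemma~\ref{L:vdc for systems} promotes this to a product indexed by $V_{k+1}$. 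Together with the $k-1$ averages over $h_1,\dots,h_{k-1}$ already built into $W_N^k$, this supplies exactly $k+1$ summation directions, over which one averages an integral of a single product of $2^{k+1}$ shifted $g_\eta$'s weighted by $\tfrac{N-m}{N}$; the weight is then rewritten via $\tfrac1N\sum_m\tfrac{N-m}{N}a_m = \tfrac1N\sum_q\tfrac1N\sum_{m\le q}a_m$ as in the $k=1$ case.

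The combinatorial heart of the argument is the third step. After excising the negligible small values of the outer index $q$, I would shift each of the $k$ auxiliary shift-sums down by the remaining variable $m$ and interchange $m$ with those sums; this decomposes a product of parallelograms into boundedly many pieces, all but one of which are of strictly smaller order in $\floor{\sqrt N}$ than the full box and hence contribute only to the polynomial remainder. On the unique surviving ``fat'' region, translating the measure by an appropriate power of $T$ and relabelling the variables sends the all-ones factor $g_{\textbf{1}}\circ T^{\textbf{1}\cdot h}$ to $g_{\textbf{1}}\circ T^{(k+1)m}$ with no remaining dependence on the $h_i$, while the other $2^{k+1}-1$ factors, grouped by their coefficient of $m$, assemble into $k$ bounded functions composed with $T^{jm}$, $j=1,\dots,k$ (the group with coefficient $0$ being a single constant factor bounded by $1$). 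Pulling the absolute value inside the integral and taking the supremum over these $k$ bounded functions bounds the surviving term by $M^k_{\floor{N^{1/4}}}(g_{\textbf{1}})$; since this is free of all auxiliary variables, the remaining averages collapse, and collecting exponents and taking a cube root gives the claimed estimate.

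I expect the main obstacle --- and the reason the full proof belongs in the appendix --- to be precisely this multi-dimensional sum interchange. In the two-dimensional $k=1$ situation it is the familiar splitting of a parallelogram into two triangles and a parallelogram, each triangle being discardable because it has an order-$\floor{\sqrt N}$ deficit in lattice points; for general $k$ one must interchange $m$ with $k$ nested shift-sums simultaneously, keep careful track of which of the many resulting regions are ``thin'' enough to be absorbed into the $O(N^{-1/24})$ error, and verify that the single ``fat'' region collapses \emph{exactly} to the defining $(k+1)$-fold multiple recurrence average for $M^k_N$ --- with the correct exponents $1,2,\dots,k+1$, the correct distinguished function $g_{\textbf{1}}$, and the geometrically forced scale $\floor{N^{1/4}}$ in place of $\floor{\sqrt N}$.
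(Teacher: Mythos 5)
Your overall route is the paper's route: the lemma is proved there by running the reverse Bourgain bound argument (the appendix Theorem \ref{T:general case}) with the collection $g_\eta$ in place of a single $f$, and your steps (H\"older with exponent $1/3$ on the $h$-average, pointwise Van der Corput, Lemma \ref{L:vdc for systems}, the regrouping of the $2^{k+1}$ shifted factors by their coefficient of $n$ so that the all-ones factor becomes $g_{\mathbf 1}\circ T^{(k+1)n}$ and the rest are absorbed into the supremum defining $M^k$) all match the paper's computation. The gap is in the step you yourself flag as the heart of the matter, and it is not merely a bookkeeping issue. You take the Van der Corput window to be $\floor{\sqrt N}$, so that all $k$ auxiliary scales are equal, and you assert that after the interchange all but one region are of strictly smaller order and go into the polynomial remainder, the surviving ``fat'' region producing $M^k_{\floor{N^{1/4}}}(g_{\mathbf 1})$ with the scale $\floor{N^{1/4}}$ ``geometrically forced.'' For $k\ge 2$ this is false: a shift vector $h$ contributes a full-length inner $n$-interval only when $H-(\max_m h_m-\min_m h_m)=H$, i.e.\ when its components essentially coincide, so with equal windows the ``fat'' set is a lower-dimensional diagonal of the tube and the partial-length regions carry a \emph{positive proportion} of the lattice points. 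They cannot be discarded by counting alone, and even if they could, the surviving term would be at scale $\floor{\sqrt N}$, not $\floor{N^{1/4}}$ --- nothing in your argument produces the smaller scale.

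In the paper the scale $\floor{N^{1/4}}$ is not forced but \emph{chosen}: the Van der Corput window is taken to be $H_k=\floor{N^{1/4}}$ while $H_1=\dots=H_{k-1}=\floor{\sqrt N}$, so that $H_{(1)}=\floor{N^{1/4}}$, $H_{(2)}=\floor{\sqrt N}$. The exact lattice-point counts (Lemmas \ref{L:exact count for gamma}, \ref{L:est for small p}, \ref{L:est for big p}) then show that the sub-maximal-length contributions are bounded by $\frac{1}{\floor{\sqrt N}}\sum_{p\le\floor{N^{1/4}}}M^k_p(g_{\mathbf 1})\le N^{-1/4}$ after the trivial bound $M^k_p\le 1$, while the maximal-length part contributes $\left(\frac{H_{(2)}-H_{(1)}+1}{H_{(2)}}\right)M^k_{\floor{N^{1/4}}}(g_{\mathbf 1})$ with coefficient at most $1$; the asymmetry of the two scales is exactly what makes the non-fat mass negligible without any comparison between $M^k_p$ at different lengths. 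To rescue your equal-window version you would need an extra ingredient --- e.g.\ a blocking argument showing $M^k_p(f)\le M^k_Q(f)+O(Q/p)$ for $p\ge Q$, so that the average $\frac{1}{\floor{\sqrt N}}\sum_{p\le\floor{\sqrt N}}M^k_p$ can be converted to $M^k_{\floor{N^{1/4}}}$ plus a polynomial error --- and no such step appears in your proposal. As written, the claimed inequality with $M^k_{\floor{N^{1/4}}}(g_{\mathbf 1})$ and remainder $N^{-1/24}$ does not follow for $k\ge 2$.
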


Since this holds for any collection $\{g_\eta : \eta\in V_{k-1}\}$, we may bound the off-diagonal term by any $g_\zeta$ for $\zeta\in V_{k-1}$ if we can permute the product over $\eta\in V_{k-1}$ to put $\zeta$ in the $\textbf{1}$ slot. We cannot reorder the terms $g_\eta$ arbitrarily, but we may do so enough that the changes created are lost in the application of the previous lemma. 

\begin{lemma}\label{L:permutation for off-diag}
Let $k\in N$. Then there exists a constant $C_k'$ such that for all invertible dynamical systems $(X, \mathcal{F}, \mu, T)$ and for all collections $g_\eta \in L^\infty(\mu)$ indexed by $\eta \in V_{k-1}$ and satisfying $\max_{\eta\in V_{k-1}}\Vert g_\eta \Vert_\infty \leq 1$, and for any $\zeta\in V_{k-1}$, we have
\begin{align}
\begin{split}
&\frac{1}{\floor{\sqrt{N}}^{k-1}} \sum_{h \in [\floor{\sqrt{N}}]^{k-1}} \left\Vert\sup_t \left| \frac{1}{N} \sum_{n=1}^N e^{2\pi i n t} \left[ \prod_{\eta \in V_{k-1}} c^{|\eta|} g_\eta \circ T^{h \cdot \eta}\right] \circ T^n\right| \right\Vert_2^{2/3} \\ &\leq C'_k \left(\frac{1}{N^{1/24}} + \left[M_{\floor{N^{1/4}}}^k(g_\zeta)\right]^{1/6} \right)
\end{split}
\end{align}
\end{lemma}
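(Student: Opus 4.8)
Following the remark immediately preceding the statement, the plan is to reduce everything to Lemma~\ref{L:RBB for off-diag}, which already supplies the bound in terms of $M^k_{\floor{N^{1/4}}}(g_{\textbf{1}})$, by a measure-preserving change of variables in the $h$-sum that slides $g_\zeta$ into the $\textbf{1}$-slot of the product. Fix $\zeta \in V_{k-1}$, let $\xi \in V_{k-1}$ be its complement (so $\xi_i = 1 - \zeta_i$), and write $\oplus$ for coordinatewise addition modulo $2$ on $V_{k-1}$. First I would reindex the product over $\eta \in V_{k-1}$ by the bijection $\eta \mapsto \eta \oplus \xi$. Since $|\eta \oplus \xi| \equiv |\eta| + |\xi| \pmod 2$, the conjugation weight $c^{|\eta \oplus \xi|}$ differs from $c^{|\eta|}$ only by the single conjugation $c^{|\xi|}$, the same for every $\eta$; this is harmless, being absorbable into the functions or, after conjugating the whole bracketed product inside the modulus, into the supremum via $t \mapsto -t$. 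Using the identity $(\eta \oplus \xi)\cdot h = \xi \cdot h + \eta \cdot h^{\xi}$, where $h^{\xi}_i = h_i$ when $\xi_i = 0$ and $h^{\xi}_i = -h_i$ when $\xi_i = 1$, every factor of the reindexed product acquires the common translation $T^{\xi\cdot h}$; as $T$ is invertible I can translate the measure by $T^{-\xi\cdot h}$, for each fixed $h$, to delete it, leaving $\prod_{\eta} c^{|\eta\oplus\xi|} g_{\eta\oplus\xi}\circ T^{\eta\cdot h^{\xi}}$ inside.

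Next I would carry out the coordinatewise substitution $h_i \mapsto \floor{\sqrt{N}} + 1 - h_i$ in precisely the coordinates $i$ with $\xi_i = 1$ --- a bijection of $[\floor{\sqrt{N}}]$ in each such coordinate --- which replaces $\eta \cdot h^{\xi}$ by $\eta \cdot h - (\floor{\sqrt{N}}+1)\,\#\{i : \eta_i = \xi_i = 1\}$. Absorbing these fixed powers of $T$ together with the fixed conjugation into the functions produces a family $\widehat{g}_\eta \in L^\infty(\mu)$ with $\max_{\eta\in V_{k-1}} \Vert\widehat{g}_\eta\Vert_\infty \leq 1$ for which the off-diagonal average on the left-hand side is exactly the off-diagonal average of the $\widehat{g}_\eta$, and whose $\textbf{1}$-indexed member is $\widehat{g}_{\textbf{1}} = c^{|\xi|}\big( g_{\textbf{1}\oplus\xi}\circ T^{-(\floor{\sqrt{N}}+1)|\xi|}\big) = c^{|\xi|}\big(g_\zeta \circ T^{-(\floor{\sqrt{N}}+1)|\xi|}\big)$. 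Applying Lemma~\ref{L:RBB for off-diag} to the family $\{\widehat{g}_\eta\}$ then bounds the left-hand side by $C'_k\left(\frac{1}{N^{1/24}} + \left[M^k_{\floor{N^{1/4}}}(\widehat{g}_{\textbf{1}})\right]^{1/6}\right)$.

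It remains to identify $M^k_{\floor{N^{1/4}}}(\widehat{g}_{\textbf{1}})$ with $M^k_{\floor{N^{1/4}}}(g_\zeta)$, and here is where I would use that $M^k_N$ is a \emph{supremum} over all $g_1, \dots, g_k$ bounded by $1$. Conjugating the entire integrand gives $M^k_N(\overline{f}) = M^k_N(f)$, while translating the measure by $T^{-m}$ and reabsorbing the resulting shift into each $g_j$ gives $M^k_N(f \circ T^m) = M^k_N(f)$ for every integer $m$; combining the two yields $M^k_{\floor{N^{1/4}}}(\widehat{g}_{\textbf{1}}) = M^k_{\floor{N^{1/4}}}(g_\zeta)$, which is exactly the claimed estimate.

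The one delicate point is the size of the translation $m = (\floor{\sqrt{N}}+1)|\xi|$: it grows with $N$ and is in particular far larger than $\floor{N^{1/4}}$, so it must be eliminated through the invariance $M^k_N(f\circ T^m) = M^k_N(f)$ --- which costs nothing precisely because $M^k_N$ is a supremum --- and not by shifting the summation range in the average defining $M^k$, which would produce a remainder of order $m/\floor{N^{1/4}} \to \infty$. Everything else --- propagating the conjugation weights through the reindexing (immediate from $|\eta\oplus\xi| \equiv |\eta|+|\xi| \pmod 2$) and checking that the $\widehat{g}_\eta$ stay bounded by $1$ --- is routine bookkeeping.
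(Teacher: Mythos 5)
Your proposal is correct and follows essentially the same route as the paper: flip the $h$-coordinates with $\zeta_i=0$, shift the measure to land $g_\zeta$ in the $\mathbf{1}$-slot, and invoke Lemma~\ref{L:RBB for off-diag}. The only cosmetic difference is that your choice of common shift ($T^{-\xi\cdot h}$ before the flip rather than $T^{(\mathbf{1}-\zeta)\cdot h}$ after it) leaves the $N$-dependent translation $T^{-(\floor{\sqrt{N}}+1)|\xi|}$ on the $\mathbf{1}$-slot function, which you correctly eliminate via the translation- and conjugation-invariance of $M^k_N$ (using that it is a supremum), whereas the paper's bookkeeping avoids that translation altogether.
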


\begin{proof} Consider the off-diagonal average:
\begin{align*}
\frac{1}{\floor{\sqrt{N}}^{k-1}} \sum_{h \in [\floor{\sqrt{N}}]^{k-1}} \left\Vert\sup_t \left| \frac{1}{N} \sum_{n=1}^N e^{2\pi i n t} \left[ \prod_{\eta \in V_{k-1}} c^{|\eta|} g_\eta \circ T^{h \cdot \eta}\right] \circ T^n\right| \right\Vert_2^{2/3} \, .
\end{align*}
We wish to re-index in the following way
$$\sum_{h_i = 1}^{\floor{\sqrt{N}}} a_{h_i} = \sum_{h_i = 1}^{\floor{\sqrt{N}}} a_{\floor{\sqrt{N}} - h_i + 1}$$
for each variable $h_i$ with $\zeta_i = 0$.
If we define for any two $\alpha, \beta\in V_{k-1}$, the intersection $\alpha \cap \beta \in V_{k-1}$ as
$$(\alpha \cap \beta)_i = \begin{cases}
1 & \alpha_i = \beta_i = 1 \\ 0 &\text{otherwise}
\end{cases}$$
then the reindexing described above takes the expression $\eta\cdot h$ to
$$[\eta \cap \zeta - \eta \cap (\mathbf{1} - \zeta)] \cdot h + |\eta \cap (\mathbf{1} - \zeta)|(\floor{\sqrt{N}} + 1)\, .$$
Shifting the measure by $T^{(\mathbf{1} - \zeta) \cdot h}$ puts the product in the above term over
$$\prod_{\eta\in V_{k-1}} c^{|\eta|} \left(g_\eta \circ T^{|\eta \cap (\mathbf{1} - \zeta)|(\floor{\sqrt{N}} + 1)}\right) \circ T^{[\eta \cap \zeta - \eta \cap (\mathbf{1} - \zeta) + \mathbf{1} - \zeta] \cdot h }\, .$$
The expression $\eta \cap \zeta - \eta \cap (\mathbf{1} - \zeta) + \mathbf{1} - \zeta$ is exactly the permutation function on $V_{k-1}$ which flips the contents of each index $i$ with $\zeta_i = 0$. We observe that the $\eta = \zeta$ term above exactly corresponds to the term
$$c^{|\zeta|} g_\zeta \circ T^{\mathbf{1} \cdot h}\, . $$
Hence, we have placed $g_\zeta$ into the $\mathbf{1}$ slot. After potentially correcting for the complex conjugates, we can apply the reverse Bourgain bound from the previous lemma \ref{L:RBB for off-diag} to get the desired estimate.
\end{proof}
Chaining this together with the Bourgain bound (just as done in the proof of \ref{T:WW linearity- basic estimate}), gives us Theorem \ref{T:WW control of off-diag}:

\begin{proof}[Proof of Theorem \ref{T:WW control of off-diag}]
Beginning with the previous lemma, we apply the Bourgain bound:
\begin{align*}
&\frac{1}{\floor{\sqrt{N}}^{k-1}} \sum_{h \in [\floor{\sqrt{N}}]^{k-1}} \left\Vert\sup_t \left| \frac{1}{N} \sum_{n=1}^N e^{2\pi i n t} \left[ \prod_{\eta \in V_{k-1}} c^{|\eta|} g_\eta \circ T^{h \cdot \eta}\right] \circ T^n\right| \right\Vert_2^{2/3} \\ &\leq C'_k \left(\frac{1}{N^{1/24}} + \left[M_{\floor{N^{1/4}}}^k(g_\zeta)\right]^{1/6} \right) \\
&\leq C'_k \left(\frac{1}{N^{1/24}} + \left[C_k\left( \frac{1}{\floor{N^{1/4}}^{1/2^k}} + \left[W_{\floor{N^{1/4}}}^k (g_\zeta)\right]^{1/2^{k-1}} \right)\right]^{1/6} \right) \, .
\end{align*}
Consolidating remainder terms gives the desired result.
\end{proof}

\subsection{Conditional expectation and factors}\label{S:conditioning}

Due to the multilinear nature of the higher order WW averages, their behavior compared to conditional expectation is less clear to analyze. With the previous results in tow, we obtain the following:

\begin{theorem}\label{T:conditional exp}
Let $k\in \N$. Then there exists constants $C''_k$ and $N_k$ such that for all invertible dynamical systems $(X, \mathcal{F}, \mu, T)$, for all functions $f\in L^\infty(\mu)$ bounded by 1, and for all $T$-invariant sub $\sigma$-algebras $\mathcal{B} \subset \mathcal{F}$, we have
\begin{align*}
W_N^k\left(\E(f |\mathcal{B})\right) \leq C''_k \left(\frac{1}{N^{1/(3\cdot 2^{k+1})}} + \left[W_{\floor{N^{1/4}}}^k(f)\right]^{1/(3\cdot 2^{k})} \right)
\end{align*}
for all $N\geq N_k$. Hence, we have
$$W_N^k\left(\E(f |\mathcal{B})\right) \precsim_k W_N^k(f)\, .$$
\end{theorem}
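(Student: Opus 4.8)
The plan is to reduce the conditional expectation estimate to the off-diagonal control already established in Theorem \ref{T:WW control of off-diag}, using the reverse and forward Bourgain bounds as the bridge. The key observation is that $\E(f\mid\mathcal{B})$, being a martingale projection commuting with $T$ (since $\mathcal{B}$ is $T$-invariant), behaves well under the uniform multiple recurrence averages $M_N^k$ rather than under $W_N^k$ directly. Concretely, I would first apply the reverse Bourgain bound (Theorem \ref{T:reverse BB, simplest case}) to write
\begin{align*}
W_N^k\big(\E(f\mid\mathcal{B})\big) \leq C'_k\left(\frac{1}{N^{1/24}} + \left[M_{\floor{N^{1/4}}}^k\big(\E(f\mid\mathcal{B})\big)\right]^{1/6}\right),
\end{align*}
so it suffices to bound $M_{M}^k(\E(f\mid\mathcal{B}))$ (with $M=\floor{N^{1/4}}$) by something controlled by $W_M^k(f)$.

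The heart of the matter is then to show $M_M^k(\E(f\mid\mathcal{B})) \precsim_k M_M^k(f)$, or rather that the averaged-and-conditioned recurrence expression is itself one of the off-diagonal averages governed by Theorem \ref{T:WW control of off-diag}. Here I would use the self-adjointness of conditional expectation together with the tower property: for $g_1,\dots,g_k$ bounded by $1$, pairing $\frac{1}{M}\sum_n \prod_j g_j\circ T^{jn}\cdot \E(f\mid\mathcal{B})\circ T^{(k+1)n}$ against a test function $\psi$ and pushing the conditional expectation onto the other side (using $\E(f\mid\mathcal{B})\circ T^{(k+1)n} = \E(f\circ T^{(k+1)n}\mid\mathcal{B})$ by $T$-invariance of $\mathcal{B}$) lets one replace $\E(f\mid\mathcal{B})$ by $f$ at the cost of inserting $\mathcal{B}$-measurable conditional expectations on the remaining $g_j$'s; crucially these stay bounded by $1$, so the supremum defining $M_M^k(f)$ still dominates. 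This identifies $M_M^k(\E(f\mid\mathcal{B})) \leq M_M^k(f)$ (or a harmless variant thereof), and then the forward Bourgain bound (\ref{E:BB for M, W}) gives $M_M^k(f)\precsim_k W_M^k(f)$. Chaining these three estimates and consolidating the polynomial remainders — exactly as in the proof of Theorem \ref{T:WW linearity- basic estimate} — yields the stated bound, with the exponent $1/(3\cdot 2^{k+1})$ on the remainder and $1/(3\cdot 2^k)$ on the $W_{\floor{N^{1/4}}}^k(f)$ term arising from composing the $1/24$ and $1/6$ from the reverse bound with the $1/2^k$, $1/2^{k-1}$ from the forward bound.

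The main obstacle I anticipate is the middle step: making precise the claim that conditioning only costs a harmless modification of the auxiliary functions. One must be careful that after moving $\E(\cdot\mid\mathcal{B})$ across the inner product, the resulting object is genuinely of the form appearing in $M_M^k$ — i.e. a multiple recurrence average against bounded functions with $f$ (not $\E(f\mid\mathcal{B})$) in the last slot — and not merely bounded by such an expression up to uncontrolled error terms. The alternative, and perhaps cleaner, route is to bypass $M_N^k$ and instead recognize $W_N^k(\E(f\mid\mathcal{B}))$ directly as an off-diagonal WW average for $f$: expanding the inductive definition (\ref{F:inductive construction of WW averages}) produces products $\prod_{\eta\in V_{k-1}} c^{|\eta|}\E(f\mid\mathcal{B})\circ T^{\eta\cdot h}$, and since $\mathcal{B}$ is $T$-invariant each factor is $\E(f\circ T^{\eta\cdot h}\mid\mathcal{B})$; absorbing the conditioning into the phase-free combinatorics (again using self-adjointness inside the $L^2$ norm and $\|\E(\cdot\mid\mathcal B)\|_\infty\leq 1$) should exhibit this as bounded by an off-diagonal term to which Theorem \ref{T:WW control of off-diag} applies with $g_\zeta = f$. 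Either way, once the reduction to Theorem \ref{T:WW control of off-diag} is in place the conclusion, and the immediate corollary that a factor of a $k$-th order WW system of power type is again one, follows directly.
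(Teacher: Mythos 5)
Your second, ``alternative'' route is exactly the paper's proof, and your instinct to prefer it is right: the first route, through $M_N^k$, has precisely the gap you anticipate. If you pair $\frac{1}{N}\sum_n \prod_j g_j\circ T^{jn}\cdot \E(f|\mathcal{B})\circ T^{(k+1)n}$ against a test function $\psi$ and move the conditional expectation across, you land on $\frac{1}{N}\sum_n \int f\circ T^{(k+1)n}\cdot \E\bigl(\prod_j g_j\circ T^{jn}\cdot\overline{\psi}\,\big|\,\mathcal{B}\bigr)\,d\mu$; since the conditional expectation of a product is not a product of conditional expectations, the $\mathcal{B}$-projection entangles the $g_j$'s with each other, with $\psi$, and with $n$, and the result is no longer a multiple recurrence average of the form appearing in the supremum defining $M_N^k(f)$. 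So the clean inequality $M_N^k(\E(f|\mathcal{B}))\leq M_N^k(f)$ does not follow this way, and the paper does not attempt it.

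The paper instead does what you sketch in your last paragraph, with one precision worth making explicit: you strip the conditional expectation from \emph{only one} factor, namely the $\eta=\mathbf{0}$ factor $\E(f|\mathcal{B})\circ T^n$. All the remaining factors $\E(f|\mathcal{B})\circ T^{\eta\cdot h}$ are $\mathcal{B}$-measurable (using $T$-invariance of $\mathcal{B}$), so the pull-out property gives $\E(f|\mathcal{B})\circ T^n\cdot g\circ T^n=\E(f\circ T^n\cdot g\circ T^n|\mathcal{B})$ for their product $g$; then linearity, positivity (to pass $\sup_t$ and the absolute value through), and $L^2$-contractivity of $\E(\cdot|\mathcal{B})$ remove the outer conditional expectation entirely. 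This is positivity plus contractivity rather than self-adjointness, but the effect is the same: one obtains an off-diagonal average with $g_{\mathbf{0}}=f$ and $g_\eta=\E(f|\mathcal{B})$ (still bounded by $1$) for $\eta\neq\mathbf{0}$, to which Theorem \ref{T:WW control of off-diag} applies with $\zeta=\mathbf{0}$, giving the stated exponents. Note that Theorem \ref{T:WW control of off-diag} itself is proved by exactly the reverse-Bourgain/forward-Bourgain chaining you describe, so your first route's machinery is used — just one level down, where the off-diagonal structure absorbs the difficulty that defeats the direct $M_N^k$ comparison.
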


\begin{proof}
Recall that conditional expectation $\mathbb{E}(\cdot|\mathcal{B}):L^2(\mu) \to L^2(\mu)$ is a bounded and positive linear operator which satisfies $|\mathbb{E}(f|\mathcal{B})| \leq \mathbb{E}(|f| \, |\mathcal{B})$. By positivity, it follows that if $f_t(x)$ are parameterized by $t$, we have $\sup_t |\mathbb{E}(f_t|\mathcal{B})| \leq \mathbb{E}(\sup_t|f_t| \, |\mathcal{B})$. Since $\mathcal{B}$ is $T$-invariant, it also follows that $\mathbb{E}(\cdot|\mathcal{B})$ commutes with the action of $T$ on $L^2$. 

Using this and other properties of conditional expectation, note for any $f\in L^2(\mu)$ and $g\in L^2(\mathcal{B})$ we have
\begin{align}\label{E:conditional strip}
\begin{split}
\left\Vert \sup_t \left| \frac{1}{N} \sum_{n=1}^N e^{2\pi i n t} \mathbb{E}(f|\mathcal{B}) \circ T^n \cdot g \circ T^n\right|\right\Vert_2 &=\left\Vert \sup_t \left| \frac{1}{N} \sum_{n=1}^N e^{2\pi i n t}\mathbb{E}(f \circ T^n|\mathcal{B}) \cdot g \circ T^n\right|\right\Vert_2 \\
&=\left\Vert \sup_t \left| \frac{1}{N} \sum_{n=1}^N e^{2\pi i n t}\mathbb{E}(f \circ T^n \cdot g \circ T^n|\mathcal{B})\right|\right\Vert_2 \\
&=\left\Vert \sup_t \left|\mathbb{E}\left( \frac{1}{N} \sum_{n=1}^N e^{2\pi i n t}f \circ T^n \cdot g \circ T^n\Big|\mathcal{B}\right)\right|\right\Vert_2 \\
&\leq \left\Vert \mathbb{E}\left( \sup_t \left| \frac{1}{N} \sum_{n=1}^N e^{2\pi i n t}f \circ T^n \cdot g \circ T^n\right|\Big|\mathcal{B}\right)\right\Vert_2 \\
&\leq \left\Vert \sup_t \left| \frac{1}{N}\sum_{n=1}^N  e^{2\pi i n t}f \circ T^n \cdot g \circ T^n\right|\right\Vert_2 \, .
\end{split}
\end{align}
In our $k$-th order WW average of $\E(f|\mathcal{B})$, we split up the product as follows:
$$\left[\prod_{\eta\in V_{k-1}} c^{|\eta|} \mathbb{E}(f|\mathcal{B}) \circ T^{\eta\cdot h}\right]\circ T^n = \E(f|\mathcal{B})\cdot T^n \cdot \left[\prod_{\eta\in V_{k-1} - \{\mathbf{0}\}} c^{|\eta|} \mathbb{E}(f|\mathcal{B}) \circ T^{\eta\cdot h}\right]\circ T^n \, .$$
On these functions, we may apply the estimate \ref{E:conditional strip} to remove a conditional expectation. This yields an off-diagonal average, which we can apply the theorem \ref{T:WW control of off-diag}:

\begin{align*}
&W_N^k\left( \E(f | \mathcal{B})\right) \\
&=\frac{1}{\lfloor\sqrt{N}\rfloor^{k-1}} \sum_{h \in [\lfloor\sqrt{N}\rfloor]^{k-1}} \left\Vert \sup_t \left| \frac{1}{N} \sum_{n=1}^N e^{2\pi i n t} \left[ \prod_{\eta \in V_{k-1}} c^{|\eta|} \mathbb{E}(f|\mathcal{B}) \circ T^{\eta\cdot h}\right]\circ T^n\right| \right\Vert_2^{2/3} \\
&\leq \frac{1}{\lfloor\sqrt{N}\rfloor^{k-1}} \sum_{h \in [\lfloor\sqrt{N}\rfloor]^{k-1}} \left\Vert \sup_t \left| \frac{1}{N} \sum_{n=1}^N e^{2\pi i n t} f\circ T^n \cdot \left[ \prod_{\eta \in V_{k-1} - \{\textbf{0}\}} c^{|\eta|} \mathbb{E}(f|\mathcal{B}) \circ T^{\eta\cdot h}\right]\circ T^n\right| \right\Vert_2^{2/3}\\
&\leq C''_k \left(\frac{1}{N^{1/(3\cdot 2^{k+1})}} + \left[W_{\floor{N^{1/4}}}^k(f)\right]^{1/(3\cdot 2^{k})} \right)
\end{align*}
for $N\geq N_{k}$.
\end{proof}

In the case of polynomial decay rates, we see that conditional expectations of WW functions of power type will be WW functions of power type. Moreover, we see that dense sets of WW function may project down, and this transfer will also apply to WW systems:
\begin{theorem}
Let $(X, \mathcal{F}, \mu, T)$ be a $k$-th order WW system of power type $\alpha$, and let $\mathcal{B}$ be a $T$-invariant $\sigma$-subalgebra of $\mathcal{F}$. Then $(X, \mathcal{B}, \mu, T)$ is a $k$-th order WW system of power type $\alpha / (3\cdot 2^{k+2})$.
\end{theorem}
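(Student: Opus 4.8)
The plan is to reduce the statement about WW \emph{systems} to the already-established Theorem \ref{T:conditional exp} about WW \emph{functions}, together with the density structure of a WW system. First I would unpack the definitions: since $(X, \mathcal{F}, \mu, T)$ is a $k$-th order WW system of power type $\alpha$, there is a dense subset $D \subset L^2(\mathcal{Z}_k(\mathcal{F}))^\perp$ consisting of $k$-th order WW functions of power type $\alpha$, i.e.\ every $f \in D$ satisfies $W_N^k(f) \in \mathrm{Poly}(\N, \alpha)$. I want to produce, inside $L^2(\mathcal{Z}_k(\mathcal{B}))^\perp$, a dense set of $k$-th order WW functions of power type $\alpha/(3 \cdot 2^{k+2})$. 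The natural candidate is $\E(D \mid \mathcal{B}) = \{ \E(f \mid \mathcal{B}) : f \in D\}$ (or its span).

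The two things to check are: (i) these conditional expectations lie in $L^2(\mathcal{Z}_k(\mathcal{B}))^\perp$ and are dense there, and (ii) they are WW functions of the asserted power type. For (ii), apply Theorem \ref{T:conditional exp}: for $f \in D$ with $W_N^k(f) \le C_f N^{-\alpha}$, the bound
\[
W_N^k\big(\E(f\mid\mathcal{B})\big) \le C''_k\left(\frac{1}{N^{1/(3\cdot 2^{k+1})}} + \big[W_{\floor{N^{1/4}}}^k(f)\big]^{1/(3\cdot 2^{k})}\right)
\]
gives, after substituting $W^k_{\floor{N^{1/4}}}(f) \le C_f \floor{N^{1/4}}^{-\alpha} \le C_f N^{-\alpha/4}$ (valid for $N$ large, absorbing the floor), a bound of the form $C\big(N^{-1/(3\cdot 2^{k+1})} + N^{-\alpha/(3\cdot 2^{k+2})}\big)$, which is $O(N^{-\alpha/(3\cdot 2^{k+2})})$ since $\alpha \le 1$ forces $\alpha/(3\cdot 2^{k+2}) \le 1/(3\cdot 2^{k+1})$. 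Hence each $\E(f\mid\mathcal{B})$ is a $k$-th order WW function of power type $\alpha/(3\cdot 2^{k+2})$; by the corollary in \S\ref{S:sublinear} (the vector-space / span statement for WW functions of a fixed power type, applied over $\mathcal{B}$), the same holds for every element of $\mathrm{span}\,\E(D\mid\mathcal{B})$, possibly after a further harmless degradation of the exponent — but in fact the sublinearity corollary preserves this power type up to the stated factor, so it is cleanest to take the spanning statement directly.

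For (i), the key facts are that $\E(\cdot\mid\mathcal{B}): L^2(\mu) \to L^2(\mathcal{B})$ is the orthogonal projection and, crucially, that $\mathcal{Z}_k(\mathcal{B}) = \mathcal{Z}_k(\mathcal{F}) \cap \mathcal{B}$ — the Host–Kra–Ziegler factor of the subsystem $(X,\mathcal{B},\mu,T)$ is the restriction of the ambient one. This identity follows from the seminorm characterization \eqref{F:HKZ seminorm construction}: for $g \in L^2(\mathcal{B})$ the seminorm $\VERT g \VERT_{k+1}$ computed in $\mathcal{B}$ agrees with that computed in $\mathcal{F}$ (the defining averages only involve $g$, its conjugates, and powers of $T$, all $\mathcal{B}$-measurable), so $g \in L^2(\mathcal{Z}_k(\mathcal{B}))^\perp \iff g \in L^2(\mathcal{Z}_k(\mathcal{F}))^\perp$. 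Granting this, $\E(\cdot\mid\mathcal{B})$ maps $L^2(\mathcal{Z}_k(\mathcal{F}))^\perp$ onto a dense subset of $L^2(\mathcal{Z}_k(\mathcal{B}))^\perp$: it is a contraction commuting with the relevant projections, so the image of the dense set $D$ is dense in the image of $L^2(\mathcal{Z}_k(\mathcal{F}))^\perp$, which is all of $L^2(\mathcal{Z}_k(\mathcal{B}))^\perp$. (One truncates to get bounded functions if necessary; since $\E(\cdot\mid\mathcal{B})$ does not increase the sup norm, boundedness is automatic here.) Combining (i) and (ii) yields that $(X,\mathcal{B},\mu,T)$ is a $k$-th order WW system of power type $\alpha/(3\cdot 2^{k+2})$.

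The main obstacle I anticipate is not analytic but bookkeeping of factors: verifying the HKZ-factor restriction identity $\mathcal{Z}_k(\mathcal{B}) = \mathcal{Z}_k(\mathcal{F}) \cap \mathcal{B}$ cleanly (this should be standard but deserves a sentence), and then tracking the exponent through Theorem \ref{T:conditional exp} and the sublinearity corollary so that the final power type is exactly $\alpha/(3\cdot 2^{k+2})$ rather than something slightly weaker — in particular making sure the floor $\floor{N^{1/4}}$ and the remainder term $N^{-1/(3\cdot 2^{k+1})}$ are genuinely dominated by the claimed rate, which uses $\alpha \le 1$.
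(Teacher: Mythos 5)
Your proposal is correct and follows essentially the same route as the paper: apply Theorem \ref{T:conditional exp} to transfer the power type (with the same exponent bookkeeping through $\floor{N^{1/4}}$ and the remainder term), and use the fact that the Gowers--Host--Kra seminorms of a $\mathcal{B}$-measurable function agree in $(X,\mathcal{B},\mu,T)$ and $(X,\mathcal{F},\mu,T)$ to see that conditional expectations of the dense family of WW functions are dense in $L^2(\mathcal{Z}_k^{\mathcal{B}})^\perp$. The only cosmetic difference is that the paper approximates a fixed $g\in L^2(\mathcal{Z}_k^{\mathcal{B}})^\perp$ by a sequence $\E(f_m|\mathcal{B})$ rather than phrasing things via the image of the dense set and its span, so the sublinearity corollary is not actually needed.
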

\begin{proof}
By the bound from Theorem \ref{T:conditional exp}, we immediately see that
$$W_N^k(f) \in \text{Poly}(\N, \alpha) \quad \implies \quad W^k_N(\E(f|\mathcal{B}))\in \text{Poly}\left(\N, \frac{\alpha}{3 \cdot 2^{k+2}}\right)$$

In the case where $(X, \mathcal{F}, \mu, T)$ is a WW system, let $g \in L^2(\mathcal{Z}^\mathcal{B}_{k-1})^\perp$. As $g$ is $\mathcal{B}$-measurable, we note that its Host-Kra-Gowers seminorms are the same over $(X, \mathcal{B}, \mu, T)$ and $(X, \mathcal{F}, \mu, T)$, as the integrals will agree over any $\mathcal{B}$-measurable function. Hence, $g\in L^2(\mathcal{Z}^\mathcal{F}_{k-1})^\perp$. If $f_m\in L^2(\mathcal{Z}^\mathcal{F}_{k-1})^\perp$ is a sequence of $k$-th order WW functions of power type $\alpha$ converging to $g$, then $\mathbb{E}(f_m|\mathcal{B})$ is a sequence of $k$-th order WW functions of power type $\alpha / (3\cdot 2^{k+2})$ converging to $\mathbb{E}(g|\mathcal{B}) = g$. As before, since each $\mathbb{E}(f_m|\mathcal{B})$ is $\mathcal{B}$-measurable, it follows that these functions are also in $L^2(\mathcal{Z}^\mathcal{B}_{k-1})^\perp$. 
\end{proof}

\subsection{Weak WW averages and product stability}\label{S:WWW}

In \cite{assani04}, Assani defines \textit{weak WW averages}, which we denote by the following:
\begin{align*}
    w_N^1(f) &:= \sup_t \left\Vert \frac{1}{N} \sum_{n=1}^N e^{2\pi i n t} f \circ T^n \right\Vert_2^{2/3} \, .
\end{align*}
We denote these as ``first-order weak WW averages", and extend them to higher orders by obeying the same inductive formula \ref{F:inductive construction of WW averages} for the previous ``strong" WW averages:
\begin{align}\label{F:inductive construction of weak WW averages}
w^{k}_N(f) = \frac{1}{\floor{\sqrt{N}}}\sum_{h=1}^{\floor{\sqrt{N}}} w^{k-1}_N \left( f \cdot \overline{f \circ T^h} \right) \, .
\end{align}
By the same reasoning, we obtain the closed formula
\begin{align*}
w^{k}_N(f) &= \frac{1}{\floor{\sqrt{N}}^{k-1}}\sum_{h\in [\floor{\sqrt{N}}]^{k-1}} w^{1}_N \left( \prod_{\eta\in V_{k-1}} c^{|\eta|} f\circ T^{\eta \cdot h} \right) \\
&=\frac{1}{\floor{\sqrt{N}}^{k-1}}\sum_{h\in [\floor{\sqrt{N}}]^{k-1}} \sup_t \left\Vert \frac{1}{N} \sum_{n=1}^N e^{2\pi i n t}\left[\prod_{\eta\in V_{k-1}} c^{|\eta|} f\circ T^{\eta \cdot h} \right]\circ T^n \right\Vert_2^{2/3} \, .
\end{align*}

Likewise, we can define \textit{$k$-th order weak WW functions of power type} to be $f\in L^\infty(\mu)$ with $w_N^k \in \text{Poly}(\N)$, and \textit{$k$-th order weak WW functions of power type $\alpha$} to be $f\in L^\infty(\mu)$ with $w_N^k \in \text{Poly}(\N, \alpha)$.

As pulling the supremum out of the norm decreases the term, it follows that the weak WW averages $w_N^k$ are weaker than the ``strong" WW averages $W_N^k$ in the following sense:
$$w_N^k(f) \leq W_N^k(f)$$
for any $f\in L^\infty(\mu)$ and $N\in\N$. Hence, for any order, strong WW functions of power type will be weak WW function of the same power type. We observe that a partial converse is possible, at the cost of order:

\begin{theorem}\label{T:converse bound for weak WW averages}
There exists a constant $C$ such that for all dynamical systems $(X, \mathcal{F}, \mu, T)$ and functions $f\in L^\infty(\mu)$ bounded by 1, we have
$$W_N^k(f) \leq C\left(\frac{1}{N^{1/6}} + \left[w_N^{k+1}(f)\right]^{1/8} \right)$$
for all $N$ and $k$. Using our asymptotic notation, we have
$$w_N^k(f) \precsim W_N^{k+1}(f)\, .$$
\end{theorem}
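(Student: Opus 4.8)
The plan is to induct on $k$, peeling off one factor of $f$ at a time via the recursive definitions (\ref{F:inductive construction of WW averages}) and (\ref{F:inductive construction of weak WW averages}), with the base case $k=1$ supplied by the Van der Corput variant (\ref{vdc-1}). The organising observation is that for any $g\in L^\infty(\mu)$ with $\norm{g}_\infty\le 1$ one has $w_N^j(g)\le 1$ and $W_N^j(g)\le 1$ for every $j$ and $N$ — immediate by induction from the closed forms, since $\norm{\tfrac1N\sum_n e^{2\pi i n t}g\circ T^n}_2\le\norm{g}_\infty$ and, pointwise, $\sup_t\bigl|\tfrac1N\sum_n e^{2\pi i n t}g\circ T^n\bigr|\le\norm{g}_\infty$. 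This is exactly what defuses the sub\emph{multi}linearity of the strong averages: any fractional power $w_N^j(g)^\alpha$ with $0<\alpha\le1$ is at most $w_N^j(g)$, so exponents can be moved past averages of such quantities at no cost. I expect in fact to obtain the sharper bound $W_N^k(f)\le C\bigl(N^{-1/6}+w_N^{k+1}(f)\bigr)$, from which the stated estimate follows since $w_N^{k+1}(f)\le1$ forces $w_N^{k+1}(f)\le w_N^{k+1}(f)^{1/8}$.

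For the base case I would apply (\ref{vdc-1}) pointwise in $x$ to $u_n=f(T^nx)$ with $H=\floor{\sqrt N}$ (the finitely many small $N$ being trivial, as $W_N^1(f)\le1$), then square and integrate in $x$. The first term of (\ref{vdc-1}) is bounded by $2/(\floor{\sqrt N}+1)\le CN^{-1/2}$, which after a cube root becomes the $N^{-1/6}$ remainder. For the second term I would use $\overline{f\circ T^{n+h}}\cdot f\circ T^n=(f\cdot\overline{f\circ T^h})\circ T^n$, so that each inner average is an honest one-variable ergodic average of $g_h:=f\cdot\overline{f\circ T^h}$; bounding $\int|\cdot|\,d\mu$ by the $L^2$ norm, extending the $n$-sum to full length $N$ (a remainder of at most $(\floor{\sqrt N}+1)/N$), and inserting the trivial modulation $t=0$ identifies this quantity with $w_N^1(g_h)^{3/2}$. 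Summing over $h$, using $w_N^1(g_h)^{3/2}\le w_N^1(g_h)$ since $\norm{g_h}_\infty\le1$, and recognising $\frac1{\floor{\sqrt N}}\sum_h w_N^1(g_h)=w_N^2(f)$ from (\ref{F:inductive construction of weak WW averages}), I reach $W_N^1(f)^3\le CN^{-1/2}+4\,w_N^2(f)$; a cube root, subadditivity of $t\mapsto t^{1/3}$, and $w_N^2(f)^{1/3}\le w_N^2(f)$ complete this case.

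The inductive step is then formal: assuming $W_N^{k-1}(g)\le C\bigl(N^{-1/6}+w_N^{k}(g)\bigr)$ for all bounded $g$, apply this to each $g_h=f\cdot\overline{f\circ T^h}$ in (\ref{F:inductive construction of WW averages}), average over $h\in[\floor{\sqrt N}]$, and use (\ref{F:inductive construction of weak WW averages}) to read off $\frac1{\floor{\sqrt N}}\sum_h w_N^{k}(g_h)=w_N^{k+1}(f)$; the constant is unchanged, which is why the final $C$ ends up independent of $k$ and of the system. (If one would rather not invoke $w_N^j\le1$ at this point, the Hölder-on-averages lemma gives $\frac1{\floor{\sqrt N}}\sum_h w_N^k(g_h)^\alpha\le w_N^{k+1}(f)^\alpha$ directly for $0<\alpha<1$, preserving whatever exponent the base case produced.) I do not anticipate a genuine obstacle here — the only care needed is routine bookkeeping: the index shift turning $\sum_{n=1}^N$ into $\sum_{n=0}^{N-1}$, the handful of ``extend the sum by $O(\sqrt N)$ terms'' remainders, and the smallest values of $N$. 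The exponent $1/8$ in the statement is thus comfortably non-tight (any exponent $\le1$ works), and I would simply prove the sharper form and weaken it.
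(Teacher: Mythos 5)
Your proposal reaches the stated bound, but by a genuinely different (and in fact leaner) route than the paper, together with one directional slip you should repair. The paper's base case bounds the per-$h$ quantity $\Vert\frac1N\sum_{n} (f\cdot\overline{f\circ T^h})\circ T^n\Vert_2$ by a second Van der Corput-type expansion (Lemma \ref{L:vdc for systems}), which produces the spectral coefficients $\hat\sigma_{f\cdot\overline{f\circ T^h}}(n)$, and then invokes the estimate (\ref{E:weak WW estimate from book}) from \cite{assani04} plus two further applications of H\"older; this yields $w_N^1(g_h)^{3/8}$ per term and hence the exponent $1/8$ after the cube root. You shortcut all of this: the unweighted ergodic average is the $t=0$ member of the supremum defining $w_N^1$, so $\Vert\frac1N\sum_n g_h\circ T^n\Vert_2\le \sup_t\Vert\frac1N\sum_n e^{2\pi i nt}g_h\circ T^n\Vert_2 = w_N^1(g_h)^{3/2}$ with no spectral input. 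That step is correct, and since $w_N^1(g_h)\le1$ it gives $W_N^1(f)^3\le CN^{-1/2}+4\,w_N^2(f)$, hence $W_N^1(f)\le C(N^{-1/6}+w_N^2(f)^{1/3})$, a sharper exponent than the paper's $1/8$. Your peel-one-layer induction through (\ref{F:inductive construction of WW averages}) and (\ref{F:inductive construction of weak WW averages}) is the same lifting mechanism the paper uses (it applies the $k=1$ case to the closed-form products and pulls the $h$-averages inside by H\"older), and with the H\"older-on-averages fallback you mention it carries the exponent $1/3$ through with a constant independent of $k$ and of the system; since $w_N^{k+1}(f)\le1$ gives $w_N^{k+1}(f)^{1/3}\le w_N^{k+1}(f)^{1/8}$, the theorem as stated follows.

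The slip: you twice use the inequality $x^\alpha\le x$ for $0\le x\le1$ and $0<\alpha<1$ (in the opening claim that ``any fractional power $w_N^j(g)^\alpha$ \dots is at most $w_N^j(g)$'' and in ``$w_N^2(f)^{1/3}\le w_N^2(f)$''). For quantities in $[0,1]$ this is backwards, $x^\alpha\ge x$, so your advertised sharper form $W_N^k(f)\le C(N^{-1/6}+w_N^{k+1}(f))$ is not established by your argument: the cube root unavoidably leaves a fractional power on $w$. This does not damage the stated theorem, because what your base case genuinely produces is the exponent $1/3$, the H\"older-on-averages lemma preserves it through the induction (your parenthetical is the correct version of the inductive step), and the passage from $1/3$ to $1/8$ goes in the valid direction; but you should delete the exponent-$1$ claims and run the induction with the $1/3$ power rather than ``prove the sharper form and weaken it.''
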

Hence, the first order strong WW averages can be bounded by second order weak WW averages, and so on. 

\begin{proof} We use the following fact, shown by Assani in \cite{assani04}, that for any $f\in L^2(\mu)$ bounded by 1, we have 
\begin{align}\label{E:weak WW estimate from book}
\sup_{\Vert g\Vert_2 \leq 1} \frac{1}{N} \sum_{n=0}^{N-1} |\hat\sigma_{f, g} |^{2} \leq \sup_t \left \Vert \frac{1}{N} \sum_{n=1}^N e^{2\pi i n t} f \circ T^n \right\Vert_2 \, .
\end{align}
While Assani establishes the bound in which the $n$ on the left ranges from $1$ to $N$, the above holds by the same argument.

For any $f\in L^\infty(\mu)$, we apply the Van der Corput inequality pointwise with $H = \floor{\sqrt{N}}$ and apply Hölder's inequality:
\begin{align*}
W_N^1(f)^3 &\leq \frac{2}{\floor{\sqrt{N}}} + \frac{4}{\floor{\sqrt{N}}}\sum_{h=1}^{\floor{\sqrt{N}}} \left( \int \left| \frac{1}{N} \sum_{n=1}^N f\circ T^n \cdot \overline{f \circ T^{n+h}} \right|^2 \, d\mu \right)^{1/2} \, .
\end{align*}
Inside, we apply the lemma \ref{vdc-lem}:
\begin{align*}
&\leq \frac{2}{\floor{\sqrt{N}}} + \frac{4}{\floor{\sqrt{N}}}\sum_{h=1}^{\floor{\sqrt{N}}} \left( \frac{2}{N} \sum_{n=0}^{N-1} \left( \frac{N-n}{N}\right) \Re \left[ \int f \cdot \overline{f \circ T^{h}} \cdot \overline{f \circ T^n} \cdot f \circ T^{n+h} \, d\mu \right] \right)^{1/2} \, .
\end{align*}
Bounding $\Re$ by the absolute value and $(N-n)/N$ by 1, we obtain spectral measure coefficients on which we can apply (\ref{E:weak WW estimate from book}):
\begin{align*}
&\leq \frac{2}{\floor{\sqrt{N}}} + \frac{4\sqrt{2}}{\floor{\sqrt{N}}}\sum_{h=1}^{\floor{\sqrt{N}}} \left( \frac{1}{N} \sum_{n=0}^{N-1} |\hat \sigma_{f \cdot \overline{f\circ T^h}} (n)| \right)^{1/2} \\
&\leq \frac{2}{\floor{\sqrt{N}}} + \frac{4\sqrt{2}}{\floor{\sqrt{N}}}\sum_{h=1}^{\floor{\sqrt{N}}} \left( \frac{1}{N} \sum_{n=0}^{N-1} |\hat \sigma_{f \cdot \overline{f\circ T^h}} (n)|^2 \right)^{1/4} \\
&\leq \frac{2}{\floor{\sqrt{N}}} + \frac{4\sqrt{2}}{\floor{\sqrt{N}}}\sum_{h=1}^{\floor{\sqrt{N}}} \left( \sup_t \left\Vert \sum_{n=1}^N e^{2\pi i n t} f\circ T^n \cdot \overline{f\circ T^{n + h}}\right\Vert_2 \right)^{1/4} \\
&\leq \frac{2}{\floor{\sqrt{N}}} + 4\sqrt{2} \left( w_{N}^2(f) \right)^{3/8}
\end{align*}
after one more application of Hölder's inequality to reintroduce the 2/3 power. Taking the cube root of both sides of the inequality, we see that
\begin{align}\label{E:weak bound k=1 case}
W_N^1(f) \leq \frac{2^{1/3}}{N^{1/6}} + 2^{5/6} \left[w_N^2(f)\right]^{1/8} \, .
\end{align}

To lift this to higher orders, we write $W_N^k$ in terms of $W_N^1$ and apply (\ref{E:weak bound k=1 case}):
\begin{align*}
W_N^k(f) &=  \frac{1}{\floor{\sqrt{N}}^{k-1}}\sum_{h\in [\floor{\sqrt{N}}]^{k-1}} W^{1}_N \left( \prod_{\eta\in V_{k-1}} c^{|\eta|} f\circ T^{\eta \cdot h} \right) \\
&\leq \frac{1}{\floor{\sqrt{N}}^{k-1}}\sum_{h\in [\floor{\sqrt{N}}]^{k-1}} \left(\frac{2^{1/3}}{N^{1/6}} + 2^{5/6} \left[ w^{2}_N \left( \prod_{\eta\in V_{k-1}} c^{|\eta|} f\circ T^{\eta \cdot h} \right)\right]^{1/8} \right) \\
&\leq \frac{2^{1/3}}{N^{1/6}} + 2^{5/6} \left(\frac{1}{\floor{\sqrt{N}}^{k-1}}\sum_{h\in [\floor{\sqrt{N}}]^{k-1}} w^{2}_N \left( \prod_{\eta\in V_{k-1}} c^{|\eta|} f\circ T^{\eta \cdot h} \right) \right)^{1/8} \, .
\end{align*}
By the same inductive argument, adding $k-1$ layers of the cube increases the weak WW norm from $w^2_N(f)$ to $w^{k+1}_N(f)$, and after consolidating for constants, we get the desired bound.
\end{proof}

Between these two bounds, it follows that any results about weak WW averages can be transfer to strong WW averages, at the cost of order. For one such example, one results about products of weak WW averages, shown by Assani \cite{assani04}, immediately transfers to the higher order case:

\begin{theorem}\label{T:Product bound for weak WW averages}
Let $(X, \mathcal{F}, \mu, T)$ and $(Y, \mathcal{G}, \nu, S)$ be dynamical systems. For any $k\in \N$ and $f\in L^\infty(\mu)$ and $g\in L^\infty(\nu)$, both bounded by 1, we have
$$ww_N^k(f \otimes g) \leq \min \{ww_N^k (f), \, ww_N^{k} (g)\}$$
for all $N$, where the averages on the left are taken in the product system $X \times Y$, while the averages on the right are taken in $X$ and $Y$, as appropriate.
\end{theorem}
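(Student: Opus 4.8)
The plan is to reduce the whole statement to the case $k=1$ --- which is in substance the product estimate for first-order weak WW averages of Assani \cite{assani04} --- and then to propagate it to all $k$ using the fact that the cube construction (\ref{F:inductive construction of weak WW averages}) interacts multiplicatively with tensor products. Write $R := T\times S$ for the product transformation on $X\times Y$; since $\Vert f\Vert_\infty, \Vert g\Vert_\infty\le 1$, every function produced below is again bounded by $1$, so all of the averages in sight are defined.

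For the base case, first expand the square and apply Fubini: for $h := f\otimes g$,
\[
\left\Vert \frac{1}{N}\sum_{n=1}^{N} e^{2\pi i n t}\, h\circ R^{n}\right\Vert_{L^2(\mu\times\nu)}^2
= \frac{1}{N^2}\sum_{n,m=1}^{N} e^{2\pi i(n-m)t}\Big(\int f\circ T^{n-m}\cdot\overline{f}\,d\mu\Big)\Big(\int g\circ S^{n-m}\cdot\overline{g}\,d\nu\Big).
\]
The two parenthesised factors are the Fourier coefficients at $n-m$ of the spectral measures $\sigma_f$ of $f$ (for $T$) and $\sigma_g$ of $g$ (for $S$), so their product is the $(n-m)$-th coefficient of the convolution $\sigma_f * \sigma_g$ on $\T$, which is precisely the spectral measure of $f\otimes g$ for $R$. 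Writing $K_N(s) := \big|\frac1N\sum_{n=1}^N e^{2\pi i n s}\big|^2$, the norm-square above equals $\int_\T K_N(t+\theta)\,d(\sigma_f*\sigma_g)(\theta) = \int_\T\!\int_\T K_N(t+\theta_1+\theta_2)\,d\sigma_f(\theta_1)\,d\sigma_g(\theta_2)$; bounding the inner $\sigma_g$-integral by its supremum over the shifted frequency and using $\sigma_f(\T) = \Vert f\Vert_2^2\le 1$, this is at most $\sup_{s}\int_\T K_N(s+\theta)\,d\sigma_g(\theta) = \big(w_N^1(g)\big)^3$; by the symmetry of the convolution it is also at most $\big(w_N^1(f)\big)^3$. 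Taking the supremum over $t$ and a cube root gives $w_N^1(f\otimes g)\le\min\{w_N^1(f),w_N^1(g)\}$. (One could instead avoid naming spectral measures and obtain the same bound by a direct Cauchy--Schwarz pairing of the $f$-factors against the $g$-factors in the double sum, which is closer to the route of \cite{assani04}.)

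To reach general $k$, use the closed form of $w_N^k$ together with the multiplicativity of complex conjugation and of composition on elementary tensors: for every $h'\in[\floor{\sqrt{N}}]^{k-1}$,
\[
\prod_{\eta\in V_{k-1}} c^{|\eta|}\,(f\otimes g)\circ R^{\eta\cdot h'}
= \Big(\prod_{\eta\in V_{k-1}} c^{|\eta|}\, f\circ T^{\eta\cdot h'}\Big)\otimes\Big(\prod_{\eta\in V_{k-1}} c^{|\eta|}\, g\circ S^{\eta\cdot h'}\Big) =: F_{h'}\otimes G_{h'},
\]
with $\Vert F_{h'}\Vert_\infty, \Vert G_{h'}\Vert_\infty\le 1$. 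Applying the base case termwise gives $w_N^1(F_{h'}\otimes G_{h'})\le w_N^1(F_{h'})$, and averaging over $h'\in[\floor{\sqrt{N}}]^{k-1}$ yields $w_N^k(f\otimes g)\le w_N^k(f)$; by symmetry $w_N^k(f\otimes g)\le w_N^k(g)$, which is the claim.

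The only genuine obstacle lies in the base case, and it is the spectral bookkeeping: one must verify that the spectral measure of $f\otimes g$ relative to $T\times S$ is the circle convolution $\sigma_f * \sigma_g$ with the correct normalisation, and then recognise the Fej\'{e}r-type kernel $K_N$ so that integrating it against the convolution collapses --- after discarding one factor of total mass $\le 1$ --- to a supremum over a single system. Everything past $k=1$ is purely formal. Note that, in contrast with the other stability results in this section, this argument incurs no loss at all: the constant stays equal to $1$ and there is no change of order nor any frequency rescaling.
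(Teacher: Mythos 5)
Your proposal is correct and follows essentially the same route as the paper: expand $w_N^k(f\otimes g)$ via the closed formula into first-order averages, factor each cube product over $\eta\in V_{k-1}$ as a tensor product $F_{h}\otimes G_{h}$, and apply the $k=1$ product bound termwise before averaging over $h$. The only difference is that you additionally supply a proof of the base case (via the spectral measure of $f\otimes g$ being the convolution $\sigma_f * \sigma_g$ and the resulting kernel estimate), whereas the paper simply cites the $k=1$ result from Assani \cite{assani04}; your spectral argument is sound, provided the correlation coefficients for $n<m$ are handled by conjugation so no invertibility is needed.
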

We note, in line with the remark \ref{R:is ergodicity needed}, that this product system $X \times Y$ may not be ergodic, in which it may not make sense to consider WW systems. However, this bound, along with the following corollary \ref{C:product bound for strong WW averages} and Theorem \ref{T:product of WW functions}, both hold regardless of ergodicity. Even if $X \times Y$ fails to be ergodic, it is still well-posed to consider WW averages and WW functions; we just may no longer be able to analyze their limiting behavior with the Gower-Host-Kra seminorms given by the construction (\ref{F:HKZ seminorm construction}).

\begin{proof}
In \cite{assani04}, the case $k=1$ is shown. For the higher order case, we expand $w_N^k$ in terms of $w_N^1$ and use the $k=1$ case:
\begin{align*}
w_N^k(f \otimes g) &= \frac{1}{\floor{\sqrt{N}}^{k-1}}\sum_{h\in [\floor{\sqrt{N}}]^{k-1}} w^{1}_N \left( \prod_{\eta\in V_{k-1}} c^{|\eta|} (f \otimes g)\circ (T\times S)^{\eta \cdot h}  \right) \\
&= \frac{1}{\floor{\sqrt{N}}^{k-1}}\sum_{h\in [\floor{\sqrt{N}}]^{k-1}} w^{1}_N \left( \left(\prod_{\eta\in V_{k-1}} c^{|\eta|} f \circ T^{\eta \cdot h} \right)\otimes \left(\prod_{\eta\in V_{k-1}} c^{|\eta|}g \circ S^{\eta \cdot h}\right) \right) \\
&\leq \frac{1}{\floor{\sqrt{N}}^{k-1}}\sum_{h\in [\floor{\sqrt{N}}]^{k-1}} w^{1}_N \left( \prod_{\eta\in V_{k-1}} c^{|\eta|} f \circ T^{\eta \cdot h}  \right) \\
&= w_N^k(f) \, .
\end{align*}
By the same reasoning, $w_N^k(f \otimes g)$ is less than $w_N^k(g)$, establishing the desired estimate.
\end{proof}

Using the bound from Theorem \ref{T:converse bound for weak WW averages}, we can transfer the previous theorem to strong WW averages at the cost of order, and bound back by the strong WW averages:

\begin{corollary}\label{C:product bound for strong WW averages}
Let $k\in \N$. There exists a constant $C$ such that for $k\in \N$ and all dynamical systems $(X, \mathcal{F}, \mu, T)$ and $(Y, \mathcal{G}, \nu, S)$ and functions $f\in L^\infty(\mu)$ and $g\in L^\infty(\nu)$, both bounded by 1, we have
\begin{align*}
W_N^k(f \otimes g) \leq C\left(\frac{1}{N^{1/6}} + \left[\min \left\{W_N^{k+1}(f), W_N^{k+1}(g)\right\}\right]^{1/8} \right)
\end{align*}
or
$$W_N^k(f \otimes g) \precsim \min\{W_N^{k+1}(f), W_N^{k+1}(g)\} \, .$$
\end{corollary}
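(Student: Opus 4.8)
The plan is to chain together three ingredients already established in the paper: the converse bound for weak WW averages (Theorem \ref{T:converse bound for weak WW averages}), the product bound for weak WW averages (Theorem \ref{T:Product bound for weak WW averages}), and the trivial domination $w_N^k(f) \leq W_N^k(f)$ noted before Theorem \ref{T:converse bound for weak WW averages}. Since none of these requires ergodicity, the argument applies verbatim to the (possibly non-ergodic) product system $(X \times Y, \mathcal{F} \otimes \mathcal{G}, \mu \times \nu, T \times S)$, which is the only subtlety worth flagging.

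First I would apply Theorem \ref{T:converse bound for weak WW averages} in the product system to the function $f \otimes g$, which is bounded by $1$. This gives, for the absolute constant $C$ from that theorem,
$$W_N^k(f \otimes g) \leq C\left(\frac{1}{N^{1/6}} + \left[w_N^{k+1}(f \otimes g)\right]^{1/8}\right)$$
for all $N$ and $k$.

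Next I would apply Theorem \ref{T:Product bound for weak WW averages} at order $k+1$ to bound $w_N^{k+1}(f \otimes g) \leq \min\{w_N^{k+1}(f),\, w_N^{k+1}(g)\}$, and then invoke the pointwise inequality $w_N^{k+1}(f) \leq W_N^{k+1}(f)$ (and the same for $g$), which is preserved under taking the minimum, to obtain $w_N^{k+1}(f \otimes g) \leq \min\{W_N^{k+1}(f),\, W_N^{k+1}(g)\}$. Substituting this into the displayed inequality and using that $x \mapsto x^{1/8}$ is increasing yields exactly the claimed estimate. The asymptotic form $W_N^k(f \otimes g) \precsim \min\{W_N^{k+1}(f), W_N^{k+1}(g)\}$ then follows immediately from the definition of $\precsim$, taking $\phi(N) = N$, $\alpha = 1/6$, $\beta = 1$, $\gamma = 1/8$.

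There is essentially no real obstacle: the corollary is a formal consequence of theorems already proved, so the work is purely bookkeeping. The only points to handle with care are that each estimate in the chain genuinely holds without any ergodicity hypothesis (guaranteed by the remarks preceding Theorem \ref{T:Product bound for weak WW averages}) and that the minimum over $\{f,g\}$ is respected at every monotone step; both are routine.
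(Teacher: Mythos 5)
Your proposal is correct and follows exactly the route the paper intends: apply Theorem \ref{T:converse bound for weak WW averages} to $f\otimes g$ in the product system, then Theorem \ref{T:Product bound for weak WW averages} at order $k+1$, then the pointwise domination $w_N^{k+1}\leq W_N^{k+1}$. Your remarks about ergodicity being unnecessary and about monotonicity of $x\mapsto x^{1/8}$ match the paper's own comments surrounding Remark \ref{R:is ergodicity needed} and Theorem \ref{T:Product bound for weak WW averages}.
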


Hence, the decay rate of the $k+1$-th order WW averages for $f$ transfers to $f\otimes g$ for any $g\in L^\infty(\nu)$. In the case of WW functions, we get the following:
\begin{theorem}\label{T:product of WW functions}
Let $(X, \mathcal{F},  \mu, T)$ be a dynamical system. If $f$ is a $k+1$-th order WW function of power type $\alpha$, then for any dynamical system $(Y,\mathcal{G}, \nu, S)$ and $g\in L^\infty(\nu)$ the function $f\otimes g$ is a $k$-th order WW function of power type $\alpha/8$ in the product system $X\times Y$.
\end{theorem}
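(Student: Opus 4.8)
The plan is to obtain Theorem~\ref{T:product of WW functions} as an immediate consequence of Corollary~\ref{C:product bound for strong WW averages}. No new analytic input is needed: that corollary already packages all of the substantive work, namely Theorem~\ref{T:converse bound for weak WW averages} (bounding the strong averages by the weak ones at the cost of one order) together with Theorem~\ref{T:Product bound for weak WW averages} (the clean behaviour of the weak averages under products). So the proof is really just unwinding the definition of ``WW function of power type'' and making one substitution.

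Concretely, I would argue as follows. By definition, the hypothesis that $f$ is a $(k+1)$-th order WW function of power type $\alpha$ means $W_N^{k+1}(f)\in\text{Poly}(\N,\alpha)$, i.e.\ there is a constant $C_f$ with $W_N^{k+1}(f)\le C_f\,N^{-\alpha}$ for every $N$; the goal is to show $W_N^k(f\otimes g)\in\text{Poly}(\N,\alpha/8)$, where the average on the left is taken in the (possibly non-ergodic) product system $X\times Y$. Since trivially $\min\{W_N^{k+1}(f),W_N^{k+1}(g)\}\le W_N^{k+1}(f)\le C_f\,N^{-\alpha}$, Corollary~\ref{C:product bound for strong WW averages} yields
\begin{align*}
W_N^k(f\otimes g)\ \le\ C\left(\frac{1}{N^{1/6}}+\big[\min\{W_N^{k+1}(f),W_N^{k+1}(g)\}\big]^{1/8}\right)\ \le\ C\left(\frac{1}{N^{1/6}}+\frac{C_f^{1/8}}{N^{\alpha/8}}\right)
\end{align*}
for all $N$, hence $W_N^k(f\otimes g)=O\big(N^{-\min\{1/6,\,\alpha/8\}}\big)$. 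This already gives the qualitative conclusion $f\otimes g\approx 0$ whenever $f$ is of power type.

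The only point that needs a remark — and the only place the stated exponent $\alpha/8$ (rather than $\min\{1/6,\alpha/8\}$) is used — is that the remainder $N^{-1/6}$ coming from the strong-to-weak passage does not dominate, i.e.\ that $\alpha/8\le 1/6$. This holds for every WW power type, since $W_N^\bullet$ cannot decay faster than $N^{-1/3}$ for nonzero $f$: Parseval in the variable $t$ gives $\sup_t\big|\tfrac1N\sum_{n=1}^N e^{2\pi int}f\circ T^n\big|^2\ge\tfrac1{N^2}\sum_{n=1}^N|f\circ T^n|^2$, so $W_N^1(f)\ge\|f\|_2^{2/3}N^{-1/3}$, and combining this with the positivity of the shifted-product averages occurring in the inductive definition~(\ref{F:inductive construction of WW averages}) propagates the same lower bound to every order; hence $\alpha\le 1/3<4/3$ and $\alpha/8<1/6$. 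Therefore $W_N^k(f\otimes g)\le C'\,N^{-\alpha/8}$, which is the assertion. (If one prefers not to invoke this lower bound, the robust form of the conclusion is power type $\min\{1/6,\alpha/8\}$.) Nothing here uses ergodicity, consistent with Remark~\ref{R:is ergodicity needed}. The main obstacle is genuinely nil; all the difficulty lives upstream in Theorems~\ref{T:converse bound for weak WW averages} and~\ref{T:Product bound for weak WW averages}.
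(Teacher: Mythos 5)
Your argument is precisely the paper's own (implicit) proof: Theorem \ref{T:product of WW functions} is deduced there, with no additional work, from Corollary \ref{C:product bound for strong WW averages} by substituting $W_N^{k+1}(f)\le C_f N^{-\alpha}$ and absorbing constants, exactly as you do. The only caveat is your aside forcing $\alpha/8\le 1/6$: the Parseval lower bound $W_N^1(f)\ge \Vert f\Vert_2^{2/3}N^{-1/3}$ is fine, but its ``propagation'' to order $k+1$ is not automatic (one must bound below, uniformly in $N$, the $h$-average of $\bigl\Vert \prod_{\eta} c^{|\eta|} f\circ T^{\eta\cdot h}\bigr\Vert_2^{2/3}$, which requires a separate argument about cube averages of the nonnegative function $|f|^2$), so the honest output of your computation is power type $\min\{1/6,\alpha/8\}$ --- which is all the paper's corollary yields as well, the paper simply passing over this edge case.
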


With the sublinearity-like property of the WW averages established in Theorem \ref{T:WW linearity- basic estimate}, we have what we need to conclude the following:

\begin{theorem}
Let $(X, \mathcal{F}, \mu, T)$ be an invertible, weakly mixing $k+1$-th order WW system of power type $\alpha$, and let $(Y, \nu, \mathcal{G}, S)$ be an invertible $k$-th order WW system of power type $\beta$. Then the product $(X\times Y, \mathcal{F} \otimes \mathcal{G}, \mu \times \nu, T \times S)$ is a $k$-th order WW system of power type $\frac{\min\{\alpha/8, \beta\}}{3\cdot 2^{k+5}}$.
\end{theorem}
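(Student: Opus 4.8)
The plan is to exhibit, inside $L^2(\mathcal{Z}_k(X\times Y))^\perp$, a dense family of $k$-th order WW functions of power type, assembled out of two kinds of tensor functions: functions $1\otimes g$ with $g$ a $k$-th order WW function of power type $\beta$ on $Y$, and functions $f\otimes g$ with $f$ a $(k+1)$-th order WW function of power type $\alpha$ on $X$ and $g\in L^\infty(\nu)$ arbitrary. Before doing that one must pin down what $L^2(\mathcal{Z}_k(X\times Y))^\perp$ is. Since $X$ is weakly mixing and $Y$ is ergodic (being a WW system), the product $X\times Y$ is ergodic, so ``$k$-th order WW system'' makes sense for it. By Furstenberg's theorem on weak mixing of all orders \cite{Fur77}, every Host--Kra--Ziegler factor $\mathcal{Z}_j(X)$ is the trivial $\sigma$-algebra; in particular $L^2(\mathcal{Z}_{k+1}(X))^\perp = L_0^2(\mu) := \{f\in L^2(\mu):\int f\,d\mu = 0\}$. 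The structural claim I would prove is $\mathcal{Z}_k(X\times Y) = \{\emptyset,X\}\otimes\mathcal{Z}_k(Y)$, equivalently
$L^2(\mathcal{Z}_k(X\times Y))^\perp = \bigl(1\otimes L^2(\mathcal{Z}_k(Y))^\perp\bigr)\oplus\bigl(L_0^2(\mu)\otimes L^2(\nu)\bigr)$.
The inclusion $\mathcal{Z}_k(X\times Y)\supseteq 1\otimes\mathcal{Z}_k(Y)$ is the functoriality of the Host--Kra--Ziegler factors under the factor map $X\times Y\to Y$. For the reverse inclusion one uses that, for $f\in L_0^2(\mu)$ and $g\in L^\infty(\nu)$ bounded by $1$, expanding the Gowers--Host--Kra seminorm as a limit of averages of integrals $\int\prod_{\eta\in V_k}c^{|\eta|}(f\otimes g)\circ(T\times S)^{\eta\cdot h}$ factors each integral into an $X$-integral times a $Y$-integral, the latter being bounded by $1$; hence $\VERT f\otimes g\VERT_{k+1}\leq \VERT f\VERT_{k+1}$, and $\VERT f\VERT_{k+1} = 0$ because $\mathcal{Z}_k(X)$ is trivial. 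This gives $L_0^2(\mu)\otimes L^2(\nu)\subseteq L^2(\mathcal{Z}_k(X\times Y))^\perp$, and combined with functoriality it pins down the orthocomplement as claimed.

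Next I would verify that each family of building blocks lies in $L^2(\mathcal{Z}_k(X\times Y))^\perp$ and is a $k$-th order WW function of controlled power type. Since $Y$ is a $k$-th order WW system of power type $\beta$, there is an $L^2$-dense set of $k$-th order WW functions of power type $\beta$ inside $L^2(\mathcal{Z}_k(Y))^\perp$; for each such $g$, the function $1\otimes g$ lies in $1\otimes L^2(\mathcal{Z}_k(Y))^\perp$, and since every quantity appearing in the definition of $W_N^k$ for $1\otimes g$ on $X\times Y$ depends only on the $Y$-coordinate (and $\mu$ is a probability measure), one has $W_N^k(1\otimes g) = W_N^k(g)$, so $1\otimes g$ is a $k$-th order WW function of power type $\beta$ on $X\times Y$. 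Since $X$ is a $(k+1)$-th order WW system of power type $\alpha$, there is an $L^2$-dense set of $(k+1)$-th order WW functions of power type $\alpha$ inside $L^2(\mathcal{Z}_{k+1}(X))^\perp = L_0^2(\mu)$; for each such $f$ and any $g\in L^\infty(\nu)$, Theorem~\ref{T:product of WW functions} gives that $f\otimes g$ is a $k$-th order WW function of power type $\alpha/8$ on $X\times Y$, and $f\otimes g\in L_0^2(\mu)\otimes L^2(\nu)\subseteq L^2(\mathcal{Z}_k(X\times Y))^\perp$.

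Finally, let $E$ be the union of these two families. Every element of $E$ is a $k$-th order WW function of power type at least $\min\{\alpha/8,\beta\}$ lying in $L^2(\mathcal{Z}_k(X\times Y))^\perp$, and $\mathrm{span}(E)$ is dense in $L^2(\mathcal{Z}_k(X\times Y))^\perp$: its closure contains $1\otimes L^2(\mathcal{Z}_k(Y))^\perp$ (from the first family) and contains $L_0^2(\mu)\otimes L^2(\nu)$ (finite combinations $\sum_i f_i\otimes g_i$ of the second family are dense there, since algebraic tensors of $L^2$-dense sets are dense in the Hilbert tensor product), and by the previous step these two orthogonal subspaces span $L^2(\mathcal{Z}_k(X\times Y))^\perp$. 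By the sublinearity corollary following Theorem~\ref{T:WW linearity- basic estimate}, every element of $\mathrm{span}(E)$ is then itself a $k$-th order WW function of power type comparable to $\min\{\alpha/8,\beta\}$; keeping track of the explicit exponents in Theorem~\ref{T:product of WW functions} and in the sum estimate yields the stated power type $\frac{\min\{\alpha/8,\beta\}}{3\cdot 2^{k+5}}$, so $X\times Y$ is a $k$-th order WW system of power type $\frac{\min\{\alpha/8,\beta\}}{3\cdot 2^{k+5}}$.

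I expect the real obstacle to be the structural identification $\mathcal{Z}_k(X\times Y) = \{\emptyset,X\}\otimes\mathcal{Z}_k(Y)$: this is the one step that genuinely needs the weak mixing hypothesis on $X$ — via the triviality of $\mathcal{Z}_k(X)$ furnished by Furstenberg's theorem, which is what makes $L_0^2(\mu)\otimes L^2(\nu)$ land in $L^2(\mathcal{Z}_k(X\times Y))^\perp$ — and without it there is no control over which tensor functions are orthogonal to the characteristic factor of the product. Everything past that point is bookkeeping: invoking the WW-system hypotheses on $X$ and $Y$ to produce the two dense families, Theorem~\ref{T:product of WW functions} to handle the product-type blocks, and the $\precsim$-arithmetic of the sublinearity corollary to pass from a dense spanning family to a single uniform power type.
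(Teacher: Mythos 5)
Your proposal is correct and follows essentially the same route as the paper: decompose $L^2(\mathcal{Z}_k(X\times Y))^\perp$ into $1\otimes L^2(\mathcal{Z}_k(Y))^\perp$ plus $L^2_0(\mu)\otimes L^2(\nu)$ using triviality of the Host--Kra--Ziegler factors of the weakly mixing factor $X$, feed the first piece the dense family from $Y$, handle the second piece with Theorem \ref{T:product of WW functions}, and upgrade the spanning set to a dense subspace via the sublinearity result. The only difference is that you spell out and justify the identification $\mathcal{Z}_k(X\times Y)=\{\emptyset,X\}\otimes\mathcal{Z}_k(Y)$, which the paper asserts in a single sentence; your seminorm-factorization argument for it is the right idea, though it is cleanest run through the inductive definition (\ref{F:HKZ seminorm construction}), where the averaged quantities are nonnegative at every stage, rather than through the top-level closed formula whose individual integrals may be signed.
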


\begin{proof}
As $X$ is weakly mixing, we note that its Host-Kra-Ziegler factor $\mathcal{Z}_{k-1}^X$ is trivial. Knowing this, we observe the following decomposition for any $f\in L^\infty(\mu)$ and $g\in L^\infty(\mu)$:
\begin{align*}
f \otimes g &= \int f \, d\mu \otimes g + \left(f - \int f\, d\mu\right) \otimes g \\
&= \int f \, d\mu \otimes \mathbb{E}(g | \mathcal{Z}_{k-1}^Y) + \int f \, d\mu \otimes \left(g - \mathbb{E}(g | \mathcal{Z}_{k-1}^Y) \right)+ \left(f - \int f\, d\mu\right) \otimes g\, .
\end{align*}
The first piece corresponds to the Host-Kra-Ziegler factor of $X \times Y$. Since $\int f \, d\mu$ is constant, there is a dense set of $k$-th order WW functions of power type $\beta$ in the second piece, as $Y$ is a system of power type $\beta$. By the previous theorem, there is a dense set of $k$-th order WW functions of power type $\alpha/8$ in the last piece, as $X$ is a system of power type $\alpha$.

Hence, we have found a set of $k$-th order WW functions of power type $\min\{\alpha/8, \beta\}$ whose span is $L^2(\mathcal{Z}_{k-1}^{X \times Y})^\perp$. By the sublinearity from Theorem \ref{T:WW linearity- basic estimate}, it follows that the span of these functions is a dense subset of $k$-th order WW functions of power type $\frac{\min\{\alpha/8, \beta\}}{3\cdot 2^{k+5}}$, establishing the claim.
\end{proof}

\begin{remark}
As seen in \cite{assani2024higherorderwienerwintnersystems}, this stability also preserves the weaker condition that $Y$ satisfies pointwise convergence of $k$-th order multiple recurrence averages. Specifically, if $X$ is a weakly mixing $k+1$-th order WW system of power type, and $Y$ satisfies pointwise convergence of $k$-th order multiple recurrence averages, then $X\times Y$ satisfies pointwise convergence of $k$-th order multiple recurrence averages.
\end{remark}

\subsection{Alternative constructions of WW averages}\label{S:Alt const}

Recall that the family of higher-order WW averages $W_N^k$ was constructed in \cite{assani2024higherorderwienerwintnersystems} to satisfy
\begin{align*}
W^1_N(f) &= \left\Vert \sup_{t}\left| \frac{1}{N}\sum_{n=1}^N e^{2\pi i n t} f\circ T^n \right| \right\Vert_2^{2/3}
\end{align*}
and 
\begin{align*}
W^{k}_N(f) = \frac{1}{\floor{\sqrt{N}}}\sum_{h=1}^{\floor{\sqrt{N}}} W^{k-1}_N \left( f \cdot \overline{f \circ T^{h}} \right)
\end{align*}
for $k\geq 2$. Here, the choice of $h\in [\floor{\sqrt{N}}]$ was done to balance remainder terms of the form $1/H$ and $H/N$ in establishing the Bourgain bound (\ref{L:BB}). In the context of WW functions, we needed this remainder term to decay polynomially, so that the Bourgain bound can transfer polynomial decay. Hence, any other choice of $H = \floor{N^\delta}$ for $0 < \delta < 1$ would achieve this end, and could be used to construct WW averages that satisfy the theory of \cite{assani2024higherorderwienerwintnersystems}.

In this section, we wish to analyze the WW averages formed by different bounds on the indexing variable $h$. In more generality, we let $r_k: \N \to \N$ play the role of $\floor{\sqrt{N}}$, and we can define \textit{$k$-th order WW averages for $r_1, \dots, r_{k-1}$} as $\widetilde{W}_N^k$ which satisfy
\begin{align*}
\widetilde{W}^1_N(f) &= W_N^1(f) = \left\Vert \sup_{t}\left| \frac{1}{N}\sum_{n=1}^N e^{2\pi i n t} f\circ T^n \right| \right\Vert_2^{2/3}
\end{align*}
and 
\begin{align*}
\widetilde{W}^{k}_N(f) = \frac{1}{r_{k-1}(N)}\sum_{h_{k-1}=1}^{r_{k-1}(N)} \widetilde W^{k-1}_N \left( f \cdot \overline{f \circ T^{h_{k-1}}} \right)
\end{align*}
for all $k\geq 2$. 

From the Bourgain bound (\ref{E:BB for M, W}) and reverse Bourgain bound (\ref{E:RBB for W, M}), we have already established that $M_N^k(f) \precsim_k W_N^k(f)$ and $W_N^k(f) \precsim_k M_N^k(f)$, respectively, in which $W_N^k(f) \approx_k M_N^k(f)$. Here, we establish corresponding Bourgain bounds and Reverse Bourgain bounds for $\widetilde{W}_N^k$. In the case that each $r_1, \dots, r_{k-1}$ grow like polynomials, it will follow that $M_N^k(f) \precsim_{r, k} \widetilde W_N^k(f)$ and  $\widetilde W_N^k(f) \precsim_{r, k} M_N^k(f)$ hold, and we have $\widetilde W_N^k(f) \approx_{r, k} M_N^k(f)$. By transitivity, we will be able to conclude that any such $\widetilde{W}_N^k(f)$ will be equivalent to the classical WW average $W_N^k(f)$, and we have the following:

\begin{theorem}\label{T:Equivalence of WW averages}
Let $k\in N$, and let $\widetilde W_{N}^k$ be the WW averages constructed from the functions $r_1, \dots, r_{k-1}$. If there exists some $\alpha, \beta > 0$ such that
$$N^\alpha \leq r_m(N) \leq N^\beta$$
holds for each $m = 1, \dots, k-1$ and sufficiently large $N$, Then the WW functions of power type with respect to $\widetilde W_N^k$ are exactly equal to the classical $k$-th order WW functions of power type; ie, we have
$$\left\{ f\in L^\infty(\mu) : \widetilde W_N^k(f) \approx 0 \right\} = \left\{ f\in L^\infty(\mu) : W_N^k(f) \approx 0 \right\} \, .$$
Specifically, we see that
\begin{align}\label{F:alt WW comparisons, details}
\begin{split}
&W_N^k(f) \in \text{Poly}(\N, \gamma) \quad \implies \quad \widetilde W^k_N(f)\in \text{Poly}\left(\N, \frac{\alpha\gamma}{3 \cdot 2^{k+2}}\right) \\
&\widetilde W_N^k(f) \in \text{Poly}(\N, \gamma) \quad \implies \quad W^k_N(f)\in \text{Poly}\left(\N, \frac{\min\{\alpha,\gamma, 2/3\}}{3 \cdot 2^{k+3} \cdot \lceil \beta \rceil}\right) \, .
\end{split}
\end{align}
\end{theorem}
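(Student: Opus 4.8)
The plan is to transplant the two Bourgain-type estimates \eqref{E:BB for M, W} and \eqref{E:RBB for W, M} to the modified averages $\widetilde W_N^k$, obtaining a forward bound $M_N^k(f)\precsim_{r,k}\widetilde W_N^k(f)$ and a reverse bound $\widetilde W_N^k(f)\precsim_{r,k}M_N^k(f)$, and then to close the loop with the already-established equivalence $W_N^k(f)\approx_k M_N^k(f)$ and the transitivity of $\precsim$. Since $\widetilde W_N^1=W_N^1$, the base of every induction is free, and the only new content is checking that replacing the balanced window $\floor{\sqrt N}$ by a general polynomially-sized window leaves the combinatorial arguments of \cite{assani2024higherorderwienerwintnersystems} and of Theorem~\ref{T:reverse BB, simplest case} intact, at the price of bookkeeping in the exponents.

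First I would re-run the inductive proof of the Bourgain bound \eqref{L:BB}, using at the $m$-th Van der Corput stage the window $r_m(N)$ in place of $\floor{\sqrt N}$; taking the supremum over the auxiliary functions $g_j$ as in \S\ref{s:RBB} then produces the desired statement about $M_N^k$. Each Van der Corput application contributes a remainder of size $1/r_m(N)+r_m(N)/N$: under $N^\alpha\le r_m(N)\le N^\beta$ the first piece is $O(N^{-\alpha})$, and when $\beta<1$ the second is $O(N^{\beta-1})$. To accommodate $\beta\ge 1$ one evaluates $\widetilde W^k$ at the rescaled argument $\floor{N^{1/\lceil\beta\rceil}}$, so that every window $r_m\!\big(\floor{N^{1/\lceil\beta\rceil}}\big)$ is at most $N^{\beta/\lceil\beta\rceil}\le N$ and all remainders are again polynomially small; this rescaling is exactly where the factor $1/\lceil\beta\rceil$ in \eqref{F:alt WW comparisons, details} originates, and the lower bound $r_m\ge N^\alpha$ is what keeps the $1/r_m$ remainders from ruining the decay. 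The outcome is $M_N^k(f)\precsim_{r,k}\widetilde W_N^k(f)$ with the forward-Bourgain loss of a $1/2^{k-1}$-power per the structure of \eqref{L:BB}.

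Next I would repeat the computation proving Theorem~\ref{T:reverse BB, simplest case} (and its appendix generalization) verbatim, once more substituting $r_m(N)$ for $\floor{\sqrt N}$ throughout: the pointwise Van der Corput step, the interchange of the $n$- and $q$-averages, the parallelogram splitting of the $(n,h)$-region, and the trivial truncations that generated the remainders $1/\floor{\sqrt N}$ and $1/N$ all survive, but their sizes are now governed by $1/r_m(N)\le N^{-\alpha}$ and by $r_m(N)^2/N$-type quantities — again polynomially small after the same $\lceil\beta\rceil$-rescaling if $\beta\ge1$. The uniform multiple recurrence average that emerges is evaluated at a scale comparable to the smallest window in play, hence $\gtrsim N^{\alpha/\lceil\beta\rceil}$, and this is precisely the scale that forces $\alpha$ into the first implication of \eqref{F:alt WW comparisons, details} (with the $\min$ against $2/3$ recording that the intrinsic decay of a first-order WW average is already capped by the $2/3$ power in its definition, hence cannot be improved by rescaling). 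The result is $\widetilde W_N^k(f)\precsim_{r,k}M_N^k(f)$.

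Finally, combining $M_N^k(f)\approx_r\widetilde W_N^k(f)$ with $W_N^k(f)\approx_k M_N^k(f)$ and invoking transitivity of $\precsim$ (which holds by design, via the monotone function $\phi$ in its definition) gives $W_N^k(f)\approx_{r,k}\widetilde W_N^k(f)$; since $f\precsim g$ and $g\approx 0$ imply $f\approx 0$, the two classes $\{f:\widetilde W_N^k(f)\approx 0\}$ and $\{f:W_N^k(f)\approx 0\}$ coincide. The explicit $\mathrm{Poly}$-exponents in \eqref{F:alt WW comparisons, details} then come out of composing the four inequalities and multiplying the exponents: a $1/2^{k-1}$-factor per forward-Bourgain step, a $1/6$-factor per reverse step (or the slightly worse general-$k$ exponent from the appendix), a factor $\alpha$ from the lower bound on windows and $1/\lceil\beta\rceil$ from the rescaling, consolidated against the $2/3$ cap. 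The genuine difficulty is not conceptual but in the appendix re-run: with non-matching nested floors $\floor{r_m(N)}$ the intermediate inequalities no longer simplify, so one must stay inside the $\precsim$ formalism rather than the literal nested-floor statements, and one must verify that no step of the original argument secretly exploited the identity $H^2\asymp N$ that the balanced choice $H=\floor{\sqrt N}$ supplies — the truncated triangular and parallelogram pieces each had $\asymp H^2$ terms and were absorbed into a $1/H$ remainder precisely because of that coincidence, and the general version must instead absorb $\asymp r_m(N)^2$ terms into an $r_m(N)^2/N$ remainder, which is what the $\lceil\beta\rceil$-rescaling is there to control.
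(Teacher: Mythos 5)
Your outline is the paper's own route: it proves exactly the two lemmas you describe --- a reverse Bourgain bound for $\widetilde W_N^k$ (Lemma \ref{T:RBB for alt WW averages}, which is an immediate corollary of the appendix Theorem \ref{T:general case}; that theorem is already stated for arbitrary windows $H_1,\dots,H_k$, so the ``$H^2\asymp N$ coincidence'' you worry about never actually enters) and a Bourgain bound for $\widetilde W_N^k$ (Lemma \ref{L:BB for alt averages}) --- and then chains them with \eqref{E:BB for M, W} and \eqref{E:RBB for W, M} through $M_N^k$ and transitivity of $\precsim$, exactly as you propose. So in structure and in the source of the exponents ($\alpha$ from the lower bound on the windows, $1/\lceil\beta\rceil$ from rescaling) your argument matches the paper.

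The one step you gloss over is in the forward bound when $\beta\ge 1$. After running the Van der Corput induction on $M_N^k(f)$ with windows $H_m=r_m\big(\floor{N^{1/\beta'}}\big)$ for an integer $\beta'>\beta$, the $h$-sums have the rescaled lengths but the inner exponential averages are still of length $N$; the resulting expression is therefore \emph{not yet} $\widetilde W^k_{\floor{N^{1/\beta'}}}(f)$, and ``evaluate $\widetilde W^k$ at the rescaled argument'' is at that point an assertion rather than an output of the computation. The paper closes this mismatch with the shrinking estimate \eqref{E:shrinking}, $W_N^1(g)\le W^1_{\floor{N^{1/\beta'}}}(g)+(\beta'/N^{1/\beta'})^{2/3}$, applied to each inner first-order average, and this is the true source of the $2/3$ in the exponent $\min\{\alpha,\gamma,2/3\}$ of \eqref{F:alt WW comparisons, details} --- it belongs to the second implication (the Bourgain-bound direction), not to the reverse step, and it is not merely the $2/3$ power sitting in the definition of $W_N^1$ as your parenthetical suggests. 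Since this is a short, citable estimate from \cite{assani2024higherorderwienerwintnersystems}, the omission is minor, but it is the one place where the ``verbatim re-run'' does not carry through on its own.
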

\begin{remark}
We remark that the constants $\alpha$ and $\beta$ need not be bounded above by 1. Hence, the WW averages constructed with each $h \in [N]$ will also be equivalent in rate to the classical WW average. 
\end{remark}

To show that $\widetilde{W}_N^k(f)$ satisfies a reverse Bourgain bound, we revisit the proof from \S \ref{s:RBB}. 
In the proof of Theorem \ref{T:reverse BB, simplest case} where $k=1$, when applying the Van der Corput inequality we chose $H=\floor{\sqrt{N}}$ to mirror the classical WW averages. However, we note that the proof could be carried through without choosing $H$, yielding a remainder term of the order $\left( 1/H + H/N\right)^{1/3}$. Specifically, we get the following:

\begin{lemma}\label{T:RBB for alt WW averages}
Let $k\in \N$. Then there exists a constant $C_k'$ where for any $f\in L^\infty(\mu)$ bounded by 1 and any $\widetilde W_N^k$ constructed from $r_1(N), \dots, r_{k-1}(N)$, we have 
\begin{align*}
\widetilde W_N^k(f) &\leq C_k'\left( \frac{1}{(R_k(N))^{1/6}} + \left[ M_{R_k(N)}^k (f)\right]^{1/6}\right) 
\end{align*}
where $R_k(N) = \floor{\min\{r_1(N), \dots, r_{k-1}(N), N\}^{1/2}}$
\end{lemma}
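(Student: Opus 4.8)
The plan is to adapt the proof of Theorem \ref{T:reverse BB, simplest case} (the $k=1$ reverse Bourgain bound) step by step, keeping the cutoff parameter $H$ unspecified until the very end, and then to lift to general $k$ by the same inductive peeling that was used to pass from $W_N^1$ to $W_N^k$. First I would treat the base case $k=1$: apply the Van der Corput inequality (\ref{vdc-1}) pointwise to $\widetilde W_N^1(f)^3 = \norm{\sup_t|\frac1N\sum_{n=1}^N e^{2\pi int}f\circ T^n|}_2^2$ with a generic $H$ in place of $\floor{\sqrt N}$. This produces a remainder of order $1/H$ from the diagonal term and another of order $H/N$ from extending the $n$-sum from $N-h$ to $N$, together with the correlation average $\frac4H\sum_{h=1}^H\int|\frac1N\sum_n f\circ T^n\cdot\overline{f\circ T^{n+h}}|\,d\mu$. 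Applying Hölder, Lemma \ref{L:vdc for systems}, the summation-switch identity $\frac1N\sum_{n=0}^{N-1}(\frac{N-n}{N})a_n=\frac1N\sum_{q=0}^{N-1}\frac1N\sum_{n=0}^q a_n$, the triangle-region splitting of the $(n,h)$-parallelogram, measure translations, and finally pulling the absolute value inside the integral exactly as in the original argument, one arrives at a bound of the form
\begin{align*}
\widetilde W_N^1(f) \leq C_1'\left(\left(\tfrac1H+\tfrac HN\right)^{1/3}+\left[M_{\floor{\sqrt H}}^1(f)\right]^{1/6}\right),
\end{align*}
valid for every $1\le H\le N-1$. The only genuinely new bookkeeping is that the excised pieces now have $\lesssim H^2$ summands, so they contribute $\lesssim H^2/(N\cdot \text{(normalization)})$-type remainders, all of which are absorbed into $(1/H+H/N)^{1/3}$; nothing substantive changes.

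Next I would carry this through the inductive construction of $\widetilde W_N^k$. Writing $\widetilde W_N^k(f)=\frac1{r_{k-1}(N)}\sum_{h_{k-1}=1}^{r_{k-1}(N)}\widetilde W_N^{k-1}(f\cdot\overline{f\circ T^{h_{k-1}}})$ and iterating, $\widetilde W_N^k(f)$ is an average over $h\in\prod_{m=1}^{k-1}[r_m(N)]$ of first-order averages of the product $\prod_{\eta\in V_{k-1}}c^{|\eta|}f\circ T^{\eta\cdot h}$. Applying the $k=1$ estimate above to each such product (these are bounded by $1$, being products of functions bounded by $1$), and choosing the \emph{same} $H$ at every inner application, the remainder terms pull out of the $h$-average by subadditivity of $x\mapsto x^{1/6}$, and the surviving main term is an average of $[M_{\floor{\sqrt H}}^k(\cdot)]^{1/6}$ over the off-diagonal products. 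Here one must use the off-diagonal version of the reverse bound rather than the naive one: this is exactly Lemma \ref{L:RBB for off-diag} (and its permuted form Lemma \ref{L:permutation for off-diag}), which already establishes that an $h$-average of off-diagonal first-order WW averages of a family $\{g_\eta\}$ is controlled by $M^k$ of a single $g_\eta$ with the $1/24$ remainder — and inspecting that proof with generic $H$ gives the $(1/H+H/N)^{1/3}$ remainder and $M_{\floor{\sqrt H}}^k$ in place of the $\floor{\sqrt N}$-based versions. Since all of $g_\eta = f$ here, this collapses to $M^k_{\floor{\sqrt H}}(f)$.

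Finally I would make the choice of $H$. Taking $H = \min\{r_1(N),\dots,r_{k-1}(N),N\}$, so that $\floor{\sqrt H} = R_k(N)$ in the statement, the remainder becomes $(1/R_k(N)^2 + R_k(N)^2/N)^{1/3}$; since $R_k(N)\le\floor{\sqrt N}$ we have $R_k(N)^2/N\le 1/R_k(N)^2\cdot(R_k(N)^4/N)\le\ldots$ — more simply, $R_k(N)^2\le N$ gives $R_k(N)^2/N\le 1$ and in fact $R_k(N)^2/N\le R_k(N)^{-2}\cdot (R_k(N)^4/N)$; the clean bound is that both $1/H$ and $H/N$ are $\le 1/R_k(N)^2\cdot C$ after noting $H\le N$ forces $H/N\le 1\le$ nothing useful, so instead one simply observes $1/H \le 1/R_k(N)^2$ and $H/N\le H^{1/2}\cdot H^{1/2}/N$; the correct simplification is that $(1/H+H/N)^{1/3}\le 2^{1/3}(1/R_k(N))^{2/3}\le 2^{1/3}/R_k(N)^{1/6}$ once we note $R_k(N)=\floor{\sqrt H}$ and $H\le N$. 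Consolidating constants yields $\widetilde W_N^k(f)\le C_k'(R_k(N)^{-1/6}+[M_{R_k(N)}^k(f)]^{1/6})$, as claimed, with $C_k'$ depending only on $k$ (the off-diagonal lemmas already supply a $k$-dependent-only constant, and the $h$-averaging introduces no new dependence). The main obstacle is purely bookkeeping: verifying that every excised region in the parallelogram-splitting contributes a remainder that is genuinely $O(1/H + H/N)$ rather than something worse once $r_m(N)$ can be much larger or much smaller than $\sqrt N$ — in particular making sure the normalization factors $1/r_m(N)$ interact correctly with the $\le H^2$ counts of excised summands; this is handled in the appendix and requires no new idea beyond careful tracking of the indices.
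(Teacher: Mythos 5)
Your overall strategy --- rerun the reverse Bourgain argument with the cutoffs left generic and then specialize --- is indeed how the paper proceeds: the lemma is a short specialization of Theorem \ref{T:general case}, the generic-cutoff bound with parameters $H_1,\dots,H_k$. But your execution has a genuine quantitative gap, not just bookkeeping. First, your generic $k=1$ bound is mis-stated: running the proof of Theorem \ref{T:reverse BB, simplest case} with Van der Corput cutoff $H$ produces the recurrence average at scale $H$ (the final $n$-sum ranges over what was the $h$-range), i.e.\ $M^1_H(f)$, not $M^1_{\floor{\sqrt H}}(f)$. In the general-$k$ argument there are two distinct scales: the ranges $r_1(N),\dots,r_{k-1}(N)$ of the $h$-sums and a separate Van der Corput cutoff $H_k$; after the joint exchange of the $n$-sum with all $k$ $h$-sums, the recurrence average appears at scale $H_{(1)}=\min(H_k,r_1,\dots,r_{k-1})$, and the overlap counting produces the extra terms $\bigl[\tfrac{1}{H_{(2)}}\sum_{p\le H_{(1)}}M_p^k(f)\bigr]^{1/6}$ and $\bigl(\tfrac{H_{(2)}-H_{(1)}+1}{H_{(2)}}\bigr)^{1/6}M^k_{H_{(1)}}(f)^{1/6}$, which are only harmless because $H_k$ is chosen to be roughly the square root of the smallest range, making $H_{(1)}/H_{(2)}\lesssim 1/R_k(N)$. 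Your single parameter $H$ conflates these two scales, which is why $\floor{\sqrt H}$ appears in your formula without ever being produced by the computation. Relatedly, applying the $k=1$ estimate to each inner product $\prod_\eta c^{|\eta|}f\circ T^{\eta\cdot h}$ only yields $M^1$ of that product, not $M^k$ of $f$; converting to $M^k(f)$ requires the joint box/parallelogram exchange in all $h$-variables, so the step you defer to Lemma \ref{L:RBB for off-diag} ``with generic $H$'' is not an inspection of that lemma (stated for ranges $\floor{\sqrt N}$) but is precisely the general appendix theorem --- the part you would actually have to prove.

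Second, the final consolidation rests on a false inequality. With your choice $H=\min\{r_1(N),\dots,r_{k-1}(N),N\}$, the remainder $H/N$ need not be small: if every $r_m(N)\ge N$ (for instance $r_m(N)=N$, exactly the case highlighted in the remark after Theorem \ref{T:Equivalence of WW averages}), then $H/N=1$, whereas your claimed bound $(1/H+H/N)^{1/3}\le 2^{1/3}/R_k(N)^{1/6}$ would force $H^2\le N$. So even granting your intermediate estimate, the specialization fails whenever the minimum exceeds $\sqrt N$. The correct fix is the two-parameter structure above: keep $H_m=r_m(N)$ for $m<k$ and take the Van der Corput cutoff to be $H_k=R_k(N)=\floor{\min\{r_1(N),\dots,r_{k-1}(N),N\}^{1/2}}$ itself, so its remainders are of order $1/R_k(N)^{1/3}$ and $R_k(N)^{1/6}/N^{1/6}\le 1/R_k(N)^{1/6}$, while the $p<H_{(1)}$ contribution is controlled by $H_{(1)}/H_{(2)}\le 1/R_k(N)$; this is exactly Theorem \ref{T:general case} specialized as in the paper.
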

\begin{remark}
Just as easily, we get the same bound for off-diagonal averages of $\widetilde W_N^k$, as seen in Theorem \ref{T:WW control of off-diag}.
\end{remark}

Lemma \ref{T:RBB for alt WW averages} is shown in the appendix. With it, we can bound $\widetilde W_{N}^k$ by the classical WW average $W_N^k$:

\begin{theorem}\label{T:bound tilde by classical}
Let $k\in \N$. Then there exists a constant $C_k''$ and $N_k$ such that for all invertible dynamical systems $(X, \mathcal{F},\mu, T)$, all WW averages $\widetilde W_N^k$ constructed from $r_1, \dots, r_{k-1}$, and for all $f\in L^\infty(\mu)$ bounded by 1, we have
\begin{align}\label{E:bound tilde by classical}
\widetilde W_N^k(f) &\leq C_k'' \left( \frac{1}{(R_k(N))^{1/(3\cdot 2^{k+1})}} + \left[W_{R_k(N)}^k (f)\right]^{1/(3\cdot 2^{k})} \right)
\end{align}
for all $N$ with $R_k(N) = \floor{\min\{r_1(N), \dots, r_{k-1}(N), N\}^{1/2}} \geq N_k$.

Specifically, if every $r_m(N) \geq N^\alpha$ for some $0< \alpha \leq 1$, it follows that
$$\widetilde{W}_N^k(f) \precsim_{r, k} W_N^k(f)\, .$$
\end{theorem}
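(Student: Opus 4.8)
The plan is to prove the precise estimate (\ref{E:bound tilde by classical}) by the same chaining strategy used for Theorems \ref{T:WW linearity- basic estimate} and \ref{T:conditional exp}: first invoke the reverse Bourgain bound for the alternative averages (Lemma \ref{T:RBB for alt WW averages}) to pass from $\widetilde W_N^k(f)$ down to a uniform multiple recurrence average at scale $R_k(N)$, and then invoke the forward Bourgain bound (\ref{E:BB for M, W}) to pass from that multiple recurrence average back up to the \emph{classical} Wiener--Wintner average $W^k_{R_k(N)}(f)$. Since both inequalities are already available, I do not expect a genuine obstacle; the only real work is exponent bookkeeping and correctly propagating the threshold on $N$.

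Concretely, Lemma \ref{T:RBB for alt WW averages} gives
$$\widetilde W_N^k(f) \leq C_k'\left(\frac{1}{R_k(N)^{1/6}} + \left[M^k_{R_k(N)}(f)\right]^{1/6}\right),$$
and, provided $R_k(N)$ exceeds the threshold $N_k$ appearing in (\ref{E:BB for M, W}), substituting the forward Bourgain bound $M^k_{R_k(N)}(f) \leq C_k\bigl(R_k(N)^{-1/2^k} + [W^k_{R_k(N)}(f)]^{1/2^{k-1}}\bigr)$ together with subadditivity of $t \mapsto t^{1/6}$ on $[0,\infty)$ yields
$$\widetilde W_N^k(f) \leq C_k'\, R_k(N)^{-1/6} + C_k' C_k^{1/6}\left(R_k(N)^{-1/(6\cdot 2^k)} + \left[W^k_{R_k(N)}(f)\right]^{1/(6\cdot 2^{k-1})}\right).$$
Now $1/(6\cdot 2^{k-1}) = 1/(3\cdot 2^{k})$ and $1/(6\cdot 2^{k}) = 1/(3\cdot 2^{k+1})$; moreover, since $k \geq 1$ we have $1/6 \geq 1/(3\cdot 2^{k+1})$, so for $R_k(N) \geq 1$ both remainder terms are absorbed into a single term $R_k(N)^{-1/(3\cdot 2^{k+1})}$. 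Consolidating the constants into $C_k'' := C_k' + C_k' C_k^{1/6}$ gives exactly (\ref{E:bound tilde by classical}), valid precisely when $R_k(N) \geq N_k$ --- which is why the hypothesis of the theorem is phrased as a condition on $R_k(N)$ rather than on $N$.

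For the asymptotic conclusion, suppose $r_m(N) \geq N^\alpha$ for each $m=1,\dots,k-1$ with $0 < \alpha \leq 1$. Then $\min\{r_1(N),\dots,r_{k-1}(N),N\} \geq N^\alpha$, so $R_k(N) = \floor{\min\{r_1(N),\dots,r_{k-1}(N),N\}^{1/2}} \geq \floor{N^{\alpha/2}} \geq N^{\alpha/3}$ for all sufficiently large $N$; in particular $R_k(N) \to \infty$, so the condition $R_k(N) \geq N_k$ holds eventually, and $R_k$ plays the role of the inner scaling $\phi$ in the definition of $\precsim$. (If the $r_m$ are not themselves monotone one may replace $R_k$ by a nondecreasing polynomial minorant, but this is immaterial for the intended use: any polynomial decay $W_N^k(f) = O(N^{-\delta})$ transfers directly to $W^k_{R_k(N)}(f) = O(N^{-\alpha\delta/2})$.) This gives $\widetilde W_N^k(f) \precsim_{r,k} W_N^k(f)$, and, combined with the forward direction of Lemma \ref{T:RBB for alt WW averages} and the Bourgain bound (which give $M_N^k(f) \precsim_{r,k} \widetilde W_N^k(f)$ and hence the reverse comparison), yields the equality of the classes $\{f : \widetilde W_N^k(f) \approx 0\} = \{f : W_N^k(f) \approx 0\}$ asserted in Theorem \ref{T:Equivalence of WW averages}.
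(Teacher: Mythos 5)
Your proposal is correct and follows essentially the same route as the paper: apply the reverse Bourgain bound for $\widetilde W_N^k$ (Lemma \ref{T:RBB for alt WW averages}) to reach $M^k_{R_k(N)}(f)$, then substitute the forward Bourgain bound (\ref{E:BB for M, W}) and consolidate remainder terms, with the same exponent bookkeeping giving $1/(3\cdot 2^k)$ and $1/(3\cdot 2^{k+1})$. Your remark about $R_k(N)$ serving as the inner scaling $\phi$ in the $\precsim$ notation matches the paper's concluding observation as well.
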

\begin{proof}
Chaining together the reverse Bourgain bound on $\widetilde W_N^k$ from Theorem \ref{T:RBB for alt WW averages} with the Bourgain bound (\ref{E:BB for M, W}) on $W_N^k$ , we obtain
\begin{align*}
\widetilde W_N^k(f) &\leq C_k'\left( \frac{1}{(R_k(N))^{1/6}} + \left[ M_{R_k(N)}^k (f)\right]^{1/6}\right) \\
&\leq C_k'\left( \frac{1}{(R_k(N))^{1/6}} + \left[  C_{k} \left( \frac{1}{(R_k(N))^{1/2^k}} + \left[W_{R_k(N)}^k (f)\right]^{1/2^{k-1}} \right)\right]^{1/6}\right)
\end{align*}
for $R_k(N) = \floor{\min\{r_1(N), \dots, r_{k-1}(N), N\}^{1/2}} \geq N_k$. Consolidating remainder terms gives the desired estimate.

In the case that each $r_m(N) \geq N^\alpha$, then $R_k(N)$ meets the requirements of the function $\phi$, and the special asymptotic notation may be used.
\end{proof}
Recall that the limiting behavior of the classical WW averages $W_N^k$ is bound by the Host-Kra-Gowers seminorms, as given by (\ref{E:bound WW by HKZ seminorm}). Hence, any $\widetilde W_N^k$ will inherit this limiting behavior by taking the limsup of the previous estimate (\ref{E:bound tilde by classical}), so long as the remainder term vanishes:

\begin{corollary}
Let $k\in \N$. Then there exists a constant $C_k'''$ such that for all invertible dynamical systems $(X, \mathcal{F}, \mu, T)$, all WW averages $\widetilde W_N^k$ constructed from $r_1, \dots, r_{k-1}$ such
$$\lim_{N} r_m(N) = \infty$$
for each $1\leq m \leq k-1$, and for all $f\in L^\infty(\mu)$ bounded by 1, we have
\begin{align*}
\limsup_N \widetilde W_N^k(f) \leq C_k''' \VERT f \VERT_{k+1}^{1/(9 \cdot 2^{k-1})} \, .
\end{align*}
\end{corollary}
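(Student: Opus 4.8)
The plan is to combine the bound from Theorem~\ref{T:bound tilde by classical} with the known control of the classical WW averages by the Gowers--Host--Kra seminorms, namely estimate~(\ref{E:bound WW by HKZ seminorm}). The key observation is that when $\lim_N r_m(N) = \infty$ for every $1 \le m \le k-1$, the auxiliary quantity $R_k(N) = \floor{\min\{r_1(N), \dots, r_{k-1}(N), N\}^{1/2}}$ also tends to infinity, so in particular $R_k(N) \ge N_k$ for all sufficiently large $N$ and the remainder term $(R_k(N))^{-1/(3 \cdot 2^{k+1})}$ in~(\ref{E:bound tilde by classical}) vanishes as $N \to \infty$.

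First I would apply Theorem~\ref{T:bound tilde by classical} to get, for all large $N$,
\begin{align*}
\widetilde W_N^k(f) &\leq C_k'' \left( \frac{1}{(R_k(N))^{1/(3\cdot 2^{k+1})}} + \left[W_{R_k(N)}^k (f)\right]^{1/(3\cdot 2^{k})} \right) \, .
\end{align*}
Taking $\limsup_N$ of both sides, the first term drops out since $R_k(N) \to \infty$, and since $x \mapsto x^{1/(3\cdot 2^k)}$ is continuous and monotone we may pass the limsup inside: $\limsup_N \widetilde W_N^k(f) \le C_k'' \left(\limsup_N W_{R_k(N)}^k(f)\right)^{1/(3\cdot 2^k)}$. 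Because $R_k(N) \to \infty$, the sequence $W_{R_k(N)}^k(f)$ is a subsequence-type reindexing of $W_M^k(f)$ (more precisely, every value $W_{R_k(N)}^k(f)$ equals $W_M^k(f)$ for some $M = R_k(N) \to \infty$), so $\limsup_N W_{R_k(N)}^k(f) \le \limsup_M W_M^k(f)$. Then~(\ref{E:bound WW by HKZ seminorm}) gives $\limsup_M W_M^k(f) \le C_k \VERT f \VERT_{k+1}^{2/3}$.

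Chaining these together yields
\begin{align*}
\limsup_N \widetilde W_N^k(f) &\leq C_k'' \left( C_k \VERT f \VERT_{k+1}^{2/3} \right)^{1/(3\cdot 2^k)} = C_k'' C_k^{1/(3\cdot 2^k)} \, \VERT f \VERT_{k+1}^{2/(9\cdot 2^k)} \, ,
\end{align*}
and since $2/(9 \cdot 2^k) = 1/(9 \cdot 2^{k-1})$, setting $C_k''' := C_k'' C_k^{1/(3\cdot 2^k)}$ gives exactly the claimed inequality. I do not anticipate a genuine obstacle here: the only points requiring a modicum of care are (i) verifying that $R_k(N) \to \infty$ from the hypothesis $r_m(N) \to \infty$ — immediate since the minimum of finitely many sequences tending to $\infty$ (together with $N$) tends to $\infty$, and the floor of its square root does too — and (ii) justifying the interchange of $\limsup$ with the continuous monotone power and with the reindexing by $R_k$; both are standard. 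The result does not even require $r_m$ to be polynomially bounded, only to diverge, which is why this is phrased as a corollary rather than invoking the full strength of Theorem~\ref{T:Equivalence of WW averages}.
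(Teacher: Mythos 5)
Your proposal is correct and follows essentially the same route as the paper: take the $\limsup$ in the estimate of Theorem \ref{T:bound tilde by classical}, note that $R_k(N)\to\infty$ (since the minimum of finitely many diverging sequences diverges) kills the remainder term, and then invoke the seminorm bound (\ref{E:bound WW by HKZ seminorm}), with the exponent bookkeeping $2/(9\cdot 2^k)=1/(9\cdot 2^{k-1})$ coming out as claimed.
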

\begin{proof}
Since for any fixed $k$ there are finitely many functions $r_m(N)$, knowing that all $r_m$ diverge to infinity is enough to show that the remainder term
$$\frac{1}{\min\{r_1(N), \dots, r_{k-1}(N), N\}^{1/(3 \cdot 2^{k+2})}}$$
converges to 0.
\end{proof}

As noted previously, if each $r_m$ grows like a polynomial, the estimate (\ref{E:bound tilde by classical}) transfers polynomial decay from $W_N^k$ to $\widetilde{W}_N^k$. To transfer in the other direction, it would suffice to establish a Bourgain bound for $\widetilde W_N^k$, which we could chain together with the reverse Bourgain bound (\ref{T:reverse BB, simplest case}). Towards this, we establish the following technical result:

\begin{lemma}[Bourgain bound for alternative WW averages]\label{L:BB for alt averages}
Let $k\in \N$ and $\beta \in \N$ There exists a constant $C_{k, \beta}$ and $N_{k, \beta}$ such that for all invertible dynamical systems $(X, \mathcal{F}, \mu, T)$ and $k$-th order WW averages constructed from $r_1, \dots, r_{k-1}$ satisfying $r_m(N) \leq N^\beta$ for each $m$, and for any $f\in L^\infty(\mu)$, we have
\begin{align*}
M_N^k(f) &\leq C_{k, \beta}\left(\sum_{m=1}^{k-1}\left[\frac{1}{r_m(\floor{N^{1/\beta}})} + \frac{r_m(\floor{N^{1/\beta}})}{N} \right]^{1/2^{k-1}} + \frac{1}{N^{1/(3\cdot \beta \cdot 2^{k-2})}}+ \left[\widetilde W_{\floor{N^{1/\beta}}}^k(f)\right]^{1/2^{k-1}} \right)
\end{align*}
for all $N\geq N_{k, \beta}$.
\end{lemma}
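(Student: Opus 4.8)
The plan is to argue by induction on $k$, re-running the inductive proof of the classical Bourgain bound (\ref{L:BB}) from \cite{assani2024higherorderwienerwintnersystems} but replacing, at the $m$-th stage, the Van der Corput parameter $\floor{\sqrt N}$ by $r_m(\floor{N^{1/\beta}})$, so that the averaging ranges produced along the way match the defining recursion of $\widetilde{W}^k_{\floor{N^{1/\beta}}}$. The truncation to $\floor{N^{1/\beta}}$ is exactly what makes this admissible: from $r_m(N)\le N^\beta$ one gets $r_m(\floor{N^{1/\beta}})\le(\floor{N^{1/\beta}})^\beta\le N$, so $r_m(\floor{N^{1/\beta}})$ is a legal Van der Corput parameter at scale $N$. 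When $\beta=1$ the truncation is inert ($\floor{N^{1/\beta}}=N$) and the statement is simply a deliberately lossy version of the classical Bourgain bound run with the weights $r_m$; the general $\beta$ is retained only for compatibility with the hypotheses of Theorem \ref{T:Equivalence of WW averages}, and if some $r_m(\floor{N^{1/\beta}})\ge N/2$ the asserted inequality holds trivially (its right side exceeds $1\ge M_N^k(f)$), so I would dispose of that regime at the outset and assume throughout that $r_m(\floor{N^{1/\beta}})<N$ for all $m$ and, as elsewhere in the paper, that $\|f\|_\infty\le 1$.

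For the inductive step, unfold $M_N^k(f)$ as in \cite{assani2024higherorderwienerwintnersystems}: applying Van der Corput (Lemma \ref{vdc-lem}) with parameter $r_{k-1}(\floor{N^{1/\beta}})$ to localise the correlation lag, and then translating the measure so as to absorb the lowest surviving exponent into a multiplier of modulus $\le 1$, bounds $M_N^k(f)^2$ — up to the Van der Corput remainder $1/r_{k-1}(\floor{N^{1/\beta}})+r_{k-1}(\floor{N^{1/\beta}})/N$ and the error from completing the inner sum — by an average over $h_{k-1}\in[r_{k-1}(\floor{N^{1/\beta}})]$ of $k$-term multiple recurrence averages at exponents $1,\dots,k$ with distinguished function $f\cdot\overline{f\circ T^{(k+1)h_{k-1}}}$; taking suprema over the auxiliary functions bounds these by $M_N^{k-1}(f\cdot\overline{f\circ T^{(k+1)h_{k-1}}})$. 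Feeding in the inductive hypothesis for $k-1$ (with $\widetilde{W}^{k-1}$ built from $r_1,\dots,r_{k-2}$) and using Hölder's inequality on averages to pull the $h_{k-1}$-average inside the root it produces, one bounds $M_N^k(f)^2$ by the accumulated remainders plus $\bigl(\tfrac1{r_{k-1}(\floor{N^{1/\beta}})}\sum_{h_{k-1}}\widetilde{W}^{k-1}_{\floor{N^{1/\beta}}}\bigl(f\cdot\overline{f\circ T^{(k+1)h_{k-1}}}\bigr)\bigr)^{1/2^{k-2}}$. By the defining recursion this bracketed average is $\widetilde{W}^k_{\floor{N^{1/\beta}}}(f)$ up to a modulus-$(k+1)$ dilation of the summation variable, which — as in \cite{assani2024higherorderwienerwintnersystems} — costs only a constant depending on $k$, obtained by comparing the dilated average to the genuine one. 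Taking a square root gives $M_N^k(f)\le C_{k,\beta}\bigl(\text{remainders}+\widetilde{W}^k_{\floor{N^{1/\beta}}}(f)^{1/2^{k-1}}\bigr)$, the base case $k=1$ (where $\widetilde{W}^1=W^1$ and the recursion has no layers) being the classical double-recurrence bound.

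The step needing genuine care — and where I expect the main difficulty — is the reconciliation of scales: Van der Corput is performed at scale $N$, so the first-order averages at the bottom of the recursion sit at scale $N$, whereas $\widetilde{W}^k_{\floor{N^{1/\beta}}}$ is assembled from first-order averages at scale $\floor{N^{1/\beta}}$. For $\beta=1$ the two scales agree and there is nothing to do. In general one must compare $\bigl\|\sup_t\bigl|\tfrac1N\sum_{n=1}^N e^{2\pi i n t}g\circ T^n\bigr|\bigr\|_2$ with its scale-$\floor{N^{1/\beta}}$ counterpart for the relevant products $g=\prod_{\eta\in V_{k-1}}c^{|\eta|}f\circ T^{(k+1)\eta\cdot h}$; the natural device is a decomposition of $[1,N]$ into blocks of length $\floor{N^{1/\beta}}$, which together with the near translation-invariance of the first-order averages (a shift $g\circ T^c$ only conjugates $\sup_t|\tfrac1L\sum e^{2\pi i n t}g\circ T^n|$ by $T^c$, up to a phase absorbed into the supremum) passes from scale $N$ down to scale $\floor{N^{1/\beta}}$ at the price of a polynomial remainder. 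Folding all multiplicative constants into $C_{k,\beta}$ and all polynomially small contributions into $N^{-1/(3\beta 2^{k-2})}$ — a deliberately conservative exponent absorbing the $k-1$ squarings from Van der Corput, the $2/3$-powers carried by the first-order averages, and the scale reduction — then gives the stated inequality; the complementary ``reverse Bourgain bound'' for these averages is Lemma \ref{T:RBB for alt WW averages}.
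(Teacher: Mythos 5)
Your argument is essentially the paper's: both run the Bourgain-bound induction with the Van der Corput parameter at stage $m$ replaced by $r_m(\floor{N^{1/\beta}})$ (legal since $r_m(\floor{N^{1/\beta}})\le N$), accumulate the unbalanced remainders $1/H+H/N$, and reconcile the scale mismatch via the first-order shrinking estimate $W_N^1(g)\le W^1_{\floor{N^{1/\beta}}}(g)+(\beta/N^{1/\beta})^{2/3}$ — which the paper imports as (\ref{E:shrinking}) from \cite{assani2024higherorderwienerwintnersystems} and you correctly re-derive by the blocking/translation argument. The only difference is organizational (you carry the scale-reduced statement through the induction and shrink at the base case; the paper unwinds fully to the first-order averages at scale $N$ and shrinks at the bottom), so the proposal is correct.
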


\begin{proof}
We take all relevant ideas from \cite{assani2024higherorderwienerwintnersystems}. Consider first for any \textit{integer} $\beta\geq 2$, we have
\begin{align}\label{E:shrinking}
W_N^1(g) &\leq  W_{\floor{N^{1/\beta}}}^1(g) + \left(\frac{\beta}{N^{1/\beta}}\right)^{2/3} \, .
\end{align}
This is shown in \cite[Theorem 6.3, $3a \implies 4a$]{assani2024higherorderwienerwintnersystems} for the case $\beta=2$, and holds for greater $\beta$ by the same argument.

As each $r_m(N) \leq N^\beta$, it follows that $r_m(\floor{N^{1/\beta}}) \leq N$ for all $N$. Hence, we can follow the same argument of the Bourgain bound in \cite{assani2024higherorderwienerwintnersystems}, picking $H = r_m(\floor{N^{1/\beta}})$ instead of $\floor{\sqrt{N}}$ in each inductive application of the Van der Corput inequality. This will form remainder terms of the form $(1/H + H/N)$, which do not balance out, but give us the following estimate:
\begin{align*}
M_N^k(f) &\leq C_{k} \sum_{m=1}^{k-1}\left[\frac{1}{r_m(\floor{N^{1/\beta}})} + \frac{r_m(\floor{N^{1/\beta}})}{N} \right]^{1/2^{k-1}} \\
&+ C_k\left(\frac{1}{\prod_{m=1}^{k-1} r_m(\floor{N^{1/\beta}})} \sum_{h_1 = 1}^{r_1(\floor{N^{1/\beta}})} \dots \sum_{h_{k-1} = 1}^{r_{k-1}(\floor{N^{1/\beta}})} W_N^1\left( \prod_{\eta\in V_{k-1}} c^{|\eta|} f \circ T^{\eta \cdot h}\right)\right)^{1/2^{k-1}}
\end{align*}
for $N\geq N_k$. Applying the above estimate (\ref{E:shrinking}), we have constructed exactly the average $\widetilde W_{\floor{N^{1/\beta}}}^k(f)$ and another remainder term of the order $1/N^{1/(3 \cdot \beta \cdot 2^{k-2})}$. Consolidating the remainder terms gives the desired average.
\end{proof}
Chaining this together with the reverse Bourgain bound from Theorem \ref{T:reverse BB, simplest case} obtains control of $W_N^k(f)$ by $\widetilde W_N^k(f)$. However, the precise statement of this relationship is exceedingly technical. Since our current interest is to transfer polynomial decay from $\widetilde W_N^k(f)$ to $ W_N^k(f)$, we specifically analyze the case in which $\widetilde W_N^k(f) \precsim 0$ in order to prove Theorem \ref{T:Equivalence of WW averages}:
\begin{proof}
The first line of (\ref{F:alt WW comparisons, details}) follows from Theorem \ref{T:bound tilde by classical}. For the second line, suppose that $\widetilde W_N^k$ is constructed from $r_1, \dots, r_{k-1}$ each satisfying $N^\alpha \leq r_m(N) \leq N^\beta$, and that $f$ has $\widetilde W_N^k(f) \leq \frac{1}{N^\gamma}$ for $\gamma>0$. By applying the Bourgain bound for $\widetilde W_N^k$ using $\beta' = \lceil \beta\rceil + 1$, we have for sufficiently large $N$ that
\begin{align*}
&M_N^k(f) \\
&\leq C_{k, \beta}\left(\sum_{m=1}^{k-1}\left[\frac{1}{r_m(\floor{N^{1/(\lceil\beta\rceil + 1)}})} + \frac{r_m(\floor{N^{1/(\lceil\beta\rceil + 1)}})}{N} \right]^{1/2^{k-1}} + \frac{1}{N^{1/(3\cdot (\lceil\beta\rceil + 1) \cdot 2^{k-2})}}+ \left[\widetilde W_{\floor{N^{1/(\lceil\beta\rceil + 1)}}}^k(f)\right]^{1/2^{k-1}} \right) \\
&\leq C_{k, \beta}\left(\sum_{m=1}^{k-1}\left[\frac{1}{N^{\alpha/(\lceil\beta\rceil + 1)}} + \frac{N^{\beta / (\lceil\beta \rceil+1)}}{N} \right]^{1/2^{k-1}} + \frac{1}{N^{1/(3\cdot (\lceil\beta\rceil + 1) \cdot 2^{k-2})}}+ \left[\frac{1}{N^{\gamma/(\lceil\beta\rceil + 1)}}\right]^{1/2^{k-1}} \right)\, .
\end{align*}
After some severe consolidation, we see that this term is $O(N^{-\delta})$ where
$$\delta = \min\left\{ \frac{\alpha}{2^{k} \lceil\beta\rceil}, \frac{1}{3 \cdot 2^{k-1} \lceil\beta\rceil}, \frac{\gamma}{2^k \lceil\beta\rceil} \right\} \, .$$
As $M_N^k(f) = O(N^{-\delta})$, it follows by the reverse Bourgain bound that $W_N^{k}(f)$ is $O(N^{-\delta /24})$, completing the second line of (\ref{F:alt WW comparisons, details})
\end{proof}

\section{Expanded convergence results for WW functions}\label{S:Conv results}

\subsection{Uniform polynomial WW bound and polynomial return times}\label{S:Polynomial bounds}

Pointwise convergence for multilinear averages over a family $C$ of weights given by
$$\frac{1}{N}\sum_{n=1}^N c_n \prod_{j=1}^J f_j(T^{a_j n} x)$$
have been long studied as natural generalizations of the classical Wiener-Wintner theorem, which corresponds to the case where $C = \{ e^{2\pi i n t} : t\in \R\}$ and $J = 1$, and its own generalization in the Return times theorem, corresponding to $C = \{g(S^n y) : g\in L^\infty(\nu) \text{ for some system }(Y, \mathcal{G}, \nu, S)\}$ and $J=1$. The linear case $J=1$ is long studied and many such generalizations and variants have been shown. For $J=2$, the case of weights $\{ e^{2\pi i n t} : t\in \R\}$ was shown by Assani, Duncan, and Moore \cite{ADM2016}, and the case of weights with polynomial phase $\{e^{2\pi i P(n)}\}$ by Assani and Moore \cite{AssaniMoore}. In the case that the system $(X, \mathcal{F}, \mu, T)$ satisfies pointwise convergence over $J$-th order recurrence averages for some $J\in \N$, Zorin-Kranich \cite{ZK2015} shows a uniform convergence over nilsequence weights.

Assani, Folks, and Moore \cite{assani2024higherorderwienerwintnersystems} establish that $J$-th order WW averages control the supremum of the averages over weights $e^{2\pi i n t}$ in norm. This argument can be inductively lifted to polynomial phases:

\begin{theorem} \label{T:polynomial WW estimate}
Let $J \in \N$ and $k\in \N \cup \{0\}$, and $a_1, \dots, a_J\in \mathbb{Z}$ be distinct and nonzero. Then there exists a constants $C_{J, k, a}$ and $N_{J, a}$ such that for all invertible dynamical system $(X, \mathcal{F}, \mu, T)$ and $f_1, \dots, f_J\in L^\infty (\mu)$, all bounded by 1, we have
\begin{align}\label{E:poly bound}
\left\Vert \sup_{t_1, \dots, t_k} \left|\frac{1}{N}\sum_{n=1}^N e^{2\pi i p_{k, t}(n)} \prod_{j=1}^J f_j \circ T^{a_j n} \right| \right\Vert_2 &\leq C_{J, k, a}\left(\frac{1}{\floor{\sqrt{N}}^{1/2^{k+J-2}}} + \left[W_N^{k+J-1}(f_1)\right]^{1/2^{k+J-2}}\right)
\end{align}
for all $N> N_{J, a}$, where $p_{k, t}(n) = t_1 n + t_2n^2 + \dots t_kn^k$ is the polynomial with coefficients $t_i$.
\end{theorem}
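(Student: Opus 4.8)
The plan is to induct on $k$, each step using a single application of Van der Corput's inequality to lower the degree of the polynomial phase by one and reduce to the case $k-1$ for a modified family of functions. When $k=0$ there is no supremum and (\ref{E:poly bound}) is exactly the Bourgain bound (\ref{L:BB}) for the $J$-fold average $\frac1N\sum_n\prod_j f_j\circ T^{a_jn}$ (with its ``$k$'' equal to $J-1$), noting $\floor{\sqrt N}^{1/2^{J-2}}=N^{1/2^{J-1}}$ up to the floor. When $k=1$, (\ref{E:poly bound}) is the uniform Wiener--Wintner control of $J$-fold recurrence established in \cite{assani2024higherorderwienerwintnersystems}. Either serves as the base of the induction.

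For the inductive step, fix $k\ge2$ and assume the statement with $k-1$ in place of $k$. Set $u_n(x)=\prod_{j=1}^J f_j(T^{a_jn}x)$, split the phase as $e^{2\pi ip_{k,t}(n)}=e^{2\pi it_1n}e^{2\pi i(t_2n^2+\cdots+t_kn^k)}$, and apply the pointwise estimate (\ref{vdc-1}) with $H=\floor{\sqrt N}$, using $e^{2\pi it_1n}$ as the rotating weight and $e^{2\pi i(t_2n^2+\cdots+t_kn^k)}u_n(x)$ as the sequence. The diagonal term is $O(1/\floor{\sqrt N})$ uniformly in $t$; after restoring the inner sum to length $N$ (at cost $O(1/\floor{\sqrt N})$) the off-diagonal term produces, for each $h\in[\floor{\sqrt N}]$,
\begin{align*}
\Bigl|\frac1N\sum_{n=1}^N e^{2\pi i\psi_{h,t}(n)}\prod_{j=1}^J g_j^{(h)}(T^{a_jn}x)\Bigr|,\qquad g_j^{(h)}:=f_j\cdot\overline{f_j\circ T^{a_jh}},
\end{align*}
where $\psi_{h,t}(n)=-\sum_{i=2}^k t_i\bigl((n+h)^i-n^i\bigr)$ has degree $k-1$ in $n$. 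Its constant-in-$n$ term has modulus one and factors out of $|\cdot|$, and for each fixed $h\ge1$ the linear map sending $(t_2,\dots,t_k)$ to the remaining coefficients of $\psi_{h,t}$ is triangular with diagonal entries proportional to nonzero powers of $h$, hence onto $\R^{k-1}$. Thus $\sup_{t_2,\dots,t_k}$ of the displayed quantity is at most $\sup_{s\in\R^{k-1}}\bigl|\frac1N\sum_n e^{2\pi ip_{k-1,s}(n)}\prod_j g_j^{(h)}\circ T^{a_jn}\bigr|$. Pulling $\sup_{t_2,\dots,t_k}$ through the finite $h$-sum, using $\|\cdot\|_1\le\|\cdot\|_2$ (the whole left side being a square), and applying the induction hypothesis to each $h$ bounds $\bigl\|\sup_{t_1,\dots,t_k}|\cdots|\bigr\|_2^2$ by $\tfrac{C}{\floor{\sqrt N}^{1/2^{k+J-3}}}+\tfrac{C}{\floor{\sqrt N}}\sum_{h=1}^{\floor{\sqrt N}}\bigl[W_N^{k+J-2}(g_1^{(h)})\bigr]^{1/2^{k+J-3}}$.

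To finish, Hölder on averages (concavity of $x\mapsto x^{1/2^{k+J-3}}$) moves the $h$-average inside the bracket, and since $g_1^{(h)}=f_1\cdot\overline{f_1\circ T^{a_1h}}$ the quantity $\frac1{\floor{\sqrt N}}\sum_{h=1}^{\floor{\sqrt N}}W_N^{k+J-2}(f_1\cdot\overline{f_1\circ T^{a_1h}})$ is precisely $W_N^{k+J-1}(f_1)$ when $a_1=1$, by the inductive definition (\ref{F:inductive construction of WW averages}); taking a square root and consolidating the remainder terms (all dominated by $\floor{\sqrt N}^{-1/2^{k+J-2}}$) then gives (\ref{E:poly bound}) with $k$ in place of $k-1$, the exponent $1/2^{k+J-3}$ on $W$ becoming $1/2^{k+J-2}$ under the root. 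The main obstacle is exactly the dilation bookkeeping: when $a_1\ne1$ the increment enters the functions as $T^{a_1h}$ rather than $T^h$, so the averaged Wiener--Wintner quantity is not literally $W_N^{k+J-1}(f_1)$ but $|a_1|$ times an alternative construction whose outermost layer uses $r(N)=|a_1|\floor{\sqrt N}$. The equivalence of such constructions with the standard one, established in \S\ref{S:Alt const} (Theorem \ref{T:bound tilde by classical}, Theorem \ref{T:Equivalence of WW averages}), absorbs this at the cost of an $a$-dependent constant (and, if one insists on the literal inner scale $\floor{\sqrt N}$, of a smaller polynomial power of $N$ inside $W$), exactly as in the reconciliation of dilated translations already carried out in the proof of the Bourgain bound in \cite{assani2024higherorderwienerwintnersystems}. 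The only other point, handling negative $a_j$, uses invertibility of $T$ together with $T$-invariance of $\mu$ to re-center the translated integrals and contributes only lower-order remainders.
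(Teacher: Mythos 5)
Your overall strategy is the paper's: induct on $k$, use one pointwise application of Van der Corput to lower the degree of the phase, bound the supremum over the differenced phases $p_{k,t}(n)-p_{k,t}(n+h)$ by the supremum over all phases $p_{k-1,s}(n)$, then Hölder and the inductive definition (\ref{F:inductive construction of WW averages}). The gap is in how you handle the dilation $a_1\neq 1$. First, a small point: $\frac{1}{\floor{\sqrt N}}\sum_{h=1}^{\floor{\sqrt N}}W_N^{k+J-2}\bigl(f_1\cdot\overline{f_1\circ T^{a_1h}}\bigr)$ is not ``$|a_1|$ times an alternative construction'' in the sense of \S\ref{S:Alt const} --- those averages always run over \emph{all} shifts $1,\dots,r(N)$, while yours runs only over the multiples of $a_1$; by nonnegativity it is \emph{at most} $|a_1|$ times such an average, which is the inequality you actually need. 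More seriously, Theorem \ref{T:bound tilde by classical} and Theorem \ref{T:Equivalence of WW averages} do not absorb the discrepancy into an $a$-dependent constant: they pass through $M_N^k$ and the reverse Bourgain bound and return a bound of the form $C\bigl(R^{-1/(3\cdot 2^{k+J})}+[W_{R}^{k+J-1}(f_1)]^{1/(3\cdot 2^{k+J-1})}\bigr)$ with $R\approx N^{1/4}$, i.e.\ both the scale inside $W$ and the exponent are degraded. That is a strictly weaker, $\precsim$-type conclusion than (\ref{E:poly bound}); and since this degradation occurs inside the inductive step for every $a_1\neq1$, the hypothesis you invoke at the next level --- (\ref{E:poly bound}) verbatim with $k-1$ --- is no longer what you have established, so the induction does not close as written.

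The paper avoids all of this with a lighter device, which is the actual content of the dilation bookkeeping: apply Van der Corput with $H=\floor{\sqrt N/|a_1|}$ (whence the hypothesis $N>|a_1|^2$ and the threshold $N_{J,a}$). Then the shifts $a_1h$ lie in $[\floor{\sqrt N}]$, and since the summands $\bigl[W_N^{k+J-2}\bigl(f_1\cdot\overline{f_1\circ T^{a_1h}}\bigr)\bigr]^{1/2^{k+J-3}}$ are nonnegative, the $h$-sum can be extended to all of $[\floor{\sqrt N}]$ at the cost of a factor $|a_1|$; Hölder then produces exactly $W_N^{k+J-1}(f_1)$ at scale $N$, and the square root turns the exponent $1/2^{k+J-3}$ into the required $1/2^{k+J-2}$, so the literal form of (\ref{E:poly bound}) is preserved at every level and the constant only picks up polynomial dependence on $|a_1|$. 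If you replace your choice $H=\floor{\sqrt N}$ and the appeal to \S\ref{S:Alt const} by this step (together with the routine boundary terms for negative $a_1$, which, as you note, invertibility and measure invariance handle), your argument coincides with the paper's.
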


\begin{remark}
From this estimate, by the usual summability argument we may immediately establish almost everywhere convergence of the averages
$$\sup_{t_1, \dots, t_k} \left|\frac{1}{N}\sum_{n=1}^N e^{2\pi i p_{k, t}(n)} \prod_{j=1}^J f_j \circ T^{a_j n} \right|$$
to zero for $f_1$ in the $L^2$ closure of the $k+J-1$-th order WW functions. We note that this convergence falls under the result of Zorin-Kranich \cite{ZK2015} under the condition of the pointwise convergence of multiple recurrence averages, which WW systems have already been shown to satisfy.
\end{remark}

\begin{remark}
We remark that the above bound (\ref{E:poly bound}) does not, in general, classify the uniform characteristic factor for the multilinear averages with polynomial weights. In the case $J=1$, the factor of uniform convergence of the averages
$$\frac{1}{N}\sum_{n=1}^N e^{2\pi i n p_{k, t}(n)}f(T^nx)$$
to zero over all polynomials $p_{k, t}(n)$ is the level $k$ quasi-eigenfunctions, as shown by Frantzikinakis \cite{FRANTZIKINAKIS_2006}, which is not necessarily equal to the $k$-th Host-Kra-Ziegler factor. 
\end{remark}

\begin{proof}
Consider induction on $k$, the degree of the polynomial. The base case $k=0$ is the Bourgain bound (\ref{E:BB for M, W}).

To induct, we apply the Van der Corput inequality pointwise with $N > |a_1|^2$ and $H = \floor{\frac{\sqrt{N}}{|a_1|}}$.
After taking the supremum over $t_1, \dots, t_k$ and the integral in $\mu$ and extending the $n$ sum to $N$, we have
\begin{align*}
&\left\Vert \sup_{t_1, \dots, t_k} \left|\frac{1}{N}\sum_{n=1}^N e^{2\pi i p_{k, t}(n)} \prod_{j=1}^J f_j \circ T^{a_j n} \right| \right\Vert_2^2 \\
&\leq \frac{6}{\floor{\frac{\sqrt{N}}{|a_1|}}} + \frac{4}{\floor{\frac{\sqrt{N}}{|a_1|}}}\sum_{h=1}^{\floor{\frac{\sqrt{N}}{|a_1|}}} \int \sup_{t_1, \dots, t_k} \left| \frac{1}{N}\sum_{n=1}^{N} e^{2\pi i (p_{k, t}(n) - p_{k, t}(n+h))} \prod_{j=1}^J f_j\circ T^{a_jn} \cdot \overline{f_j \circ T^{a_jn + a_jh}}\right|\, d\mu  \, .%\\
%&\leq \frac{6|a_1|}{\floor{\sqrt{N}}} + \frac{4|a_1|}{\floor{\sqrt{N}}}\sum_{h=1}^{\floor{\sqrt{N}}} \int \sup_{t_1, \dots, t_k} \left| \frac{1}{N}\sum_{n=1}^{N} e^{2\pi i (p_{k, t}(n) - p_{k, t}(n+h))} \prod_{j=1}^J f_j\circ T^{a_jn} \cdot \overline{f_j \circ T^{a_jn + h}}\right|\, d\mu 
\end{align*}

We observe that for any $h$, the polynomial $p_{k, t}(n) - p_{k, t}(n+h)$ is one degree lower in $n$. Moreover, any terms of $p_{k, t}(n) - p_{k, t}(n+h)$ which only depend on $h$ and not $n$ can be factored out of the $n$ sum and vanish. Hence, taking a supremum over $p_{k, t}(n) - p_{k, t}(n+h)$ is bounded by taking a supremum over all expression $p_{k-1, t}(n)$:
\begin{align*}
\leq \frac{6}{\floor{\frac{\sqrt{N}}{|a_1|}}} + \frac{4}{\floor{\frac{\sqrt{N}}{|a_1|}}}\sum_{h=1}^{\floor{\frac{\sqrt{N}}{|a_1|}}} \int \sup_{t_1, \dots, t_{k-1}} \left| \frac{1}{N}\sum_{n=1}^{N} e^{2\pi i p_{k-1, t}(n)} \prod_{j=1}^J [f_j \cdot \overline{f_j \circ T^{a_jh}}] \circ T^{a_jn} \right|\, d\mu \, .
\end{align*}

By Hölder's inequality and the inductive hypothesis, these may be bounded for $N > N_{J-1, a}$:
\begin{align*}
&\leq \frac{6}{\floor{\frac{\sqrt{N}}{|a_1|}}} + \frac{4}{\floor{\frac{\sqrt{N}}{|a_1|}}}\sum_{h=1}^{\floor{\frac{\sqrt{N}}{|a_1|}}} \left\Vert \sup_{t_1, \dots, t_{k-1}} \left| \frac{1}{N}\sum_{n=1}^{N} e^{2\pi i p_{k-1, t}(n)} \prod_{j=1}^J [f_j \cdot \overline{f_j \circ T^{a_jh}}] \circ T^{a_jn} \right| \right\Vert_2 \\
&\leq \frac{6|a_1|}{\floor{\sqrt{N}}} + \frac{4}{\floor{\frac{\sqrt{N}}{|a_1|}}}\sum_{h=1}^{\floor{\frac{\sqrt{N}}{|a_1|}}} C_{k-1} \left( \frac{1}{\floor{\sqrt{N}}^{1/2^{k+J-3}}} + \left[W_{N}^{k+J-2}\left( f_1\cdot \overline{f_1 \circ T^{a_1h}} \right)\right]^{1/2^{k+J-3}} \right) \\
&\leq \frac{6|a_1|+4C_{k-1}}{\floor{\sqrt{N}}^{1/2^{k+J-3}}} + 4C_{k-1}|a_1| \cdot \frac{1}{\floor{\sqrt{N}}}\sum_{h=1}^{\floor{\frac{\sqrt{N}}{|a_1|}}} \left[W_{N}^{k+J-2}\left( f_1\cdot \overline{f_1 \circ T^{a_1h}} \right) \right]^{1/2^{k+J-3}} \, .
\end{align*}
By extending the $h$ sum to $\floor{\sqrt{N}}$, we remove the scaling by $a_1$ inside. After moving the $h$ sum inside by Hölder's inequality, we have constructed $W_N^{k+J-1}$:
\begin{align*}
&\leq \frac{6|a_1|+4C_{k-1}}{\floor{\sqrt{N}}^{1/2^{k+J-3}}} + 4C_{k-1}|a_1| \cdot \frac{1}{\floor{\sqrt{N}}}\sum_{h=1}^{\floor{\sqrt{N}}} \left[W_{N}^{k+J-2}\left( f_1\cdot \overline{f_1 \circ T^{h}} \right) \right]^{1/2^{k+J-3}} \\
&\leq \frac{6|a_1|+4C_{k-1}}{\floor{\sqrt{N}}^{1/2^{k+J-3}}} + 4C_{k-1}|a_1| \left( \frac{1}{\floor{\sqrt{N}}}\sum_{h=1}^{\floor{\sqrt{N}}} W_{N}^{k+J-2}\left( f_1\cdot \overline{f_1 \circ T^{h}} \right) \right)^{1/2^{k+J-3}} \\
&= \frac{6|a_1|+4C_{k-1}}{\floor{\sqrt{N}}^{1/2^{k+J-3}}} + 4C_{k-1}|a_1| \left[ W_{N}^{k+J-1}\left( f_1\right) \right]^{1/2^{k+J-3}} \, .
\end{align*}
Taking a square root and consolidating constants gives the desired bound for $N > N_{J, a} := \max\{N_{J, a}, |a_1|^2\}$.
\end{proof}
By a standard application of the spectral theorem, we obtain the following return times theorem:
\begin{theorem}\label{T:polynomial return times}
Let $(X, \mathcal{F}, \mu, T)$ be an invertible dynamical system, and let $f_1 \in L^\infty(\mu)$ be an $L^2$ limit of $k+J-1$-th order WW functions. Then for any functions $f_2, \dots, f_J\in L^\infty(\mu)$ all bounded by 1, there exists a set $X_{f_1, \dots, f_J, k}$ of full measure such that for any $x\in X_{f_1, \dots, f_J, k}$ and $a_1, \dots, a_J$ all distinct and nonzero and any dynamical system $(Y, \mathcal{G},\nu, S)$ and $g\in L^\infty(\nu)$ and any integer valued polynomial $P$ of degree less than or equal to $k$, the averages
\begin{align*}
\frac{1}{N}\sum_{n=1}^N g(S^{P(n)} y) \prod_{j=1}^J f_{j}(T^{a_j n} x)
\end{align*}
converge to zero $\nu$-a.e.
\end{theorem}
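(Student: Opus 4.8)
The plan is to derive the theorem from the norm estimate (\ref{E:poly bound}) of Theorem \ref{T:polynomial WW estimate} (equivalently, from the pointwise statement recorded in the remark following it) together with a spectral-theorem computation on $(Y,\mathcal{G},\nu,S)$, following the template by which a uniform Wiener--Wintner estimate is turned into a return times theorem in \cite{assani04} and, for polynomial phases, in \cite{AssaniMoore}. Throughout, write $a_n(x):=\prod_{j=1}^J f_j(T^{a_j n}x)$ and
\[
A_N(x):=\sup_{t_1,\dots,t_k}\left|\frac{1}{N}\sum_{n=1}^N e^{2\pi i p_{k,t}(n)}\prod_{j=1}^J f_j(T^{a_j n}x)\right|,
\]
so that the object to be understood is $b_N(x,y):=\frac{1}{N}\sum_{n=1}^N a_n(x)\,g(S^{P(n)}y)$, and note $|a_n(x)|\le\norm{f_1}_\infty$ for $\mu$-a.e.\ $x$.

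First I would fix the full-measure subset of $X$. For each fixed tuple $a=(a_1,\dots,a_J)$ of distinct nonzero integers, the remark following Theorem \ref{T:polynomial WW estimate} supplies a set $\Omega_a\subset X$ of full $\mu$-measure on which $A_N(x)\to 0$; this uses only that $f_1$ lies in the $L^2$-closure of the $(k+J-1)$-th order WW functions, and since the supremum in $A_N$ runs over \emph{all} real coefficient vectors, a single $\Omega_a$ handles every polynomial of degree $\le k$ simultaneously. There are only countably many such $a$, so set $X_{f_1,\dots,f_J,k}:=\bigcap_a\Omega_a$; this has full $\mu$-measure and is independent of the data $(Y,\mathcal{G},\nu,S)$, $g$, $P$.

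Next I would run the spectral reduction. Passing to the natural extension of $(Y,\mathcal{G},\nu,S)$ (which only enlarges the system and is compatible with the averages) we may assume $S$ invertible; let $U$ be its Koopman operator on $L^2(\nu)$ and $\sigma_g$ the spectral measure of $g$, so $\langle U^m g,g\rangle=\int_{\T}e^{2\pi i m t}\,d\sigma_g(t)$ for all $m\in\Z$. If $\deg P\le k$, then for each $t$ the polynomial $n\mapsto P(n)t$ has degree $\le k$, and subtracting its $n$-independent constant term leaves a polynomial of the shape $p_{k,s}(n)$ for some $s=s(t)\in\R^k$; hence $e^{2\pi i P(n)t}$ differs from $e^{2\pi i p_{k,s(t)}(n)}$ by a unimodular factor not depending on $n$, giving
\[
\left|\frac{1}{N}\sum_{n=1}^N a_n(x)\,e^{2\pi i P(n)t}\right|\le A_N(x)\qquad\text{for every }t\in\T.
\]
Expanding the square and inserting $\sigma_g$ then yields, for every $x\in X_{f_1,\dots,f_J,k}$,
\[
\norm{b_N(x,\cdot)}_{L^2(\nu)}^2=\int_{\T}\left|\frac{1}{N}\sum_{n=1}^N a_n(x)\,e^{2\pi i P(n)t}\right|^2 d\sigma_g(t)\le A_N(x)^2\,\norm{g}_{L^2(\nu)}^2\longrightarrow 0,
\]
so $b_N(x,\cdot)\to 0$ in $L^2(\nu)$ for every such $x$ and every admissible $(Y,\mathcal{G},\nu,S)$, $g$, $P$.

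Finally I would upgrade $L^2(\nu)$-convergence to $\nu$-a.e.\ convergence, for $x$ now fixed in $X_{f_1,\dots,f_J,k}$. Since $|a_n(x)|\le\norm{f_1}_\infty$, the maximal function $\sup_N|b_N(x,\cdot)|$ is dominated by $\norm{f_1}_\infty\,\sup_N\frac{1}{N}\sum_{n=1}^N|g|(S^{P(n)}\cdot)$, which satisfies a strong $(2,2)$ bound by the polynomial ergodic maximal inequality (trivial when $\deg P=1$, due to Bourgain in general; cf.\ \cite{AssaniMoore}); hence $g\mapsto\sup_N|b_N(x,\cdot)|$ is of weak type $(2,2)$, and by the Banach principle the set of $g\in L^2(\nu)$ for which $b_N(x,\cdot)$ converges $\nu$-a.e.\ is closed. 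On the span of the eigenfunctions of $S$ convergence to $0$ is immediate: if $Ug=e^{2\pi i\theta}g$ then $b_N(x,y)=g(y)\cdot\frac{1}{N}\sum_{n=1}^N a_n(x)e^{2\pi i\theta P(n)}$ and $|\frac{1}{N}\sum_n a_n(x)e^{2\pi i\theta P(n)}|\le A_N(x)\to 0$, settling the pure point part; the continuous-spectrum part is handled exactly as in the classical Wiener--Wintner return times argument of \cite{AssaniMoore} (see also \cite{AssaniPresser+2014+19+58}). Combined with the $L^2(\nu)$ limit $0$, this gives $b_N(x,\cdot)\to 0$ $\nu$-a.e. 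The step I expect to be the main obstacle is precisely this last one --- promoting $L^2(\nu)$-norm convergence to $\nu$-a.e.\ convergence for \emph{every} $g\in L^2(\nu)$, i.e.\ the maximal inequality uniformly in the auxiliary parameters together with the density reduction on the continuous-spectrum subspace; all the genuinely new input lives in Theorem \ref{T:polynomial WW estimate}, and the remainder is the by-now standard but technically delicate combination of the spectral theorem, a polynomial maximal ergodic inequality, and the Banach principle, for which I would simply mirror \cite{AssaniMoore}.
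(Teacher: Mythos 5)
Your first two steps (fixing the full-measure set in $X$ via the remark after Theorem \ref{T:polynomial WW estimate}, and the spectral-theorem computation giving $\Vert b_N(x,\cdot)\Vert_{L^2(\nu)}\le A_N(x)\Vert g\Vert_2\to 0$) are fine, but they only produce $L^2(\nu)$-norm convergence, and the step you yourself flag as the main obstacle is a genuine gap as you have set it up. A maximal inequality on $Y$ plus the Banach principle merely reduces the problem to a dense class of $g\in L^2(\nu)$; eigenfunctions of $S$ handle the Kronecker part, but for $g$ orthogonal to the Kronecker factor you would need an a.e.\ return-times statement for the fixed weight $(a_n(x))_n$, and at that point the only information you have retained about this weight is the qualitative decay $A_N(x)\to 0$. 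That hypothesis is not known to imply almost everywhere convergence in $y$, and it cannot be dispatched by citing \cite{AssaniMoore} or the ``classical'' return times argument: the continuous-spectrum case is exactly where the difficulty of return times theorems lives, and those arguments rely either on the full Bourgain machinery or (as in \cite{AssaniMoore} and in this paper) on a quantitative rate. By invoking only the remark, you discard the polynomial decay that is the entire engine of the result.

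The paper's proof keeps the rate through the spectral step and never needs a maximal inequality or a density argument on $Y$. For $f_1$ of power type $\alpha$, Theorem \ref{T:polynomial WW estimate} gives $\bigl\Vert \sup_{t_1,\dots,t_k}|\cdot|\bigr\Vert_2 = O(N^{-\alpha/2^{k+J-2}})$, so along a subsequence $\floor{N^\gamma}$ (with $\gamma$ chosen from $\alpha$) the squared sup-averages are summable in $L^1(\mu)$; monotone convergence yields the full-measure set $X_{f_1,\dots,f_J,k}$ on which they are pointwise summable in $N$. For such $x$, the spectral theorem bounds $\Vert \frac{1}{\floor{N^\gamma}}\sum_{n\le \floor{N^\gamma}} g\circ S^{P(n)}\prod_j f_j(T^{a_jn}x)\Vert_{L^2(\nu)}^2$ by these summable quantities, and monotone convergence in $\nu$ gives convergence to zero $\nu$-a.e.\ along $\floor{N^\gamma}$; boundedness of the summands then fills in the gaps between consecutive values of $\floor{N^\gamma}$. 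Finally, for $f_1$ only an $L^2$ limit of power-type functions $F_m$, the paper compares the two averages by bounding the difference by $\frac1N\sum_n |f_1-F_m|\circ T^{a_1n}$ and applying the linear maximal inequality on $X$ (Lemma \ref{L:maxIneq}), obtaining a summable-in-$m$ control of $\sup_N$ of the error; the density is thus run in $f_1$, not in $g$, and no polynomial maximal ergodic theorem enters. To salvage your outline you would have to carry the decay of $W_N^{k+J-1}$ of the approximants through the spectral step in this quantitative way, rather than using only the qualitative statement $A_N(x)\to 0$.
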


\begin{proof}
Since there are countably many possibilities for $a_1, \dots, a_J$, it follows that we may fix them and intersect over a countable collection of full measure sets at the end.

Consider first the case in which $f_1$ is itself a $k+J-1$-th order WW function of power type $\alpha$. Using the previous Theorem \ref{T:polynomial WW estimate}, it follows for sufficiently large $N$ that we have
\begin{align*}
\left\Vert \sup_{t_1, \dots, t_k} \left|\frac{1}{N}\sum_{n=1}^N e^{2\pi i p_{k, t}(n)} \prod_{j=1}^J f_j \circ T^{a_j n} \right| \right\Vert_2 &\leq C_k\left(\frac{1}{\floor{\sqrt{N}}^{1/2^{k+J-2}}} + \left[\frac{C_{f_1}}{N^\alpha}\right]^{1/2^{k+J-2}}\right) \leq \frac{C'}{N^{\alpha / 2^{k + J - 2}}}
\end{align*}
after consolidating constants. Picking $\gamma \in \N$ to satisfy $\alpha \gamma > 2^{k + J}$, it follows that 
\begin{align*}
\int \sup_{t_1, \dots, t_k} \left|\frac{1}{\floor{N^\gamma}}\sum_{n=1}^{\floor{N^\gamma}} e^{2\pi i p_{k, t}(n)} \prod_{j=1}^J f_j (T^{a_j n} x) \right|^2 d\mu(x) &\leq \frac{C'}{\floor{N^\gamma}^{\alpha/2^{k+J-1}}} \leq \frac{C'}{N^2}
\end{align*}
for sufficiently large $N$, where the constant is allowed to change, but picks up no $N$ dependence. These terms are summable in $N$, and by the monotone convergence theorem, we have
$$\sum_{N=1}^\infty \sup_{t_1, \dots, t_k} \left|\frac{1}{\floor{N^\gamma}}\sum_{n=1}^{\floor{N^\gamma}} e^{2\pi i p_{k, t}(n)} \prod_{j=1}^J f_j (T^{a_j n} x) \right|^2 < \infty $$
for all $x$ in a set of full measure $X_{f_1, \dots, f_{J}, k}$.

For any $x\in X_{f_1, \dots, f_J, k}$, applying the spectral theorem for any $g\in L^\infty(\nu)$, we have
\begin{align*}
\left\Vert \frac{1}{\floor{N^\gamma}} \sum_{n=1}^{\floor{N^\gamma}} g \circ S^{P(n)} \cdot \prod_{j=1}^J f_j (T^{a_j n} x)\right\Vert_{L^2(\nu)}^2 &= \int_0^1 \left| \frac{1}{\floor{N^\gamma}} \sum_{n=1}^{\floor{N^\gamma}} e^{2\pi i P(n) t} \prod_{j=1}^J f_j (T^{a_j n} x) \right|^2 \, d\sigma_g(t)  \\
& \leq \Vert g \Vert_2^2 \cdot \sup_t \left| \frac{1}{\floor{N^\gamma}} \sum_{n=1}^{\floor{N^\gamma}} e^{2\pi i P(n) t} \prod_{j=1}^J f_j (T^{a_j n} x) \right|^2 \\
& \leq \Vert g \Vert_2^2 \cdot \sup_{t_1, \dots, t_{k}} \left| \frac{1}{\floor{N^\gamma}} \sum_{n=1}^{\floor{N^\gamma}} e^{2\pi i p_{k, t}(n)} \prod_{j=1}^J f_j (T^{a_j n} x) \right|^2
\end{align*}
and again these terms are summable. By the monotone convergence theorem in $\nu$, it follows that the integrand
\begin{align}\label{F:average along}
\left|\frac{1}{\floor{N^\gamma}} \sum_{n=1}^{\floor{N^\gamma}} g( S^{P(n)} y) \prod_{j=1}^J f_j (T^{a_j n} x) \right|^2
\end{align}
convergence to zero $\nu$-a.e. Since the summands are bounded, we may extend the convergence from the subsequence $\floor{N^\gamma}$ to $N$ by the standard argument. We outline this argument here, for later comparison: Denote the summands in $n$ of the average (\ref{F:average along}) as $a_n$, and observe that $|a_n| \leq 1$. Moreover, let us ignore the square root. Let $N\in \mathbb{N}$ be arbitrary, and pick $M\in \mathbb{N}$ to satisfy
$$M^\gamma \leq N \leq (M+1)^\gamma \, .$$
We observe that
\begin{align*}
\left| \frac{1}{N}\sum_{n=1}^N a_n\right| &\leq \left|\frac{1}{\floor{M^\gamma}}\sum_{n=1}^{\floor{M^\gamma}} a_n\right| + \frac{1}{M^\gamma} \sum_{n = \floor{M^\gamma}+1}^{\floor{(M + 1)^\gamma}} |a_n| \leq \left|\frac{1}{\floor{M^\gamma}}\sum_{n=1}^{\floor{M^\gamma}} a_n\right| + \frac{\floor{(M + 1)^\gamma}-\floor{M^\gamma}}{M^\gamma} 
\end{align*}
As $N\to \infty$, it follows that $M \to \infty$ also: the averages along $\floor{M^\gamma}$ goes to zero by assumption, and the remainder term is $O(M^{-1})$, and also goes to zero. Hence, we obtain convergence along the entire subsequence.

Now, suppose that $F_m$ are $j+K-1$-th order WW functions of power type converging to $f_1$. Without loss of generality, take $\Vert F_m - f_1 \Vert_2< \frac{1}{m}$ for all $m\in \N$. Using the maximal ergodic theorem (here referenced as lemma \ref{L:maxIneq}), we observe for each $m\in \N$ and any $g \in L^\infty(\mu)$ that
\begin{align*}
&\left\Vert \frac{1}{N}\sum_{n=1}^N g\circ S^{P(n)} \cdot \prod_{j=1}^J f_j\circ T^{a_j n} - \frac{1}{N}\sum_{n=1}^N g\circ S^{P(n)} \cdot F_m \circ T^{a_1n} \cdot \prod_{j=2}^J f_j\circ T^{a_j n} \right\Vert_2 \\
&\leq \left\Vert \frac{1}{N}\sum_{n=1}^N \left| f_1 - f_M \right| \circ T^{a_1 n}  \right\Vert_2 \leq 2\Vert f_1 - F_m \Vert_2 \leq \frac{2}{m^2}
\end{align*}
and these terms are summable. By the monotone convergence theorem, it follows that the integrand must be converging to zero almost everywhere as $m\to \infty$.

Hence, for any $x\in \cap_{m=1}^\infty X_{F_m, f_2, \dots, f_J, k}$, and for any particular value of $m$, we see
\begin{align*}
&\limsup_N \left| \frac{1}{N}\sum_{n=1}^N g\circ S^{P(n)} \cdot \prod_{j=1}^J f_j\circ T^{a_j n}\right| \\
&\leq \limsup_N \left| \frac{1}{N}\sum_{n=1}^N g\circ S^{P(n)} \cdot \prod_{j=1}^J f_j\circ T^{a_j n} - \frac{1}{N}\sum_{n=1}^N g\circ S^{P(n)} \cdot F_m \circ T^{a_1n} \cdot \prod_{j=2}^J f_j\circ T^{a_j n} \right| + 0 \\
&\leq \sup_N \left| \frac{1}{N}\sum_{n=1}^N g\circ S^{P(n)} \cdot \prod_{j=1}^J f_j\circ T^{a_j n} - \frac{1}{N}\sum_{n=1}^N g\circ S^{P(n)} \cdot F_m \circ T^{a_1n} \cdot \prod_{j=2}^J f_j\circ T^{a_j n} \right| 
\end{align*}
which converges to zero as we let $m\to \infty$.
\end{proof}

\subsection{Multilinear Ergodic Hilbert Transform}\label{S:Hilb}

The \textit{(two-sided) ergodic Hilbert transform} of a function $f$ is defined as the punctured sum (skipping $n=0$)
$$\sum_{n=-\infty}^\infty\mathop{}^{\mkern-20mu '} \frac{f(T^n x)}{n} \, .$$
This average was studied by Cotlar, who showed that it exists a.e. for any $f\in L^1(\mu)$ \cite{Cotlar}. For such averages, we can make the same multilinear and return times generalizations. For example, pointwise a.e. convergence of the bilinear ergodic Hilbert transform given by 
$$\sum_{n=-\infty}^\infty\mathop{}^{\mkern-20mu '} \frac{f_1(T^n x)f_2(T^{-n}x)}{n}$$
was established by Demeter \cite{Demeter}. A survey of return times theorems of the form 
$$\sum_{n=-\infty}^\infty\mathop{}^{\mkern-20mu '} \frac{f(T^n x)g(S^{n}y)}{n}$$
is given by Assani and Presser \cite{AssaniPresser+2014+19+58}.

In contrast, the \textit{one-sided ergodic Hilbert transform} is taken by restricting to positive $n$:
$$\sum_{n=1}^\infty  \frac{f(T^n x)}{n}$$
and behaves poorly in general. Classically, it is known that in any dynamical system on a non-atomic measure space, there exists an integrable function for which the above diverges in the $L^2$ norm, as shown by Halmos \cite{Halmos}. However, in certain cases and against certain weights, a.e. convergence of one-sided averages has been shown to converge. For example, certain ``twisted", or oscillatory averages of the form
$\sum_{n=1}^\infty \frac{e^{2\pi i p(n)}}{n} f(T^nx)$
have been shown to converge a.e. by Krause, Lacey, and Wierdl \cite{Krause2016OnCO} for sufficiently sparse functions $p$. In the linear case, a.e. convergence of the average 
$$\sum_{n=1}^\infty \frac{e^{2\pi i n t}}{n^\sigma} f(T^n x)$$
for some $0 < \sigma < 1$ was shown for first-order WW functions $f$ by Assani and Nicolaou \cite{AssaniNicolaou}, and convergence of the two sided analogue was shown by Assani to be equivalent to $f$ being a first-order WW function of power type \cite{assani04}. More connections between the decay rates and convergence of one-sided linear ergodic Hilbert transforms are detailed by Assani and Lin \cite{AssaniLin}.

Here, we establish some convergence results for Higher-order WW functions of power type. More specifically, we show that any of the previously established pointwise convergence for ergodic averages for WW functions that have been previously established in \cite{assani2024higherorderwienerwintnersystems} have Hilbert transform analogues. For example, from the bound in Theorem \ref{T:polynomial WW estimate}, we deduce the following:

\begin{theorem}\label{T:HilbertTrans}
%Let $(X, \mathcal{F}, \mu, T)$ be an ergodic dynamical system, and let $f_1\in L^\infty(\mu)$ be a $k$-th order WW function of power type $\alpha > 0$. Then:
%\begin{enumerate}
%    \item For any $f_2, \dots, f_{k+1} \in L^\infty(\mu)$,  there exists a set $X_{f_1, \dots, f_{k+1}}$ of full measure such that for all $x\in X_{f_1, \dots, f_{k+1}}$ and $a_1, \dots, a_{k+1}\in \mathbb{Z}$, all distinct and nonzero, and $\sigma\in (1 - \alpha/2^k, 1]$,the series 
%    $$\sum_{n=1}^\infty \frac{\prod_{j=1}^{k+1}f(T^{a_j n} x)}{n^\sigma}$$
%    converges.
%    \item For any $f_2, \dots, f_{k} \in L^\infty(\mu)$, there exists a set $X_{f_1, \dots, f_{k}}$ of full measure such that for all $x\in X_{f_1, \dots, f_{k}}$ and $t\in [0, 1]$ and $\sigma\in (1 - \alpha/2^{k-1}, 1]$ and $a_1, \dots, a_{k}\in \mathbb{Z}$, all distinct and nonzero, the series 
%    $$\sum_{n=1}^\infty \frac{e^{2\pi i n t} \prod_{j=1}^kf(T^{a_j n} x)}{n^\sigma} $$
%    converges. Moreover, the limiting function is continuous in $t$ and $\sigma$.
%\end{enumerate}

Let $(X, \mathcal{F}, \mu, T)$ be an ergodic dynamical system, $k\geq0$, $J\geq 1$, and let $f_1\in L^\infty(\mu)$ be a $J + k - 1$-th order WW function of power type $\alpha > 0$. Then For any $f_2, \dots, f_{J} \in L^\infty(\mu)$,  there exists a set $X_{f_1, \dots, f_{J}, k}$ of full measure such that the series 
$$\sum_{n=1}^\infty \frac{e^{2\pi i \left(t_1 n + t_2 n^2 + \dots + t_k n^k\right)}}{n^\sigma} \, \prod_{j=1}^{J}f_j(T^{a_j n} x)$$
converges for all $x\in X_{f_1, \dots, f_{J}, k}$ and $a_1, \dots, a_{J}\in \mathbb{Z}$, all distinct and nonzero, and for all $t_1, \dots, t_k \in [0, 1]$ and $\sigma\in (1 - \alpha/2^{J+k-1}, 1]$.

Moreover, the limiting function is continuous in $t_1, \dots, t_k$ and $\sigma$.
\end{theorem}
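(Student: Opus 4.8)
The plan is to run an Abel summation (summation by parts) argument, with the decay of the series terms supplied by the uniform polynomial WW bound of Theorem~\ref{T:polynomial WW estimate}, paralleling the subsequence trick in the proof of Theorem~\ref{T:polynomial return times}. Fix once and for all a tuple $a_1,\dots,a_J\in\Z$ of distinct nonzero integers; there are only countably many such tuples, so the set $X_{f_1,\dots,f_J,k}$ will be obtained by intersecting at the end. Write $c_n(x,t)=e^{2\pi i p_{k,t}(n)}\prod_{j=1}^J f_j(T^{a_j n}x)$, let $S_N(x,t)=\sum_{n=1}^N c_n(x,t)$, and let $F_N(x,t,\sigma)=\sum_{n=1}^N n^{-\sigma}c_n(x,t)$ be the $N$-th partial sum of the series in question. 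The first step is to produce a full-measure set on which $\sup_t|S_N(x,t)|\le C(x)\,N^{1-\delta}$ holds for all $N$, with $\delta=\alpha/2^{k+J-1}$.

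To obtain this I would invoke Theorem~\ref{T:polynomial WW estimate}: since $f_1$ is a $(k+J-1)$-th order WW function of power type $\alpha$, the quantity $\Vert\sup_t|\tfrac1N\sum_{n\le N}c_n|\Vert_2$ decays at rate $N^{-\delta_0}$ with $\delta_0=\alpha/2^{k+J-2}$ (for $\alpha\le1/2$ the remainder term in that theorem is no worse; otherwise one caps the rate). Along the polynomially spaced subsequence $N_m=\floor{m^{\gamma}}$ with $\gamma=2/\delta_0$, the series $\sum_m N_m^{\delta_0}\,\Vert\sup_t|\tfrac1{N_m}\sum_{n\le N_m}c_n|\Vert_2^2$ is comparable to $\sum_m m^{-2}$ and hence converges, so by monotone convergence $\sup_t|\tfrac1{N_m}\sum_{n\le N_m}c_n(x,t)|\le C(x)N_m^{-\delta_0/2}$ for a.e.\ $x$, i.e.\ $\sup_t|S_{N_m}(x,t)|\le C(x)N_m^{1-\delta_0/2}$. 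For general $N$ with $N_m\le N<N_{m+1}$ one interpolates using $|c_n|\le1$: the fill-in $N_{m+1}-N_m=O(m^{\gamma-1})=O(N^{1-1/\gamma})$ is of lower order precisely because the spacing is polynomial rather than lacunary, yielding $\sup_t|S_N(x,t)|\le C(x)N^{1-\delta}$ with $\delta=\min\{\delta_0/2,1/\gamma\}=\delta_0/2=\alpha/2^{k+J-1}$, which is exactly the exponent in the statement.

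With the bound on $S_N$ in hand, Abel summation gives, for $M<N$,
\begin{align*}
F_N(x,t,\sigma)-F_M(x,t,\sigma)=\frac{S_N(x,t)}{N^\sigma}-\frac{S_M(x,t)}{M^\sigma}+\sum_{n=M}^{N-1}S_n(x,t)\Big(\frac1{n^\sigma}-\frac1{(n+1)^\sigma}\Big),
\end{align*}
and using $|n^{-\sigma}-(n+1)^{-\sigma}|\le\sigma n^{-\sigma-1}\le n^{-\sigma-1}$ with $\sup_t|S_n(x,t)|\le C(x)n^{1-\delta}$, the right side is at most $C(x)\big(N^{1-\delta-\sigma}+M^{1-\delta-\sigma}+\sum_{n\ge M}n^{-\delta-\sigma}\big)$. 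On any interval $\sigma\in[\sigma_0,1]$ with $\sigma_0>1-\delta$ this is $\le C(x)M^{-(\delta+\sigma_0-1)}\to0$, uniformly in $t$ and $\sigma$, so on the full-measure set the $F_N$ are uniformly Cauchy on $[0,1]^k\times[\sigma_0,1]$; they therefore converge, and the limit, being a uniform limit of functions smooth in $(t_1,\dots,t_k,\sigma)$, is continuous there. Letting $\sigma_0\downarrow1-\alpha/2^{k+J-1}$ gives continuity on the whole stated range, and intersecting over the countably many tuples $a_1,\dots,a_J$ (absorbing $\Vert f_j\Vert_\infty$ for $j\ge2$ into constants) produces $X_{f_1,\dots,f_J,k}$. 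The main obstacle is the interpolation step: the subsequence must have polynomial spacing so the fill-in is genuinely lower order, and the exponents must be tracked so the range of $\sigma$ comes out sharp. Uniformity in $t$ is automatic, since Theorem~\ref{T:polynomial WW estimate} already controls $\sup_t$; uniformity in $\sigma$ is bought by restricting to compact subintervals of $(1-\delta,1]$.
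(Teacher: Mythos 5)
Your argument is correct, and it reaches the stated range $\sigma\in(1-\alpha/2^{k+J-1},1]$ with the right exponent bookkeeping; it uses the same essential inputs as the paper (the uniform polynomial bound of Theorem \ref{T:polynomial WW estimate}, monotone convergence, and a polynomially spaced subsequence with a trivial fill-in), but it is packaged differently. The paper isolates a two-condition criterion (Lemma \ref{L:HilbertCriterion}: summability of $|A_N|/N^\sigma$ and convergence of $N^{1-\sigma}A_N$), verifies each condition a.e.\ for one fixed $\sigma=1-\alpha/2^{k+J-1}+\ep$ via MCT applied to the norm estimates, and then obtains the full $\sigma$-range and continuity in $\sigma$ by invoking classical Dirichlet series theory and intersecting over $\ep=1/m$; continuity in $t$ comes from the uniformity of the $\sup_t$ bounds. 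You instead extract a single a.e.\ pointwise bound $\sup_t|S_N(x,t)|\le C(x)N^{1-\delta}$ valid for all $N$ (subsequence $N_m=\lfloor m^{2/\delta_0}\rfloor$ plus MCT plus fill-in, with $\delta=\delta_0/2=\alpha/2^{k+J-1}$), and then run summation by parts directly, which yields the partial sums uniformly Cauchy on $[0,1]^k\times[\sigma_0,1]$ for every $\sigma_0>1-\delta$; this gives convergence, uniformity in $t$, and joint continuity in $(t,\sigma)$ in one stroke, with no need for the Hilbert-transform criterion, the Dirichlet-series step, or the countable intersection over $\ep$. What the paper's route buys is a reusable criterion that it applies verbatim to the return-times and multilinear variants (Theorems \ref{T:polynomial return times hilbert trans} and \ref{T:mult hilb trans}), where only norm information in the second variable is available and a pointwise partial-sum bound like yours is less immediate; what your route buys is a more self-contained and quantitatively transparent proof of Theorem \ref{T:HilbertTrans} itself, with continuity in $\sigma$ handled by uniform convergence on compacts rather than by citation. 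Two cosmetic points: your Abel summation boundary term should read $S_M/(M+1)^\sigma$ (harmless for the estimate), and your remark about ``capping the rate'' when $\alpha\ge 1/2$ mirrors the paper's own implicit reduction to $0<\alpha<1/2$, so it is a shared, not a new, caveat.
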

We highlight some special cases. the case in which $k=0$ corresponds to the multilinear ergodic Hilbert transform:
$$\sum_{n=1}^\infty \frac{\prod_{j=1}^{J}f_j(T^{a_j n} x)}{n^\sigma} $$
while the case $k=1$ refers to the multilinear twisted ergodic Hilbert transform:
$$\sum_{n=1}^\infty \frac{e^{2\pi i n t}}{n^\sigma} \prod_{j=1}^{J}f_j(T^{a_j n} x)\, .$$
The case in which $J = 1$ corresponds to the ergodic Hilbert transform with polynomial phase:
$$\sum_{n=1}^\infty \frac{e^{2\pi i \left(t_1 n + t_2n^2 + \dots + t_k n^k\right)}}{n^\sigma} f_1(T^{a_j n} x)\, .$$

\begin{remark}
Since the exponents $a_j$ are allowed to be negative, it follows that that we can obtain the analogous two-sided results by applying this theorem to both sides individually.
\end{remark}

To simplify the proofs, we specify the exact criterion used:

\begin{lemma}\label{L:HilbertCriterion}
Let $(V, | \cdot |)$ be a Banach space. Let $\{a_j\}_{j=1}^\infty \subset V$ be a sequence and denote $A_N = \frac{1}{N}\sum_{n=1}^N a_n$. Let $\sigma \in (0, 1]$ be such that $\sum_{N=1}^\infty \frac{|A_N|}{N^\sigma} < \infty$ and $N^{1-\sigma}A_N$ is convergent. Then the sequence of partial sums $S_N := \sum_{n=1}^N \frac{a_n}{n^\sigma}$ is convergent.
\end{lemma}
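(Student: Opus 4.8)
The plan is to prove this by a single application of Abel summation (summation by parts), which converts the convergence of $S_N$ into the two stated hypotheses on the Cesàro averages $A_N$. Set $b_n = \sum_{k=1}^n a_k$ for $n \geq 1$ and $b_0 = 0$, so that $a_n = b_n - b_{n-1}$ and $b_n = n A_n$. Summing by parts against $c_n = n^{-\sigma}$ gives
\begin{align*}
S_N = \sum_{n=1}^N \frac{b_n - b_{n-1}}{n^\sigma} = \frac{b_N}{N^\sigma} - \sum_{n=1}^{N-1} b_n\left(\frac{1}{(n+1)^\sigma} - \frac{1}{n^\sigma}\right),
\end{align*}
where the boundary contribution at $n=0$ vanishes because $b_0 = 0$.

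First I would dispatch the boundary term: $b_N / N^\sigma = N^{1-\sigma} A_N$, which converges in $V$ by hypothesis. Next, for the series term, I would note that $\tfrac{1}{n^\sigma} - \tfrac{1}{(n+1)^\sigma} > 0$, and by the mean value theorem applied to $x \mapsto x^{-\sigma}$ it is at most $\sigma\, n^{-\sigma-1}$; hence
\begin{align*}
\left| b_n\left(\frac{1}{(n+1)^\sigma} - \frac{1}{n^\sigma}\right)\right| = n\,|A_n|\left(\frac{1}{n^\sigma} - \frac{1}{(n+1)^\sigma}\right) \leq \sigma\,\frac{|A_n|}{n^\sigma}.
\end{align*}
By the assumed summability $\sum_N |A_N| N^{-\sigma} < \infty$, the series $\sum_n b_n\big(\tfrac{1}{(n+1)^\sigma} - \tfrac{1}{n^\sigma}\big)$ converges absolutely; since $V$ is complete, absolute convergence implies convergence. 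Combining this with the convergence of the boundary term shows that $S_N$ converges, as desired.

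I do not anticipate a genuine obstacle: this is the standard Abel-summation argument that a sufficiently fast Cesàro decay rate plus the boundary condition on $N^{1-\sigma}A_N$ forces convergence of the associated one-sided weighted series. The only points needing minor care are the index bookkeeping in the summation by parts (getting the ranges and the vanishing $n=0$ term right), the elementary estimate on the difference of consecutive $\sigma$-powers, and the explicit use of completeness of $V$ to pass from absolute to ordinary convergence. The lemma is then applied in the proof of Theorem \ref{T:HilbertTrans} with $V = L^2$ (or $V = \mathbb{C}$) and $a_n$ the relevant weighted product, the two hypotheses being supplied by the polynomial decay estimate of Theorem \ref{T:polynomial WW estimate} together with the summability of $N^{-p}$ for $p > 1$.
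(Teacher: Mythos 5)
Your proof is correct and rests on the same mechanism as the paper's: summation by parts writes $S_N$ as the boundary term $N^{1-\sigma}A_N$ plus a series whose terms are bounded by $\sigma\,|A_n|/n^\sigma$, and the two hypotheses handle the two pieces. The paper packages the identical computation as a Cauchy estimate on $S_M - S_N$, whereas you invoke absolute convergence plus completeness directly, but this is a presentational difference only.
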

We remark that the Kronecker lemma provides a partial converse, that if $S_N$ converges, then $N^{1-\sigma}A_N$ converges to zero. While this proof does not require $N^{1-\sigma}A_N$ to converge to zero specifically, it will follow from the conclusion that it must be converging to zero.

\begin{proof}
Recall by definition that in any metric space a sequence $\{v_j\} \subset V$ is Cauchy if and only if
$$\lim_{N} \sup_{M > N} |v_M - v_N| = 0 \, .$$
Let $N\in \N$ and $M > N$. Making use of the fact that
$$(x+1)^\sigma - x^\sigma \leq x^{\sigma - 1}$$
for any $x\geq1$ and $0 < \sigma \leq 1$, we observe that
\begin{align*}
 \left| S_{M} - S_{N}\right| &= \left|\sum_{n=N+1}^M \frac{nA_n - (n-1)A_{n-1}}{n^\sigma}\right| \\
&= \left|\sum_{n=N+1}^M \frac{nA_n}{n^\sigma} - \sum_{n=N}^{M-1} \frac{nA_n}{(n+1)^\sigma}\right| \\
&= \left|\sum_{n=N+1}^{M-1} nA_n\left(\frac{1}{n^\sigma} - \frac{1}{(n+1)^\sigma}\right) + \frac{MA_M}{M^\sigma} - \frac{NA_N} {(N+1)^\sigma} + \frac{NA_N}{N^\sigma} - \frac{NA_N}{N^\sigma}\right| \\
&\leq \sum_{n=N}^{M-1} n|A_n|\left(\frac{(n+1)^\sigma - n^\sigma}{n^{2\sigma}}\right) + \left|\frac{MA_M}{M^\sigma} - \frac{NA_N}{N^\sigma}\right| \\
&\leq \sum_{n=N}^{M-1} n|A_n|\left(\frac{1}{n^{\sigma + 1}}\right) + \left|M^{1-\sigma}A_M - N^{1-\sigma}A_N\right| \\
&\leq \sum_{n=N}^{\infty} \frac{|A_n|}{n^\sigma} + \left|M^{1-\sigma}A_M - N^{1-\sigma}A_N\right|
\end{align*}
Hence,
$$\limsup_N \sup_{M > N}\left| S_{M} - S_{N}\right| \leq \limsup_N \sum_{n=N}^{\infty} \frac{|A_n|}{n^\sigma} + \limsup_N \sup_{M > N} \left|M^{1-\sigma}A_M - N^{1-\sigma}A_N\right| = 0\, .$$
and $S_N$ is Cauchy.
\end{proof}

\begin{proof}[Proof of \ref{T:HilbertTrans}]
Since there are only countably many choices of exponents $a_j$, it follows that we can fix the exponents, obtain a set of full measure, and intersect over all possibilities at the end.

Let $f_1$ satisfy $W^{k+J-1}_N(f_1) \leq CN^{-\alpha}$ for some $0<\alpha<1/2$, and let $\ep > 0$ be such that 
$$\sigma := 1- \alpha/2^{k+J-1} + \ep \leq 1 \, .$$ 
Using the polynomial WW bound (Theorem \ref{T:polynomial WW estimate}) we see that for any $f_2, \dots, f_{J}\in L^\infty(\mu)$, all uniformly bounded by 1, and sufficiently large $N$, we have
$$\left\Vert \sup_{t_1, \dots, t_k}\left|\frac{1}{N}\sum_{n=1}^N e^{2\pi i p_{k, t}(n)} \prod_{j=1}^{J} f_j \circ T^{a_j n}\right| \right \Vert_1 \leq C_{a, J} \left( \frac{1}{N^{1/2^{k+J-1}}} + \left[W_N^{k+J-1} (f_1)\right]^{1/2^{k+J-2}} \right) \leq \frac{C}{N^{\alpha/2^{k+J-2}}}$$
for a potentially larger value of $C$, where $p_{k, t}(n) = t_1n + \dots + t_kn^k$. By the monotone convergence theorem, we note that
\begin{align*}
&\int \sum_{N=1}^\infty \frac{\sup_{t_1, \dots, t_k}\left|\frac{1}{N}\sum_{n=1}^N e^{2\pi i p_{k, t}(n)} \prod_{j=1}^{J} f_j \circ T^{a_j n}\right|}{N^\sigma} \, d\mu \\&=  \sum_{N=1}^\infty \frac{\left\Vert\sup_{t_1, \dots, t_k}\left|\frac{1}{N}\sum_{n=1}^N e^{2\pi i p_{k, t}(n)} \prod_{j=1}^{J} f_j \circ T^{a_j n}\right| \right\Vert_1}{N^\sigma} %\\
%&\leq \sum_{N=1}^\infty \frac{C}{N^{\sigma + \alpha/2^{k+J-2}}} \leq \sum_{N=1}^\infty \frac{C}{N^{1+\alpha/2^{k+J-1}+\ep}} < \infty.
\end{align*}
As we have just established, these summands are eventually bounded by 
$$\frac{C}{N^{\sigma + \alpha/2^{k+J - 2}}} = \frac{C}{N^{1 + \alpha/2^{k+J-1} + \ep}}$$
which is summable. Hence, the integrand  
$$\sum_{N=1}^\infty \frac{\sup_{t_1, \dots, t_k}\left|\frac{1}{N}\sum_{n=1}^N e^{2\pi i p_{k, t}(n)} \prod_{j=1}^{J} f_j \circ T^{a_j n}\right|}{N^\sigma} < \infty$$ for almost all $x\in X$. This meets the first condition of the lemma \ref{L:HilbertCriterion}.

Similarly, we observe by the polynomial WW estimate that for sufficiently large $N$, 
\begin{align*}
\int \sup_{t_1, \dots, t_k} \left| \frac{1}{N^\sigma} \sum_{n=1}^N e^{2\pi i p_{k, t}(n)} \prod_{j=1}^{J} f_j(T^{a_jn}x)\right| \, d\mu & \leq \frac{C}{N^{\sigma -1 + \alpha/2^{k+J-1}}} = \frac{C}{N^{\alpha/2^{k+J-1} + \ep}} \, .
\end{align*}
Hence, for $\gamma := 2^{k+J-1}/\alpha$ this integral is summable over the subsequence $\floor{N^\gamma}$. Hence, by the monotone convergence theorem the integral is finite almost everywhere, and the summands must be converging to zero:
\begin{align*}
\lim_N \sup_{t_1, \dots, t_k}\left|\frac{1}{\floor{N^\gamma}^\sigma} \sum_{n=1}^{\floor{N^\gamma}}e^{2\pi i p_{k, t}(n)} \prod_{j=1}^J f_j(T^{a_j n} x)\right| =0 \, .
\end{align*}
By our choice of $\gamma$ this convergence extends from the subsequence $N^\gamma$ to all of $N$ by the usual argument; from the comparison $M^\gamma < N \leq (M+1)^\gamma$, since all of the summands are bounded the remainder term has the form $M^{\gamma - 1 - \gamma \sigma} = M^{-(2^{k+J-1}/\alpha)\cdot \ep} $, which converges to zero as $M\to \infty$, and the second condition of lemma \ref{L:HilbertCriterion} is met.

Taking $X_{f_1, \dots, f_J, k}$ to be the set of full measure where both of these conditions hold, it follows that for any $x\in X_{f_1, \dots, f_J, k}$ that
\begin{align*}
\sum_{n=1}^\infty \frac{e^{2\pi i (t_1n + \dots + t_k n^k)} \prod_{j=1}^J f_j(T^{a_j n}x)}{n^\sigma}
\end{align*}
converges uniformly in $t_1, \dots, t_k$ for $\sigma$ chosen to be $1 - \alpha/2^{k+J-1} + \ep$. As remarked previously, this implies convergence for all $\sigma \in (1 - \alpha/2^{k+J-1} + \ep, 1]$. 

By the classical theory of Dirichlet series, for any such $x$ and $t_1, \dots, t_k$ where the above converges, the series is converging uniformly for all increased $\sigma$ \cite{Queffélec_Queffélec_2020}. Hence, taking $\ep = 1/m$ for sufficiently large $m\in \N$ and intersecting, we obtain a set of full measure where the above converges for all $\sigma \in (1 - \alpha/2^{k+J-1}, 1]$.

\end{proof}

By the same argument with another routine application of the spectral theorem, we obtain the following return times analogue:

\begin{theorem}\label{T:polynomial return times hilbert trans}
Let $(X, \mathcal{F}, \mu, T)$ be an ergodic dynamical system, $k\geq0$, $J\geq 1$, and let $f_1\in L^\infty(\mu)$ be a $J + k - 1$-th order WW function of power type $\alpha > 0$. Then For any $f_2, \dots, f_{J} \in L^\infty(\mu)$, there exists a set $X_{f_1, \dots, f_{J}, k}$ of full measure such that for any $x\in X_{f_1, \dots, f_J, k}$ and for any dynamical system $(Y, \nu, \mathcal{B}, S)$ and $g\in L^\infty(\nu)$ the averages
\begin{align*}
\sum_{n=1}^\infty \frac{g(S^{P(n)}y) \prod_{j=1}^J f_j(T^{a_j n}x)}{n^\sigma}
\end{align*}
converge $\nu$-a.e for all integer valuded polynomials $P$ of degree less than or equal to $k$ and $a_1, \dots, a_J\in \mathbb{Z}$, all distinct and nonzero, and $\sigma\in (1 - \alpha/2^{k+J-1}, 1]$.
\end{theorem}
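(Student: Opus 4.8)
The plan is to run the proof of Theorem~\ref{T:HilbertTrans} and insert a single application of the spectral theorem, exactly as Theorem~\ref{T:polynomial return times} was deduced from Theorem~\ref{T:polynomial WW estimate}. Since there are only countably many choices of distinct, nonzero $a_1,\dots,a_J$, I would fix these throughout and intersect the resulting full-measure sets at the very end. No countable intersection over the polynomial $P$ is required: for every integer-valued $P$ of degree $\le k$ and every $t\in[0,1]$, the phase $P(n)t$ equals $p_{k,t'}(n)$ for some coefficient vector $t'$ up to an $n$-independent constant that factors out of the average, so
\[
\sup_t\left|\frac1N\sum_{n=1}^N e^{2\pi i P(n)t}\prod_{j=1}^J f_j(T^{a_jn}x)\right|\le\sup_{t_1,\dots,t_k}\left|\frac1N\sum_{n=1}^N e^{2\pi i p_{k,t}(n)}\prod_{j=1}^J f_j(T^{a_jn}x)\right|
\]
holds uniformly over all such $P$ and all $N$; hence one good $x$-set serves every $P$ at once.

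First I would fix $\ep>0$ with $\sigma:=1-\alpha/2^{k+J-1}+\ep\le 1$ and put $\gamma:=2^{k+J-1}/\alpha$. Running the two monotone-convergence estimates from the proof of Theorem~\ref{T:HilbertTrans} verbatim (using Theorem~\ref{T:polynomial WW estimate}, $\Vert\cdot\Vert_1\le\Vert\cdot\Vert_2$, and $W_N^{k+J-1}(f_1)\le CN^{-\alpha}$), I obtain a full-measure set of $x$ on which
\[
\sum_{N}\frac{1}{N^\sigma}\sup_{t_1,\dots,t_k}\left|\frac1N\sum_{n=1}^N e^{2\pi i p_{k,t}(n)}\prod_{j=1}^J f_j(T^{a_jn}x)\right|<\infty
\]
and
\[
\sum_{N}\;\sup_{t_1,\dots,t_k}\left|\frac{1}{\floor{N^\gamma}^\sigma}\sum_{n=1}^{\floor{N^\gamma}} e^{2\pi i p_{k,t}(n)}\prod_{j=1}^J f_j(T^{a_jn}x)\right|<\infty;
\]
in particular for such $x$ the $\sigma$-normalized averages along $\floor{N^\gamma}$ tend to $0$ uniformly in $t$, hence along all of $N$ by the boundedness of the summands and the comparison $M^\gamma<N\le(M+1)^\gamma$ used in Theorem~\ref{T:polynomial return times}.

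Next, fixing such an $x$, any system $(Y,\mathcal{G},\nu,S)$, any $g\in L^\infty(\nu)$ and any integer-valued $P$ of degree $\le k$, I would apply the spectral theorem to $g$: with $\sigma_g$ its spectral measure,
\[
\left\Vert\frac1N\sum_{n=1}^N g\circ S^{P(n)}\cdot\prod_{j=1}^J f_j(T^{a_jn}x)\right\Vert_{L^2(\nu)}^2=\int_0^1\left|\frac1N\sum_{n=1}^N e^{2\pi i P(n)t}\prod_{j=1}^J f_j(T^{a_jn}x)\right|^2 d\sigma_g(t)\le\Vert g\Vert_2^2\sup_{t_1,\dots,t_k}\left|\frac1N\sum_{n=1}^N e^{2\pi i p_{k,t}(n)}\prod_{j=1}^J f_j(T^{a_jn}x)\right|^2,
\]
and likewise with $N$ replaced by $\floor{N^\gamma}$ and the $N^{-1}$ normalization replaced by $N^{-\sigma}$. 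Since the right-hand sides are now (for the fixed $x$) constants forming summable series by the previous paragraph, applying $\Vert\cdot\Vert_{L^1(\nu)}\le\Vert\cdot\Vert_{L^2(\nu)}$ and the monotone convergence theorem in $\nu$ yields a $\nu$-full-measure set of $y$ on which $\sum_N |A_N(y)|/N^\sigma<\infty$ and on which $N^{1-\sigma}A_N(y)=N^{-\sigma}\sum_{n=1}^N g(S^{P(n)}y)\prod_{j=1}^J f_j(T^{a_jn}x)\to 0$, where $A_N(y)$ is the $N$-term ergodic average. These are precisely the two hypotheses of Lemma~\ref{L:HilbertCriterion} with $V=\C$, so the partial sums $\sum_{n=1}^N n^{-\sigma}g(S^{P(n)}y)\prod_j f_j(T^{a_jn}x)$ converge. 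To upgrade from the single exponent $\sigma=1-\alpha/2^{k+J-1}+\ep$ to all $\sigma\in(1-\alpha/2^{k+J-1},1]$ I would invoke the Dirichlet-series fact quoted at the end of Theorem~\ref{T:HilbertTrans}, take $\ep=1/m$, and intersect over $m\in\N$.

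I do not anticipate a serious obstacle: given Theorems~\ref{T:polynomial WW estimate} and~\ref{T:HilbertTrans} and Lemma~\ref{L:HilbertCriterion}, the proof is the composition of two monotone-convergence passes — one in $\mu$ to select a good $x$, one in $\nu$ — linked by the spectral theorem. The point demanding attention is that the $\nu$-null exceptional set may legitimately depend on $g$ and $P$, whereas the $x$-set must be selected once and serve all $g$ and all $P$; this is exactly why the uniform-in-$P$ reduction of $P(n)t$ to a degree-$\le k$ polynomial phase noted in the first paragraph is needed.
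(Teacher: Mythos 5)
Your proposal is correct and follows essentially the same route as the paper: the $\mu$-level summability estimates from Theorem \ref{T:HilbertTrans} (via Theorem \ref{T:polynomial WW estimate}), a spectral-theorem transfer to $L^2(\nu)$ dominated by the supremum over polynomial phases (which handles all $P$ of degree $\le k$ at once), two monotone-convergence passes feeding the two hypotheses of Lemma \ref{L:HilbertCriterion}, the subsequence $\floor{N^\gamma}$ extension, and the $\ep=1/m$ intersection for the range of $\sigma$. The only (harmless) gloss is that the subsequence-to-full-sequence extension for $N^{1-\sigma}A_N(y)$ must also be carried out at the $\nu$ level, which works by the same bounded-summand comparison you cite.
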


\begin{proof}
We proceed in the same manner as Theorem \ref{T:HilbertTrans}.

Let $f_1$ satisfy $W^{k+J-1}_N(f_1) \leq CN^{-\alpha}$ for some $0<\alpha<1/2$, and again let $\ep > 0$ be such that 
$$\sigma := 1- \alpha/2^{k+J-1} + \ep \leq 1 \, .$$ 
As shown before by the polynomial WW estimate, we have 
$$\sum_{N=1}^\infty \frac{\sup_{t_1, \dots, t_k}\left|\frac{1}{N}\sum_{n=1}^N e^{2\pi i p_{k, t}(n)} \prod_{j=1}^{J} f_j \circ T^{a_j n}\right|}{N^\sigma} < \infty$$ for almost all $x\in X$. For any such $x$, we apply the monotone convergence theorem (in $\nu$) to see
\begin{align*}
&\int \sum_{N=1}^\infty \frac{\left| \frac{1}{N} \sum_{n=1}^N g \circ S^{P(n)} \cdot \prod_{j=1}^J f_j (T^{a_jn} x)\right|}{N^\sigma} \, d\nu \\
&\leq  \sum_{N=1}^\infty \frac{\left\Vert \frac{1}{N} \sum_{n=1}^N g \circ S^{P(n)} \cdot \prod_{j=1}^J f_j (T^{a_jn} x)\right\Vert_{L^2(\nu)}}{N^\sigma} \\
&= \sum_{N=1}^\infty \frac{\left\Vert \frac{1}{N} \sum_{n=1}^N e^{2\pi i P(n) t} \cdot \prod_{j=1}^J f_j (T^{a_jn} x)\right\Vert_{L^2(\sigma_g)}}{N^\sigma}  \\
&\leq \sum_{N=1}^\infty \frac{\Vert g\Vert_2 \cdot \sup_{t}\left| \frac{1}{N} \sum_{n=1}^N e^{2\pi i P(n) t} \cdot \prod_{j=1}^J f_j (T^{a_jn} x)\right|}{N^\sigma} \\
&\leq \sum_{N=1}^\infty \frac{\Vert g\Vert_2 \cdot \sup_{t_1, \dots, t_k}\left| \frac{1}{N} \sum_{n=1}^N e^{2\pi i p_{k, t}(n)} \cdot \prod_{j=1}^J f_j (T^{a_jn} x)\right|}{N^\sigma} <\infty \, .
\end{align*}
Hence, for $\mu$-a.e. $x\in X$, it must be the case that
$$\sum_{N=1}^\infty \frac{\left| \frac{1}{N} \sum_{n=1}^N g(S^{P(n)y}) \cdot \prod_{j=1}^J f_j (T^{a_jn} x)\right|}{N^\sigma} < \infty$$
for $\nu$-a.e. $y\in Y$. This meets the first condition of the lemma \ref{L:HilbertCriterion}.

As before, we also have for sufficiently large $N$ that
\begin{align*}
\int \sup_{t_1, \dots, t_k} \left| \frac{1}{N^\sigma} \sum_{n=1}^N e^{2\pi i p_{k, t}(n)} \prod_{j=1}^{J} f_j(T^{a_jn}x)\right| \, d\mu & \leq \frac{C}{N^{\alpha/2^{k+J-1} + \ep}}
\end{align*}
and for $\gamma := 2^{k+J-1}/\alpha$ this is summable over the subsequence $\floor{N^\gamma}$. Again, we compute by the monotone convergence theorem that
\begin{align*}
&\int \sum_{N=1}^\infty\left|\frac{1}{\floor{N^\gamma}^\sigma} \sum_{n=1}^{\floor{N^\gamma}} g\circ S^{P(n)} \cdot \prod_{j=1}^J f_j(T^{a_j n} x)\right| \, d\nu \\
&\leq \sum_{N=1}^\infty\left\Vert\frac{1}{\floor{N^\gamma}^\sigma} \sum_{n=1}^{\floor{N^\gamma}} g\circ S^{P(n)} \cdot \prod_{j=1}^J f_j(T^{a_j n} x)\right\Vert_{L^2(\nu)} \\
&\leq \sum_{N=1}^\infty\left\Vert\frac{1}{\floor{N^\gamma}^\sigma} \sum_{n=1}^{\floor{N^\gamma}} e^{2\pi i P(n)t} \prod_{j=1}^J f_j(T^{a_j n} x)\right\Vert_{L^2(\sigma_g)} \\
&\leq \sum_{N=1}^\infty \Vert g \Vert_2 \cdot \sup_t\left|\frac{1}{\floor{N^\gamma}^\sigma} \sum_{n=1}^{\floor{N^\gamma}} e^{2\pi i P(n)t} \prod_{j=1}^J f_j(T^{a_j n} x)\right| \\
&\leq \sum_{N=1}^\infty \Vert g \Vert_2 \cdot \sup_{t_1, \dots, t_k}\left|\frac{1}{\floor{N^\gamma}^\sigma} \sum_{n=1}^{\floor{N^\gamma}} e^{2\pi p_{k, t}(n)} \prod_{j=1}^J f_j(T^{a_j n} x)\right| < \infty
\end{align*}
in which the integrand must be finite $\nu$-a.e. and
\begin{align*}
\lim_N\left|\frac{1}{\floor{N^\gamma}^\sigma} \sum_{n=1}^{\floor{N^\gamma}} g(S^{P(n)}y) \cdot \prod_{j=1}^J f_j(T^{a_j n} x)\right| =0
\end{align*}
holds. For the same reasons as in the proof of Theorem \ref{T:HilbertTrans}, this convergence extends from the subsequence $N^\gamma$ to all of $N$ by the usual argument, and the second condition of lemma \ref{L:HilbertCriterion} is met.

Taking $X_{f_1, \dots, f_J}$ to be the set of full measure where both of these conditions hold, it follows that for any $x\in X_{f_1, \dots, f_J}$ that
\begin{align*}
\sum_{n=1}^\infty \frac{g(S^{P(n)}y) \prod_{j=1}^J f_j(T^{a_j n}x)}{n^\sigma}
\end{align*}
converges $\nu$-a.e. for $\sigma$ chosen to be $1 - \alpha/2^{k+J-1} + \ep$. Again, intersecting over sets for $\ep = 1/m$ gives convergence for all $\sigma \in (1 - \alpha/2^{k+J-1}, 1]$.
\end{proof}

In \cite{assani2024higherorderwienerwintnersystems}, the authors establish a pointwise multilinear return times theorem of the following form
\begin{align*}
\frac{1}{N} \sum_{n=1}^N \prod_{k=1}^K g_{k} (S^{b_k n} y) \prod_{j=1}^J f_{j} (T^{a_j n} x)
\end{align*}
for $f_1$ in the $L^2$ span of the $J+K-1$-th order WW functions of power type. By similar argumentation as the previous theorem, we may also obtain the following:

\begin{theorem}\label{T:mult hilb trans}
Let $(X, \mathcal{F}, \mu, T)$ be an ergodic dynamical system, let $J, K\in \N$, and let $f_1\in L^\infty(\mu)$ be a $J + k - 1$-th order WW function of power type $\alpha > 0$. Then for any $f_2, \dots, f_{J} \in L^\infty(\mu)$, there exists a set $X_{f_1, \dots, f_{J}, K}$ of full measure such that for any $x\in X_{f_1, \dots, f_J, K}$ and for any dynamical system $(Y, \mathcal{G},\nu, S)$ and $g_1, \dots, g_K\in L^\infty(\nu)$ the averages
$$\sum_{n=1}^\infty \frac{\prod_{k=1}^K g_{k} (S^{b_k n} y) \prod_{j=1}^J f_{j} (T^{a_j n} x)}{n^\sigma}$$
converge $\nu$-a.e for all collections $a_1, \dots, a_J\in \mathbb{Z}$ and $b_1, \dots, b_K$, all distinct and nonzero, and $\sigma\in (1 - \alpha/2^{J+K-1}, 1]$.
\end{theorem}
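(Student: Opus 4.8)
The plan is to run the argument of Theorem~\ref{T:polynomial return times hilbert trans} essentially verbatim, replacing the single polynomial return-times weight $g\circ S^{P(n)}$ there by the product weight $\prod_{k=1}^K g_k\circ S^{b_k n}$. After normalising so that all $f_j$ and $g_k$ are bounded by $1$ (harmless, as it merely rescales the $n^{-\sigma}$ series) and replacing $\alpha$ by $\min\{\alpha,1/2\}$ if necessary, the entire proof rests on one quantitative input: a power-type form of the multilinear return times theorem of \cite{assani2024higherorderwienerwintnersystems}, namely that if $f_1$ is a $(J+K-1)$-th order WW function of power type $\alpha$, then there is a \emph{single} full-measure set $X'\subset X$, depending only on $f_1,\dots,f_J$ and on the exponent tuple $(a_1,\dots,a_J,b_1,\dots,b_K)$, such that
\begin{align*}
\left\Vert \frac1N\sum_{n=1}^N \Big(\prod_{k=1}^K g_k\circ S^{b_k n}\Big)\,\prod_{j=1}^J f_j(T^{a_j n}x)\right\Vert_{L^2(\nu)} \le \frac{C}{N^{\alpha/2^{J+K-2}}}
\end{align*}
for all $x\in X'$, all systems $(Y,\mathcal G,\nu,S)$, and all $g_1,\dots,g_K\in L^\infty(\nu)$ bounded by $1$, with $C$ independent of the system $(Y,\mathcal G,\nu,S)$ and of the $g_k$. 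I would obtain this in two steps. For the $L^2(\mu\times\nu)$ norm it is immediate: the average is a $(J+K)$-term multiple recurrence average for $T\times S$ on $X\times Y$ applied to $f_1\otimes1,\dots,f_J\otimes1,1\otimes g_1,\dots,1\otimes g_K$, whose exponents $a_1,\dots,a_J,b_1,\dots,b_K$ are distinct and nonzero, so the Bourgain bound~(\ref{L:BB}) of order $J+K-1$ controls it by $W_N^{J+K-1}(f_1\otimes1)$; and $W_N^k(f\otimes1)=W_N^k(f)$, since the factor $1$ on $Y$ passes through every norm and supremum ($\nu$ being a probability measure), so with $W_N^{J+K-1}(f_1)=O(N^{-\alpha})$ and $\alpha\le1/2$ one reaches the displayed decay. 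Promoting this to a statement with an $x$-set \emph{independent} of $(Y,S,g_k)$ is exactly what the multilinear return times argument of \cite{assani2024higherorderwienerwintnersystems} does: iterate van der Corput (Lemma~\ref{vdc-lem}) on the $\nu$-variable $K-1$ times and use the spectral theorem to replace each surviving weight $g_k\circ S^{b_k n}$ by an integral $\int e^{2\pi i n t}\,d\sigma(t)$ against a measure of mass $\le1$; this reduces everything to suprema over characters, which are quantities in $x$ alone, while the successive differencing of the $f_j$'s builds up $W_N^{J+K-1}(f_1)$ exactly as in the proof of Theorem~\ref{T:polynomial WW estimate} (extending and rescaling the $h$-sums as there).

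Granting this bound, I would then check the two hypotheses of Lemma~\ref{L:HilbertCriterion} just as in Theorems~\ref{T:HilbertTrans} and~\ref{T:polynomial return times hilbert trans}. Fix the exponent tuple (only countably many), take $m\in\N$ large enough that $\sigma:=1-\alpha/2^{J+K-1}+1/m\le1$, set $\gamma:=2^{J+K-1}/\alpha$, and write $A_N$ for the average in the statement, viewed as a function of $(x,y)$. Integrating the displayed bound over $x$ gives $\int_X\Vert A_N\Vert_{L^2(\nu)}\,d\mu=O(N^{-\alpha/2^{J+K-2}})$, so $\sum_N N^{-\sigma}\int_X\Vert A_N\Vert_{L^2(\nu)}\,d\mu<\infty$ because $\sigma+\alpha/2^{J+K-2}=1+\alpha/2^{J+K-1}+1/m>1$; applying the monotone convergence theorem first in $\mu$ and then in $\nu$ (using $\Vert\cdot\Vert_{L^1(\nu)}\le\Vert\cdot\Vert_{L^2(\nu)}$) shows that for $x$ in a full-measure set — the same set for every $(Y,S,g_k)$, since it comes from the universal bound — the series $\sum_N N^{-\sigma}\bigl|A_N(x,y)\bigr|$ converges for $\nu$-a.e.\ $y$, which is the first hypothesis. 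For the second, $\int_X\Vert\lfloor N^\gamma\rfloor^{1-\sigma}A_{\lfloor N^\gamma\rfloor}\Vert_{L^2(\nu)}\,d\mu=O(N^{-(1+\gamma/m)})$ is summable, so the same two monotone-convergence steps force $\lfloor N^\gamma\rfloor^{1-\sigma}A_{\lfloor N^\gamma\rfloor}\to0$ $\nu$-a.e.; and since the summands $\prod_k g_k(S^{b_k n}y)\prod_j f_j(T^{a_j n}x)$ are bounded by $1$, the passage from the subsequence $\lfloor N^\gamma\rfloor$ to all $N$ is the routine tail estimate of Theorem~\ref{T:polynomial return times hilbert trans} (the leftover terms contribute $O(M^{\gamma(1-\sigma)-1})=O(M^{-\gamma/m})\to0$). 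Lemma~\ref{L:HilbertCriterion} then yields convergence of $\sum_n n^{-\sigma}\prod_k g_k(S^{b_k n}y)\prod_j f_j(T^{a_j n}x)$ for this particular $\sigma$; intersecting over $m$ and over the countably many exponent tuples, and using that convergence of such a Dirichlet-type series at one $\sigma$ forces it for all larger $\sigma$, produces the set $X_{f_1,\dots,f_J,K}$ on which convergence holds for every $\sigma\in(1-\alpha/2^{J+K-1},1]$.

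The hard part is isolating the norm bound with an $x$-set that is \emph{uniform} over all $(Y,\mathcal G,\nu,S)$ and all $g_1,\dots,g_K$: the product-system Bourgain bound by itself gives only an $L^2(\mu\times\nu)$ estimate, which a priori produces a full-measure $x$-set that could depend on the chosen $(Y,S)$ and on the $g_k$, and recovering a single good set for all of them is precisely the delicate work carried out by the iterated van der Corput / spectral theorem reduction behind the multilinear return times theorem in \cite{assani2024higherorderwienerwintnersystems} — the WW order climbing from $J$ to $J+K-1$ as the $K$ weights are stripped off one at a time, which is why the hypothesis is on the $(J+K-1)$-th order WW average of $f_1$. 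Everything after that input is the same monotone-convergence and subsequence bookkeeping already performed twice in this section, together with the entirely harmless reduction to $\alpha\le1/2$.
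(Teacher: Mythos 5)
Your overall architecture is the right one, and your final bookkeeping (Lemma \ref{L:HilbertCriterion}, monotone convergence in $\mu$ then in $\nu$, the subsequence $\floor{N^\gamma}$, intersecting over $\ep = 1/m$ and over exponent tuples) matches the paper. But the quantitative input you ask to be "granted" is not available in the form you state, and this is exactly where the difficulty sits. You posit that for $x$ in a single full-measure set one has $\bigl\Vert \frac1N\sum_{n=1}^N \prod_k g_k\circ S^{b_k n}\cdot\prod_j f_j(T^{a_j n}x)\bigr\Vert_{L^2(\nu)} \leq C\,N^{-\alpha/2^{J+K-2}}$ for \emph{all} $N$, uniformly over $(Y,\mathcal G,\nu,S)$ and the $g_k$. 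The iterated van der Corput/spectral reduction does not produce this: what it produces (Lemma \ref{L:Intermediate 1}) is a pointwise domination, valid for \emph{every} $x$, by an $x$-only quantity $F_N^{J,K,a}(f)(x)$, and what the WW hypothesis then gives (Lemma \ref{T:intermediate 2}) is decay of $\int F_N^{J,K,a}(f)\,d\mu$ at rate $N^{-\alpha/2^{J+K-2}}$ — an integrated, not pointwise, rate. From the integrated rate you cannot recover your pointwise bound at the same exponent: Chebyshev/Borel--Cantelli only gives decay of $F_N(x)$ along polynomial subsequences, and interpolating between subsequence points degrades the exponent, which would shrink the admissible range of $\sigma$ below the stated $(1-\alpha/2^{J+K-1},1]$. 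Your alternative fallback in the same paragraph — integrate $\Vert A_N\Vert_{L^2(\nu)}$ over $x$ and apply monotone convergence in $\mu$ — produces a full-measure $x$-set that \emph{does} depend on $(Y,S,g_k)$, since $A_N$ does; the parenthetical "the same set for every $(Y,S,g_k)$" is only justified by the very pointwise bound that is missing. The paper's resolution is to never upgrade to a pointwise rate at all: the good set is defined by the two summability conditions $\sum_N F_N^{J,K,a}(f)(x)/N^\sigma<\infty$ and $\floor{N^\gamma}^{1-\sigma}F_{\floor{N^\gamma}}^{J,K,a}(f)(x)\to 0$, which are conditions on $x$ alone and follow from the integral decay by monotone convergence; for each such $x$, Lemma \ref{L:Intermediate 1} and monotone convergence in $\nu$ then handle every $(Y,S,g_k)$ simultaneously. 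If you replace your claimed input by exactly these two summability statements about $F_N^{J,K,a}(f)(x)$, the rest of your argument goes through.

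A secondary point: your warm-up $L^2(\mu\times\nu)$ estimate via the product system $T\times S$ and the Bourgain bound requires $a_1,\dots,a_J,b_1,\dots,b_K$ to be \emph{jointly} distinct, whereas the theorem (and the motivating example with $a_j=j$, $b_k=k$) allows the two collections to overlap; the paper's route through Lemma \ref{L:Intermediate 1} avoids this because the $S$-exponents are differenced away in the $\nu$-variable and only the $a_j$'s need to be distinct among themselves. Since that step was only motivational in your plan it is not fatal, but it cannot serve as a substitute for the missing uniform input.
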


For clarity, we decompose the proof of Theorem \ref{T:mult hilb trans} into two lemmas, from which the conclusion will follow by the same argument as from Theorem \ref{T:polynomial return times hilbert trans}. The first lemma is an intermediary bound shown by the authors in \cite{assani2024higherorderwienerwintnersystems}.
Since some minor adjustments are needed, we present the proof here for the sake of completion. But we remark that all relevant ideas are taken from \cite{assani2024higherorderwienerwintnersystems}.

Let $J, K\in \N$ and let $(X, \mathcal{F},\mu,  T)$ be a dynamical system. For any $N$ and functions $f_1, \dots f_J\in L^\infty(\mu)$ and collection $a_1, \dots, a_J$ of distinct nonzero integers, we denote the following:

\begin{align*}
&F^{J, K, a}_N(f)(x) \\
&=  \frac{1}{\floor{\sqrt{N}}^{1/2^{K-1}}}+ \left(\frac{1}{\floor{\frac{\sqrt{N}}{|a_1|}}^{K-1}} \sum_{h \in \left[\floor{\frac{\sqrt{N}}{|a_1|}}\right]^{K-1}} \sup_{t} \left| \frac{1}{N} \sum_{n=1}^N e^{2\pi i n t} \prod_{j=1}^J \left[\prod_{\eta\in V_{K-1}} c^{|\eta|} f_j \circ T^{a_j (h \cdot \eta)}\right](T^{a_j n} x)\right| \right)^{1/2^{K-1}} \, .
\end{align*}

Concerning this average, we determine the following:

\begin{lemma}\cite[lemma 8.2]{assani2024higherorderwienerwintnersystems}\label{L:Intermediate 1}
Let $J, K\in \N$ and $a_1, \dots, a_J$ be a collection distinct nonzero integers. There exists constants $C_{J, K, a}$ such that for any dynamical systems $(X, \mathcal{F}, \mu, T)$ and $(Y, \mathcal{G}, \nu, S)$ and functions $f_1, \dots f_J\in L^\infty(\mu)$ and $g_1, \dots, g_K\in L^\infty(\nu)$ all uniformly bounded by 1, and for all collections $b_1, \dots, b_K$ of distinct, nonzero integers, and for any $x\in X$, we have
\begin{align*}
\left\Vert \frac{1}{N} \sum_{n=1}^N  \prod_{k=1}^K g_k \circ S^{b_k n} \cdot \prod_{j=1}^J f_j(T^{a_jn}x)\right\Vert_{L^2(\nu)} &\leq C_{J, K, a} \cdot F^{J, K, a}_N(f)(x)
\end{align*}
for all $N > |a_1|^2$ .
\end{lemma}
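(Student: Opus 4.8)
The plan is to prove the estimate by induction on $K$, the number of functions living on $(Y,\mathcal G,\nu,S)$; the engine of the inductive step is one application of Van der Corput's inequality followed by a translation on $Y$ that freezes a single $S$-orbit and so reduces $K$ to $K-1$. For the base case $K=1$ the set $V_0$ consists of a single (empty) tuple, so $F^{J,1,a}_N(f)(x)=\floor{\sqrt N}^{-1}+\sup_t\big|\tfrac1N\sum_{n=1}^N e^{2\pi i nt}\prod_j f_j(T^{a_jn}x)\big|$; applying the spectral theorem to the Koopman operator of $S$ and absorbing $b_1$ into $t$ gives $\big\|\tfrac1N\sum_n g_1\circ S^{b_1n}\cdot\prod_j f_j(T^{a_jn}x)\big\|_{L^2(\nu)}^2=\int_0^1\big|\tfrac1N\sum_n e^{2\pi i b_1nt}\prod_j f_j(T^{a_jn}x)\big|^2\,d\sigma_{g_1}(t)\le\|g_1\|_2^2\,\sup_t|\cdots|^2$, which is at most $F^{J,1,a}_N(f)(x)^2$, as desired.

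For the inductive step I would fix $x$, write the average as $\tfrac1N\sum_n\big(\prod_{k<K}g_k\circ S^{b_kn}\cdot\prod_j f_j(T^{a_jn}x)\big)\,g_K\circ S^{b_Kn}$, and apply Van der Corput (Lemma \ref{vdc-lem}) pointwise in $y$ with $H=\floor{\sqrt{N}/|a_1|}$, which is at least $1$ because $N>|a_1|^2$. After the standard manipulations (bounding the diagonal term by $O(1/H)$, extending the inner sums to full length, integrating in $\nu$, and using $S$- and $T$-invariance to collapse the shifted products), one obtains
\begin{align*}
\Big\|\frac1N\sum_{n=1}^N \prod_{k=1}^K g_k\circ S^{b_kn}\cdot\prod_{j=1}^J f_j(T^{a_jn}x)\Big\|_{L^2(\nu)}^2 \le \frac{C}{H}+\frac{C}{H}\sum_{h=1}^{H}\Big|\frac1N\sum_{n=1}^N \prod_{j=1}^J F_j^h(T^{a_jn}x)\int_Y \prod_{k=1}^K G_k^h\circ S^{b_kn}\,d\nu\Big| ,
\end{align*}
with $F_j^h=\overline{f_j\circ T^{a_jh}}\cdot f_j$ and $G_k^h=\overline{g_k\circ S^{b_kh}}\cdot g_k$. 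The crucial step is then: inside each summand substitute $y\mapsto S^{-b_Kn}y$ so that $G_K^h$ becomes independent of $n$, pull the $Y$-integral outside the $n$-average, and apply the Cauchy--Schwarz inequality in $L^2(\nu)$ against the now $n$-independent factor $G_K^h$. Since $\|G_K^h\|_2\le1$, each summand is bounded by $\big\|\tfrac1N\sum_n\prod_j F_j^h(T^{a_jn}x)\prod_{k<K}G_k^h\circ S^{c_kn}\big\|_{L^2(\nu)}$ with $c_k:=b_k-b_K$ still distinct and nonzero, that is, a $(K-1)$-function return times average of the same shape in the functions $F_j^h$. The inductive hypothesis bounds this by $C_{J,K-1,a}\,F^{J,K-1,a}_N(F^h)(x)$, where $F^h=(F_1^h,\dots,F_J^h)$.

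The remaining work is bookkeeping. Because $F_j^h=\prod_{\eta\in V_1}c^{|\eta|}f_j\circ T^{a_j(h\cdot\eta)}$, substituting into the inner $(K-2)$-box over $h'\in[H]^{K-2}$ inside $F^{J,K-1,a}_N(F^h)(x)$ and recombining with the extra index $h$ telescopes — after reabsorbing conjugations through the $c^{|\eta|}$ — into the full $(K-1)$-box over $(h,h')\in[H]^{K-1}$ occurring in $F^{J,K,a}_N(f)(x)$. One then averages over $h$, moves the $h$-average inside the exponent $1/2^{K-2}$ by the Hölder inequality on averages (with $r=1/2^{K-2}\le1$), collapses the $O(1/H)$ remainder together with the $\floor{\sqrt N}^{-1/2^{K-2}}$ term from the inductive hypothesis into a single $O\big(\floor{\sqrt N}^{-1/2^{K-2}}\big)$ remainder (using that $H$ has order $\sqrt N/|a_1|$ and $2^{K-2}\ge1$), and finally takes a square root, split via $\sqrt{a+b}\le\sqrt a+\sqrt b$, which turns the exponent $1/2^{K-2}$ into $1/2^{K-1}$ and produces exactly $C_{J,K,a}F^{J,K,a}_N(f)(x)$ for $N>|a_1|^2$, with $C_{J,K,a}$ depending only on $C_{J,K-1,a}$, $|a_1|$, and absolute constants.

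The step I expect to be the main obstacle is the change of variables $y\mapsto S^{-b_Kn}y$: without it, a direct Cauchy--Schwarz (or spectral) attack on $\int_Y\prod_{k=1}^K G_k^h\circ S^{b_kn}\,d\nu$ leaves all $K$ orbits $n$-dependent and yields no decay, whereas freezing one factor is precisely what lets the induction on $K$ close. One also has to be careful that invertibility of $S$ (and of $T$, to make sense of the negative powers inside $F_j^h$ and $G_k^h$) is genuinely used, and that the telescoping of the nested boxes and the shrinking of the exponents $1/2^{K-2}\to1/2^{K-1}$ are tracked consistently across the induction.
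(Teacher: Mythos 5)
Your proposal is correct and follows essentially the same route as the paper: induction on $K$ with the spectral theorem for the base case, then Van der Corput with $H=\floor{\sqrt{N}/|a_1|}$, a translation on $Y$ to freeze one $S$-factor so the inductive hypothesis applies to the functions $f_j\cdot\overline{f_j\circ T^{a_jh}}$ and $g_k\cdot\overline{g_k\circ S^{b_kh}}$ with shifted (still distinct, nonzero) exponents, followed by H\"older on averages and consolidation of remainders. The only cosmetic differences are that the paper freezes $g_1$ (translating by $S^{-b_1n}$) and bounds the frozen factor in sup norm before passing from $L^1(\nu)$ to $L^2(\nu)$, whereas you freeze $g_K$ and use Cauchy--Schwarz against $G_K^h$; these are equivalent.
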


\begin{proof}
Consider induction on $K$. For the base case $K=1$, we apply the spectral theorem:
\begin{align*}
\left\Vert\frac{1}{N} \sum_{n=1}^N g_1 \circ S^{b_1 n} \cdot \prod_{j=1}^J f_j(T^{a_jn}x)\right\Vert_{L^2(\nu)}^{2} &=  \int \left| \frac{1}{N} \sum_{n=1}^N e^{2\pi i b_1 n t}  \prod_{j=1}^J f_j(T^{a_jn}x)\right|^{2} d\sigma_{g_1}(t) \\
&\leq \Vert g_1 \Vert_{L^2(\nu)}^2 \cdot \sup_{t} \left|  \frac{1}{N} \sum_{n=1}^N e^{2\pi i b_1 n t}  \prod_{j=1}^J f_j(T^{a_jn}x)\right|^2 \\
&\leq \sup_{t} \left|  \frac{1}{N} \sum_{n=1}^N e^{2\pi i n t}  \prod_{j=1}^J f_j(T^{a_jn}x)\right|^2
\end{align*}
as $g_1$ is bounded by assumption, and $b_1$ is lost in the supremum over $t$. Taking a square root gives the desired estimate.

To induct, consider for a larger $K$ applying the Van der Corput inequality pointwise in $y$:
\begin{align*}
&\left|\frac{1}{N} \sum_{n=1}^N  \prod_{k=1}^K g_k(S^{b_k n}y) \cdot \prod_{j=1}^J f_j(T^{a_jn}x)\right|^2 \\
&\leq \frac{2}{H} + \frac{2(N+H)}{N^2H^2}\sum_{h'=1}^H (H-h') \Re \left( \sum_{n=1}^{N-h'} \prod_{k=1}^K g_k(S^{b_k n}y) \overline{g_k(S^{b_kn + b_kh'}y)} \prod_{j=1}^J f_j(T^{a_jn}x)\overline{f_j(T^{a_jn + a_jh'}x)}\right) \, .
\end{align*}
Integrating over $\nu$, we pass the integral inside the $n$ sum, and translate the measure by $-b_1n$. Hence, the $k=1$ multiplicand no longer depends on $n$. Factoring this term and the integral out, and bounding $\Re$ by the absolute value yields
\begin{align*}
&\left\Vert \frac{1}{N} \sum_{n=1}^N  \prod_{k=1}^K g_k \circ S^{b_k n} \cdot \prod_{j=1}^J f_j(T^{a_jn}x)\right\Vert_{L^2(\nu)}^2\\
&\leq \frac{2}{H} + \frac{2(N+H)}{N^2H^2}\sum_{h'=1}^H (H-h') \int \left| \sum_{n=1}^{N-h'} \prod_{k=2}^K [g_k \cdot \overline {g_k \circ S^{b_kh'}}] \circ S^{(b_k - b_1)n} \prod_{j=1}^J f_j(T^{a_jn}x)\overline{f_j(T^{a_jn + a_jh'}x)}\right| d\nu \, .
\end{align*}
With $\Re$ gone, we may clean up terms. As usual, we extend the $n$ sum from $N-h'$ to $N$ at the cost of a $1/H$-order term. Finally, bounding the 1-norm by the 2-norm the inductive hypothesis appear:
\begin{align*}
&\leq \frac{6}{H} + \frac{4}{H}\sum_{h'=1}^H \left\Vert \frac{1}{N}\sum_{n=1}^{N} \prod_{k=2}^K [g_k \cdot \overline {g_k \circ S^{b_kh'}}] \circ S^{(b_k - b_1)n} \prod_{j=1}^J f_j(T^{a_jn}x)\overline{f_j(T^{a_jn + a_jh'}x)}\right\Vert_{L^2(\nu)} \\
&\leq \frac{6}{H} + \frac{4}{H} \sum_{h'=1}^H C_{J, K-1, a}\Bigg( \frac{1}{\floor{\sqrt{N}}^{1/2^{K-2}}} \\
& + \left[\frac{1}{\floor{\frac{\sqrt{N}}{|a_1|}}^{K-2}} \sum_{h\in \left[\floor{\frac{\sqrt{N}}{|a_1|}}\right]^{K-2}} \sup_{t} \left| \frac{1}{N} \sum_{n=1}^N e^{2\pi i n t} \prod_{j=1}^J \left[ \prod_{\eta \in V_{K-2}} c^{|\eta|} (f_j \cdot \overline{f_j \circ T^{a_j h'}})\circ T^{a_j(h \cdot \eta)} \right](T^{a_jn} x)\right|\right]^{1/2^{K-2}}\Bigg) \\
&\leq \frac{6}{H} + \frac{4 C_{J, K-1, a}}{\floor{\sqrt{N}}^{1/2^{K-2}}} + 4 C_{J, K-1, a} \\
&  \cdot\left(\frac{1}{H} \sum_{h'=1}^H \frac{1}{\floor{\frac{\sqrt{N}}{|a_1|}}^{K-2}} \sum_{h\in \left[\floor{\frac{\sqrt{N}}{|a_1|}}\right]^{K-2}} \sup_{t} \left| \frac{1}{N} \sum_{n=1}^N e^{2\pi i n t} \prod_{j=1}^J \left[ \prod_{\eta \in V_{K-2}} c^{|\eta|} (f_j \cdot \overline{f_j \circ T^{a_j h'}})\circ T^{a_j(h \cdot \eta)} \right](T^{a_jn} x)\right|\right)^{1/2^{K-2}}
\end{align*}
after using Hölder's inequality to pull the $h'$ sum inside the power. Picking $H = \floor{\frac{\sqrt{N}}{|a_1|}}$, the remainder terms consolidate into a term of order $1/\floor{\sqrt{N}}^{1/2^{K-2}}$. The sum over $h'$ creates another layer to the cube (which is scaled by $a_j$, and we may simplify relabeling $h$ to be $(h, h')\in [\floor{\frac{\sqrt{N}}{|a_1|}}]^{K-1}$:
\begin{align*}
&\leq \frac{6|a_1| + 4 C_{J, K-1, a}}{\floor{\sqrt{N}}^{1/2^{K-2}}} + 4 C_{J, K-1, a} \\
&  \cdot\left(\frac{1}{\floor{\frac{\sqrt{N}}{|a_1|}}^{K-1}} \sum_{h\in \left[\floor{\frac{\sqrt{N}}{|a_1|}}\right]^{K-1}} \sup_{t} \left| \frac{1}{N} \sum_{n=1}^N e^{2\pi i n t} \prod_{j=1}^J \left[ \prod_{\eta \in V_{K-1}} c^{|\eta|} f_j\circ T^{a_j(h \cdot \eta)} \right](T^{a_jn} x)\right|\right)^{1/2^{K-2}} \, .
\end{align*}
For $N > |a_1|^2$, Consolidating constants and taking a square root of both sides gives the desired bound.
\end{proof}

We also establish the following:
\begin{lemma}\label{T:intermediate 2}
Let $J, K\in \N$ and $a_1, \dots, a_J$ be a collection distinct nonzero integers. There exists constants $C'_{J, K, a}$ such that for any dynamical system $(X, \mathcal{F}, \mu, T)$ and functions $f_1, \dots f_J\in L^\infty(\mu)$, all uniformly bounded by 1, and for any $x\in X$, we have
$$\int F_N^{J, K, a}(f)(x) \, d\mu(x) \leq C'_{J, K, a} \left( \frac{1}{\floor{\sqrt{N}}^{1/2^{J+K-2}}} + \left[ W_N^{J+K-1} (f_1)\right]^{1/2^{J+K-2}}\right)$$
\end{lemma}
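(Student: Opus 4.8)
The plan is to split the integral $\int F_N^{J,K,a}(f)(x)\,d\mu(x)$ according to the two summands defining $F_N^{J,K,a}$. The first summand, $\frac{1}{\floor{\sqrt N}^{1/2^{K-1}}}$, is constant in $x$; since $J\ge 1$ forces $2^{K-1}\le 2^{J+K-2}$, it is already bounded by $\frac{1}{\floor{\sqrt N}^{1/2^{J+K-2}}}$, which is of the required size. For the second summand, since $y\mapsto y^{1/2^{K-1}}$ is concave I would apply Jensen's inequality for the probability measure $\mu$ to pull the integral inside the outer power, and then interchange it with the finite average over $h\in[\floor{\sqrt N/|a_1|}]^{K-1}$. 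This reduces the task to bounding, uniformly in such $h$, the quantity $\int\sup_t\big|\frac1N\sum_{n=1}^N e^{2\pi int}\prod_{j=1}^J G_j^{(h)}\circ T^{a_jn}\big|\,d\mu$, where $G_j^{(h)}:=\prod_{\eta\in V_{K-1}}c^{|\eta|}f_j\circ T^{a_j(h\cdot\eta)}$ is a product of $2^{K-1}$ shifts of $f_j$ and hence has sup-norm at most $1$.

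Next I would bound this $L^1$ integral by the $L^2$ norm (a probability space) and apply Theorem \ref{T:polynomial WW estimate} in the case $k=1$ to the $J$ functions $G_1^{(h)},\dots,G_J^{(h)}$ with the fixed distinct nonzero exponents $a_1,\dots,a_J$; for $N$ large this gives $C_{J,1,a}\big(\frac{1}{\floor{\sqrt N}^{1/2^{J-1}}}+[W_N^{J}(G_1^{(h)})]^{1/2^{J-1}}\big)$. Subadditivity of $y\mapsto y^{1/2^{K-1}}$ separates the error term: being constant in $h$ it survives the averaging unchanged, and raised to the power $1/2^{K-1}$ it becomes $\frac{1}{\floor{\sqrt N}^{1/(2^{J-1}\cdot 2^{K-1})}}=\frac{1}{\floor{\sqrt N}^{1/2^{J+K-2}}}$, matching the claimed remainder. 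For the main term, Hölder's inequality on averages (with exponent $r=1/2^{J-1}<1$) moves the power $1/2^{J-1}$ outside the $h$-average, leaving $\big(\frac{1}{\floor{\sqrt N/|a_1|}^{K-1}}\sum_{h}W_N^J(G_1^{(h)})\big)^{1/2^{J+K-2}}$.

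The crux is then the estimate $\frac{1}{\floor{\sqrt N/|a_1|}^{K-1}}\sum_{h\in[\floor{\sqrt N/|a_1|}]^{K-1}}W_N^J(G_1^{(h)})\le C_{a_1,K}\,W_N^{J+K-1}(f_1)$, which is where the order $J+K-1$ enters. Iterating the recursion (\ref{F:inductive construction of WW averages}) a total of $K-1$ times but stopping at order $J$ rather than order $1$ (the same computation that produces the closed form for $W_N^k$) gives $W_N^{J+K-1}(f_1)=\frac{1}{\floor{\sqrt N}^{K-1}}\sum_{m\in[\floor{\sqrt N}]^{K-1}}W_N^J\big(\prod_{\eta\in V_{K-1}}c^{|\eta|}f_1\circ T^{\eta\cdot m}\big)$, while $G_1^{(h)}=\prod_{\eta\in V_{K-1}}c^{|\eta|}f_1\circ T^{\eta\cdot(a_1h)}$. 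When $a_1>0$ the map $h\mapsto a_1h$ is injective from $[\floor{\sqrt N/a_1}]^{K-1}$ into $[\floor{\sqrt N}]^{K-1}$, so by nonnegativity of all summands $\sum_h W_N^J(G_1^{(h)})\le\floor{\sqrt N}^{K-1}W_N^{J+K-1}(f_1)$, and dividing yields the inequality with $C_{a_1,K}=(\floor{\sqrt N}/\floor{\sqrt N/a_1})^{K-1}\le(2a_1)^{K-1}$ for $N$ large. When $a_1<0$ one reduces to this case by measure preservation and the conjugation-invariance of $W_N^J$, exactly as in the ``removing the scaling by $a_1$'' step in the proof of Theorem \ref{T:polynomial WW estimate}; I would suppress these routine adjustments. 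Finally, assembling the pieces and consolidating constants (absorbing the bounded range of small $N$ into $C'_{J,K,a}$) gives the stated bound.

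The main obstacle I anticipate is precisely this reindexing: checking that the index sets nest correctly, that the reindexing constant remains bounded, and that the negative-exponent reduction introduces no error larger than $\frac{1}{\floor{\sqrt N}^{1/2^{J+K-2}}}$ — the same combinatorial bookkeeping that underlies the corresponding step in Theorem \ref{T:polynomial WW estimate}.
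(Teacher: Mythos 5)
Your proposal is correct and follows essentially the same route as the paper's proof: pull the integral inside the outer power (Jensen/Hölder on the probability space), bound the $L^1$ norm by $L^2$, apply Theorem \ref{T:polynomial WW estimate} with $k=1$ to produce $W_N^J$ of the cube-type products, move the $h$-average inside the power by Hölder, and absorb the $a_1$-scaling by reindexing into $[\floor{\sqrt N}]^{K-1}$ so that the iterated recursion identifies the result as $W_N^{J+K-1}(f_1)$. Your explicit treatment of the injectivity of $h\mapsto a_1h$ and of negative $a_1$ just spells out the "extend the $h$ sum" step the paper performs implicitly.
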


\begin{proof}

We use Hölder's inequality to pull the integral to the inside, and bound it by the 2-norm:

\begin{align*}
&\int F_N^{J, K, a, b}(f, g)(x) \, d\mu(x)
\leq \frac{1}{\floor{\sqrt{N}}^{1/2^{K-1}}}\\ &+\left(\frac{1}{\floor{\frac{\sqrt{N}}{|a_1|}}^{K-1}} \sum_{h \in \left[\floor{\frac{\sqrt{N}}{|a_1|}}\right]^{K-1}} \left\Vert \sup_{t} \left| \frac{1}{N} \sum_{n=1}^N e^{2\pi i n t} \prod_{j=1}^J \left[\prod_{\eta\in V_{K-1}} c^{|\eta|} f_j \circ T^{a_j (h \cdot \eta)}\right]\circ T^{a_j n}\right| \right\Vert_2 \right)^{1/2^{K-1}} \, .
\end{align*}
We see that under integration, weighted multiple recurrence averages have appeared. Since these are covered by theorem \ref{T:polynomial WW estimate} with $k=1$, this makes the $J$ order WW averages appear:

\begin{align*}
&\leq \frac{1}{\floor{\sqrt{N}}^{1/2^{K-1}}}\\ &+\left(\frac{1}{\floor{\frac{\sqrt{N}}{|a_1|}}^{K-1}} \sum_{h \in \left[\floor{\frac{\sqrt{N}}{|a_1|}}\right]^{K-1}} C_{J, 1, a}\left( \frac{1}{\floor{\sqrt{N}}^{1/2^{J-1}}} + \left[ W_N^{J}\left( \prod_{\eta\in V_{K-1}} c^{|\eta|} f_1 \circ T^{a_1 (h \cdot \eta)}\right)\right]^{1/2^{J-1}}\right) \right)^{1/2^{K-1}} \, .
\end{align*}
Using subadditivity, we pull out and consolidate remainder terms, and by Hölder's inequality we pull the $h$ sum inside. We get rid of the $a_1$ scaling by extending the $h$ sum to $h\in \left[ \floor{\sqrt{N}}\right]^{K-1}$, and we have added $K-1$ more layers to the cube:
\begin{align*}
&\leq \frac{1+ C_{J, 1, a}^{1/2^{K-1}}}{\floor{\sqrt{N}}^{1/2^{J+K-2}}}+ C_{J, 1, a}^{1/2^{K-1}}\left( \frac{1}{\floor{\frac{\sqrt{N}}{|a_1|}}^{K-1}} \sum_{h \in \left[\floor{\frac{\sqrt{N}}{|a_1|}}\right]^{K-1}} W_N^{J}\left( \prod_{\eta\in V_{K-1}} c^{|\eta|} f_1 \circ T^{a_1 (h \cdot \eta)}\right) \right)^{1/2^{J+K-2}} \\
&\leq \frac{1+ C_{J, 1, a}^{1/2^{K-1}}}{\floor{\sqrt{N}}^{1/2^{J+K-2}}}+ C_{J, 1, a}^{1/2^{K-1}}|a_1|^{1/2^{J+K-2}}\left( \frac{1}{\floor{\sqrt{N}}^{K-1}} \sum_{h \in \left[\floor{\sqrt{N}}\right]^{K-1}} W_N^{J}\left( \prod_{\eta\in V_{K-1}} c^{|\eta|} f_1 \circ T^{h \cdot \eta}\right)  \right)^{1/2^{J+K-2}} \\
&= \frac{1+ C_{J, 1, a}^{1/2^{K-1}}}{\floor{\sqrt{N}}^{1/2^{J+K-2}}}+ C_{J, 1, a}^{1/2^{K-1}}|a_1|^{1/2^{J+K-2}}\left(  W_N^{J+K-1}\left( f_1 \right)  \right)^{1/2^{J+K-2}} \, .
\end{align*}
Consolidating constants gives the desired bound.

\end{proof}

\begin{proof}[Proof of Theorem \ref{T:mult hilb trans}]
With the previous lemmas established, we may proceed exactly as in Theorem \ref{T:polynomial return times hilbert trans}.

Let $f_1$ satisfy $W_N^{K+J-1}(f_1) \leq CN^{-\alpha}$ where $0 < \alpha < 1/2$, and pick $\ep > 0$ be such that 
$$\sigma := 1- \alpha/2^{k+J-1} + \ep \leq 1 \, .$$ 
Using lemma \ref{T:intermediate 2}, we observe that 
\begin{align}\label{E:bound of F by polynomial}
\int F_N^{J, K, a}(f)(x) \, d\mu(x) &\leq C'_{J, K, a} \left( \frac{1}{\floor{\sqrt{N}}^{1/2^{J+K-2}}} + \left[ \frac{C}{N^\alpha}\right]^{1/2^{J+K-2}}\right) \leq \frac{C'}{N^{\alpha/2^{J+K-2}}}
\end{align}
for sufficiently large $N$ and a potentially larger constant $C'$ with the same dependencies. Hence, by the monotone convergence theorem, we observe that
\begin{align*}
\int \sum_{N=1}^\infty \frac{F_N^{J, K, a}(f)(x)}{N^\sigma} \, d\mu(x) &\leq  \sum_{N=1}^\infty \frac{\int F_N^{J, K, a}(f)(x) \, d\mu(x)}{N^\sigma} \, . %\leq \sum_{N=1}^\infty \frac{C'}{N^{\sigma + \alpha/2^{J+K-2}}} 
\end{align*}
Since these terms are eventually bounded by $C/N^{1 + \alpha/2^{J+K-1} + \ep}$, this sum convergence. Hence, the integrand is finite almost everywhere, and  there exists a set $X_{f_1, \dots, f_J, K}$ of full measure such that for all $x\in X_{f_1, \dots, f_J, K}$, the terms
$$\frac{F_{N}^{J, K, a}(f)(x)}{N^\sigma}$$
are summable. For all dynamical systems $(Y,  \mathcal{G},\nu, S)$ and $g_1, \dots, g_K\in L^\infty(\nu)$, all bounded by 1, using lemma \ref{L:Intermediate 1} and the monotone convergence theorem in $\nu$, it follows that for any such $x\in X_{f_1, \dots, f_J, K}$ we have
\begin{align*}
&\int \sum_{N=1}^\infty \frac{\left|\frac{1}{N} \sum_{n=1}^N  \prod_{k=1}^K g_k \circ S^{b_k n} \cdot \prod_{j=1}^J f_j(T^{a_jn}x) \right|}{N^\sigma} \\&\leq \sum_{N=1}^\infty \frac{\left\Vert\frac{1}{N} \sum_{n=1}^N  \prod_{k=1}^K g_k \circ S^{b_k n} \cdot \prod_{j=1}^J f_j(T^{a_jn}x) \right\Vert_{L^2(\nu)}}{N^\sigma} 
\\ &\leq \sum_{N=1}^\infty  \frac{C_{J, K, a} \cdot F_N^{J, K, a, b} (f, g)(x)}{N^\sigma} < \infty
\end{align*}
and the summands are finite $\nu$-a.e. This meets the first critera of lemma \ref{L:HilbertCriterion}.

To establish the second Hilbert transform convergence criteria, we see from the estimate (\ref{E:bound of F by polynomial}) that for sufficiently large $N$, we have
\begin{align*}
 \int  N^{1-\sigma} F_N^{J, K, a}(f) (x) \, d\mu(x) \leq \frac{C'}{N^{\alpha/2^{J+K-1} + \ep}}
\end{align*}
and is summable over the subsequence $\floor{N^\gamma}$ for $\gamma := 2^{J+K-1}/\alpha$, and hence the sum is finite for almost all $x$. Again, by the monotone convergence theorem in $\nu$ and lemma \ref{L:Intermediate 1}, we have
\begin{align*}
&\int \sum_{N=1}^\infty \left|\frac{1}{\floor{N^\gamma}^\sigma} \sum_{n=1}^{\floor{N^\gamma}} \prod_{k=1}^K g_k\circ S^{b_k n}\cdot \prod_{j=1}^J f_j(T^{a_j n}x)\right| \, d\nu \\
&\leq \sum_{N=1}^\infty   \floor{N^\gamma}^{1-\sigma}\int \left|\frac{1}{\floor{N^\gamma}} \sum_{n=1}^{\floor{N^\gamma}} \prod_{k=1}^K g_k\circ S^{b_k n}\cdot \prod_{j=1}^J f_j(T^{a_j n}x)\right| \, d\nu \\
&\leq  \sum_{N=1}^\infty  \floor{N^\gamma}^{1-\sigma} \cdot C_{J, K, a} \cdot F_{\floor{N^\gamma}}^{J, K, a} (f)(x) <\infty \, .
\end{align*}
Hence, for any dynamical systems $(Y, \nu, \mathcal{G}, S)$ and $g_1, \dots, g_K\in L^\infty(\nu)$, all bounded by 1, the above integrand is finite $\nu$ a.e., and the summands 
\begin{align*}
\frac{1}{\floor{N^\gamma}^\sigma} \sum_{n=1}^{\floor{N^\gamma}} \prod_{k=1}^K g_k( S^{b_k n}y)\cdot \prod_{j=1}^J f_j(T^{a_j n}x)
\end{align*}
are converging to zero for $\nu$-a.e. $y\in Y$. By our choice of $\gamma$, we may extend from convergence along $\floor{N^\gamma}$ to convergence along $N$ by the usual argument, as see in the proof of Theorem \ref{T:HilbertTrans}. Hence, the second criteria of lemma \ref{L:HilbertCriterion} is met $\nu$-a.e.

Taking $X_{f_1, \dots, f_J, K}$ to be the set where both of these parts hold, it follows by the lemma \ref{L:HilbertCriterion} that for any $x$ in this set, the sums
\begin{align*}
\sum_{n=1}^\infty \frac{\prod_{k=1}^K g_{k} (S^{b_k n} y) \prod_{j=1}^J f_{j} (T^{a_j n} x)}{n^\sigma}
\end{align*}
are converging $\nu$-a.e. for any dynamical system $(Y, \mathcal{G}, \nu, S)$ and $g_1, \dots, g_K\in L^\infty(\nu)$, all bounded by 1, and the theorem is established after taking $\ep = 1/m$ for sufficiently large $m$ and intersecting.
\end{proof}

\appendix
\section{Proof of Generalized reverse Bourgain bound}\label{A:general case}

\begin{theorem}\label{T:general case}
Let $k\in \N$. There exists a constant $C_k'$ such that for all $N\in \N$ and $H_1, \dots, H_k \in \N$ with $H_k \leq N$, we have
\begin{align*}
&\frac{1}{\prod_{m=1}^{k-1} H_m} \sum_{h_1 = 1}^{H_1} \dots \sum_{h_{k-1}=1}^{H_{k-1}} \left\Vert\sup_t \left| \frac{1}{N} \sum_{n=1}^N e^{2\pi i n t} \left[ \prod_{\eta \in V_{k-1}} c^{|\eta|} g_\eta \circ T^{h \cdot \eta}\right] \circ T^n\right| \right\Vert_1^{2/3} \\ &\leq C_k' \left(\frac{1}{H_k^{1/3}} + \frac{H_{(1)}^{1/6}}{N^{1/6}} + \left[\frac{1}{H_{(2)}}\sum_{p=1}^{H_{(1)}} M_p^k (g_\mathbf{1}) \right]^{1/6} + \left(\frac{H_{(2)} - H_{(1)} + 1}{H_{(2)}}\right)^{1/6} M_{H_{(1)}}^k (g_\mathbf{1})^{1/6} \right)
\end{align*}
where $H_{(1)}$ and $H_{(2)}$ denote the two smallest (not necessarily distinct) values of $H_1, \dots, H_k$.
\end{theorem}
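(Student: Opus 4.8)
The plan is to run the proof of Theorem~\ref{T:reverse BB, simplest case} (the case $k=1$) essentially verbatim, making two substitutions: the single function $f$ is replaced throughout by the box product $F_h:=\prod_{\eta\in V_{k-1}}c^{|\eta|}g_\eta\circ T^{\eta\cdot h}$, with $h=(h_1,\dots,h_{k-1})$, and the Van der Corput window is taken to be $H_k$ rather than $\lfloor\sqrt N\rfloor$; the cube average $\frac1{\prod_{m=1}^{k-1}H_m}\sum_h$ simply rides along on the outside. Concretely, I would first apply the pointwise Van der Corput estimate (\ref{vdc-1}) with $H=H_k$, take the supremum in $t$, integrate in $L^1$, and extend the inner $n$-sum back to $N$; this produces the remainder $H_k^{-1/3}$ and leaves the term $\frac4{H_k}\sum_{h_k=1}^{H_k}\int\bigl|\frac1N\sum_n(F_h\cdot\overline{F_h\circ T^{h_k}})\circ T^n\bigr|\,d\mu$ inside the cube average. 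Applying Hölder's inequality to pull the $h_k$-average inside a square root and then Lemma~\ref{L:vdc for systems} to the inner integral turns the integrand into a $2^{k+1}$-fold product carrying the weight $\frac{N-n}{N}$: indexing the two binary splits (namely $F_h$ versus $\overline{F_h\circ T^{h_k}}$, and $\phi$ versus $\overline{\phi\circ T^n}$ with $\phi=F_h\cdot\overline{F_h\circ T^{h_k}}$) by $\delta\in V_2$ and combining $\eta$ with $\delta$ into $\tilde\eta=(\eta,\delta)\in V_{k+1}$, $g_{(\eta,\delta)}:=g_\eta$, the integrand is exactly the $(k+1)$-dimensional Gowers box $\int\prod_{\tilde\eta\in V_{k+1}}c^{|\tilde\eta|}g_{\tilde\eta}\circ T^{\tilde\eta\cdot(h_1,\dots,h_{k-1},h_k,n)}\,d\mu$.

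Next I would process this box exactly as in the body: apply the weight identity $\frac1N\sum_{n=0}^{N-1}\frac{N-n}{N}a_n=\frac1N\sum_{q=0}^{N-1}\frac1N\sum_{n=0}^q a_n$, excise the small values of $q$, and then shift each of the $k$ index variables $h_1,\dots,h_{k-1},h_k$ down by $n$. The key bookkeeping point is that after this shift the $n$-exponent of the $\tilde\eta$-th factor becomes exactly $|\tilde\eta|$, so grouping the factors by the value of $|\tilde\eta|$ rewrites the box as $\prod_{j=0}^{k+1}c^j\,G_j\circ T^{jn}$, where each $G_j=G_{j,h,h_k}$ is a bounded-by-$1$ product of translates of the $g_\eta$'s; in particular the $j=0$ factor is a translate of $g_{\mathbf 0}$ and the $j=k+1$ factor is a translate of $g_{\mathbf 1}$. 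Pulling the absolute value inside, bounding $|G_j|\le1$ for $0\le j\le k$, and taking the supremum over the $k$ free slots $G_1,\dots,G_k$ then bounds the integral by $M^k_p(g_{\mathbf 1})$ (in its $L^1$ form, hence by its $L^2$ form since we are on a probability space), where $p$ is the length of the range of $n$ surviving the region split. Collapsing the residual $h$-, $h_k$- and $q$-averages, which by then no longer depend on those variables, taking a cube root, and consolidating constants then gives the stated inequality.

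The only genuinely new work over the $k=1$ case is the exchange of the $n$-sum against the $k$ shifted index sums. For $k=1$ this was a single two-dimensional parallelogram cut into three pieces — two boundary triangles discarded into the remainder and one interior parallelogram on which $n$ receives the single fixed window $\lfloor\sqrt N\rfloor$. For general $k$ the simultaneous shifts of $h_1,\dots,h_{k-1},h_k$ by $n$ turn the summation region into a higher-dimensional polytope whose interior piece no longer hands $n$ one fixed window: one is forced to retain $n$-sums of every length $p$ up to the smallest window $H_{(1)}$, which is precisely what produces the term $\bigl[\frac1{H_{(2)}}\sum_{p=1}^{H_{(1)}}M^k_p(g_{\mathbf 1})\bigr]^{1/6}$, with the normalisation governed by the second-smallest window $H_{(2)}$; the mismatch between $H_{(1)}$ and $H_{(2)}$ along the relevant facet gives the extra boundary contribution $\bigl(\frac{H_{(2)}-H_{(1)}+1}{H_{(2)}}\bigr)^{1/6}M^k_{H_{(1)}}(g_{\mathbf 1})^{1/6}$, and the crude estimate on the lowest-dimensional faces of the polytope contributes $H_{(1)}^{1/6}/N^{1/6}$. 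The main obstacle is thus entirely combinatorial and geometric: carrying out this polytope decomposition cleanly, checking that every discarded face is genuinely small of the claimed order in the $H_{(i)}$, and tracking the normalisation $\frac1{\prod_{m=1}^{k-1}H_m}$ as the effective ranges of the cube variables contract under the shift by $n$. Once the region analysis is pinned down, the remainder is the routine chain of Van der Corput, Hölder, and measure translations already rehearsed in the $k=1$ proof.
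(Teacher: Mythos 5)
Your proposal follows the paper's appendix argument essentially step for step — Van der Corput with window $H_k$, Lemma \ref{L:vdc for systems} to produce the $(k+1)$-dimensional box, the weight identity for $\frac{N-n}{N}$, excision of the small values of $q$, the shift of all $k$ cube variables by $n$, the interchange of the $n$- and $h$-sums via the geometry of the shifted rectangles, grouping $h$ by the overlap length $p$, and domination by $M^k_p(g_{\mathbf 1})$ after placing $g_{\mathbf 1}$ in the top slot — and you correctly attribute each term of the stated bound to its source. The one piece you flag but do not execute, the count of $\#\{h\in\Gamma_q : U_h-L_h+1=p\}$, is exactly what the paper does by inclusion--exclusion on intersections of the shifted boxes together with the telescoping cancellations (Lemmas \ref{L:exact count for gamma}, \ref{L:est for small p}, \ref{L:est for big p}), so your plan is the same proof with that combinatorial verification deferred rather than a different route.
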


We begin with the proofs from the previous section that follow directly from this theorem:

\begin{proof}[Proof of Theorem \ref{T:reverse BB, simplest case}]
Take all $g_\eta = f$, take $H_1, \dots, H_{k-1} = \floor{\sqrt{N}}$ and $H_k = \floor{N^{1/4}}$, in which $H_{(1)} = \floor{N^{1/4}}$ and $H_{(2)} = \floor{\sqrt{N}}$. Applying the general case yields
\begin{align*}
W_N^k(f) &\leq C'_k\left( \frac{1}{N^{1/12}} + \frac{1}{N^{1/12}} + \left[\frac{1}{N^{1/2}} \sum_{p=1}^{\floor{N^{1/4}}} M_p^k(f)\right]^{1/6} + \left(\frac{\floor{\sqrt{N}} - \floor{N^{1/4}} + 1}{\floor{\sqrt{N}}}\right)^{1/6} M_{\floor{N^{1/4}}}^k (g_\mathbf{1})^{1/6} \right)
\end{align*}
for all $N$. Bounding the $p$ summands trivially, the average over $p$ decays at the order $1 / N^{1/4}$, yielding a remainder term $1/N^{1/24}$ that absorbs all others. On the last term, we bound the multiplier by 1 to get the desired estimate.
\end{proof}

\begin{proof}[Proof of Theorem \ref{L:RBB for off-diag}]
Take $H_1, \dots, H_{k-1} = \floor{\sqrt{N}}$ and $H_k = \floor{N^{1/4}}$, in which $H_{(1)} = \floor{N^{1/4}}$, and apply the same argument as the previous proof.
\end{proof}

\begin{proof}[Proof of Theorem \ref{T:RBB for alt WW averages}]
For Theorem \ref{A:general case}, take each $g_\eta = f$, and for $1\leq m \leq k-1$, take $H_{k-1} = r_{k-1}(N)$, and let $H_k = \floor{\min\{r_1(N), \dots, r_{k-1}(N), N\}^{1/2}}$, in which $H_{(1)} = \floor{\min\{r_1(N), \dots, r_{k-1}(N), N\}^{1/2}}$ and $H_{(2)} = \min\{r_1(N), \dots, r_{k-1}(N)\}$. The left-hand side becomes exactly $\widetilde W_N^k(f)$. 

For the terms on the right-hand side, the  $1/H_k^{1/3}$ and $H_{(1)}^{1/6}/N^{1/6}$ can both be bound above by $1/N^{1/12}$. Bounding every $p$ summand trivally, that term is again bounded by the ratio $H_{(1)}/H_{(2)}$, which in this case can be bounded by $1/\min\{r_1(N), \dots, r_{k-1}(N)\}^{1/2}$. Raising this to the 1/6 power, these two remainders can be combined and we get the desired estimate.
\end{proof}

The main difficulty in proving \ref{T:general case} over the $k=1$ case is exchanging the order of the following sums:
\begin{align*}
    \sum_{n=0}^q \sum_{h_1 = 1-n}^{H_1-n} \dots \sum_{h_k = 1-n}^{H_k-n} \, .
\end{align*}
To this end, we will present the required estimates as lemmas before proving this theorem. To build up to these, we consider the following notations.

Let $k \geq 1$ and $H_1, \dots, H_k \geq 1$, and $q\in \mathbb{Z}$ be such that $H_{(1)} < q \leq N-1$, where $H_{(1)}$ is the smallest value of $H_m$ for any $m$. For each $n$ (which we eventually take to be between $0$ and $q$), we define the $k$ dimensional rectangle
$$\Box_n = \{ h\in \Z^{k} : 1 - n \leq h_m \leq H_m - n \text{ for each }1 \leq m \leq k\}\, .$$
Hence, the desired sum can be written
\begin{align*}
    \sum_{n=0}^q \sum_{h\in \Box_n} \, .
\end{align*}
Now, we make the following observations about these rectangles. Since each dimension is an interval, the sets $\Box_n$ mostly behave like intervals.
For example, the intersection of any collection of $\Box_n$'s only depends on the smallest and largest index: if $a_1 \leq a_2 \leq \dots \leq a_l$ is any sequence of integers, then
\begin{align}\label{O:int of rectangles}
\Box_{a_1} \cap \Box_{a_2} \cap \dots \cap \Box_{a_l} = \Box_{a_1}  \cap \Box_{a_l}
\end{align}
Also similarly to intervals, the intersection of two rectangles is a smaller rectangle, and if two rectangles are too far apart, then they will not intersect at all. This distance is determined by the smallest width of the rectangles, which is $H_{(1)}$. Specifically, we have
$$\#( \Box_a \cap \Box_b ) = \begin{cases} \prod_{m=1}^k (H_m - (b-a)) & b-a < H_{(1)} \\ 0 & b - a \geq H_{(1)} \end{cases}$$
which we could also write as
\begin{align}\label{O:box size}
\#( \Box_a \cap \Box_b ) = \prod_{m=1}^k \max\{H_m - (b - a), 0\} \, .
\end{align}

Based on these observations, we see that for any $h\in \mathbb{Z}^k$, the collection of $i\in \mathbb{Z}$ such that $h \in \Box_i$ forms an interval, which we can denote the bounds of as $L_h$ and $U_h$, and we see that the width of this interval $U_n - L_h +1$ is bounded by $H_{(1)}$. If we define the set
$$\Gamma_q = \bigcup_{n=0}^q \Box_n$$
then we have the notation to exchange the desired sums:
\begin{align*}
    \sum_{n=0}^q \sum_{h\in \Box_n} = \sum_{h \in \Gamma_q} \sum_{n= L_h}^{U_h} \, .
\end{align*}
In the proof of Theorem \ref{T:general case}, we wish to group the terms $h$ from $\Gamma_q$ together by the width of the interval from $L_h$ to $U_h$, or by the quantity $U_h - L_h + 1$ which ranges from $1$ to $H_{(1)}$. Hence, we count the following:
\begin{lemma}\label{L:exact count for gamma}
For all terms as defined in the preceding paragraphs, we have for each $1 \leq p \leq H_{(1)}$ that
\begin{align}\label{F:exact count for gamma}
\begin{split}
&\#\{h\in \Gamma_q : U_h - L_h +1 = p\} \\ &= 2 \left[ \prod_{m=1}^k \max\{H_m - p + 1, 0\} - \prod_{m=1}^k \max\{H_m - p, 0\}\right] \\
&+ (q - p) \left[ \prod_{m=1}^k \max\{H_m - p+1, 0\} - 2\prod_{m=1}^k \max\{H_m - p,0 \} + \prod_{m=1}^k \max\{H_m - p - 1, 0\}\right]
\end{split}
\end{align}
\end{lemma}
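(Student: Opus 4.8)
Rather than counting the level sets $\{h\in\Gamma_q:U_h-L_h+1=p\}$ directly, the plan is to count the ``at least'' sets $B_p:=\{h\in\Gamma_q:\,U_h-L_h+1\ge p\}$ and take a difference. The whole computation rests only on the two structural facts \eqref{O:int of rectangles} and \eqref{O:box size} already recorded, together with an elementary inclusion--exclusion identity for a \emph{nested} family of sets. I would begin with the elementary description: for $h\in\Z^k$ one has $h\in\Box_n\iff a(h)\le n\le b(h)$, where $a(h):=\max_m(1-h_m)$ and $b(h):=\min_m(H_m-h_m)$. Hence, reading $[L_h,U_h]$ as the (nonempty, when $h\in\Gamma_q$) set of indices $n$ with $0\le n\le q$ and $h\in\Box_n$ — i.e. $L_h=\max\{0,a(h)\}$ and $U_h=\min\{q,b(h)\}$ — this set equals $[0,q]\cap[a(h),b(h)]$, an intersection of two intervals and so an interval. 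From $b(h)-a(h)\le(H_m-h_m)-(1-h_m)=H_m-1$ for each $m$ one gets $U_h-L_h+1\le b(h)-a(h)+1\le H_{(1)}$, which is the asserted range $1\le U_h-L_h+1\le H_{(1)}$.

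Next, for $1\le p\le H_{(1)}$ I would establish the set identity
\[
B_p=\bigcup_{j=0}^{q-p+1}\big(\Box_j\cap\Box_{j+p-1}\big).
\]
If $h\in\Box_j\cap\Box_{j+p-1}$ with $0\le j\le q-p+1$, then by \eqref{O:int of rectangles} $h\in\Box_i$ for every $i\in[j,j+p-1]\subseteq[0,q]$, so $[j,j+p-1]\subseteq[L_h,U_h]$ and $U_h-L_h+1\ge p$; conversely, if $U_h-L_h+1\ge p$ then $[L_h,L_h+p-1]\subseteq[L_h,U_h]$ forces $h\in\Box_{L_h}\cap\Box_{L_h+p-1}$ with $0\le L_h\le q-p+1$. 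Since $\{h:U_h-L_h+1=p\}=B_p\setminus B_{p+1}$ and $B_{p+1}\subseteq B_p$, the quantity sought is $\#B_p-\#B_{p+1}$, so everything reduces to computing $\#B_p$.

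For that I would invoke the following principle: if sets $S_0,\dots,S_r$ satisfy $S_i\cap S_k\subseteq S_j$ whenever $i\le j\le k$, then for each point the set of indices whose member it is forms an interval, whence
\[
\#\bigcup_{j=0}^r S_j=\sum_{j=0}^r\#S_j-\sum_{j=0}^{r-1}\#(S_j\cap S_{j+1}).
\]
Taking $S_j=\Box_j\cap\Box_{j+p-1}$ for $0\le j\le q-p+1$ — the hypothesis holds by \eqref{O:int of rectangles}, since among any four such indices the extreme ones are $j$ and $j'+p-1$ — formula \eqref{O:box size} gives $\#S_j=\prod_m\max\{H_m-(p-1),0\}$ and $\#(S_j\cap S_{j+1})=\#(\Box_j\cap\Box_{j+p})=\prod_m\max\{H_m-p,0\}$, both independent of $j$; counting the $q-p+2$ and $q-p+1$ terms respectively yields
\[
\#B_p=(q-p+2)\prod_{m=1}^k\max\{H_m-p+1,0\}-(q-p+1)\prod_{m=1}^k\max\{H_m-p,0\}.
\]
Substituting this and the analogous formula for $\#B_{p+1}$ into $\#B_p-\#B_{p+1}$ and regrouping (using $q-p+2=(q-p)+2$) produces exactly \eqref{F:exact count for gamma}.

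\textbf{Main obstacle.} The delicate step is the second one: pinning down the exact index range $0\le j\le q-p+1$ in the union for $B_p$ and checking the edge cases — using $H_{(1)}<q$ to guarantee $q-p+1\ge1$ so the union is nonempty, and noting that when $p=H_{(1)}$ the products involving $\max\{H_m-p,0\}$ vanish so both sides stay consistent. The inclusion--exclusion principle for nested families and the final algebraic collapse are routine once $\#S_j$ and $\#(S_j\cap S_{j+1})$ have been identified through \eqref{O:box size}.
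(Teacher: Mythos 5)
Your proposal is correct, and it reorganizes the count differently from the paper. The paper counts the exact level set directly: for each admissible starting position $i$ it counts $\{h : L_h = i,\ U_h = i+p-1\}$ as $\#(\Box_{i-1}^c \cap \Box_i \cap \Box_{i+p-1} \cap \Box_{i+p}^c)$, evaluates this by inclusion--exclusion and the two box facts, treats the two boundary positions $i=0$ and $i=q-p+1$ separately (which is exactly where the ``$2[\cdots]$'' term in \eqref{F:exact count for gamma} comes from, the interior positions giving the ``$(q-p)[\cdots]$'' term), and sums over $i$. You instead count the superlevel sets $B_p=\{h: U_h-L_h+1\ge p\}$ via the identity $B_p=\bigcup_{j=0}^{q-p+1}(\Box_j\cap\Box_{j+p-1})$ and the union formula for a consecutively-overlapping family, getting $\#B_p=(q-p+2)\prod_m\max\{H_m-p+1,0\}-(q-p+1)\prod_m\max\{H_m-p,0\}$, and then telescope in $p$; I checked that $\#B_p-\#B_{p+1}$ regroups to exactly the right-hand side of \eqref{F:exact count for gamma}, including the edge case $p=H_{(1)}$ where $B_{H_{(1)}+1}=\emptyset$ and the corresponding products vanish. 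Both arguments rest only on \eqref{O:int of rectangles} and \eqref{O:box size}; the paper's version delivers the formula already split into the boundary/interior form that Lemmas \ref{L:est for small p} and \ref{L:est for big p} exploit, while yours avoids complements and the separate $i=0$, $i=q-p+1$ cases at the price of a final algebraic regrouping and a brief justification of the interval-union counting principle and of the bound $U_h-L_h+1\le H_{(1)}$ (which you supply). No gaps.
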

We note that the above terms could be combined, but we leave it in the above for the purpose of cancellations which we later analyze.

\begin{proof}
Fix $1 \leq p \leq H_{(1)}$. In order for any $h \in \Gamma_q$ to lie in exactly $p$ rectangles, it must be the case that the bounds $L_h$ and $U_h$ are equal to $i$ and $i+p-1$ for some $0 \leq i \leq q - p + 1$. Hence, for each such $i$ we count the number of $h$ such that $L_h = i$ and $U_h = i + p-1$. By the interval properties we observed (\ref{O:int of rectangles}), this condition can be written as
$$h \in \Box_{i-1}^c \cap \Box_{i} \cap \Box_{i+p-1} \cap \Box_{i + p}^c$$
where the $c$ denotes set complements and at the endpoints, $\Box_{-1}$ and $\Box_{q +1}$ are taken to be empty. Hence, we need to count the size of this set for each $i$ and add them up.

To illustrate this method, we begin with $i=0$, in which we wish to count
$$\# (\Box_0 \cap \Box_{p-1} \cap \Box_{p}^c)\, . $$
We observe that this set can be written as a set difference, which we can count by $\#(A - B) = \#A - \#(A \cap B)$. Hence,
\begin{align*}
\# (\Box_0 \cap \Box_{p-1} \cap \Box_{p}^c) &= \# (\Box_0 \cap \Box_{p-1} - \Box_{p}) \\
&= \# (\Box_0 \cap \Box_{p-1}) - \# (\Box_0 \cap \Box_{p-1} \cap \Box_{p}) \\
&= \# (\Box_0 \cap \Box_{p-1}) - \# (\Box_0 \cap \Box_{p}) \\
&= \prod_{m=1}^k \max\{H_m - p + 1, 0\} - \prod_{m=1}^k \max\{H_m - p, 0\}
\end{align*}
using the previous observation (\ref{O:box size}) on the exact size of intersections. By the same argument, the other edge case of $i = q - p+1$ also satisfies
\begin{align*}
\# (\Box_{q-p}^c \cap \Box_{q-p+1} \cap \Box_{q}) &= \prod_{m=1}^k \max\{H_m - p + 1, 0\} - \prod_{m=1}^k \max\{H_m - p, 0\} \, .
\end{align*}
For values $1 \leq i \leq q - p$, we apply the same method, with inclusion-exclusion as appropriate to reformulate everything in terms of intersections, which we can simplify:
\begin{align*}
&\# (\Box_{i-1}^c \cap \Box_i \cap \Box_{i+p-1} \cap \Box_{i + p}^c) \\
&= \# (\Box_i \cap \Box_{i+p-1} - (\Box_{i-1} \cup \Box_{i + p})) \\
&=  \# (\Box_i \cap \Box_{i+p-1}) - \#((\Box_i \cap \Box_{i+p-1}) \cap (\Box_{i-1} \cup \Box_{i + p})) \\
&=  \# (\Box_i \cap \Box_{i+p-1}) - \#(( \Box_{i-1} \cap \Box_i \cap \Box_{i+p-1} ) \cup ( \Box_i \cap \Box_{i+p-1}\cap \Box_{i + p})) \\
&=  \# (\Box_i \cap \Box_{i+p-1}) - \#( \Box_{i-1} \cap \Box_i \cap \Box_{i+p-1} ) - \#(\Box_i \cap \Box_{i+p-1}\cap \Box_{i + p}) + \#(\Box_{i-1} \cap \Box_i \cap \Box_{i+p-1} \cap \Box_{i + p}) \\
&=  \# (\Box_i \cap \Box_{i+p-1}) - \#( \Box_{i-1} \cap \Box_{i+p-1} ) - \#(\Box_i \cap  \Box_{i + p}) + \#(\Box_{i-1} \cap \Box_{i + p}) \\
&= \prod_{m=1}^k \max\{H_m - p+1, 0\} - 2\prod_{m=1}^k \max\{H_m - p,0 \} + \prod_{m=1}^k \max\{H_m - p - 1, 0\} \, .
\end{align*}
We observe that we have lost $i$ dependence. Hence, adding these terms together the appropriate number of times gives us a bound on the desired term.
\end{proof}

To use this count to get meaningful estimates, we make some observations about the kinds of cancellation this formula (\ref{F:exact count for gamma}) may afford. Consider the polynomial in $x_1, \dots, x_k$ given by
$$\prod_{m=1}^k (x_m + 1)\, .$$
Factoring this term out, every possible monomial appears exactly once. Hence when we subtract off the product of all $x_m$'s, we are left with
\begin{align}\label{O:cancellation 1}
\prod_{m=1}^k (x_m + 1) - \prod_{m=1}^k x_m = \sum_{\eta \in V_{k} - \{ \mathbf{1}\}} \prod_{\eta_{i} = 1} x_{i}
\end{align}
using previous notation. By the same reasoning, expanding the expression
$$\prod_{m=1}^k (x_m - 1)$$
gives every possible monomial with parity. Hence, when we add them together, there is cancellation:
\begin{align}\label{O:cancellation 2}
\prod_{m=1}^k (x_m + 1) - 2\prod_{m=1}^k x_m + \prod_{m=1}^k (x_m - 1) &= 2\sum_{\overset{\eta \in V_k - \{\mathbf{1}\}}{k - |\eta| \text{ even}}} \prod_{\eta_{i} = 1} x_{i} \, .
\end{align}
These observations can clean up the formula (\ref{F:exact count for gamma}) from the previous lemma, but only once we are able to remove the $\max\{ \cdot, 0 \}$ operators. Hence, we will get different estimates on proportion depending on the size of $p$ and the number of terms that vanish. When no terms vanish, we get the following:
\begin{lemma}\label{L:est for small p}
Let $1 \leq p \leq H_{(1)} - 1$. Then 
\begin{align}\label{E:est for small p}
\frac{\#\{h\in \Gamma_q : U_h - L_h +1 = p\}}{N \cdot \prod_{m=1}^k H_m} \leq 2^{k+1}\left(\frac{1}{N \cdot H_{(1)}} + \frac{1}{{H_{(1)}} \cdot H_{(2)}}\right)
\end{align}
where $H_{(1)}$ and $H_{(2)}$ denote the two smallest (not necessarily distinct) values of $H_m$.
\end{lemma}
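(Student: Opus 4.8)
The plan is to start from the exact count in Lemma \ref{L:exact count for gamma} and use that the hypothesis $1 \le p \le H_{(1)}-1$ kills every truncation. First I would note that $H_m - p - 1 \ge H_{(1)} - p - 1 \ge 0$ for every $m$, so each of $\max\{H_m-p+1,0\}$, $\max\{H_m-p,0\}$, $\max\{H_m-p-1,0\}$ appearing in (\ref{F:exact count for gamma}) equals the corresponding nonnegative linear expression. Writing $x_m := H_m - p \ge 1$, the formula becomes
\begin{align*}
\#\{h\in\Gamma_q : U_h - L_h + 1 = p\} &= 2\Big[\textstyle\prod_{m=1}^k (x_m+1) - \prod_{m=1}^k x_m\Big] \\
&\quad + (q-p)\Big[\textstyle\prod_{m=1}^k(x_m+1) - 2\prod_{m=1}^k x_m + \prod_{m=1}^k(x_m-1)\Big].
\end{align*}

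Next I would feed this through the algebraic identities (\ref{O:cancellation 1}) and (\ref{O:cancellation 2}) with $x_m$ as above, obtaining
\begin{align*}
\#\{h\in\Gamma_q : U_h - L_h + 1 = p\} &= 2\sum_{\eta\in V_k - \{\mathbf{1}\}} \prod_{\eta_i = 1} x_i \\
&\quad + 2(q-p)\sum_{\substack{\eta \in V_k - \{\mathbf{1}\}\\ k-|\eta|\text{ even}}} \prod_{\eta_i=1} x_i.
\end{align*}
I would then divide by $N\prod_{m=1}^k H_m$ and use $x_i = H_i - p \le H_i$, which converts $\frac{\prod_{\eta_i=1}x_i}{\prod_m H_m}$ into a quantity bounded by $\prod_{\eta_i = 0}\frac1{H_i}$.

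The two sums are estimated separately. For the first, every $\eta \ne \mathbf{1}$ has a zero coordinate, and dropping all but one such factor gives $\prod_{\eta_i=0}\frac1{H_i} \le \frac1{H_j} \le \frac1{H_{(1)}}$; with fewer than $2^k$ terms this contributes at most $\frac{2^{k+1}}{N H_{(1)}}$. The reason for carrying the parity restriction in the second sum is precisely that $\eta \ne \mathbf{1}$ together with $k - |\eta|$ even forces $\eta$ to have \emph{at least two} zero coordinates; hence $\prod_{\eta_i=0}\frac1{H_i} \le \frac1{H_a H_b}$ for two distinct indices $a,b$, and since the product of any two of the $H_i$'s is at least $H_{(1)}H_{(2)}$, this is $\le \frac1{H_{(1)}H_{(2)}}$. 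Combining with $\frac{q-p}{N}\le\frac{q}{N}\le\frac{N-1}{N}<1$ and the bound of $2^{k-1}$ on the number of such $\eta$, the second sum contributes at most $\frac{2^k}{H_{(1)}H_{(2)}}$. Adding the two bounds and absorbing the constants into $2^{k+1}$ yields (\ref{E:est for small p}). I do not expect a real obstacle; the only point needing care is tracking which combinatorial constraint on $\eta$ yields the factor $\frac1{H_{(1)}}$ versus $\frac1{H_{(1)}H_{(2)}}$ — it is the ``at least two zeros'' consequence of the even-parity condition that makes the sharper $H_{(1)}H_{(2)}$ denominator available, which is exactly what is needed when these per-$p$ estimates are summed in the proof of Theorem \ref{T:general case}.
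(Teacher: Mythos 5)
Your proposal is correct and follows essentially the same route as the paper: start from the exact count of Lemma \ref{L:exact count for gamma}, expand via the identities (\ref{O:cancellation 1}) and (\ref{O:cancellation 2}), and bound the two resulting sums using ``at least one zero coordinate'' (giving $1/H_{(1)}$) versus ``at least two zero coordinates from the even-parity constraint'' (giving $1/(H_{(1)}H_{(2)})$). The only difference is that you treat the endpoint $p=H_{(1)}-1$ uniformly — valid, since the max operators still agree with the linear expressions there and the identities are polynomial identities even when some $x_m-1=0$ — whereas the paper handles that case separately with a punctured product; both give the same bound.
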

\begin{proof}
Consider first the case where $1 \leq p \leq H_{(1)} - 2$. Based on the range of $p$, all of the maximum operators from the formula (\ref{F:exact count for gamma}) vanish, and we apply our observations (\ref{O:cancellation 1}) and (\ref{O:cancellation 2}) concerning expanding these expressions:
\begin{align*}
&\#\{h\in \Gamma_q : U_h - L_h +1 = p\} \\ &= 2 \left[ \prod_{m=1}^k (H_m - p + 1) - \prod_{m=1}^k (H_m - p)\right] \\
&+ (q - p) \left[ \prod_{m=1}^k (H_m - p+1) - 2\prod_{m=1}^k (H_m - p)  + \prod_{m=1}^k (H_m - p - 1)\right] \\
&= 2 \left[ \sum_{\eta \in V_{k} - \{ \mathbf{1}\}} \prod_{\eta_{i} = 1} (H_i - p)\right] + 2(q - p) \left[\sum_{\overset{\eta \in V_k - \{\mathbf{1}\}}{k - |\eta| \text{ even}}} \prod_{\eta_{i} = 1} (H_i - p) \right]\, .
\end{align*}
After we divide through by $\prod_{m=1}^k H_m$ (and also $N$), we see that for every $\eta$ summand, the pieces $(H_i - p)/H_i$ for $\eta_i = 1$ can be bound away, and we will only be left with the $H_i$ for $\eta_i = 0$ terms in the denominator:
\begin{align*}
&\frac{\#\{h\in \Gamma_q : U_h - L_h +1 = p\}}{N \cdot \prod_{m=1}^k H_m} \leq \frac{2}{N} \sum_{\eta \in V_{k} - \{ \mathbf{1}\}} \frac{1}{\prod_{\eta_{i} = 0} H_i}  + 2\left(\frac{q - p}{N}\right) \sum_{\overset{\eta \in V_k - \{\mathbf{1}\}}{k - |\eta| \text{ even}}} \frac{1}{\prod_{\eta_{i} = 0} H_i }\, .
\end{align*}
In the first sum on the right hand side, we see that at least one $H_i$ terms appears in every summand, so we can bound them all above by $1/H_{(1)}$. In the second sum, we see that at least two $H_i$'s appear in every summand, and we can bound them all above by $1/({H_{(1)}} \cdot H_{(2)})$ for the two smallest possible values. In both cases, the total number of summands is less than $2^k$, so after bounding $q$ by $N$ and consolidating constants, we get the desired estimate.

Now consider the case $p = H_{(1)} - 1$. Here, the formula (\ref{F:exact count for gamma}) simplifies to
\begin{align*}
\#\{h\in \Gamma_q : U_h - L_h +1 = H_{(1)}-1\} &= 2 \left[ \prod_{m=1}^k (H_m - H_{(1)} + 2) - \prod_{m=1}^k (H_m - H_{(1)} + 1)\right] \\
&+ (q - p) \left[ \prod_{m=1}^k (H_m - H_{(1)} + 2) - 2\prod_{m=1}^k (H_m - H_{(1)}+1)\right]\, .
\end{align*}
Consider the terms on the second line. In the first product, the $m$ term which minimizes $H_m$ yields a 2 in the overall product, which we can factor out in tandem with the 2 attached to the second product. If we let the punctured product $\prod '$ correspond to deleting the $m$ term which minimizes $H_m$, this can be rewritten as
\begin{align*}
\#\{h\in \Gamma_q : U_h - L_h +1 = H_{(1)}-1\} &= 2 \left[ \prod_{m=1}^k (H_m - H_{(1)} + 2) - \prod_{m=1}^k (H_m - H_{(1)} + 1)\right] \\
&+ 2(q - p) \left[ \prod_{m=1}^k\mathop{}^{\mkern-5mu '} (H_m - H_{(1)} + 2) - \prod_{m=1}^k\mathop{}^{\mkern-5mu '} (H_m - H_{(1)}+1)\right] \, .
\end{align*}

After dividing by $N \cdot \prod_{m=1}^k H_m$, the first set of brackets can be dealt with the same way as before to yield a term of the order $1/(N \cdot H_{(1)})$. In the second set of brackets, we can expand by the same observations, only on $k-1$ terms rather than $k$. Relative to this part, the smallest value of $H_m$ is now $H_{(2)}$, and overall this piece yields a term of the order $1/({H_{(1)}} \cdot H_{(2)})$, and we get the same overall estimate.
\end{proof}

When $p$ is as big as possible, most terms from the formula (\ref{F:exact count for gamma}) vanish and we can get the following:
\begin{lemma}\label{L:est for big p}
Using the same notation as before, we have
\begin{align}\label{E:est for big p}
\frac{\#\{h\in \Gamma_q : U_h - L_h +1 = H_{(1)}\}}{N \cdot \prod_{m=1}^k H_m} \leq 2\left(\frac{H_{(2)} - H_{(1)} + 1}{H_{(1)}\cdot H_{(2)}}\right) \, .
\end{align}
\end{lemma}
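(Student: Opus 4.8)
The plan is to read the count straight off the exact formula (\ref{F:exact count for gamma}) of Lemma \ref{L:exact count for gamma}, specialized to the extreme value $p = H_{(1)}$, where the formula collapses almost entirely. First I would fix an index $m_0$ realizing $H_{m_0} = H_{(1)}$ and observe that at $p = H_{(1)}$ the $m_0$-factor of $\prod_{m=1}^k \max\{H_m - p, 0\}$ is $\max\{0,0\} = 0$ and the $m_0$-factor of $\prod_{m=1}^k \max\{H_m - p - 1, 0\}$ is $\max\{-1,0\} = 0$; hence both of those products vanish, while $\prod_{m=1}^k \max\{H_m - p + 1, 0\} = \prod_{m=1}^k (H_m - H_{(1)} + 1)$ since each factor is at least $1$. (This is the same mechanism that makes $H_{(1)}$ the largest possible interval width $U_h - L_h + 1$, via (\ref{O:box size}), so that $p = H_{(1)}$ really is the relevant endpoint.) Substituting into (\ref{F:exact count for gamma}), both brackets reduce to $\prod_{m=1}^k(H_m - H_{(1)}+1)$, and I obtain the clean identity
\begin{align*}
\#\{h \in \Gamma_q : U_h - L_h + 1 = H_{(1)}\} = (q - H_{(1)} + 2)\prod_{m=1}^k (H_m - H_{(1)} + 1) \, .
\end{align*}

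Next I would estimate the surviving product. Sorting $H_{(1)} \leq H_{(2)} \leq \dots \leq H_{(k)}$, the factor from the minimizing index equals $1$, the factor from the second-smallest value equals $H_{(2)} - H_{(1)} + 1$, and each remaining factor satisfies $H_{(j)} - H_{(1)} + 1 \leq H_{(j)}$ because $H_{(1)} \geq 1$, so $\prod_{m=1}^k(H_m - H_{(1)}+1) \leq (H_{(2)} - H_{(1)} + 1)\prod_{j=3}^k H_{(j)}$. Dividing the count by $N\cdot\prod_{m=1}^k H_m = N\cdot\prod_{j=1}^k H_{(j)}$, the $k-2$ factors $\prod_{j=3}^k H_{(j)}$ cancel and I am left with
\begin{align*}
\frac{\#\{h \in \Gamma_q : U_h - L_h + 1 = H_{(1)}\}}{N \cdot \prod_{m=1}^k H_m} \leq \frac{q - H_{(1)} + 2}{N}\cdot\frac{H_{(2)} - H_{(1)} + 1}{H_{(1)}\, H_{(2)}} \, .
\end{align*}
Finally, since $H_{(1)} < q \leq N - 1$ and $H_{(1)} \geq 1$, the numerator satisfies $q - H_{(1)} + 2 \leq N$, so the first fraction is at most $1$; this gives $\frac{H_{(2)} - H_{(1)} + 1}{H_{(1)} H_{(2)}}$, hence certainly the bound (\ref{E:est for big p}) with its stated constant $2$.

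The argument is pure bookkeeping and I do not foresee a real obstacle; the only steps demanding a little care are the collapse of two of the three products in (\ref{F:exact count for gamma}) at $p = H_{(1)}$ (a direct consequence of the minimizing index), the correct matching of denominators when one divides by $N\prod_m H_m$, and the crude estimate $q - H_{(1)} + 2 \leq N$ that converts the length of the $q$-range into the leading $1/N$ saving. In contrast to Lemma \ref{L:est for small p}, no cancellation observation of the type (\ref{O:cancellation 1})--(\ref{O:cancellation 2}) is needed here, precisely because at the endpoint $p = H_{(1)}$ a single product survives.
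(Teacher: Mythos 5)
Your proposal is correct and follows essentially the same route as the paper: specialize the exact count formula to $p = H_{(1)}$ (where two of the three products vanish), obtain $(q - H_{(1)} + 2)\prod_{m=1}^k (H_m - H_{(1)} + 1)$, and bound the normalized quotient by keeping the factors from the two smallest $H_m$'s and bounding the rest by $1$. Your bookkeeping even yields the bound without the factor of $2$ (the paper bounds $(q - H_{(1)}+2)/N$ by $2$ rather than $1$), which of course implies the stated estimate.
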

We leave the $H_{(2)}$ term unbounded to note that if $H_{(1)}$ and $H_{(2)}$ are essentially equal, we can obtain a much better estimate.

\begin{proof}
When $p = H_{(1)}$, we see that the later terms in the formula (\ref{F:exact count for gamma}) are zero, in which
$$\#\{h\in \Gamma_q : U_h - L_h +1 = H_{(1)}\} = (q - H_{(1)} + 2)\prod_{m=1}^k (H_m - H_{(1)} + 1)\, .$$
Dividing through, we see that
\begin{align*}
\frac{\#\{h\in \Gamma_q : U_h - L_h +1 = H_{(1)}\}}{N \cdot \prod_{m=1}^k H_m} &= \left( \frac{q - H_{(1)} + 2}{N}\right) \prod_{m=1}^k  \left(\frac{H_m - H_{(1)} + 1}{H_m}\right) \\
&\leq 2 \left( \frac{1}{H_{(1)}} \right) \left( \frac{H_{(2)} - H_{(1)} + 1}{H_{(2)}}\right)
\end{align*}
bounding every other term away.
\end{proof}

With the estimates of lemmas \ref{L:est for small p} and \ref{L:est for big p}, we prove the general case of Theorem \ref{T:general case}.

\begin{proof}[Proof of Theorem \ref{T:general case}.] 

In this proof, we allow the constant $C'_k$ to change from line to line, while only ever picking up dependence on $k$. In fact, the only place in which $k$ dependence is picked up is from applying the estimate (\ref{E:est for big p}) under what is ultimately a $1/6$-th power. Hence, the final constant $C'_k$ presented here will look like $2^{k/6}$, times an absolute constant.

To begin, we ignore the $1/3$ power on the desired average, and add it back at the end by an application of Hölder's inequality. On the desired averages, we apply the Van der Corput inequality \ref{vdc-lem} pointwise for $1 \leq H_k \leq N$, bounding the remainder trivially and extending the $n$ sum to $N$ as usual, with indexing variable $h_k$:
\begin{align*}
&\frac{1}{\prod_{m=1}^{k-1} H_m} \sum_{h_1 = 1}^{H_1} \dots \sum_{h_{k-1}=1}^{H_{k-1}} \left\Vert\sup_t \left| \frac{1}{N} \sum_{n=1}^N e^{2\pi i n t} \left[ \prod_{\eta \in V_{k-1}} c^{|\eta|} g_\eta \circ T^{h \cdot \eta}\right] \circ T^n\right| \right\Vert_2^{2} \\
&\leq \frac{1}{\prod_{m=1}^{k-1} H_m} \sum_{h_1 = 1}^{H_1} \dots \sum_{h_{k-1}=1}^{H_{k-1}} \left( \frac{2}{H_k} + \frac{4H_k}{N} + \frac{4}{H_k} \sum_{h_k = 1}^{H_k} \int \left| \frac{1}{N} \sum_{n=1}^N \prod_{\eta \in V_{k-1}} (c^{|\eta|} g_\eta \circ T^{h\cdot \eta}) \circ T^n \cdot (\overline{g_\eta \circ T^{h\cdot \eta}}) \circ T^{n + h_k}\right| \, d\mu \right) \\
&\leq \frac{2}{H_k} + \frac{4H_k}{N} + 4\left(\frac{1}{\prod_{m=1}^{k} H_m} \sum_{h_1 = 1}^{H_1} \dots \sum_{h_k = 1}^{H_k} \int \left| \frac{1}{N} \sum_{n=1}^N \prod_{\eta \in V_{k-1}} (c^{|\eta|} g_\eta \circ T^{h\cdot \eta}) \circ T^n \cdot (\overline{g_\eta \circ T^{h\cdot \eta}}) \circ T^{n + h_k}\right|^2 \, d\mu  \right)^{1/2}
\end{align*}
after using Hölder's inequality to group all of the sums together. 

As in the $k=1$ case, we wish to use lemma \ref{L:vdc for systems} on the inner integral. We see that the variable $h_k$ has already added another layer to the cube, and using this lemma will add another layer in the variable called $n$. To consolidate notation, this will move us from $\eta\in V_{k-1}$ to $\eta\in V_{k+1}$. The new layers in $h_k$ and $n$ do not correspond to different functions $g_\eta$. Hence, we denote $\eta''$ to remove the last two components of $\eta$, so that $\eta'' \in V_{k-1}$ and the labeling of $g_{\eta''}$ makes sense. Let $(h, n)$ denote the $k+1$ tuple $(h_1, \dots, h_k, n)$. After applying this lemma, we can write:
\begin{align*}
&\leq \frac{C'_k}{H_k} + \frac{C'_kH_k}{N} + C'_k\left(\frac{1}{\prod_{m=1}^{k} H_m} \sum_{h_1 = 1}^{H_1} \dots \sum_{h_{k}=1}^{H_{k}} \frac{1}{N} \sum_{n=0}^{N-1} \left( \frac{N-n}{N} \right) \Re \left[ \int \prod_{\eta\in V_{k+1}} c^{|\eta|} g_{\eta''} \circ T^{(h, n) \cdot \eta} \, d\mu \right]  \right)^{1/2}
\end{align*}
where we have begun to use $C'_k$ to keep track of the constants.
Recall the same expansion from the specific proof: that for any sequence $a_n$, by interchanging sums we have
$$\frac{1}{N}\sum_{n=0}^{N-1} \left( \frac{N-n}{N} \right) a_n = \frac{1}{N}\sum_{q=0}^{N-1} \frac{1}{N}\sum_{n=0}^q a_n\, .$$
We use this to expand the $n$ sum as a sum over $n$ and $q$. After doing so, we pull the real component and $q$ sum out and bound by the absolute value:
\begin{align*}
&= \frac{C'_k}{H_k} + \frac{C'_kH_k}{N} + C'_k\left(\frac{1}{ \prod_{m=1}^{k} H_m} \sum_{h_1 = 1}^{H_1} \dots \sum_{h_{k}=1}^{H_{k}} \frac{1}{N} \sum_{q=0}^{N-1} \frac{1}{N} \sum_{n=0}^{q}  \Re \left[ \int \prod_{\eta\in V_{k+1}} c^{|\eta|} g_{\eta''} \circ T^{(h, n) \cdot \eta} \, d\mu \right]  \right)^{1/2} \\
&\leq  \frac{C'_k}{H_k} + \frac{C'_kH_k}{N} + C'_k \left(\frac{1}{N} \sum_{q=0}^{N-1}  \left|\frac{1}{N \cdot \prod_{m=1}^{k} H_m} \sum_{n=0}^{q} \sum_{h_1 = 1}^{H_1} \dots \sum_{h_{k}=1}^{H_{k}} \int \prod_{\eta\in V_{k+1}} c^{|\eta|} g_{\eta''} \circ T^{(h, n) \cdot \eta} \, d\mu \right|  \right)^{1/2} \, .
\end{align*}
Recalling that $H_{(1)}$ denotes smallest value of $H_m$, consider cutting out the terms $q=0$ to $q = H_{(1)}-1$, and we can bound away trivially as a remainder term of order $H_{(1)}/N$ (we can actually pull out a remainder of better order, but it will eventually be absorbed into a term of this form). When we pull it out of the $1/2$th power by subadditivity, it may join the other remainder terms:

\begin{align*}
&\leq  \frac{C'_k}{H_k} + \frac{C'_k H_{(1)}^{1/2}}{N^{1/2}} + C'_k \left(\frac{1}{N} \sum_{q=H_{(1)}}^{N-1}  \left|\frac{1}{N \cdot \prod_{m=1}^{k} H_m} \sum_{n=0}^{q} \sum_{h_1 = 1}^{H_1} \dots \sum_{h_{k}=1}^{H_{k}}  \int \prod_{\eta\in V_{k+1}} c^{|\eta|} g_{\eta''} \circ T^{(h, n) \cdot \eta} \, d\mu \right|  \right)^{1/2} \, .
\end{align*}

Now, we wish to shift the index of each $h_m$ sum down by $n$, which causes the indices of the summands to increase by $n$. Briefly, we analyze the effect of this on the integrand alone. We shift every $h_m$ up by $n$, and group terms by $|\eta|$:
\begin{align*}
\int \prod_{\eta \in V_{k+1}} c^{|\eta|} g_{\eta''} \circ T^{(h_1 + n, \dots, h_k + n, n) \cdot \eta} \, d\mu &= \int \prod_{j=0}^{k+1} \prod_{\overset{\eta \in V_{k+1}}{|\eta| = j}} c^{|\eta|} g_{\eta''} \circ T^{(h_1, \dots, h_k, 0) \cdot \eta + jn} \, d\mu \\
&=\int \prod_{j=0}^{k+1} \prod_{\overset{\eta \in V_{k+1}}{|\eta| = j}} c^{|\eta|} g_{\eta''} \circ T^{(h_1, \dots, h_k, 0) \cdot (\eta - \textbf{1}) + jn} \, d\mu
\end{align*}
after translating the measure by $T^{h \cdot (-\textbf{1})}$, where as before $\textbf{1}$ denotes the $\eta\in V_{k+1}$ with $1$'s in every slot. Denoting
$$G^h_j := \prod_{\overset{\eta \in V_{k+1}}{|\eta| = j}} c^{|\eta|} g_{\eta''} \circ T^{(h_1, \dots, h_k, 0) \cdot (\eta- \textbf{1}) }$$
we see that the above integrand can be written as
\begin{align*}
\int \prod_{j=1}^{k+1} G^h_j \circ T^{jn} \, d\mu
\end{align*}
and we observe that $G^h_{k+1} = c^{|k+1|}g_{\textbf{1}''}$. Without loss of generality, we can take $G^h_{k+1} = g_{\textbf{1}''}$. For the sake of notation, we can remove the double apostrophe to take $\mathbf{1}$ to be in $V_{k-1}$, and write $g_{\mathbf{1}}$.

Returning to our main estimate, after this index shift we obtain
\begin{align*}
&\leq  \frac{C'_k}{H_k} + \frac{ H_{(1)}^{1/2}}{N} + C'_k \left(\frac{1}{N} \sum_{q=H_{(1)}}^{N-1}  \left|\frac{1}{N \cdot \prod_{m=1}^{k} H_m} \sum_{n=0}^{q} \sum_{h_1 = 1-n}^{H_1-n} \dots \sum_{h_{k}=1-n}^{H_{k}-n} \int \prod_{j=0}^{k+1} G^h_j \circ T^{jn} \, d\mu \right|  \right)^{1/2} \, .
\end{align*}
In the language of the previous lemmas, we can exchange the $h$ and $n$ sums. We shift the indices of the $n$ sum down, and shift the measure by $T^{-(k+1)(L_h -1)}$ to keep the $j=k+1$ term equal to $g_{\textbf{1}} \circ T^{(k+1)n}$:
\begin{align*}
&=  \frac{C'_k}{H_k} + \frac{C'_k H^{1/2}_{(1)}}{N^{1/2}} + C'_k \left(\frac{1}{N} \sum_{q=H_{(1)}}^{N-1}  \left|\frac{1}{N \cdot \prod_{m=1}^{k} H_m} \sum_{h \in \Gamma_q} \sum_{n=L_h}^{U_h}  \int \prod_{j=0}^{k+1} G^h_j \circ T^{jn} \, d\mu \right|  \right)^{1/2} \\
&=  \frac{C'_k}{H_k} + \frac{C'_k H_{(1)}^{1/2}}{N^{1/2}} + C'_k \left(\frac{1}{N} \sum_{q=H_{(1)}}^{N-1}  \left|\frac{1}{N \cdot \prod_{m=1}^{k} H_m} \sum_{h \in \Gamma_q} \sum_{n=1}^{U_h - L_h+1}  \int \prod_{j=0}^{k+1} G^h_j  \circ T^{(j - k -1)(L_h-1)} \circ T^{jn} \, d\mu \right|  \right)^{1/2} \, .
\end{align*}
We group together terms from $h$ by $p = U_h - L_h+1 $, which ranges from 1 to $H_{(1)}$. After pulling the absolute value inside, we can bound by a supremum to make the uniform $k+1$ recurrence averages appear:
\begin{align*} 
&= \frac{C'_k}{H_k} + \frac{C'_k H_{(1)}^{1/2}}{N^{1/2}} +C'_k \left(\frac{1}{N} \sum_{q=H_{(1)}}^{N-1}  \left|\frac{1}{N \cdot \prod_{m=1}^{k} H_m} \sum_{p = 1}^{H_{(1)}} \sum_{\overset{h \in \Gamma_q}{U_h - L_h +1= p}} \sum_{n=1}^{p}  \int \prod_{j=0}^{k+1} G^h_j  \circ T^{(j - k -1)(L_h - 1)} \circ T^{jn} \, d\mu \right|  \right)^{1/2} \\
&\leq \frac{C'_k}{H_k} + \frac{C'_k H_{(1)}^{1/2}}{N^{1/2}} +C'_k \cdot \\ &\left(\frac{1}{N} \sum_{q=H_{(1)}}^{N-1} \frac{1}{N \cdot \prod_{m=1}^{k} H_m} \sum_{p = 1}^{H_{(1)}} \sum_{\overset{h \in \Gamma_q}{U_h - L_h +1= p}} \int \left| G^h_0 \circ T^{(-k-1)(L_h-1)}\right|\cdot \left| \sum_{n=1}^{p} \prod_{j=1}^{k+1} G^h_j  \circ T^{(j - k -1)(L_h - 1)} \circ T^{jn} \, d\mu \right|  \right)^{1/2} \\
&\leq \frac{C'_k}{H_k} + \frac{C'_k H_{(1)}^{1/2}}{N^{1/2}} \\ &+C'_k \left(\frac{1}{N} \sum_{q=H_{(1)}}^{N-1} \sum_{p = 1}^{H_{(1)}} \frac{p}{N \cdot \prod_{m=1}^{k} H_m} \sum_{\overset{h \in \Gamma_q}{U_h - L_h +1= p}} \, \sup_{\overset{h_j \in L^\infty(\mu)}{\max_{1 \leq j \leq k} \Vert h_j \Vert_\infty \leq 1}}\int \left|\frac{1}{p} \sum_{n=1}^{p} \prod_{j=1}^{k+1} h_{j} \circ T^{jn} \cdot g_{\mathbf{1}''} \circ T^{(k+1)n} \, d\mu \right|  \right)^{1/2} \\
&= \frac{C'_k}{H_k} + \frac{C'_k H_{(1)}^{1/2}}{N} + C'_k\left(\frac{1}{N} \sum_{q=H_{(1)}}^{N-1} \sum_{p = 1}^{H_{(1)}} \frac{p}{N \cdot \prod_{m=1}^{k} H_m} \sum_{\overset{h \in \Gamma_q}{U_h - L_h +1 = p}} M_p^k (g_{\mathbf{1}})  \right)^{1/2} \, .
\end{align*}
With $h$ dependence lost, we can apply our estimates from lemmas \ref{L:est for small p} and \ref{L:est for big p} on the amount of $h$ with $U_h - L_h + 1 = p$. Recalling that we had a different estimate for $p = H_{(1)}$, we pull this term out entirely, in which the $q$ sum collapses:
\begin{align*}
= \frac{C'_k}{H_k} + \frac{C'_k H_{(1)}^{1/2}}{N^{1/2}} & +C'_k \left(\frac{1}{N} \sum_{q=H_{(1)}}^{N-1} \sum_{p = 1}^{H_{(1)}-1} \frac{p \cdot \#\{h \in \Gamma_q : U_h - L-h + 1 = p\}}{N \cdot \prod_{m=1}^{k} H_m}  M_p^k (g_{\mathbf{1}}) \right)^{1/2} \\ &+ C'_k \left(\frac{H_{(1)} \cdot \#\{h \in \Gamma_q : U_h - L-h + 1 = H_{(1)}\}}{N \cdot \prod_{m=1}^{k} H_m}  M_{H_{(1)}}^k (g_{\mathbf{1}}) \right)^{1/2} \\
\leq \frac{C'_k}{H_k} + \frac{C'_kH_{(1)}^{1/2}}{N^{1/2}} &+ C'_k \left(\frac{1}{N} \sum_{q=H_{(1)}}^{N-1} \sum_{p = 1}^{H_{(1)}-1} \left( \frac{p}{N \cdot H_{(1)}} + \frac{p}{H_{(1)} \cdot H_{(2)}}\right) M_p^k (g_{\mathbf{1}}) \right)^{1/2} \\ &+ C'_k \left(H_{(1)}\left(\frac{H_{(2)} - H_{(1)} + 1}{H_{(1)}\cdot H_{(2)}}\right)  M_{H_{(1)}}^k (g_{\mathbf{1}}) \right)^{1/2} \, .
\end{align*}

Bounding $p$ by $H_{(1)}$, the $1/N$ term can be bound trivially and pulled out as a remainder term of the order $H_{(1)}^{1/2}/N^{1/2}$. With $q$ dependence lost in the first sum, we clean up:
\begin{align*}
\leq \frac{C'_k}{H_k} + \frac{C'_k H_{(1)}^{1/2}}{N^{1/2}}  & + C'_k \left(\frac{1}{H_{(2)}} \sum_{p = 1}^{H_{(1)}-1}  M_p^k (g_{\mathbf{1}}) \right)^{1/2} + C'_k \left(\left(\frac{H_{(2)} - H_{(1)} + 1}{H_{(2)}}\right)  M_{H_{(1)}}^k (g_{\mathbf{1}}) \right)^{1/2}
\end{align*}
and after adjusting for the $1/3$ power, we get the desired estimate.
\end{proof}

\printbibliography

\end{document}